\newcommand{\lv}[1]{} %% use this if the argument of \lv should not appear
\newcommand{\comments}[1]{}
\newcommand{\pcomments}[1]{}
\newcommand{\inparcom}[1]{}
\newcounter{question} \setcounter{question}{0}
\newcommand{\quest}[1]{} %%% if questions will not appear in the pdf
\newcommand{\new}{}
\newcommand{\enew}{}
\newcommand{\arxiv}[1]{\href{http://arxiv.org/abs/#1}{\tt arXiv:\nolinkurl{#1}}}
\newtheorem{thm}[subsection]{Theorem}
\newtheorem{prop}[subsection]{Proposition}
\newtheorem{lem}[subsection]{Lemma}
\newtheorem{cor}[subsection]{Corollary}
\newtheorem{principle}[subsection]{Cancellation Principle}
\newtheorem{appli}[subsection]{Application}
\theoremstyle{definition}
\newtheorem{remark}[subsection]{Remark}
\newtheorem{remarks}[subsection]{Remarks}
\newtheorem{example}[subsection]{Example}
\newtheorem{examples}[subsection]{Examples}
\numberwithin{equation}{subsection}%% was {section} before
\newcommand\sm{\smallskip}
\newcommand\ms{\medskip}
\def\limind{\mathop{\oalign{lim\cr\hidewidth$\longrightarrow$\hidewidth \cr}}}%directed limit
\newcommand{\ol}{\overline} 
\newcommand{\simlgr}{\buildrel \sim \over \longrightarrow}
\newcommand{\simla}{\buildrel \sim \over \longleftarrow}
\newcommand{\wdh}{\widehat}
\newcommand{\et}{_\mathrm{\acute{e}t}}
\newcommand{\longto}{\longrightarrow}
\newcommand{\ad}{^{\, {\rm ad}}}
\newcommand{\ti}{^\times}
\newcommand{\lan}{\langle}\newcommand{\ran}{\rangle}
\def\co{\colon}
\def\ot{\otimes} 
\def\op{^{\rm op}}
\def\ideal{\triangleleft}
\newcommand{\me}{^{-1}}
\def\dar[#1]{\ar@<2pt>[#1]\ar@<-2pt>[#1]}
\def\wtl{\widetilde}
\newcommand{\an}{_\mathrm{an}} %anisotropic kernel
\newcommand\reallywidehat[1]{%
\savestack{\tmpbox}{\stretchto{%
  \scaleto{%
    \scalerel*[\widthof{\ensuremath{#1}}]{\kern.1pt\mathchar"0362\kern.1pt}%
    {\rule{0ex}{\textheight}}%WIDTH-LIMITED CIRCUMFLEX
  }{\textheight}%
}{2.4ex}}%
\stackon[-6.9pt]{#1}{\tmpbox}%
}
\newcommand{\rmA}{\mathrm{A}}
\newcommand{\Aut}{\operatorname{Aut}}
\newcommand{\Br}{\operatorname{Br}} %Brauer group
\newcommand{\rmC}{\mathrm{C}}
\newcommand{\Char}{\operatorname{char}} %% \char is already a command
\newcommand{\Cent}{\mathrm{Cent}} % centralizer, was \operatorname before
\newcommand{\rmD}{\mathrm{D}}
\newcommand{\diag}{\operatorname{diag}}
\newcommand{\Di}{{\operatorname{Dick}}}
\newcommand{\Dyn}{\mathrm{Dyn}} %Dynkin diagram as scheme (finite etale)
\newcommand{\End}{\operatorname{End}}
\newcommand{\rmG}{\mathrm{G}}
\newcommand{\GL}{{\operatorname{GL}}}
\newcommand{\Gen}{\operatorname{Gen}} % for various parabolic subgroips
\newcommand{\rmH}{\mathrm{H}}
\newcommand{\Hom}{\operatorname{Hom}}
\newcommand{\hyp}{\mathrm{hyp}}
\newcommand{\Id}{\operatorname{Id}}
\newcommand{\ind}{\operatorname{ind}}
\newcommand{\inc}{{\operatorname{inc}}}
\newcommand{\Isom}{\operatorname{Isom}}
\newcommand{\idemp}{\operatorname{idemp}}%idempotents of an algebra, used by [Ford}
\newcommand{\Jac}{{\operatorname{Jac}}}
\newcommand{\Ker}{\operatorname{Ker}}
\newcommand{\Lie}{\operatorname{Lie}}
\newcommand{\Mat}{{\operatorname{M}}}
\newcommand{\Norm}{\operatorname{Norm}} %normalizer scheme
\newcommand{\Orth}{{\operatorname{O}}}\newcommand{\orth}{\Orth}
\newcommand{\rmP}{\operatorname{P}}
\newcommand{\pr}{\operatorname{pr}}
\newcommand{\Pic}{{\operatorname{Pic}}}
\newcommand{\Par}{{\operatorname{Par}}} %scheme of parabolic subgroups
\newcommand{\per}{{\operatorname{per}}} % period in Brauer group
\newcommand{\rank}{\operatorname{rank}}
\newcommand{\rad}{\operatorname{rad}}
\newcommand{\reg}{{{\operatorname{reg}}}} %2021-07-02: eliminated brackets,
\newcommand{\Spec}{\operatorname{Spec}}
\newcommand{\SL}{{\operatorname{SL}}}
\newcommand{\type}{\operatorname{type}}
\newcommand{\W}{\operatorname{W}}
\newcommand{\rmX}{\operatorname{X}}
\DeclareMathOperator{\rmZ}{Z} %for the centre
\newcommand{\euO}{\EuScript{O}}
\newcommand{\euZ}{\EuScript{Z}}
\newcommand{\calA}{\operatorname{{\mathcal A}}}
\newcommand{\calB}{\operatorname{\mathcal B}}
\newcommand{\calC}{\operatorname{\mathcal C}}
    \newcommand{\Cliff}{{\mathcal C}\ell} %Clifford algebra
    \newcommand{\Cli}{\Cliff}
\newcommand{\calD}{\operatorname{\mathcal D}}
    \newcommand{\Dis}{{\mathcal Dis}} %Discriminat algebra of a quadratic form
\newcommand{\calI}{\operatorname{\mathcal I}}
\newcommand{\calO}{\operatorname{\mathcal O}}
\newcommand{\calS}{{\operatorname{\mathcal S}}}
\newcommand{\scB}{\mathscr{B}}
\newcommand{\scD}{\mathscr{D}}
\newcommand{\scO}{\mathscr{O}}
\newcommand{\scT}{\mathscr{T}}
\newcommand{\scX}{\mathscr{X}}
\newcommand{\uIsom}{\underline{\rm Isom}}
\newcommand{\uLev}{\underline{\rm Lev}} % scheme of Levi subgroups
\newcommand{\Of}{\mathrm{Of}} %%open closed subsets
\newcommand{\uOpp}{\underline{\rm Opp}} % opposite parabolics
\newcommand{\uAut}{\mathbf {Aut}}%{\underline{\Aut}}
\newcommand{\uGL}{\mathbf{GL}}
\newcommand{\uO}{\mathbf {O}}%orthogonal group scheme
\newcommand{\uPGL}{\mathbf {PGL}}%{\underline{\PGL}}%{\mathbf{PGL}}
\newcommand{\uSL}{\mathbf {SL}}%{\underline{\SL}}
\newcommand{\uSO}{\mathbf {SO}}%
\newcommand{\bft}{\mathbf{t}}
\newcommand{\uU}{\mathbf U}
\newcommand{\uW}{\mathbf W}  %was {\operatorname{W}} before
\newcommand{\frg}{\mathfrak g}
\newcommand{\gh}{\mathfrak h}
\newcommand{\m}{\mathfrak m} %maximal ideal
    \newcommand{\gm}{\m}
\newcommand{\p}{\mathfrak p}
\newcommand{\frR}{\mathfrak R}
\newcommand{\Ralg}{R\mathchar45\mathbf{alg}}
\newcommand{\bbA}{{\mathbb A}}
\newcommand{\GG}{{\mathbb G}}
\newcommand{\HH}{{\mathbb H}}
\newcommand{\NN}{{\mathbb N}}
\newcommand{\QQ}{{\mathbb Q}}
\newcommand{\ZZ}{{\mathbb Z}}
\newcommand\al{\alpha}
\newcommand\be{\beta}
\newcommand\ga{\gamma} 
\newcommand\Ga{\Gamma}
\newcommand\de{\delta}
\newcommand\De{\Delta}
\newcommand\veps{\varepsilon} 
\newcommand\io{\iota}
\newcommand\ka{\kappa}
\newcommand\la{\lambda} 
 \newcommand\vphi{\varphi}
\newcommand\si{\sigma}
\newcommand\om{\omega} \newcommand\Om{\Omega}
\newcommand{\bmu}{\boldsymbol{\mu}}%{\underline \mu}
\title[Group schemes over LG-rings]%  \heute]
{Group schemes over LG-rings and applications to cancellation theorems and Azumaya algebras}
\author[P. Gille]{Philippe Gille}
\address{UMR 5208 du CNRS -
Institut Camille Jordan - Universit\'e Claude Bernard Lyon 1, 43 boulevard du
11 novembre 1918, 69622 Villeurbanne cedex, France and Institute of Mathematics "Simion Stoilow" of the Romanian Academy,
21 Calea Grivitei Street, 010702 Bucharest, Romania}
\thanks{The first author was supported by the project "Group schemes, root systems, and related representations" founded by the European Union - NextGenerationEU through Romania's National Recovery and Resilience Plan (PNRR) call no. PNRR-III-C9-2023-I8, Project CF159/31.07.2023, and coordinated by the Ministry of Research, Innovation and Digitalization (MCID)
of Romania. }
\email{gille@math.univ-lyon1.fr}
\author[E.~Neher]{Erhard Neher}
\address{ Department of Mathematics and Statistics, University of Ottawa,
Ottawa, Ontario, Canada, K1N 6N5}
\email{neher@uottawa.ca}
\date{\today}
\begin{document}

\begin{abstract} We prove several results on reductive group schemes over LG-rings, e.g., existence of maximal tori and conjugacy of parabolic subgroups. These were  proven in \cite{SGA3} for the special case of semilocal rings. We apply these results to establish cancellation theorems for hermitian and quadratic forms over LG-rings and show that the Brauer classes of Azumaya algebras over connected LG-rings have a unique representative and allow Brauer decomposition.
\end{abstract}

\keywords{LG-rings, reductive group schemes, parabolic subgroup schemes, maximal tori, cancellation theorems, Azumaya algebras, Wedderburn property, Brauer decomposition}

\subjclass[2010]{
20G35; %Linear algebraic groups over adèles and other rings and schemes
11E39, % Bilinear and Hermitian forms
13A20, %Brauer groups
13H99, %13H = Local and semilocal rings None of the above, but in this section
14L15%Group schemes
}
\maketitle

\[ \new \text{\em In memory of Nikolai Vavilov} \]

\setcounter{tocdepth}{1}%% put 2 here if we want the subsections to appear
\tableofcontents

\section*{Introduction}

In volume III of the fundamental treatise \cite{SGA3} on group schemes over a scheme $S$, several results are only proven if $S=\Spec(R)$ for $R$ a semilocal ring. In this paper we extend most of these results to the case $S=\Spec(R)$ for $R$ an LG-ring.

LG-rings, an abbreviation of ``local-global" rings, axiomatize a basic property of a semilocal ring $R$. By definition, a ring $R$ is an LG-ring if a polynomial $f$ in several variables represents a unit of $R$ as soon as for every maximal ideal $\gm \ideal R$ the canonical image $f_{R/\gm}$ represents a unit over $R/\gm$. Besides semilocal rings, the class of LG-rings include rings that are von Neumann regular modulo their radical and the ring of algebraic integers, see \ref{LG-ex} for more examples and an historical background.

Among the results we prove for a reductive group scheme $G$ over an LG-ring $R$, we highlight the following:
\begin{enumerate}
  \item \label{intro-i} Every parabolic subgroup of $G$ admits a maximal $R$--torus, in particular this holds for $G$ itself (\ref{thm_maxtorus}, \ref{cor_maxtorus}).

  \item \label{intro-ii}  Two parabolic subgroups of the same type are conjugate under an element of $G(R)$ (\ref{thm_BTD}).

   \item \label{intro-3} If $R$ is a connected LG-ring, then $G(R)$ acts transitively on the set of minimal parabolic subgroups of $G$, and if $G$ is semisimple, then $G(R)$ also acts transitively on the set of maximal split subtori of $G$ (\ref{thm_conj}).
\end{enumerate}
An important ingredient in the proofs of \eqref{intro-i}--\eqref{intro-3} is a geometric characterization of LG-rings:
\begin{enumerate}\setcounter{enumi}{3}
\item \label{intro-4} If $M$ is a finite locally free module over an LG-ring $R$ and $U$ is an open quasi-compact subscheme of the scheme $\uW(M)$ associated with $M$, then $U(R) \ne \emptyset \iff U(R/\gm) \ne \emptyset$ for every maximal ideal $\gm \ideal R$, see \ref{prop_baire}\ref{prop_baire-a}.
\end{enumerate}
With \eqref{intro-4} in place, our proof strategy often is to recast a claim in terms of an open quasi-compact subscheme of an affine space and then follow the approach of \cite{SGA3}.

Our applications are twofold. First, in \S\ref{sec:canc} we prove several cancellation theorems, based on the cohomological cancellation principle \ref{prin} which says that certain embeddings of reductive group schemes over LG-rings induce injective maps in cohomology. Specializing the groups involved, we easily derive cancellation of modules and Azumaya algebras in tensor products \ref{canfgp}, cancellation of hermitian forms in \ref{hercan} and Witt cancellation of quadratic forms in \ref{canqf}.

Second, we consider Azumaya $R$--algebras $A$ in \S\ref{sec:azu} and extend several results from $R$ connected semilocal to $R$ being connected LG. In particular, we show:
\begin{enumerate}\setcounter{enumi}{4}
  \item  \label{intro3} Each two indecomposable finite projective $A$--modules are isomorphic. Equivalently, the Brauer class of $A$ contains, up to isomorphism, a unique  algebra $B$ with $\idemp(B)= \{0,1\}$, where $\idemp(B)$ is the set of idempotents of $B$ (\ref{LGdeM}).

  \item Every $A$ with $\idemp(A) = \{0,1\}$ has a Brauer decomposition (\ref{raco}).
\end{enumerate}
We note that \eqref{intro3} says that connected LG-rings have the Wedderburn property in the sense of \cite{AW14} (or see \cite[\S 7.6]{Ford}). 
\ms

{\em Organization of the paper.} We begin with a short introduction to LG-rings in \S\ref{sec:LG_rings}, list examples, recall some immediate consequences and prove the important characterization \eqref{intro-4} of LG-rings, stated above. Our investigation of group schemes over LG-rings starts in \S\ref{sec:exi}, where we prove the crucial result \eqref{intro-i}. We finish this section by discussing quasi-split and split reductive groups over LG-rings.

We study parabolic subgroups of reductive group schemes over LG-rings in \S\ref{sec:btd}. Besides \eqref{intro-ii} above and some immediate corollaries on three or two parabolic subgroups (\ref{thm_BTD22}, \ref{cor_BTD+}), we prove in \ref{cor57}\eqref{cor57-b} that there exists a unique smallest element $\bft_{\min}$ in the set of types of parabolic subgroups of a reductive $R$--group scheme $G$ for $R$ an LG-ring. This allows us to introduce the Tits index in case $G$ is semisimple and $R$ is connected (no new indices occur, \ref{tti}).

We focus on minimal parabolic subgroups and maximal split tori in \S\ref{sec:minipara}, in particular prove \eqref{intro-3} above. As a consequence, it makes sense to define the anisotropic kernel of a reductive $R$--group scheme $G$, $R$ connected LG, as the derived group of a minimal Levi subgroup, \eqref{tak1}. 
Finally, \S\ref{sec:canc} and \S\ref{sec:azu} are devoted to the applications mentioned above. \sm

{\em Notation.} We use standard notation and terminology, but note here that $\Ralg$ denotes the category of unital associative commutative $R$--algebras. An $R$--scheme is a scheme over $\Spec(R)$. A finite locally free $R$--module is the same as a finitely generated projective $R$--module, often abbreviated as finite projective $R$--module. If $M$ is such a module, $\uW(M)$ denotes the $R$--scheme representing the $R$--functor $T \mapsto M \ot_R T$, $T\in \Ralg$. For any unital associative algebra $S$ we write $S\ti$ for the set of invertible elements of $S$. Given an Azumaya algebra $A$, over a scheme or over a ring $R$, we follow \cite[2.4.2.2 and 2.4.2.2]{CF} and denote by $\uGL_1(A)$ and $\uPGL(A)$ the group schemes of invertible elements and of automorphisms of $A$. Regarding cohomology, $\rmH^1(R, G) = \rmH^1(\Spec(R), G)$ is fppf-cohomology.

%$\rmH^1(R, G) = \rmH^1(\Spec(R), H)$ is fppf-cohomology, $\frR_{X'/X}(G)$ denotes Weil restriction (following [BLR]), $R$--group scheme = $\Spec(R)$--group scheme
%We consider reductive group schemes in \S\ref{sec:exi}-\S\ref{sec:minipara}. Not surprisingly, there are many references in these sections to \cite{SGA3}.
\sm

{\em References to \cite{SGA3}.} Our paper has many references to \cite{SGA3}. For better readability, a reference to \cite{SGA3} will simply be written by specifying the expos\'e in Roman numbers and the result  in arabic numbers, but leaving out \cite{SGA3}. For example,\ \cite[XIV, 3.20]{SGA3} = [XIV, 3.20]. \sm

\new
{\em Acknowledgements.} We thank Skip Garibaldi and Holger Petersson for having aroused our interest in LG-rings and for comments on an earlier version of this paper. We also thank Laurent Moret-Bailly for correspondence regarding Example~\ref{lg-cor-ex}. Last, not least we thank 
the referee for a very thorough report. 
\enew

\section{LG-rings}\label{sec:LG_rings}

This section gives an introduction to LG-rings (definition, examples, immediate consequences of the definition, preliminary results). In particular, we prove the important geometric characterization \ref{prop_baire}  of LG-rings and of rings satisfying the primitive condition.

\subsection{LG-rings (definition and some known facts)}\label{LG-def} For $S\in \Ralg$ we say that a polynomial $g\in S[X_1, \ldots, X_n]$ {\em represents a unit over $S$\/} if there exist $s_1, \ldots, s_n \in S$ such that $g(s_1, \ldots, s_n) \in S\ti$. We apply this notion for a polynomial $f\in R[X_1, \ldots, X_n]$ by viewing $f$ as a polynomial over $S$ using the structure homomorphism $R \to S$. We call $R$ an {\em $LG$-ring\/} if for every $n\in \NN$ and every $f\in R[X_1, \ldots, X_n]$ the polynomial $f$ represents a unit over $R$ if and only if one of the following obviously equivalent conditions hold:
\begin{enumerate}[label={\rm (\roman*)}]
 \item\label{LG-defi} $f$ represents a unit over every localization $R_\m$, $\m$ a maximal ideal of $R$;

 \item\label{LG-defii} $f$ represents a unit over every field $R/\m$, $\m$ a maximal ideal of $R$;

  \item\label{LG-defiii} $f$ represents a unit over every field $F\in \Ralg$.
 \end{enumerate}
An LG-ring is sometimes called a ``local-global ring'', hence the short form LG, or a {\em ring with many units\/}.
%Some background about their emergence  and their use is given in \ref{LG-notes}.
The following facts will be helpful:
\sm

\begin{inparaenum}[(a)]
  %\item\label{LG-defa} If $R$ is an LG-ring, then so is every $S\in \Ralg$ (this follows from \ref{LG-defiii} above).
\item\label{LG-defa} ({\em Direct products}) Let $R_1, \ldots, R_n$ be rings. Then the direct product $R_1 \times \cdots \times R_n$ is an LG-ring if and only if every $R_i$ is an LG-ring.
\sm

\item \label{LG-defb} ({\em Characterization}) Recall that $\Jac(R)$ denotes the Jacobson radical of a ring $R$, i.e., the intersection of all maximal ideals of $R$. The following are equivalent for $R$:

      \begin{inparaenum}[(i)]
      \quad \item \label{LG-defbi} $R$ is an LG-ring;

    %  \quad \item \label{LG-defbii} $R/\Jac(R)$ is von Neumann regular (= absolutely flat);

      \quad \item \label{LG-defbiii} $R/\Jac(R)$ is an LG-ring.

\end{inparaenum}

\noindent The equivalence \eqref{LG-defbi} $\iff$ \eqref{LG-defbiii}
   holds because the maximal ideals of $R$ and of $R/\Jac(R)$ are in obvious bijection.
\sm

\item\label{LG-defe} ({\em Finite modules}) Let $M$ be a finitely presented and let $N$ be a finitely generated module over an LG-ring $R$. Then $M\cong N$ if and only if $M_\m \cong N_\m$ for all maximal ideals $\m\ideal R$ \cite[Thm.~2.6]{EG}. In particular, {\em any finite projective $R$--module of constant rank is free\/} \cite[Thm.~2.10]{EG}, \cite[II, Thm.]{MW}, and therefore
    \begin{equation}
      \label{LG-defe1}  \Pic(R) = \{0\} .
    \end{equation}
We will give a quick proof, different from the published ones, of the last statement in Corollary~\ref{prop_baireco}\eqref{prop_baireco-b}. \end{inparaenum}

\subsection{Examples of LG-rings and some history}\label{LG-ex}
\begin{inparaenum}[(a)] \item Clearly, $\{0\}$ is an LG-ring and so is every field.
It then follows from \ref{LG-def}\eqref{LG-defa} and \ref{LG-def}\eqref{LG-defb} that {\em every semilocal ring  is an LG-ring}. \sm

  \item \label{LG-ex-c} {\em If $R$ is an LG-ring and $R'\in \Ralg$ is an integral extension, then $R'$ is also an LG-ring\/} by \cite[Cor.~2.3]{EG}. In particular, every finite $R$--algebra of an LG-ring $R$ is itself an LG-ring. (Recall that an $R$--algebra $S$ is finite if and only if $S$ is integral and of finite type as $R$--algebra \cite[Tag 00GN]{Stacks}.)
\cite[\S2]{EG}.

\inparcom{(2025-02-21) I deleted the previous 1.2(c). It had two parts which we never referred to. In addition, the first part, i.e., {\tt every homomorphic image of an LG-ring is an LG-ring}, is not true. I moved the second part of the previous 1.2(c) to the new 1.2(c). }
\sm

\item\label{LG-ex-af} {\em If $R/\Jac(R)$ is von Neumann regular, then $R$ is an LG-ring} \cite[I, Prop.]{MW}.  Recall \cite[I, \S2, Exc.~17]{BAC} that a commutative ring $A$ is von Neumann regular if and only if $A$ is absolutely flat, i.e., all $A$--modules are flat. Also, {\em any direct limit of LG-rings is an LG-ring}. 
\sm

 \item {\em A zero-dimensional ring is an LG-ring\/} \cite[I, Cor.]{MW}. Indeed, a ring $R$ is zero-dimensional if all its prime ideals are maximal. For such a ring the Jacobson radical $\Jac(R)$ equals the nil radical, and so $R/\Jac(R)$ is absolutely flat = von Neumann regular by \cite[II,\S4, Exc.~16(d)]{BAC} or by \cite[Tag 092F]{Stacks}. Therefore $R$ is LG by \eqref{LG-ex-af}.
\sm

\item \label{LG-ex-pri} A polynomial in $R[X_1, \ldots, X_n]$ is called  {\em primitive\/} if its coefficients generate $R$ as ideal.
One says that a ring $R$ {\em satisfies the primitive criterion\/} \cite{EG, MW}
if the following equivalent conditions hold: \end{inparaenum}

\begin{enumerate}[label={\rm (\Roman*)}]
\item for every primitive polynomial $P\in R[X]$ there exists $r\in R$ such that $P(r) \in R\ti$;

\item for every primitive $Q\in R[X_1, \ldots, X_n]$ there exists $(r_1, \ldots, r_n)\in R^n$ such that $Q(r_1, \ldots, r_n) \in R\ti$;

\item \label{LG-ex-prii} $R$ is LG and all residue fields are infinite.
\end{enumerate}

\sm
An example of a ring satisfying the primitive criterion, is the ring $S\me R[X]$ where $R$ is arbitrary and $S$ is the multiplicative subset of all primitive polynomials in the polynomial ring $R[X]$, \cite[1.13]{vdK}.

Another interesting example is the ring of all algebraic integers or of all real algebraic integers is an LG-ring. That these rings are LG-rings, is shown in \cite{Dade, EG}. 
%Previously: Since they do not have homomorphic images of finite characteristic,
\new  Since they do not have finite non-zero homomorphic images, \ref{LG-ex-prii} proves our claim. \enew
% The example of  algebraic integers is stated in the intro to \cite[\S6]{EG}, and the real algebraic integers is \cite[Cor.~6.4]{EG}.  
\sm

\begin{inparaenum}[(a)] \setcounter{enumi}{6}  \item {\em Non-examples}: 
\new
If $R$ is an LG-ring, the polynomial $X^2 + r$, $r\in R$ arbitrary, represents a unit over $R$. It follows that 
\enew 
the rings $\ZZ$ and $R[X]$, $R$ an integral domain, are not LG-rings \cite[Exc.~11.42]{PRbook}. \sm

\item {\em Some history of LG-rings.} It seems that the concept of an LG-ring goes back to the paper \cite{MW} by McDonald and Waterhouse, where the authors refer to such a ring as a ``ring in which every polynomial with local unit values has unit values''.  The motivation of \cite{MW} comes from $K$--theory, in particular the study of $\GL_2(R)$ and $\Aut\big(\GL_2(R)\big)$. The name ``LG-ring'' was introduced in the paper \cite{EG} by Estes and Guralnick, which is the first in-depth investigation of LG-rings. Later on, LG-rings were used by Dias \cite{Dias} in her Ph.D. thesis with the goal of extending some classical theorems on quadratic forms over fields to quadratic  forms over LG-rings (her result on Witt cancellation of quadratic forms is generalized in \ref{canqf}). The subject seem to have fallen in oblivion until it was recently resurrected by Garibaldi-Petersson-Racine in their paper \cite{GPR} and book \cite{PRbook}.%
\end{inparaenum}
\sm

Our first goal is to give a more geometric approach to LG-rings, see \ref{lem_vdk} and \ref{prop_baire}.

\begin{lem} \label{lem_vdk}
Let $R$ be an LG-ring  and let $n \geq 1$ be an integer. We further assume that
$U$ is an open quasi-compact subscheme of $\bbA^n_R=X$ and let $I \ideal R[X_1, \ldots, X_n]$ be the radical ideal such that $U= X\setminus \Spec(R[X_1, \dots, X_n]/I)$. Hence $U = \bigcup_{f \in I} X_f$. \sm

\begin{inparaenum}[\rm (a)] \item\label{lem_vdk-a} Denoting by $I_{\rm prim}$ the set of primitive polynomials in $I$, we have
\[
\bigcup\limits_{f \in I_{\rm prim}} X_f(R)  =\bigcup\limits_{f \in I} X_f(R)
= U(R).
\]

\item \label{lem_vdk-b} Moreover, if $U(R/\gm) \ne \emptyset$ for every maximal ideal $\gm \ideal R$, then also $U(R) \ne \emptyset$.
\end{inparaenum}\end{lem}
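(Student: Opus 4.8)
The plan is to reduce everything to the defining property of an LG-ring by exhibiting a single polynomial whose representing a unit is equivalent to the existence of an $R$-point of $U$. First I would unwind the geometry: since $X = \bbA^n_R$ and $U = X \setminus V(I)$ with $I$ radical, a point $x = (r_1, \dots, r_n) \in X(R) = R^n$ lies in $U$ if and only if it avoids the closed set $V(I)$, i.e.\ if and only if $f(r_1, \dots, r_n) \ne 0$ is not forced — more precisely, $x \in U(R)$ iff the ideal generated by $\{f(r_1,\dots,r_n) : f \in I\}$ is the unit ideal of $R$ (this is the condition that $x^*$ of the closed subscheme is empty, i.e.\ $\Spec(R) \times_X V(I) = \emptyset$). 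Because $R[X_1,\dots,X_n]$ is noetherian modulo nothing is assumed — wait, $R$ need not be noetherian, but $U$ is quasi-compact, so $U = \bigcup_{i=1}^m X_{f_i}$ for finitely many $f_i \in I$; equivalently $I$ is, up to radical, generated by $f_1, \dots, f_m$. Then $x \in U(R)$ iff $(f_1(x), \dots, f_m(x)) = R$, iff there exist $a_1, \dots, a_m \in R$ with $\sum a_i f_i(x) \in R\ti$ — and in fact iff $\sum a_i f_i(x) = 1$ has a solution, which is iff the polynomial $g(X_1,\dots,X_n,Y_1,\dots,Y_m) := \sum_{i=1}^m Y_i f_i(X_1,\dots,X_n) \in R[X_1,\dots,X_n,Y_1,\dots,Y_m]$ represents a unit over $R$.

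For part \ref{lem_vdk-a}: the inclusions $\bigcup_{f \in I_{\mathrm{prim}}} X_f(R) \subseteq \bigcup_{f \in I} X_f(R) \subseteq U(R)$ are immediate. For the reverse, suppose $x \in U(R)$, so $\sum_i a_i f_i(x) = 1$ for suitable $a_i \in R$. I want a single primitive $f \in I$ with $f(x) \in R\ti$. The trick is a standard "Nagata-style" substitution: introduce new variables and form $f(X_1,\dots,X_n) := \sum_i c_i(X) f_i(X)$ where the $c_i \in R[X_1,\dots,X_n]$ are chosen so that $f$ is primitive as a polynomial (its coefficients generate $R$) while $f(x) = 1$ still holds. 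Concretely one can take $c_i$ to be suitable monomials in the $X_j$'s with coefficients $a_i$, shifted by high powers so that the coefficients of the $f_i$'s themselves appear among the coefficients of $f$; since $f_1, \dots, f_m$ generate $I$ up to radical and $U$ covers, their coefficients together with $1$ generate $R$, forcing $f$ primitive. Then $f \in I$ (it's an $R[X]$-combination of the $f_i \in I$ and $I$ is an ideal), $f$ is primitive, and $f(x) = \sum a_i f_i(x) = 1 \in R\ti$, so $x \in X_f(R)$ with $f \in I_{\mathrm{prim}}$. This gives all three sets equal.

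For part \ref{lem_vdk-b}: apply the LG hypothesis to the polynomial $g = \sum_i Y_i f_i(X) $ constructed above. The assumption $U(R/\gm) \ne \emptyset$ for every maximal $\gm$ says exactly, by the reduction of part \ref{lem_vdk-a} applied over the field $R/\gm$ (or directly), that $g$ represents a unit over each $R/\gm$; by the definition of LG-ring (condition \ref{LG-defii}), $g$ then represents a unit over $R$, i.e.\ there are $r_j, b_i \in R$ with $\sum b_i f_i(r_1,\dots,r_n) \in R\ti$, which means $(r_1,\dots,r_n) \in U(R)$, so $U(R) \ne \emptyset$. The main obstacle is the bookkeeping in part \ref{lem_vdk-a}: producing a \emph{single} primitive polynomial in $I$ from the finite covering data while preserving the value $1$ at $x$ requires choosing the auxiliary polynomials $c_i$ with enough care that no cancellation among coefficients destroys primitivity — this is the one step where a genuine (if routine) construction is needed rather than a formal manipulation; everything else is translation between $R$-points, unit ideals, and the LG axiom.
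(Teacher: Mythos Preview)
Your argument for part \ref{lem_vdk-b} is correct and is exactly the paper's approach: form the auxiliary polynomial $Q=\sum_i Y_i f_i(X_1,\dots,X_n)$ and apply the LG axiom.

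For part \ref{lem_vdk-a}, your setup is right but you talk yourself into an unnecessary difficulty. Once you have constants $a_i\in R$ with $\sum_i a_i f_i(x)=1$, you are done: the polynomial $f:=\sum_i a_i f_i$ (constant coefficients, no Nagata trick) lies in $I$ and satisfies $f(x)=1\in R^\times$, so $x\in X_f(R)$. There is nothing further to arrange for primitivity, because any polynomial that takes a unit value at some $R$-point is automatically primitive: if $f=\sum_\alpha c_\alpha X^\alpha$ then $f(x)=\sum_\alpha c_\alpha x^\alpha$ lies in the ideal generated by the $c_\alpha$, so $f(x)\in R^\times$ forces that ideal to be $R$. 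This is also why the first equality $\bigcup_{f\in I_{\rm prim}} X_f(R)=\bigcup_{f\in I} X_f(R)$ is immediate (the paper says exactly this: $X_f(R)=\emptyset$ for $f\in I\setminus I_{\rm prim}$). So the ``one step where a genuine construction is needed'' is in fact vacuous, and your vague description of polynomial combinations $c_i(X)$ with controlled coefficients can be deleted.

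A pleasant byproduct: your direct argument shows that part \ref{lem_vdk-a} holds over \emph{any} ring $R$, with no LG hypothesis. The paper's proof of \ref{lem_vdk-a} routes through the LG property (applying it to the linear polynomial $P(Y)=\sum_i f_i(u)Y_i$), but as your analysis makes clear, the condition ``$P$ represents a unit over every $R/\gm$'' already coincides with ``$(f_1(u),\dots,f_d(u))=R$'', which is ``$P$ represents a unit over $R$'', so the appeal to LG there is superfluous.
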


\begin{proof} \eqref{lem_vdk-a} The first equality is obvious since $X_f(R)=\emptyset$ for $f \in I \setminus I_{\rm prim}$. The  inclusion $\bigcup_{f \in I} X_f(R) \subset U(R)$ is obvious. For the converse we are given  $u \in U(R)$ and need to find $f\in I$ such that $u\in X_f(R)$. Since $U$ is quasi-compact, the ideal $I$ is finitely generated (\cite[I, (1.1.4)]{EGA-neu}),
say by $f_1,\dots , f_d$ . % \cite[Exc.~2.6]{GW}. %\ref{ag}\eqref{ag-qc}.
Let us introduce the auxiliary  linear polynomial in the variables $Y_1, \ldots, Y_d$:
\[ P(Y_1, \dots, Y_d)=  f_1(u) Y_1+  \cdots + f_d(u) Y_d.\]
We claim:
\begin{equation}
  \label{lem_vdK1}
 \text{for every maximal ideal $\gm\ideal R$, $P$ represents a unit in $(R/\gm)^\times$. }
 \end{equation}
Indeed, fix a maximal ideal $\m \ideal R$ and put $\ka = R/\gm$.  We have
$U(\ka) = \bigcup_{g \in I \otimes_R \ka} X_g(\ka)$. Hence $u\ot 1_\ka \in X_g(\ka)$ for some $g\in I$, i.e., $g(u\ot 1_\ka) \in \ka\ti$. We can write $g$ in the form
$g= f_1 b_1+ \dots +f_d b_d$ with suitable $b_i \in R$ and then have
$ P(b_1, \dots, b_d) = f_1(u) b_1+  \cdots + f_d(u) b_d
 = g(u) \ot 1_\ka \in \ka\ti$,
establishing the claim \eqref{lem_vdK1}.

The LG-property now applies and says that
$P$ represents a unit in $R$, say $P(a_1, \ldots ,a_d) \in R\ti$ for suitable $a_i \in R$. Putting
\begin{equation}\label{lem_vdK2}
f= a_1 f_1 + \dots + a_d f_d \in I \subset R[X_1, \ldots , X_n],
\end{equation}
we have
%\begin{equation}\label{lem_vdK2}
$ f(u) = a_1 f_1(u) + \cdots + a_d f_d(u) = P(a_1, \ldots, a_d)\in R\ti$,
%\end{equation}
i.e., $u \in X_f(R)$.
\sm

%\quest{(2023-12-05) Why isn't \eqref{lem_vdk-b} obvious from \eqref{lem_vdk-a}?}

\eqref{lem_vdk-b} We consider the polynomial
\begin{align*} Q &= f_1(X_1, \ldots, X_n)Y_1 + \cdots + f_d(X_1, \ldots, X_n) Y_d
\\ & \in R[X_1, \ldots, X_n, Y_1, \ldots , Y_d].
\end{align*}
We have seen in \eqref{lem_vdk-a} that the assumption of \eqref{lem_vdk-b} implies that $Q$ represents a unit in $R/\gm$ for every maximal ideal $\gm \ideal R$. Hence, since $R$ is LG, the polynomial $Q$ represents a unit in $R$, i.e., there exist $(u_1, \ldots, u_n)\in R^n$ and $(a_1, \ldots, a_d) \in R^d$ such that
\[ Q(u_1, \ldots, u_n, a_1, \ldots , a_d) \in R\ti.\]
It then follows that $u\in X_f(R)$ for the polynomial $f\in I$ defined in \eqref{lem_vdK2}, in particular $U(R) \ne \emptyset$. \end{proof}

%\comments{(2024-02-20) For the application in \ref{thm_BTD22} it may be better to replace $M$ in \ref{prop_baire} by a locally free $\calO_S$--module   }

\begin{prop} \label{prop_baire} Let $R$ be a LG-ring, let $M$ be a finite  locally free $R$-module, and let $U \subset \uW(M)$ be an open quasi-compact subscheme.
\sm

\begin{enumerate}[label={\rm (\alph*)}] \item \label{prop_baire-a}
$U(R) \not = \emptyset \iff U(R/\gm) \not = \emptyset$ for every maximal ideal $\gm \ideal R$.
\sm

\item  \label{prop_baire-b} If  $R$ satisfies the primitive condition as in {\rm \ref{LG-ex}\eqref{LG-ex-pri}} and $U$ is $R$-dense, then $U(R) \not = \emptyset$.
\end{enumerate}\end{prop}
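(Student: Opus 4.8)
The plan is to reduce both parts to the case $\uW(M)=\bbA^n_R$, where Lemma~\ref{lem_vdk} applies directly. Since $M$ is finite locally free, its rank function on $\Spec(R)$ is locally constant, and as $\Spec(R)$ is quasi-compact it takes only finitely many values, with clopen level sets. Correspondingly $R=R_1\times\cdots\times R_k$, where $M_i:=M\otimes_R R_i$ has constant rank $n_i$ over $R_i$; since each $R_i$ is again an LG-ring by \ref{LG-def}\eqref{LG-defa}, $M_i$ is free by \ref{LG-def}\eqref{LG-defe}, and moreover $R_i$ satisfies the primitive condition whenever $R$ does, its residue fields being among those of $R$. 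Writing $\uW(M)=\bigsqcup_i\uW(M_i)$ and $U=\bigsqcup_i U_i$ with $U_i:=U\cap\uW(M_i)$ open and quasi-compact in $\uW(M_i)$, one checks the routine compatibilities: $U(R)=\prod_i U_i(R_i)$; a maximal ideal $\gm\ideal R$ is the preimage of a maximal ideal $\gm_i$ in a single factor $R_i$, with $R/\gm=R_i/\gm_i$ and $U(R/\gm)=U_i(R_i/\gm_i)$; and $U$ is $R$-dense iff every $U_i$ is $R_i$-dense. Hence we may assume $M=R^n$, so $\uW(M)=\bbA^n_R$ (the degenerate case $n=0$, where $\uW(M)=\Spec R$, being immediate).

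For part~(a), the implication $\Rightarrow$ is trivial: reduce any $u\in U(R)$ modulo each maximal ideal. The implication $\Leftarrow$ is, in the free case, precisely Lemma~\ref{lem_vdk}\eqref{lem_vdk-b}.

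For part~(b), assume $R$ satisfies the primitive condition and $U$ is $R$-dense; I would first deduce that $U(R/\gm)\ne\emptyset$ for every maximal ideal $\gm$, and then conclude by part~(a). Set $\ka=R/\gm$. By $R$-density the base change $U_\ka$ is dense, hence nonempty, in $\uW(M)_\ka=\bbA^n_\ka$. Write the closed complement of $U_\ka$ as the common zero locus of polynomials $g_1,\dots,g_r\in\ka[X_1,\dots,X_n]$; nonemptiness of $U_\ka$ forces some $g_j$ to be a nonzero polynomial, and since $\ka$ is infinite (this is part of the primitive condition, \ref{LG-ex}\eqref{LG-ex-pri}) there is $a\in\ka^n$ with $g_j(a)\ne 0$, i.e.\ $a\in U(\ka)$.

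Once Lemma~\ref{lem_vdk} is in hand the argument is short, and the only step needing real care is the reduction to free $M$ — verifying that $R$-points, residue-field points, and $R$-density all decompose correctly along $R=\prod_i R_i$. An alternative that avoids this decomposition is to realize $\uW(M)$ as the linear closed subscheme of some $\bbA^m_R$ cut out by an idempotent matrix and then enlarge $U$ to an open quasi-compact subscheme of $\bbA^m_R$; that works too but is more cumbersome, so I would use the rank decomposition.
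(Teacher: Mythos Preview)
Your proof is correct, but it takes a different route from the paper's and there is a point worth flagging.

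The paper does not use the rank decomposition. Instead, for \ref{prop_baire-a} it picks a complement $N$ with $M\oplus N\cong R^n$, sets $V=U\times_R\uW(N)\subset\bbA^n_R$, observes that $V$ is open quasi-compact with $V(R/\gm)\ne\emptyset$ whenever $U(R/\gm)\ne\emptyset$, and applies Lemma~\ref{lem_vdk}\eqref{lem_vdk-b} to $V$; then $V(R)=U(R)\times N\ne\emptyset$ gives the claim. This is essentially the ``alternative'' you sketch in your last paragraph and dismiss as cumbersome --- in the paper it is three lines. For \ref{prop_baire-b} the paper argues exactly as you do.

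The substantive difference is that your reduction invokes \ref{LG-def}\eqref{LG-defe} (freeness of constant-rank projectives over LG-rings). That is legitimate mathematically, since it is proved in \cite{EG,MW}, but within the paper's own development it is circular: the paper reproves this freeness as Corollary~\ref{prop_baireco}\eqref{prop_baireco-b}, \emph{using} Proposition~\ref{prop_baire}. So the complement trick is not just a stylistic alternative; it is what makes the paper's internal logic self-contained. If you want your argument to fit the paper's architecture, switch to the $M\oplus N\cong R^n$ reduction.
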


\begin{proof}
  \ref{prop_baire-a} Of course, only the implication from right to left is to show. To do so, we take a presentation $M \oplus N= R^n$, hence $\uW(M ) \times_R \uW(N)= \bbA_R^n$. It then follows from \cite[IV$_1$, 1.1.2]{EGA} (or
  \cite[Tag 01K5]{Stacks}) % \ref{ag}\eqref{ag-qc}
  that $V= U \times_R \uW(N)$ is an open quasi-compact $R$--subscheme of $\bbA^n_R$. Moreover, $V(R/\gm) \not = \emptyset$ for each
maximal ideal $\gm$ of $R$ since $\uW(N)(R/\gm) = N \ot_R (R/\gm )\ne \emptyset$. 
%% was $= N \ot_R (R/\gm )\ne 0$ before. 
Now \ref{lem_vdk}\eqref{lem_vdk-b} shows that $U(R) \times N= V(R) \ne \emptyset$, so that also $U(R) \not= \emptyset$. %% by projecting onto the first component
 \sm

\ref{prop_baire-b}
%We assume that $R$ has many units and that $U$ is $R$-dense.
Let $\gm$ be a maximal ideal of $R$, and put $k = R/\gm$. By $R$--denseness, the $k$--variety $U_k$ is an open dense subvariety of the affine $k$--space $\uW(M_k)$, in particular $U_k$ is a rational non-empty subvariety of $\bbA_k^n$. Since $k$ is an infinite field by \ref{LG-ex}\eqref{LG-ex-pri}, it follows %from \ref{adc}
that $U_k(k) = U(k)\ne \emptyset$. Now \ref{prop_baire-a} implies that
$U(R) \ne \emptyset$.
 \end{proof}
\sm

\textbf{Remark.} Proposition~\ref{prop_baire}\ref{prop_baire-a} characterizes LG-rings. More precisely, if $R$ is any base ring such that for every finite locally free $R$--module $M$ and every open quasi-compact $U\subset \uW(M)$ we have the equivalence \ref{prop_baire-a}, then $R$ is an LG-ring.

Indeed, it suffices to evaluate the condition for a free $R$-module $M$ of finite rank and a principal open affine $U \subset U(M)$.

\subsection{Faithfully projective modules}\label{fapmod} Recall that an $R$--module $M$ is {\em faithful\/} if the structure map $R\to \End_R(M)$, $r\mapsto r\Id_M$, is injective. It is a standard fact in commutative algebra, \new see for example \cite[IX, Prop.~4.6, page 476]{Bas2},  
\enew% see for example \cite[I, Lem.~6.2]{KO},
that the following conditions are equivalent for a finite projective $R$--module $P$:
\begin{enumerate}[label={\rm (\roman*)}]
\item $P$ is faithful;

\item every localization $P_\p$, $\p \in \Spec(R)$, is non-zero;

%\item $P_{R_\gm} \ne \{0\}$ for every maximal ideal $\gm \ideal R$;

\item $P_{R/\m} \ne \{0\}$ for every maximal ideal $\m \ideal R$;

%\item $P_F \ne \{0\}$ for every field $F\in \Ralg$

\item $P$ is faithfully flat;

\item there exists an $R$--module $Q$ such that $P\ot_R Q \cong R^n$ for some $n\in \NN_+$.
\end{enumerate}
In this case, $P$ is called {\em faithfully projective}.
\sm

The following corollary is a first application of Proposition~\ref{prop_baire}.
An element $m$ of a finite projective $R$--module $M$ is called {\em unimodular\/} if $m\ot_R 1_{\ka(\p)} \ne 0$ for all $\p \in \Spec(R)$, equivalently, $R\cdot m$ is a free $R$--module of rank $1$ and a direct summand of $M$ (\cite[9.17]{PRbook}, \cite[0.3]{Lo-genalg}).

\begin{cor}\label{prop_baireco} Let $R$ be an $LG$-ring and let $M$ be a finite locally free $R$--module.
\sm

\begin{inparaenum}[\rm (a)] \item \label{prop_baireco-a} If $M$ is faithfully projective, $M$ contains a unimodular element. \sm

\item \label{prop_baireco-b} If $M$ has constant rank, then $M$  is free.
\end{inparaenum}\end{cor}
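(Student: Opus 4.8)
The plan is to derive both parts of Corollary~\ref{prop_baireco} from the geometric characterization \ref{prop_baire}\ref{prop_baire-a}, exactly as the authors signal that this is "a first application." For part \ref{prop_baireco-a}, I would consider the open subscheme $U \subset \uW(M)$ whose $T$-points (for $T \in \Ralg$) consist of the unimodular elements of $M \ot_R T$, i.e.\ those $m$ for which $m$ generates a rank-one free direct summand. The key observations are: (i) $U$ is open in $\uW(M)$, which one sees locally — choosing a basis of $M$ on a principal open cover, $U$ is cut out by the non-vanishing of the various coordinate functions, so it is a union of principal opens, hence open; (ii) $U$ is quasi-compact, since $\uW(M)$ is affine of finite type over $R$ and $U$ is a finite union of principal opens over a cover of $\Spec(R)$ by finitely many principal opens trivializing $M$ (using that $M$ is finite locally free, so finitely many opens suffice); and (iii) for each maximal ideal $\gm$, $U(R/\gm) \ne \emptyset$ precisely because $M_{R/\gm}$ is a nonzero vector space over the field $R/\gm$ — here faithfulness of $M$ enters, via condition (iii) in \ref{fapmod}, to guarantee $M_{R/\gm} \ne \{0\}$, and any nonzero vector in a vector space over a field is unimodular. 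Then \ref{prop_baire}\ref{prop_baire-a} gives $U(R) \ne \emptyset$, i.e.\ $M$ contains a unimodular element.

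For part \ref{prop_baireco-b}, I would argue by induction on the rank $n$ of $M$. If $n = 0$ then $M = \{0\}$ is free. If $n \geq 1$, then $M$ has constant positive rank, hence is faithfully projective (a finite projective module of constant positive rank is faithful by \ref{fapmod}), so by part \ref{prop_baireco-a} it contains a unimodular element $m$. By the characterization of unimodular elements recalled just before the corollary, $R m$ is free of rank $1$ and a direct summand: $M = Rm \oplus M'$ for some finite projective $R$-module $M'$, which then has constant rank $n-1$. By induction $M'$ is free, hence $M \cong R \oplus R^{n-1} = R^n$ is free. This reproves $\Pic(R) = \{0\}$ for $R$ an LG-ring, as promised in \ref{LG-def}\eqref{LG-defe}.

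The main obstacle — really the only nontrivial point — is verifying carefully that the scheme $U$ of unimodular elements is an \emph{open quasi-compact} subscheme of $\uW(M)$, since \ref{prop_baire}\ref{prop_baire-a} requires exactly these two hypotheses. Openness is a local computation but requires identifying the functor of unimodular elements correctly and checking it is representable by an open subscheme; quasi-compactness needs the finiteness of a trivializing cover for $M$ together with the fact that on each trivializing piece only finitely many principal opens are involved (one per coordinate). One could alternatively reduce to the free case first by passing to a finite trivializing cover, but since $R$ is LG with $\Pic(R) = 0$ one expects $M$ itself to be globally a direct summand of a free module $R^n$ (take $M \oplus N = R^n$), and then $U \times_R \uW(N)$ sits inside $\bbA^n_R$ as an open quasi-compact subscheme, bringing \ref{lem_vdk} directly into play just as in the proof of \ref{prop_baire}\ref{prop_baire-a}. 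Everything else — the passage from unimodular element to direct-sum decomposition, and the induction — is routine commutative algebra.
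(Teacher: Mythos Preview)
Your proposal is correct and follows essentially the same approach as the paper: apply Proposition~\ref{prop_baire}\ref{prop_baire-a} to the subscheme of unimodular elements for part~\ref{prop_baireco-a}, then induct on the rank for part~\ref{prop_baireco-b}. The paper handles the quasi-compactness of $U$ slightly more directly by identifying $U$ as the complement of the zero section $\Spec(R) \hookrightarrow \uW(M)$, which is cut out by the finitely generated ideal of ${\sf Sym}(M^*)$ spanned by linear forms generating $M^*$, so quasi-compactness follows from \cite[I, (1.1.4)]{EGA-neu}; but your local-trivialization argument works just as well.
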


\begin{proof} \eqref{prop_baireco-a} We consider the open subset $U = \uW(M)  \setminus \Spec(R)$, that is, the punctured affine space associated with the $R$-module $M$. Thus $U(R)$ consists of the  unimodular elements of $\uW(M)(R) = M$. We have $\uW(M) = \Spec( {\sf Sym} (M^*))$ where ${\sf Sym}(M^*)$ is the symmetric algebra of $M$, and $R\cong {\sf Sym}(M^*)/ I$ where $I$ is the ideal generated by finitely many linear forms $\la_1, \ldots , \la_n$ spanning $M^*$. Hence, by
\cite[I, (1.1.4)]{EGA-neu},  %\cite[Exc.~2.6]{GW}, %\ref{ag}\eqref{ag-qc},
$U$ is quasi-compact. Clearly, $U(R/\gm) \ne \emptyset$. Therefore  Proposition~\ref{prop_baire}\ref{prop_baire-a} shows that $U(R)$ is not empty. \sm

\eqref{prop_baireco-b} We prove the statement by induction on the rank $d$ of $M$, starting with the obvious case $d=0$. In the following we will assume that $d \geq 1$. Then $M$ is faithfully projective and therefore contains a unimodular element $m\in M$. Thus $M = Rm \oplus M'$ with $M'$ being a finite projective $R$--module of rank $d-1$. By the induction hypothesis, $M'$ is free of rank $d-1$, so $M$ is free of rank $d$. \end{proof}

\section{Existence of maximal tori}\label{sec:exi}

In this section our goal is to generalize Grothendieck's Theorem on the existence
of maximal tori in reductive group schemes \cite[XIV, 3.20(*)]{SGA3} from the semilocal case to the LG case, see Theorem~\ref{thm_maxtorus}. Our proof is a mild modification of the original proof, and is based on the expos\'es XIII and XIV of \cite{SGA3}.

Throughout this section $G$ is a reductive group scheme defined over an arbitrary ring $R$, unless specified otherwise. We abbreviate $\frg = \Lie(G)(R)$, which is a Lie $R$--algebra whose underlying module is finite projective.

\subsection{The conditions (C$_0$), (C$_1$), (C$'_1$) and (C$_2$) of [XIV, 2.9] hold for $\Lie(G)$}
The expos\'es XIII and XIV of \cite{SGA3} are written in a more general setting than the reductive case. For example, several results of {\em loc.~cit.} assume the conditions (C$_0$), (C$_1$), (C$'_1$) and/or (C$_2$) of [XIV, 2.9]. We claim: {\em These conditions are fulfilled in the reductive case.} Indeed, the nilpotent rank equals the reductive rank in this case,
%% see [XIX, 1.1]
which is locally constant by [XIX, 2.6], i.e., (C$_0$) holds. Moreover, the proof of [XIV, 3.7] shows that quasi-regular sections are regular for Lie algebras of reductive groups, which proves (C$_2$), and thereby also (C$_1$) and (C$_1'$) in view of [XIV, 2.9].

\begin{lem}\label{lem_reg} %Let $\frg = \Lie(G)(R)$.
For $A\in \Ralg$ and $\frg_A = \frg \ot_R A$  we denote by $\frg_A^{\reg}$ the set of regular elements of the Lie $A$--algebra $\frg_A$, defined in  {\rm [XIV, 2.5]}. Then the $R$--functor $A \mapsto (\frg_A)^{\reg}$ is representable by an open quasi-compact $R$--subscheme $U$ of $\uW(\frg)$. Furthermore, $U$ is $R$-dense in  $\uW(\frg)$. \end{lem}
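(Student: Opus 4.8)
The plan is to reduce the claim to two separate assertions: (1) representability of $A\mapsto \frg_A^{\reg}$ by an open quasi-compact subscheme $U$ of $\uW(\frg)$, and (2) $R$-density of $U$. For (1), I would follow [XIV] directly. The set of regular elements is characterized by a condition on the characteristic polynomial of $\adj(x)$ acting on $\frg_A$: recalling that the nilpotent rank of $G$ equals the reductive rank $r$ (which holds in the reductive case by the discussion in the preceding subsection, so that conditions (C$_0$), (C$_1$), (C$_1'$), (C$_2$) are available), an element $x\in\frg_A$ is regular precisely when the coefficient of $t^r$ in $\det(t\cdot\Id - \adj(x))$ is a unit. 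Concretely, writing this coefficient as a polynomial function $\delta \in \Sym(\frg^*)$ of $x$, the functor $A\mapsto \frg_A^{\reg}$ is represented by the principal open subscheme $U = \uW(\frg)_\delta = \{\delta \ne 0\}$ of $\uW(\frg)$. This is the content of [XIV, 2.5–2.6] combined with the fact that in the reductive case quasi-regular = regular; I would cite these. Being a principal open subscheme of an affine scheme, $U$ is quasi-compact (indeed affine), so no extra argument is needed for quasi-compactness beyond invoking that $\uW(\frg)$ is affine.

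For (2), $R$-density, I would argue fibrewise. By the definition of $R$-density (an open subscheme $U\subset\uW(\frg)$ is $R$-dense if $U_k$ is dense in $\uW(\frg)_k = \uW(\frg_k)$ for every residue field $k=R/\gm$), it suffices to show that over every field $k$ the set of regular elements in $\frg_k = \Lie(G_k)$ is Zariski-dense in the affine space $\frg_k$. Since $\frg_k$ is the Lie algebra of the reductive $k$-group $G_k$, the regular locus is the non-vanishing locus of the nonzero polynomial $\delta_k$ — nonzero because a reductive group always has regular elements in its Lie algebra (a generic element of a maximal torus's Lie algebra is regular; more precisely $\delta$ specializes to the analogous discriminant-type polynomial which is not identically zero by [XIV, 3.7] or by passing to an algebraic closure and using the classical theory). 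The complement of the zero set of a nonzero polynomial is dense in $\bbA^n_k$, giving density of $U_k$. Hence $U$ is $R$-dense.

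I expect the main obstacle to be (1), specifically pinning down the precise statement in [XIV] that identifies $\frg_A^{\reg}$ with a principal open subscheme and verifying that the hypotheses ((C$_0$)–(C$_2$)) needed there are exactly those established in the preceding subsection for reductive $G$. The subtlety is that [XIII]–[XIV] work in greater generality, and one must check that "regular" in the sense of [XIV, 2.5] (defined via the nilpotent rank) agrees with the naive notion via the reductive rank — this is exactly what the cited remark about quasi-regular sections being regular for reductive Lie algebras delivers, via [XIV, 3.7]. Once that identification is in hand, quasi-compactness is automatic and $R$-density is the short fibrewise argument above. So the proof structure is: invoke [XIV, 2.5–2.6] + the reductive-case verification of (C$_0$)–(C$_2$) for representability by a principal (hence quasi-compact) open $U = \uW(\frg)_\delta$; then check $\delta_k \ne 0$ for each residue field $k$ using [XIV, 3.7] (reductive groups have regular elements) to conclude $R$-density.
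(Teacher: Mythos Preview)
Your approach is correct and, for quasi-compactness, more direct than the paper's. The paper cites [XIV, 2.10] only for openness of $U\subset\uW(\frg)$ and then establishes quasi-compactness via noetherian reduction: it writes $R=\varinjlim R_i$ over finitely generated $\ZZ$-subalgebras, descends $G$ to some reductive $G_i$ over a noetherian $R_i$, uses that open immersions over noetherian schemes are quasi-compact, and concludes by base change. Your observation that under (C$_0$)--(C$_2$) the regular locus coincides with the quasi-regular locus and is therefore the principal open $D(\delta)$ cut out by the coefficient of $t^r$ in the characteristic polynomial of $\ad$ yields affineness---hence quasi-compactness---without any limit argument. One small point you should make explicit: the nilpotent rank $r$ is only \emph{locally} constant on $\Spec R$, so globally there is no single $\delta$; you must first pass to the finitely many clopen pieces of $\Spec R$ on which both $\rank\frg$ and $r$ are constant (finitely many because both are bounded by the number of generators of $\frg$), and on each piece $U$ is principal open. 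For $R$-density the two proofs are essentially the same fibrewise check; your formulation via $\delta_k\ne 0$ is arguably cleaner, since it applies uniformly to finite and infinite residue fields (indeed $\delta_k\not\equiv 0$ is tautologous from the definition of the nilpotent rank), whereas the paper invokes [XIV, 2.11(b)], which is phrased for infinite fields. What your route buys is brevity and an explicit affine description of $U$; what the paper's route buys is robustness---noetherian reduction works even without knowing the precise equations for $U$.
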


\begin{proof}
Since the condition (C$_0$) holds for $\frg$, the representability of the given $R$--functor by an open subscheme of $\uW(\frg)$ follows from [XIV, 2.10].

%\quest{By \cite[Exc.~2.6]{GW}, an open subscheme of $\rmW(\frg)$ is quasi-compact if and only if $\rmW(\frg) \setminus U = \rmV(\fra)$ where $\fra$ is a finitely generated ideal of $R[\rmW(\frg)]$, the symmetric algebra on the dual space of $\frg$. One knows [XIII, before 4.3] that over a field $U$ is a principal open subscheme given by the polynomial $c_{n-r}$, the last term of the Killing polynomial. Is something similar true over rings?}

Because $\uW(\frg)$ is an affine scheme, the structure morphism $\uW(\frg) \to \Spec(R)$ is quasi-compact, % in the sense of 
\cite[Tag 01K3]{Stacks}. Hence, if the immersion $\io \co U \to \uW(\frg)$ is also a quasi-compact morphism, then so is the structure morphism $U \to \Spec(R)$ since quasi-compact morphisms respect composition \cite[Tag 01K6]{Stacks}. It is therefore sufficient to show that $\io$ is a quasi-compact morphism.

To do so, we will apply noetherian reduction, writing $R= \limind_{\, i \in I} R_i$ as the direct limit of its finitely generated, hence noetherian
$\ZZ$--subalgebras. It follows from [VI$_B$, 10.1, 10.3] and \cite[3.1.11]{Co1} that there exists $i\in I$ and a reductive $R_i$--group scheme $G_i$ such that $G = G_i\times_{R_i} R$. Let $\frg_i = \Lie(G_i)(R_i)$ and let $U_i$ be the open subscheme of $\uW( \frg_i)$ representing regular elements. Since the formation of $U$ commutes with base change, we can assume $U = U_i \times_{R_i} R$. Furthermore, the open immersion $\io$ is obtained by base change from the open immersion $\io_i \co U_i \to \uW(\frg_i)$. Because quasi-compact morphisms respect base change \cite[01K5]{Stacks}, it is now sufficient to show that $\io_i$ is a quasi-compact morphism. This is indeed true: Since $R_i$ is noetherian, $\uW(\frg_i)$ is a noetherian scheme %the symmetric $R_i$--algebra is a finitely generated $R_i$--algebra and therefore is noetherian
and the open immersion $\io_i$ is q quasi-compact morphism by \cite[Tag 01OX]{Stacks}. Thus, $U$ is indeed quasi-compact.

Finally, since Lie algebras of reductive groups over infinite fields admit regular elements [XIV, 2.11(b)], %or [XIV, 2.16(c)]
$U$ is $R$--dense in $\uW(\frg)$. \end{proof}

\subsection{Cartan subalgebras and regular elements} \label{csre}
%We recall briefly the relevant definitions.
Let $S=\Spec R$. A {\em Cartan subalgebra\/} of the Lie algebra $\Lie(G)$ over $S$ is a Lie subalgebra $D$ of $\Lie(G)$ which is locally a direct summand and whose geometric fibres $D_{\bar s}$ are Cartan subalgebras of $\Lie(G_{\bar s})$ for all $s\in S$ [XIV, 2.4]. The concept of a Cartan subalgebra of a finite-dimensional Lie algebra over an infinite field, is defined in [XIII, after 4.5]. In this special case, they are the nil spaces of regular elements, and therefore always exist since regular elements exist.

One can use the equivalence of quasi-coherent $\scO_S$--modules and $R$--modules to translate the definition of a Cartan subalgebra of $\Lie(G)$ to that of a Cartan subalgebra of the $R$--Lie algebra $\frg = \Lie(G)(R)$. In particular, a Cartan subalgebra of $\frg$ is always a direct summand of the finite projective Lie algebra $\frg$.

The $R$--functor, associating with $A\in \Ralg$ the set of Cartan algebras of $\frg_A$, is representable by a quasi-projective finitely presented $R$-scheme $\scD$ [XIV, 2.16].
%We will use a geometric  feature of regular elements of $\frg = \Lie(G)(R)$  [before XXIV, 2.16].
Also, the $R$--subfunctor $X$ of $\scD \times_R \W(\mathfrak{g})$, defined by
\[
X(A)= \{   ( \gh,  y ) \in \scD(A) \times \frg_A : y  \in \gh \}
\]
for each $R$--algebra $A$,
% here $\gh$ is a Cartan $A$--algebra of $\mathfrak{g}_A$ and $y$ an element of $\gh$.  We know that $X$
is representable by a quasi-projective $R$--scheme $\scX$. Furthermore, the first projection $p_1: \scX \to \scD$ is a vector bundle.

We denote by $\psi: \scX \to \W(\frg)$ the second projection,
and recall that the restriction $\psi^{-1}(U) \to U$ is an isomorphism, where $U\subset \uW(\frg)$ is the open scheme of \ref{lem_reg}. In particular, this implies that a regular element of $\frg_A$ is contained in a unique Cartan subalgebra of $\frg_A$.

\begin{thm} \label{thm_LG_reg} We assume that $R$ is an LG-ring and that one of the following conditions hold:
\begin{enumerate}[label={\rm (\roman*)}]
\item \label{thm_LG_reg1}
$R$ satisfies the primitive criterion;
\item \label{thm_LG_reg2} $G$ is adjoint.
\end{enumerate}
Then $\frg$ admits a regular element and hence a Cartan $R$--subalgebra.
\end{thm}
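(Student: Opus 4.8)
The plan is to deduce the existence of a regular element from the geometric characterization of LG-rings, namely Proposition~\ref{prop_baire}. By Lemma~\ref{lem_reg}, the functor of regular elements is represented by an open quasi-compact $R$-subscheme $U$ of $\uW(\frg)$, and moreover $U$ is $R$-dense. So the task reduces entirely to showing $U(R)\neq\emptyset$. In case \ref{thm_LG_reg1}, when $R$ satisfies the primitive criterion, this is immediate: Proposition~\ref{prop_baire}\ref{prop_baire-b} applies directly since $U$ is $R$-dense, giving $U(R)\neq\emptyset$. Once a regular element $y\in\frg$ is found, its nil space is a Cartan subalgebra (as recalled in \ref{csre}, via the isomorphism $\psi^{-1}(U)\to U$, a regular element lies in a unique Cartan subalgebra, and over the LG-ring $R$ this is an honest $R$-Cartan subalgebra since $\scX\to\Spec R$ specializes correctly), so that half of the theorem is essentially formal.

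The substantive case is \ref{thm_LG_reg2}, where $G$ is adjoint but the residue fields $R/\gm$ may be finite. Here $R$-denseness alone is not enough, because a dense open subvariety of affine space over a finite field can miss all rational points. Instead I would invoke Proposition~\ref{prop_baire}\ref{prop_baire-a}: it suffices to check $U(R/\gm)\neq\emptyset$ for every maximal ideal $\gm\ideal R$, i.e., that the Lie algebra $\frg_{R/\gm}=\Lie(G_{R/\gm})$ of the adjoint group $G_{R/\gm}$ over the (possibly finite) field $R/\gm$ contains a regular element. This is the crux of the matter, and I expect it to be the main obstacle: it is a statement about semisimple adjoint groups over arbitrary (including small finite) fields. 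The reference [XIV, 2.11(b)] quoted in Lemma~\ref{lem_reg} only guarantees regular elements over infinite fields, so a separate argument is needed in the finite-field case.

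To handle regular elements in the Lie algebra of an adjoint semisimple group over a finite field $k$, the natural route is to reduce to the split case and produce a regular semisimple element explicitly. Over $\bar k$, after passing to a maximal torus $T$ and the root space decomposition $\frg_{\bar k}=\Lie(T)\oplus\bigoplus_{\alpha}\frg_\alpha$, an element $t\in\Lie(T)$ is regular precisely when $\alpha(t)\neq 0$ for all roots $\alpha$; such $t$ exists over $k$ as soon as $k$ is large enough to avoid the finitely many hyperplanes $\ker\alpha$ in $\Lie(T)$. When $G$ is adjoint, the character lattice of $T$ is the root lattice, which makes $\Lie(T)$ identify with the coroot space in a way that, combined with adjointness, forces the linear forms $\alpha$ to be "independent enough" to be simultaneously nonzero over any field, including $\FF_2$ and $\FF_3$; this is precisely where the adjoint hypothesis is used and replaces the infinitude of $k$. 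I would point to the relevant statement in \cite{SGA3} (the discussion of regular elements and Cartan subalgebras in [XIII] and [XIV], together with the structure theory of adjoint groups) or give the short direct computation. With $U(R/\gm)\neq\emptyset$ established for all $\gm$, Proposition~\ref{prop_baire}\ref{prop_baire-a} yields $U(R)\neq\emptyset$, hence a regular element of $\frg$, and its nil space is the desired Cartan $R$-subalgebra.

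One technical point to be careful about: the identification of $U(R/\gm)$ with the regular elements of $\Lie(G)(R/\gm)$ requires that the formation of $U$ commutes with the base change $R\to R/\gm$, which was noted in the proof of Lemma~\ref{lem_reg}; and the passage from a regular element to a Cartan subalgebra of $\frg$ (as opposed to just a Cartan subalgebra of some fibre) uses that $\psi^{-1}(U)\to U$ is an isomorphism over all of $R$, so a regular $y\in\frg$ canonically determines a section of $\scD$ over $\Spec R$, i.e., a genuine Cartan $R$-subalgebra. These are routine once the existence of $y$ is in hand, so I would state them briefly and move on.
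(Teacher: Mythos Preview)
Your proposal is correct and follows essentially the same route as the paper: reduce to $U(R)\neq\emptyset$ via Lemma~\ref{lem_reg}, invoke Proposition~\ref{prop_baire}\ref{prop_baire-b} for case~\ref{thm_LG_reg1}, and invoke Proposition~\ref{prop_baire}\ref{prop_baire-a} for case~\ref{thm_LG_reg2} by checking $U(R/\gm)\neq\emptyset$ fibrewise. The only point worth flagging is the finite-field step in case~\ref{thm_LG_reg2}: the paper does not attempt your sketched torus computation but simply cites the Chevalley--Serre result [XIV, App.], which is exactly the black box you were looking for; your heuristic that adjointness makes the root forms ``independent enough'' over $\FF_2$ or $\FF_3$ is suggestive but would require real work to make precise, so citing [XIV, App.] is the right move.
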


\begin{proof} By \ref{csre}, a regular element is contained in a (unique) Cartan subalgebra of $\frg$. It is therefore enough to prove that $\frg$ contains regular elements.\sm %

{\em Case \ref{thm_LG_reg1}}: Regular elements always exist in finite-dimensional Lie algebras over infinite fields [XIII, 4.2], i.e., $U(R/\m) \ne \emptyset$ for all infinite residue fields of $R$. Since $U \subset \uW(\frg)$ is open and quasi-compact by \ref{lem_reg}, Proposition~\ref{prop_baire}\ref{prop_baire-b} shows that $U(R) \ne \emptyset$. \sm

{\em Case \ref{thm_LG_reg2}}: Applying Proposition~\ref{prop_baire}\ref{prop_baire-a} and Lemma~\ref{lem_reg}, it suffices to show that $U(R/\m) \ne \emptyset$ for every maximal ideal $\m\ideal R$. As mentioned in \ref{thm_LG_reg1}, this is always true if $R/\m$ is infinite. But it is also true if $R/\m$ is finite and \ref{thm_LG_reg2} holds, according to a  result of Chevalley-Serre [XIV, App.]. \end{proof}

\subsection{Groups of type (C) and Cartan subalgebras}\label{gtc}
Let $S=\Spec(R)$. We recall that a smooth $R$--subgroup $H\subset G$ with connected geometric fibers is a {\em subgroup of type\/} (C) if $\Lie(H)$ is a Cartan $\calO_S$--Lie subalgebra of $\Lie(G)$. The map %derivation
$H \mapsto \Lie(H)(R)$ is a bijection between subgroups of type (C) of $G$ and the $R$--Cartan subalgebras of $\Lie(G)(R)$, [XIV, 3.9]. The inverse map is given by
$\gh \mapsto \Norm_G(\gh)^0$. We shall use that $R$--subgroups of type (C)
are precisely the maximal tori of $G$ when $G$ is adjoint, [XIV, 3.18].

\begin{thm}\label{thm_maxtorus} Let $R$ be an LG-ring and let $G$ be a reductive $R$--group scheme. Then $G$ admits a maximal $R$--torus.
\end{thm}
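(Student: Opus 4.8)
The plan is to reduce to the case where $G$ is adjoint, for which Theorem~\ref{thm_LG_reg}\ref{thm_LG_reg2} applies with no restriction on the residue fields, and then to extract a maximal torus from a Cartan subalgebra. First I would invoke the standard reduction along the adjoint quotient $\pi \co G \to G\ad = G/Z(G)$. Since $G$ is reductive, $Z(G)$ is a central subgroup scheme of multiplicative type, $\pi$ is faithfully flat of finite presentation, and $G\ad$ is an adjoint semisimple $R$--group scheme. Passing to preimages, $H \mapsto \pi^{-1}(H)$ yields a bijection between maximal tori of $G\ad$ and maximal tori of $G$ (cf.\ [XII, 4.7]): for a maximal torus $H$ of $G\ad$, the $R$--group scheme $\pi^{-1}(H)$ is flat, affine, and of finite presentation over $R$, and its geometric fibres are maximal tori of the corresponding fibres of $G$ --- over an algebraically closed field every maximal torus of $G\ad$ is the image of a unique maximal torus $T_0$ of $G$, and $\pi^{-1}$ of that torus is $T_0$ because the centre is contained in $T_0$ --- so $\pi^{-1}(H)$ is of multiplicative type, fibrewise connected, hence a torus, and it is maximal by construction. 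Thus it is enough to produce a maximal torus of $G\ad$.

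Applying Theorem~\ref{thm_LG_reg}\ref{thm_LG_reg2} to $G\ad$, which is legitimate since $R$ is an LG-ring and $G\ad$ is adjoint, the Lie algebra $\Lie(G\ad)(R)$ admits a Cartan $R$--subalgebra $\frd$. I would then translate $\frd$ back into a subgroup using the dictionary recalled in \ref{gtc}: by [XIV, 3.9] the subgroup $H = \Norm_{G\ad}(\frd)^0$ is a subgroup of type (C) of $G\ad$ with $\Lie(H)(R) = \frd$, and since $G\ad$ is adjoint, [XIV, 3.18] identifies $H$ with a maximal torus of $G\ad$. Then $\pi^{-1}(H)$ is a maximal $R$--torus of $G$, as required.

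The only step carrying real mathematical weight is Theorem~\ref{thm_LG_reg}, which is already established; there the content lay in the geometric characterization of LG-rings (Proposition~\ref{prop_baire}) together with the Chevalley-Serre existence of regular elements over finite residue fields in the adjoint case. The remaining two steps amount to bookkeeping with the results of \cite{SGA3}; the one point worth checking with care is the fibrewise verification that the preimage of a maximal torus under the central quotient $\pi$ is again a torus, and not merely a group of multiplicative type.
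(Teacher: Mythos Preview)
Your argument is correct and follows essentially the same route as the paper: reduce to the adjoint quotient $G\ad$, invoke Theorem~\ref{thm_LG_reg}\ref{thm_LG_reg2} to obtain a Cartan subalgebra, and then use the dictionary of \ref{gtc} ([XIV, 3.9, 3.18]) to integrate it to a maximal torus. The only cosmetic difference is that the paper phrases the torus correspondence in the forward direction $T \mapsto T/\euZ(G)$ rather than via preimages, and cites [XXII, 4.3.5(ii)] for the adjoint quotient.
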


\begin{proof}
Let $\euZ(G)$ be the (schematic) centre of $G$. The flat quotient $G/\euZ(G)$ is represented by a semisimple adjoint $R$--group scheme $G\ad$, [XXII, 4.3.5(ii)] or \cite[3.3.5]{Co1}. Moreover, $T \mapsto T/\euZ(G)$ defines a bijective correspondence between the set of maximal tori of $G$ and the set of maximal tori of $G\ad$. Thus, without loss of generality we can assume that $G$ is adjoint. Then
Theorem~\ref{thm_LG_reg}\ref{thm_LG_reg2} provides a Cartan $R$--algebra
of $\frg$, which by \ref{gtc} ``integrates'' to a maximal torus of $G$.
\end{proof}

\begin{lem}[Tori in subgroups of type (RC)]\label{lemRC} Let $G$ be a reductive group scheme over a ring $R$, let $H\subset G$ be a subgroup scheme of $G$ of type {\rm (RC)} in the sense of {\rm [XXII, 5.11.1]} and let $\rad^u(H)$ be the unipotent radical of $H$, {\rm [XXII, 5.11.4]}. Suppose:
\begin{enumerate}[label={\rm (\roman*)}]
\item \label{lemRCi} every reductive $R$--group scheme admits a maximal torus, e.g., assume that $R$ is an LG-ring, and

\item\label{lemRCii}   $H^1(R, \rad^u(H)) = 0$, e.g., assume that there exists a parabolic subgroup scheme $P \subset G$ such that $\rad^u(H) = \rad^u(P) \cap H$.
\end{enumerate}
Then $G$ admits a maximal torus contained in $H$.
\end{lem}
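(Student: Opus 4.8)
The plan is to reduce the assertion to producing a Levi subgroup of $H$ defined over $R$, and then apply hypothesis (i) to it. Write $U=\rad^u(H)$. By the structure theory of subgroups of type (RC) (see [XXII, 5.11], and [XXVI] for the special case of parabolic subgroups), $U$ is a smooth normal $R$-subgroup of $H$ with unipotent fibres, the fppf quotient $\bar H=H/U$ is a reductive $R$-group scheme, $H$ admits a \emph{Levi subgroup} --- a closed $R$-subgroup mapping isomorphically onto $\bar H$ --- \'etale-locally on $\Spec(R)$, and every maximal torus of a Levi subgroup of $H$ is a maximal torus of $G$. (This last point uses membership in the class (RC) rather than merely that $H$ is smooth with reductive quotient: a type (RC) subgroup has, fibrewise, the same reductive rank as $G$.) Granting this, suppose $M\subseteq H$ is a Levi subgroup defined over $R$; then $M$ is a reductive $R$-group scheme, so by (i) --- e.g.\ by Theorem~\ref{thm_maxtorus} if $R$ is LG --- it has a maximal $R$-torus $T$, and $T$ is then a maximal torus of $G$ contained in $H$, as desired.

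It remains to produce $M$, which is where hypothesis (ii) enters. For $R'\in\Ralg$ and Levi subgroups $M_1,M_2$ of $H_{R'}$ one checks that the transporter $\{u\in U_{R'}: uM_1u^{-1}=M_2\}$ is empty or a single $U_{R'}$-coset; concretely, the stabilizer $\{u\in U : uM_1u^{-1}=M_1\}$ is trivial, because the fixed-point subgroup $U^{M_1}$ is trivial ($M_1$ contains a maximal torus of $G$, which acts with nonzero weights on the fibres of $U$). Hence the $R$-functor $\uLev$ of Levi subgroups of $H$, with $U$ acting by conjugation, carries a simply transitive $U$-action; being nonempty \'etale-locally, it is an \'etale-locally trivial $U$-torsor, so its class lies in $H^1(R,U)=H^1(R,\rad^u(H))$, which is $0$ by (ii). Therefore $\uLev(R)\ne\emptyset$, i.e.\ $H$ has a Levi subgroup over $R$. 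For the sufficient condition in (ii): if $\rad^u(H)=\rad^u(P)\cap H$ for a parabolic $P\subseteq G$, then, \'etale-locally on $R$, $\rad^u(H)$ is generated by a subset of the root groups of $\rad^u(P)$ relative to a maximal torus, hence is split unipotent; its canonical filtration by vector groups descends to $R$, and so $H^1(\Spec R,\rad^u(H))=0$ by d\'evissage from the vanishing of $H^1_{\mathrm{Zar}}$ of a quasi-coherent sheaf on the affine scheme $\Spec(R)$.

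The main obstacle is the opening paragraph: correctly invoking the SGA3 theory of type (RC) subgroups, namely that $H/\rad^u(H)$ is reductive, that Levi subgroups exist \'etale-locally and are conjugate under $\rad^u(H)$, and that their maximal tori are maximal in $G$. Once these facts are assembled, the descent in the second paragraph and the appeal to (i) are routine. If one prefers to avoid the Levi theory for general type (RC) subgroups, an alternative is to first choose, using (i), a maximal torus $\bar T$ of $\bar H$, form its preimage $N\subseteq H$ --- a smooth affine $R$-group with $\rad^u(N)=\rad^u(H)$ and $N/\rad^u(N)=\bar T$ --- and split the extension $1\to\rad^u(H)\to N\to\bar T\to 1$ by the same torsor argument; this reduces to the elementary splitting theory of an extension of a torus by a split unipotent group, at the price of a slightly longer set-up and an additional rank count showing that the resulting torus is maximal in $G$.
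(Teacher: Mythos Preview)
Your alternative at the end is precisely the paper's proof: choose a maximal torus $\bar T$ of $\bar H=H/U$ via (i), form its preimage $N\subset H$, verify that $N$ is of type (R) with $\rad^u(N)=U$ (the paper cites [XXII, 5.6.9(ii)] for the latter), and then invoke [XXII, 5.6.13] --- the functor of maximal tori of a type (R) group is a torsor under its unipotent radical --- together with (ii) to obtain a maximal torus of $N$, hence of $G$. Your main approach reverses the order of (i) and (ii): first produce a Levi subgroup of $H$ over $R$ by trivializing the functor $\uLev$ as a $U$-torsor, then find a maximal torus inside it. This is correct in spirit and arguably more conceptual, but as you yourself flag, the torsor structure on $\uLev$ for a \emph{general} type (RC) subgroup is not a ready citation from [XXII, 5.11]; your argument establishes freeness of the $U$-action, but \'etale-local non-emptiness and transitivity of the action (even fppf-locally) need more than a one-line ``one checks''. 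The paper's route sidesteps this entirely by reducing to type (R), where [XXII, 5.6.13] is available off the shelf. For the ``e.g.'' in (ii) the paper simply cites [XXVI, 2.11] and [XXVI, 2.5] rather than spelling out the d\'evissage you sketch.
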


\begin{proof} (Modelled after the proof of [XXVI, 4.2.7(ii)])
The unipotent radical $U= \rad^u(H)$ is a smooth, finitely presented, and normal subgroup scheme of $H$, whose geometric fibres are connected and unipotent. Furthermore, by [XXII, 5.11.4], we have an exact sequence of $R$--group schemes
\[
 1 \longto U \longto H \xrightarrow{\;f\;} M \longto 1
\]
where $M$ is reductive. The subgroup $H$ is in particular of type (R), so that $H$ and $G$ have the same rank by [XXII, 5.2.2(b)]. By the properties of $U$, this is then also the rank of $M$.

The assumption \ref{lemRCi} %\ref{thm_maxtorus}
provides us with a maximal $R$--torus $T$ of $M$. Let
$N=f^{-1}(T)$ be its pre-image. We thus have an induced exact sequence
\[
 1 \longto U \longto N \xrightarrow{\;f\;} T \longto 1
\]
of $R$--group schemes. The properties of $U$ and $T$, together with [VI$_B$, 9.2(viii)], imply that $N$ is a smooth and finitely presented subgroup scheme of $G$. Moreover, by [XVII, 5.1.1(i)(a)], its geometric fibres $N_{\bar s}$, $s\in \Spec(R)$, are connected, solvable and contain a maximal torus of $G_{\bar s}$. Thus, $N$ is a subgroup scheme of $G$ of type (R). We have $\rank N = \rank M = \rank G$, and by [XXII, 5.6.9(ii)] also $\rad^u(N) = \rad^u(H) = U$.

By [XXII, 5.6.13], the functor of maximal tori of $N$ is representable by an $R$--scheme $\scT$, which is a $U$--torsor. It then follows from assumption \ref{lemRCii} that $\scT(R) \ne \emptyset$, i.e., $N$ admits a maximal torus $T$, which is a maximal torus of $G$ because $\rank N = \rank G$. Hence $T \subset N \subset H $ fulfills our claim.

That $H^1(R, \rad^u(H)) = 0$  in case there exists a parabolic subgroup scheme $P \subset G$ such that $\rad^u(H) = \rad^u(P) \cap H$, is shown in [XXVI, 2.11] (use [XXVI, 2.5]). \end{proof}
%The assumption \ref{lemRCii} is fulfilled whenever $U$ has a filtration $U=U_1 \supset U_2 \supset  \cdots$ such that the successive quotients $U_i/U_{i+1} \cong \uW(\calE_i)$ where the $\calE_i$ are locally free $\calO_S$--modules of finite type. The proof of [XXVI; 2.10] shows that this is the case if there exists a parabolic subgroup $P$ such that $\rad^u(H) = \rad^u(P) \cap H$, e.g. in the setting \ref{cor_maxtorus}\ref{cor_maxtorus2}.

\begin{cor}[Tori in parabolic subgroups]\label{cor_maxtorus} Let $R$ be an arbitrary ring satisfying the condition {\rm \ref{lemRC}\ref{lemRCi}},
%i.e., \begin{equation}\label{cor_maxtorus11}
%\text{every reductive $R$--group scheme admits a maximal torus,} \end{equation}
e.g., suppose that $R$ is an LG-ring, and let $G$ be a reductive $R$--group scheme. Then every parabolic subgroup of $G$ contains a maximal torus of $G$.
\end{cor}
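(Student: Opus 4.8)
The plan is to obtain this as a direct specialization of Lemma~\ref{lemRC}, taking for the subgroup $H$ the given parabolic subgroup $P$ of $G$. The point I must put in place first is that every parabolic subgroup $P$ of a reductive $R$--group scheme is a subgroup of type (RC) in the sense of [XXII, 5.11.1]: it is smooth with connected geometric fibres (on a geometric fibre it even contains a Borel subgroup, hence a maximal torus of the corresponding fibre of $G$), it possesses a unipotent radical $\rad^u(P)$ which is smooth, normal in $P$ and has connected unipotent geometric fibres, and the quotient $P/\rad^u(P)$ is a reductive $R$--group scheme, the Levi quotient of $P$. These are classical facts about parabolic subgroup schemes, see [XXVI, \S1] (and [XXII, \S5.11] for the notion of type (RC)).

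Granting this, I verify the two hypotheses of Lemma~\ref{lemRC} for $H = P$. Hypothesis \ref{lemRCi} is exactly the running assumption on $R$ in the statement of the corollary --- and it holds in particular when $R$ is an LG-ring, by Theorem~\ref{thm_maxtorus}. For hypothesis \ref{lemRCii}, I invoke the sufficient criterion built into the lemma and simply choose the auxiliary parabolic subgroup appearing there to be $P$ itself: since $\rad^u(P) \subseteq P$, we have $\rad^u(P) \cap P = \rad^u(P) = \rad^u(H)$, whence $H^1(R, \rad^u(H)) = 0$ by the vanishing recorded at the end of the proof of Lemma~\ref{lemRC} (which rests on [XXVI, 2.5] and [XXVI, 2.11]).

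Both hypotheses being met, Lemma~\ref{lemRC} furnishes a maximal torus $T$ of $G$ contained in $H = P$, which is precisely the assertion. I do not expect any genuine obstacle here: the corollary is a direct application of the lemma, and the only steps requiring a word of care are the bookkeeping observation that a parabolic subgroup is a legitimate input for the (RC)-framework of [XXII, \S5.11] --- a standard fact --- together with the trivial remark that $\rad^u(P) = \rad^u(P) \cap P$, which makes hypothesis \ref{lemRCii} automatic in this case.
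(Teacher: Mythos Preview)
Your proof is correct and follows essentially the same route as the paper: both apply Lemma~\ref{lemRC} with $H=P$, noting that a parabolic subgroup is of type (RC) (the paper cites [XXVI, 1.5]) and that hypothesis~\ref{lemRCii} is automatic. The only cosmetic difference is that you verify $H^1(R,\rad^u(P))=0$ via the built-in ``e.g.'' criterion of the lemma with the auxiliary parabolic taken to be $P$ itself, whereas the paper simply cites [XXVI, 2.2] directly for this vanishing.
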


\begin{proof} A parabolic subgroup $P$ of $G$ is a group of type (RC) by [XXVI, 1.5]. The condition~\ref{lemRC}\ref{lemRCii} holds by [XXVI, 2.2]. Hence Lemma~\ref{lemRC} with $H=P$ proves the claim. \end{proof}
\sm

Corollary~\ref{cor_maxtorus} is the special case $P=Q$ of Proposition~\ref{prop_stan}, whose proof requires however much more work. We will present an immediate application of Corollary~\ref{cor_maxtorus} to quasi-split groups.

\subsection{Quasi-split reductive groups}\label{qsgr} We refer the reader to [XXIV, 3.9] for the definition of a quasi-split reductive group over an arbitrary scheme $S$. It simplifies greatly if $S$ is an LG-scheme, i.e., $S=\Spec(R)$ for $R$ an LG-ring:
\begin{enumerate}[label={\rm (\alph*)}]
  \item\label{qsgr-a} {\em If $S$ is an LG-scheme, a reductive $S$--group $G$ is quasi-split if and only if $G$ admits a Borel subgroup.}
\end{enumerate}
Indeed, it is explained in [XXIV, 3.9] that a reductive group $G$ over a scheme $S$ with $\Pic(S) = 0$ is quasi-split if and only if it contains a Killing couple, i.e., a pair $(B,T)$ consisting of a Borel subgroup $B$ of $G$ and a maximal torus $T$ of $G$ contained in $T$. But, by Corollary~\ref{cor_maxtorus}, every Borel subgroup contains a maximal torus of $G$. We note that \ref{qsgr-a} is the definition of a quasi-split reductive $S$--group in \cite[5.2.10]{Co1}. \sm

Recall (\cite[\S7.2]{Co1} or \cite[2.2.4.9]{CF}) that an {\em inner form\/} of a reductive $S$--group scheme $G$ is a twisted form of $G$ under a torsor in the image of $H^1(S, G/\euZ(G))$ in $H^1(S, \Aut(G))$.
\begin{enumerate}[label={\rm (\alph*)}]\setcounter{enumi}{1}
  \item\label{qsgr-b} {\em For an LG-scheme $S$, up to isomorphism every reductive $S$--group admits a {\em unique\/} quasi-split inner form. }
\end{enumerate}
This is proven in \cite[7.2.12]{Co1} for semilocal $S$, but as \cite[7.2.13]{Co1} states, \ref{qsgr-b} holds whenever $\Pic(S') = 0$ for any finite \'etale cover $S'$ of $S$. By \ref{LG-ex}\eqref{LG-ex-c}, such a cover is again an LG-scheme, so that \eqref{LG-defe1} establishes the condition $\Pic(S') = 0$.
\ms

In \ref{tars}--\ref{lem_iso} we review some concepts for group schemes over arbitrary schemes. 

\subsection{Type of a reductive group scheme} \label{tars}
Let $S$ be an arbitrary scheme and let $G$ be a reductive $S$-group scheme. We recall the notion of the type of $G$ [XXII, 2.7].  

To each point $s \in S$ we associate the isomorphism class of the root data of the reductive algebraic group $G_{\ol{\ka(s)}}$ over the algebraically closed field $\ol{\ka(s)}$, called the {\em type of $G$ at $s$} and denoted $\type_s(G)$. The function $s\mapsto \type_s(G)$ is locally constant [XXII, 2.8].

Since the type of a reductive  algebraic group is invariant under an arbitrary field extension, it follows that for each morphism $f\co S' \to S$ of schemes and for each point $s' \in S'$ we have $\type_{s'}(G_{S'})= \type_{f(s)}(G)$.   

Let $S' \to S$ be a surjective morphism (for example a flat cover) such that $G \times_S S'$ is a split reductive $S'$-group with root data $\Psi_0$ [XXII, 1.13]. The type function of the $S'$-reductive group scheme  $G \times_S S'$ is constant with value $\Psi_0$. 
The previous compatibility shows that the type function of $G$
is  constant and has value $\Psi_0$.

\subsection{Split reductive $S$--groups with $\Pic(S) = 0$ ([XXIV, 2.14])}\label{srg}  {\em Let $S$ be a scheme for which $\Pic(S) = 0$, and let $G$ be a reductive $S$--group scheme of constant type. Then $G$ is split if and only if $G$ contains a split maximal torus.}

The result applies to $S=\Spec(R)$ for $R$ an LG-ring since by \eqref{LG-defe1} we know that $\Pic(R) = 0$. Moreover, by Theorem~\ref{thm_maxtorus}, a reductive group scheme  over an LG-ring $R$ contains a maximal torus.

\subsection{Isotrivial reductive groups}\label{irg} A reductive group scheme $G$ over $S$ is {\em isotrivial\/} if there exists a finite \'etale cover (= finite \'etale surjective) $S' \to S$ such that $G_{S'}$ is split, see for example \cite[4.5.1(2)]{G2}. By \ref{tars},  an isotrivial reductive $S$--group scheme $G$ is necessarily of constant type. 

\begin{lem} \label{lem_iso}
Let $G$ be reductive group scheme $G$ over $S$ of constant type $\Psi_0$, and let $G_0$ be the Chevalley $\ZZ$--group scheme of type $\Psi_0$. Then the following are equivalent:

\begin{enumerate}[label={\rm (\alph*)}]
\item \label{lem_iso1}  $G$ is isotrivial; \sm

\item \label{lem_iso2} the $\Aut(G_0)$–-torsor $\uIsom(G_{0,S},G)$ (defined in [XXIV, 1.8]) is isotrivial, i.e., splits after passing to a finite \'etale cover of $S$.
\end{enumerate}
\end{lem}

\begin{proof} \ref{lem_iso2} $\implies$ \ref{lem_iso1}.
Our assumption is that there exists a finite \'etale cover $S'$ of $S$ such that $\uIsom(G_{0,S},G)(S') \not = \emptyset$, so that $G_{0,S'}$ is isomorphic to $G_{S'}$.
Since $G_0$ is split, it follows that $G_{S'}$ is split.
We conclude that $G$ is isotrivial. 
\sm

\noindent  \ref{lem_iso1} $\implies$ \ref{lem_iso2}.
We assume that there exists a finite \'etale cover $S'$ of $S$ such that $G_{S'}$ is split of type $\Psi_0$ and so is $G_{0,S'}$. In view of Demazure's unicity theorem [XXIII, 5.3], $G_{S'}$ is isomorphic to $G_{0,S'}$. Thus the $\Aut(G_0)$–-torsor $\uIsom(G_{0,S},G)$ has an $S'$-point and is split by $S'/S$.
Thus the  $\Aut(G_0)$–-torsor $\uIsom(G_{0,S},G)$ is isotrivial.
\end{proof}
\sm 

After this intermezzo on group schemes over arbitrary schemes we come back to the case $S=\Spec(R)$, $R$ an LG-ring. The following is a corollary of Theorem~\ref{thm_maxtorus}.

\begin{cor} \label{lg-cor} Let $R$ be an LG-ring and let $G$ be a reductive  $R$--group scheme of constant type which is linear, e.g., $G$ is semisimple \cite[Cor.\ 4.3]{G3}. Then $G$ is isotrivial. In particular, if $R$ is connected and simply connected (= does not admit non-trivial finite \'etale covers), then $G$ is split.
 \end{cor}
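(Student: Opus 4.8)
The plan is to reduce the statement to splitting a single torsor by a finite \'etale cover of $S=\Spec(R)$, and to produce that cover from a maximal torus of $G$ together with the finiteness of the automorphism group of a (semisimple) root datum. Throughout I use the characterization recalled just before the statement: since $G$ is of constant type with Chevalley form $G_0$, it is isotrivial if and only if the $\Aut(G_0)$--torsor $\uIsom(G_{0,S},G)$ is split by some finite \'etale cover of $S$. So it suffices to split that torsor.

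First I would invoke Theorem~\ref{thm_maxtorus} to fix a maximal $R$--torus $T\subset G$, and fix the standard split maximal torus $T_0\subset G_0$. Then I would consider the subfunctor $P\subset\uIsom(G_{0,S},G)$ of those isomorphisms carrying $T_{0,S}$ onto $T$; it is representable and is a torsor under the base change to $S$ of the $\ZZ$--group scheme $N:=\Aut(G_0,T_0)$ of automorphisms of $G_0$ stabilizing $T_0$. That $P$ is an \'etale torsor rests on the fact that $(G,T)$ is \'etale-locally isomorphic to $(G_0,T_0)$: constant type gives \'etale-local triviality of $G$ alone, and any two maximal tori are conjugate fppf-, hence (by smoothness) \'etale-, locally by [XII, 7.1].

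The structural heart is the exact sequence of $\ZZ$--group schemes
\[
1 \longto T_0/\euZ(G_0) \longto N \longto \Aut(\Psi_0) \longto 1,
\]
where $\Psi_0=\Psi(G_0,T_0)$ is the root datum, $\Aut(\Psi_0)$ acts via its action on the character lattice $X^{*}(T_0)$, and the kernel consists of the inner automorphisms by $T_0$ (a normal subgroup, since $N$ normalizes $T_0$); cf.\ the structure theory of \cite{SGA3}, XXIII--XXIV. When $G$ is semisimple, $\Psi_0$ is semisimple and $\Aut(\Psi_0)$ is a \emph{finite} constant group scheme, which is the essential point; the general ``linear'' case is handled by additionally splitting off the radical torus $\rad(G)$, which is isotrivial precisely because $G$ is linear, and applying the semisimple case to $G/\rad(G)$. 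Pushing $P$ forward along $N\to\Aut(\Psi_0)$ yields a torsor $\bar P$ under a finite constant group; hence $\bar P$ is representable by a finite \'etale surjective $S$--scheme $S'$, and $\bar P_{S'}$ has a section. Such a section reduces $P_{S'}$ to a torsor under $T_0/\euZ(G_0)$, and since $T_0/\euZ(G_0)$ acts on $T_0$ by conjugation, i.e.\ trivially, the maximal torus $T_{S'}$ of $G_{S'}$ is split.

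To finish: $S'$ is finite over $R$, so $\calO(S')$ is again an LG-ring by \ref{LG-ex}\eqref{LG-ex-c}, whence $\Pic(S')=0$ by \eqref{LG-defe1}; thus $G_{S'}$ is reductive of constant type and contains a split maximal torus, so it is split by \ref{srg}. Hence $G$ is split by the finite \'etale cover $S'\to S$, i.e.\ $G$ is isotrivial. If moreover $R$ is connected and admits no non-trivial finite \'etale cover, I would pick a connected component $S'_0$ of $S'$: then $S'_0\to S$ is finite \'etale surjective with $S'_0$ connected, hence an isomorphism, and $G_{S'_0}$ is split (a base change of $G_{S'}$), so $G$ itself is split. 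The main obstacle is the third step: pinning down the extension structure of $\Aut(G_0,T_0)$ and, especially, the precise role of the linearity hypothesis in controlling the radical torus (for a possibly non-connected LG-ring); the representability and \'etale-local-triviality claims of the second step also require careful cross-referencing with the conjugacy and normalizer results of expos\'es XII and XXII of \cite{SGA3}.
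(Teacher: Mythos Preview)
Your approach is correct in the semisimple case but takes a substantially longer route than the paper, and your handling of the general linear case has a genuine gap.

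The paper's proof is a three-liner: Theorem~\ref{thm_maxtorus} gives a maximal torus $T\subset G$; since $G$ is linear, so is $T$ (restrict the faithful representation), and $T$ has constant rank, so by Grothendieck's theorem \cite[Th.~3.3]{G3} the torus $T$ is isotrivial; finally, over the finite \'etale cover $R'$ splitting $T$, the ring $R'$ is again LG, so $G_{R'}$ is split by \ref{srg}. The simply-connected clause follows because then $R'\cong R\times\cdots\times R$.

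You instead try to produce the splitting cover from the structure of $N=\Aut(G_0,T_0)$ and the finiteness of $\Aut(\Psi_0)$. In the semisimple case this is valid---your observation that the $N$-action on $T_0$ factors through $\Aut(\Psi_0)$ is exactly what makes $T_{S'}$ split after trivializing $\bar P$---but it is a roundabout reconstruction of what \cite[Th.~3.3]{G3} gives you directly. The irony is that you invoke that very result for $\rad(G)$ (``isotrivial precisely because $G$ is linear''), without noticing that it applies equally to the full maximal torus $T$, which would short-circuit the entire argument.

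Your general-case reduction is where the real problem lies. Knowing that $\rad(G)$ and $G/\rad(G)$ become split over a common finite \'etale cover does not by itself force $T$ (or $G$) to split there: an extension of split tori by split tori need not be split over a non-normal base, cf.\ Remark~\ref{flagrem}\eqref{flagrem-a} and the examples around \ref{ex_tori}. Over an LG-ring this obstruction may well vanish, but you have not shown it, and you correctly flag it as ``the main obstacle.'' The paper avoids the issue entirely by applying the linear-torus-is-isotrivial theorem to $T$ itself.
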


\begin{proof}
Theorem \ref{thm_maxtorus} provides a maximal $R$--torus $T$ of $G$. Since $T$ is linear (and of constant rank), $T$ is isotrivial (\ref{irg}) according to a result of
Grothendieck, see \cite[Thm.\ 3.3]{G3}. In other words, there exists a finite \'etale cover $R'$ of $R$ such that $T_{R'}$ is split. Since $R'$ is an LG-ring as well, \ref{LG-ex}\eqref{LG-ex-c}, 
we conclude that $G_{R'}$ is split in view of \ref{srg}.
Assume now that $R$ is connected and simply connected. Then $R'=R \times  \dots \times R$, so that $T$ is already split.
\end{proof}

\begin{example}\label{lg-cor-ex}  The ring $\overline{\ZZ}$ of algebraic integers is an  LG-ring by \ref{LG-ex}\eqref{LG-ex-pri}. It is also well-known that $\overline{\ZZ}$ is simply-connected. Hence, Corollary \ref{lg-cor}
shows that  any semisimple group scheme over $\overline{\ZZ}$ is split.
In particular, this holds for the semisimple $\overline{\ZZ}$--group $G$ of type $\rmG_2$, which is known to be the automorphism group scheme of an octonion algebra $\euO$ over $\overline{\ZZ}$ (\cite[B.15]{CG} or \cite[55.4]{PRbook}).
As a consequence, $\euO$ is split.

Moret-Bailly told us that this is a direct consequence of a theorem of Rumely
on the existence of points for ``reasonable'' schemes over $\overline{\ZZ}$  \cite[Th.\ 1]{R} (see also \cite[Th.\ 1.3]{MB}). For $H$ a flat affine group scheme of finite type over $\overline{\ZZ}$ whose generic fiber is (smooth) connected,
Rumely's  Theorem implies that  any $H$-torsor  has a $\overline{\ZZ}$-point, so it is trivial. Since  $\overline{\ZZ}$ is simply connected  a semisimple $\overline{\ZZ}$-group $G$ is an inner form of  its Chevalley form $G_0$, that is, is the twist of $G_0$ by a $G_{0,ad}$-torsor $E$ where $G_{0,ad}$ is the adjoint group of $G_0$. Since $E$ is trivial,  it follows that $G$ is split.

Coming back to the  example of octonions, Rumely's theorem  directly does the job,
 since the automorphism group $G_2$ of the split octonions is connected. Of course,
one could also appeal to the strong approximation theorem to show that $H^1( {\overline{\ZZ}},G_2)$ vanishes. Indeed, a  class in   $H^1( {\overline{\ZZ}},G_2)$
arises from   $\gamma \in H^1( R,G_2)$  where  $R$ is  the ring of integers
of a number field $F$. Furthermore  we can assume that $\gamma_F=1$ since
$H^1( \overline{\QQ},G_2)=1$. But then \cite[Satz 3.3]{H} proves $\ga = 1$.
%%%%%%%%%%%%  previous 06-26
%According to \cite[Th.\ 4.1, 4.2]{N}, we  have a bijection
%\[
%  G_2(R) \backslash \prod\limits_{v \in V} G_2(F_v)/G_2(R_v) \simlgr \ker\bigl( %H^1( R,G_2) \to H^1( F,G_2) \bigr)
%\] P
%where the product is the restricted product over the set $V$ of finite completions of $F$.
%The strong approximation theorem \cite[Satz 2.1.1]{H} in the split case shows that the left-hand side is trivial. Thus $\gamma=1$, as desired.
%%%%%%%%%%    end of previous
\sm

Yet another way to see that any octonion algebra over the LG-ring $\overline{\ZZ}$ is split, has been communicated to us by Skip Garibaldi and works for any simply connected LG-ring $R$: By \cite[19.16(b)]{PRbook}, any octonion algebra over $R$  contains a quadratic \'etale subalgebra. It is split, since $R$ is simply connected. By \cite[22.9]{PRbook}, the algebra $\euO$ is therefore reduced. But then it is split by \cite[22.16]{PRbook} and \ref{prop_baireco}\eqref{prop_baireco-b}.
\end{example}

%\newpage
\section{Parabolic subgroups (conjugacy, relative positions)}
%Borel-Tits' and Demazure's conjugacy theorem}
\label{sec:btd}

A fundamental result of Borel-Tits proves conjugacy of parabolic subgroups of the same type in reductive groups over a field \cite[Th.~4.13(a)]{BT}. This was extended by Demazure to reductive group schemes over a semilocal ring \cite[XXVI, 5.2, 5.10]{SGA3}. In Theorem~\ref{thm_BTD} we prove this result for reductive groups over an LG-ring. In \ref{thm_BTD22} and \ref{cor_BTD+} we apply this  to conjugate parabolic subgroups to a transversal or osculating position.

%of Borel-Tits over fields \cite[Th.~4.13(a)]{BT} and of Demazure over semilocal rings \cite[XXVI, 5.2, 5.10]{SGA3}. As in \S\ref{sec:exi} we abbreviate  \cite[XXVI, 5.2, 5.10]{SGA3}= [XXVI, 5.2, 5.10].
%In this section we consider reductive $R$--groups over an LG-ring $R$, usually denoted $G$.

We use the notion of an opposite parabolic subgroup of a reductive group scheme as defined in [XXVI, 4.3.1]. Over an affine base, every parabolic subgroup admits an opposite parabolic subgroup [XXVI, 4.3.5(i)]. We will also employ the concept of the $\type(P) \in \Of\bigl( \Dyn(G))(R)$ of a parabolic subgroup $P$ of $G$, defined in [XXVI, 3.2].

%\comments{(2024-01) I reformulated \ref{thm_BTD}\ref{thm_BTD1} in the way it is stated in [XXVI, 5.2] -- this formulation could be useful later. }

\begin{thm} \label{thm_BTD} Let $R$ be an LG-ring, and let  $G$ be a reductive $R$-group scheme with a pair $(P,P')$ of opposite parabolic subgroup schemes of $G$. We abbreviate the unipotent radicals of $P$ and $P'$ by $U=\rad^u(P)$ and $U' = \rad^u(P')$ respectively.

\begin{enumerate}[label={\rm (\alph*)}]

\item \label{thm_BTD1} Any parabolic subgroup $Q$ of $G$ of the same type as $P$ has the form $Q= {^{uu'}P}$ for suitable $u\in U(R)$ and $u'\in U'(R)$. In particular, the group $G(R)$ acts transitively on $(G/P)(R)$. \sm

\item \label{thm_BTD3} The map $H^1(R,P) \to H^1(R,G)$, induced by the inclusion $P \subset G$, is injective. \sm

\item \label{thm_BTD2} We have a decomposition $G(R)=U(R) \, U'(R) \, P(R)$.
\end{enumerate}
\end{thm}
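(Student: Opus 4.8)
The plan is to reduce everything to the geometric characterization of LG-rings in Proposition~\ref{prop_baire}\ref{prop_baire-a}, following the strategy announced in the introduction: recast the conjugacy/decomposition claims in terms of $R$-points of an open quasi-compact subscheme of an affine space (or more precisely of $\uW(\frg)$), and then reduce to the residue fields, where the field-case results of [XXVI] are available. Part~\ref{thm_BTD2} will be deduced from part~\ref{thm_BTD1}, which is the heart of the matter, and part~\ref{thm_BTD3} is then essentially formal. So I concentrate on \ref{thm_BTD1}.

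First I would recall the "big cell" picture from [XXVI, 4.3.2]: the multiplication map $U' \times_R (P \cap P') \times_R U \to G$ (or rather $\Omega = U \cdot U' \cdot P$, the open image) is an open immersion onto an open subscheme $\Omega$ of $G$ whose complement is closed, and since $G$ is affine, $\Omega$ is quasi-compact. Equivalently $\Omega$ is the big cell associated with the pair $(P,P')$. Next, given a parabolic $Q$ of the same type as $P$, consider the transporter scheme $\Transp_G(P,Q)$ translating $P$ to $Q$ by conjugation; by [XXVI, 3.5] (or the relevant smoothness statement for transporters of parabolics), the natural map $\Transp_G(P,Q) \to \Spec(R)$ is smooth (even a torsor under the parabolic), and the open subscheme obtained by intersecting with $\Omega$, say $V = \Omega \cap \Transp_G(P,Q)$, is an open quasi-compact subscheme of an affine $R$-scheme. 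The key point is that $V(A)$ is exactly the set of elements $g = u u' p \in \Omega(A)$ with ${}^{g}P = {}^{uu'}P = Q_A$, i.e. witnesses of the desired form. So it suffices to show $V(R) \ne \emptyset$.

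By Proposition~\ref{prop_baire}\ref{prop_baire-a} — applied after embedding $V$ as an open quasi-compact subscheme of $\uW(M)$ for a suitable finite locally free $M$ (e.g. realize $G$ as a closed subscheme of some $\uW(\frg\frl_n)$, intersect, and use that open quasi-compact subschemes of quasi-compact schemes over $R$ stay quasi-compact, cf. the argument in Lemma~\ref{lem_reg}) — it is enough to check $V(R/\gm) \ne \emptyset$ for every maximal ideal $\gm \ideal R$. Over the field $k = R/\gm$ this is precisely the content of the Borel–Tits theorem in Demazure's form [XXVI, 5.2] (conjugacy of parabolics of the same type over a field) combined with the Bruhat-type decomposition $G(k) = U(k)\, U'(k)\, P(k)$ for parabolics [XXVI, 4.3.2, 4.3.5]: any $Q_k$ of the type of $P_k$ is ${}^{g}P_k$ for some $g \in G(k)$, and the subset of such $g$ lying in the big cell is non-empty (density of the big cell), giving a point of $V(k)$. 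Hence $V(R) \ne \emptyset$, proving \ref{thm_BTD1}; transitivity of $G(R)$ on $(G/P)(R)$ follows since $(G/P)(R) \to H^1(R,P)$ has the orbits of $G(R)$ as fibres and every $R$-point of $G/P$ corresponds to a parabolic of type $\type(P)$ by [XXVI, 3.3].

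For \ref{thm_BTD2}: a decomposition $G(R) = U(R)\, U'(R)\, P(R)$ says exactly that $\Omega(R) = G(R)$, i.e. every $g \in G(R)$ lies in the big cell. Apply \ref{thm_BTD1} to the parabolic $Q = {}^{g}P$ (which has the same type as $P$): we get $u \in U(R)$, $u' \in U'(R)$ with ${}^{uu'}P = {}^{g}P$, hence $(uu')^{-1} g$ normalizes $P$, so $(uu')^{-1} g \in P(R)$ (the normalizer of a parabolic is itself, [XXVI, 1.2]), i.e. $g = u u' p$ with $p \in P(R)$. Finally \ref{thm_BTD3}: given a class in $H^1(R,P)$ mapping to the trivial class in $H^1(R,G)$, the corresponding $P$-torsor $E$ satisfies $E \times^P G$ trivial, so the associated twisted form of $G/P$ has an $R$-point, which by the twisted version of transitivity (apply \ref{thm_BTD1} to the inner twist, legitimate since an inner twist of $G$ by a torsor coming from $P \subset G$ is again reductive over the LG-ring $R$) shows $E$ is trivial.

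\emph{Main obstacle.} The delicate point is the quasi-compactness bookkeeping: one must present the relevant locus (big cell intersected with the transporter) as an open quasi-compact subscheme of some $\uW(M)$ so that Proposition~\ref{prop_baire}\ref{prop_baire-a} applies, and check that its $A$-points genuinely parametrize the witnesses $(u,u',p)$ with ${}^{uu'}P = Q$. This requires citing the scheme-theoretic structure of the big cell and of transporters of parabolics from [XXVI, 3.5, 4.3.2], and using a Noetherian-reduction argument as in Lemma~\ref{lem_reg} to guarantee quasi-compactness of the open immersion; the field-level input is then the classical Borel–Tits/Bruhat decomposition, which is already available in [XXVI].
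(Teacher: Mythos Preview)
Your overall strategy—reduce to residue fields via Proposition~\ref{prop_baire}\ref{prop_baire-a} and invoke the field case from [XXVI]—is the paper's, and your derivations of \ref{thm_BTD3} and \ref{thm_BTD2} from \ref{thm_BTD1} are correct and match the paper. The gap is in your implementation of \ref{thm_BTD1}.

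The open immersion $U'\times_R(P\cap P')\times_R U\to G$ of [XXVI,~4.3.2] has image the big cell $\Omega=U'\cdot P$, not $U\cdot U'\cdot P$. With $\Omega=U'\cdot P$, your scheme $V=\Omega\cap\Transp_G(P,Q)$ can have \emph{empty} fibres: for $G=\GL_2$ over a field $k$, $P$ upper-triangular, $P'=Q$ lower-triangular, the transporter $\Transp_G(P,Q)=wP$ is precisely the complement of $U'P$ (Bruhat decomposition), so $V(k)=\emptyset$ and your appeal to ``density of the big cell'' fails—an open dense subset of $G$ need not meet a given proper closed $P$-coset. If instead you enlarge to $\Omega=U\cdot U'\cdot P$, the multiplication $U\times U'\times P\to G$ is smooth with surjective geometric fibres (by the field case), so this $\Omega$ equals $G$ as a scheme; then $V=\Transp_G(P,Q)$ is a closed $P$-coset in $G$, and embedding $G$ closed in some $\uW(\gl_n)$ leaves $V$ only locally closed there, not open in any $\uW(M)$. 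Either way Proposition~\ref{prop_baire}\ref{prop_baire-a} is inapplicable to your $V$.

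The paper repairs this by changing the ambient affine space from $G$ to $U\cong\uW(E)$ (via [XXVI,~2.5]), using the isomorphism $U\simlgr\uOpp(/P)$, $u\mapsto{}^{u}P'$ of [XXVI,~4.3.6]. The right locus is $V=\uOpp(/P)\cap\uOpp(/Q)$: parabolics of type $\type(P')$ opposite to \emph{both} $P$ and $Q$. This $V$ is genuinely open in $\uW(E)$, it is quasi-compact by a cancellation argument (the inclusion $\uOpp(/Q)\hookrightarrow\Par_{\type(P')}(G)$ is quasi-compact since the source is affine and the target is separated over $\Spec(R)$, then base-change), and $V(k)\ne\emptyset$ over every residue field by \cite[4.3, 6.27]{BT}. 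Now Proposition~\ref{prop_baire}\ref{prop_baire-a} yields a common opposite $Q'={}^{u}P'\in V(R)$, and two applications of the bijection $\uOpp(/P)\simlgr\uLev(/P)$ together with [XXVI,~1.8] produce $u\in U(R)$, $u'\in U'(R)$ with $Q={}^{uu'}P$. In short: look for a common \emph{opposite}, not for a conjugating element.
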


\begin{proof} \ref{thm_BTD1}
%We denote the type of the $R$--parabolic subgroup $P$, as  defined in \cite[XXVI.3.2]{SGA3},  by $\type(P) \in \Of\bigl( \Dyn(G))(R)$.
By [XXVI, 3.6], we have an isomorphism $G/P \simlgr \Par_{\type(P)}(G)$
where   $\Par_{\type(P)}(G)$ stands for  the smooth projective $R$-scheme of parabolic subgroups of type $\type(P)$. Hence it suffices to prove the first part of \ref{thm_BTD1}.

We consider the $R$-subfunctor $\uOpp(/P)$ of $\Par_{\type(P)}(G)$
of parabolic subgroups which are opposite to $P$. By [XXVI, 2.5] (or \cite[5.4.3]{Co1}), we have $U \cong \uW(E)$ for a locally free $R$--module of finite type. (That $E$ has finite type, is not explicitly stated in [XXVI, 2.5], but follows for example from the representability of $\uW(E)$.)
%\quest{ In [XXVI, 2.5] and also in \cite[5.4.3]{Co1} it is only stated that $E$ is locally free. I think that it also follows that $E$ is of finite type. I find it strange that they do not say this. Please check.  We need finite type to conclude that $U$ is affine, which is important later on. }
Hence $U$ is an affine smooth $R$-scheme. Furthermore, according to [XXVI, 4.3.6], we have a commutative diagram
\[
\xymatrix@C=1.5cm@M=5pt{
U \ar@{^{(}->}[r] \ar[d]^{\displaystyle \wr}
& G/P' \ar[d]^{\displaystyle \wr}
\\
\uOpp(/P) \ar@{^{(}->}[r] & \Par_{\type(P')} (G) ,
}
\]
This implies that $\uOpp(/P)$ is representable by an affine smooth $R$-scheme, since this is so for $U$. % by \cite[XXVI.2.5]{SGA3}.

We consider the $R$-subfunctor $\uOpp(/P) \cap \uOpp(/Q)$ of  $\Par_{\type(P')}$;  it is representable by an open  $R$-subscheme $V$ of $U$. We claim that it is also quasi-compact. Indeed, we have a commutative diagram
\[ \xymatrix@C=1.5cm@M=5pt{
V \ar[rr] \ar[d]_a & &   \uOpp(/Q) \ar[d]^b \\
\uOpp(/P) \ar[rr] \ar[dr]_c && \Par_{\type(P')} (G) \ar[dl]^d\\
& \Spec(R)
}\]
in which the top rectangle is cartesian and $c$ and  $d \circ b$ are quasi-compact morphisms since $\uOpp(/P)$ and $\uOpp(/Q)$ are affine schemes. As $d$ is a
projective, %proper,
hence separated morphism, cancellation (\cite[IV$_1$, 1.1.2]{EGA} or \cite[Tag 03GI]{Stacks}) says that $b$ is quasi-compact morphism. By base change, $a$ is quasi-compact and so $c\circ a$ is quasi-compact too, i.e., $V$ is a quasi-compact scheme.

For every $R$-ring $A$, the set $V(A)$ is the set of $A$--parabolic subgroups of $G_A$ which are opposite to $P$ and to $Q$. For a residue field $R/\gm$ of $R$, two parabolic subgroups of $G_{R/\gm}$ of the same type are conjugate by \cite[4.3]{BT}, % or [XXVI, 3.3]
so that \cite[6.27]{BT} says that $V(R/\gm)$ is non-empty for each  maximal ideal $\gm$ of $R$.  Since $R$ is a LG-ring, Proposition \ref{prop_baire} implies that  $V(R) \not = \emptyset$. In other words, $P$ and $Q$ admit a common opposite $R$--parabolic subgroup $Q'$. We now use the isomorphism of $R$-functors [XXVI, 4.3.5.(i)]
\[
\uOpp(/P) \simlgr \uLev(/P), \quad H \mapsto H \cap P
\]
between parabolic subgroups of $G$ opposite to $P$ and Levi subgroups of $P$,   and note that it is $P$--equivariant. We consider the two Levi subgroups $L=P' \cap P$ and $M=Q' \cap P$ of $P$. Since $P$ is a subgroup of type (RC), [XXVI, 1.8] says that $M=\, {^u\!L}$ for an unique $u \in U(R)$, so that $Q'= \, {^u \! P'}$
 by using the above bijection. It follows that  $P'=\, {^{u^{-1}}\!Q'}$ is opposite to $P$ and to $^{u^{-1}}\!Q$. Similarly, there exists a unique $u' \in U'(R)$
such that ${^{u^{-1}} \! Q} \cap P'= \, {^{u'}\!(P \cap P')}$
and ${^{u^{-1}}\!Q}= \, {^{u'}\!P}$. Thus $Q= \, {^{uu'}\!P}$.
% and $x=[Q] \in G(R) .[P]$.
\sm

\noindent \ref{thm_BTD3} According to \cite[III, 3.3.1]{Gir}, transitivity in \ref{thm_BTD1} implies that the map $H^1(R,P) \to H^1(R,G)$ has trivial kernel. The classical twisting argument applies and yields the injectivity of the map  $H^1(R,P) \to H^1(R,G)$, see for example the proof of [XXVI, 5.10]. \sm

\noindent \ref{thm_BTD2} Let $g \in G(R)$ and apply \ref{thm_BTD1} %the first part
to $Q:=\, {^g\!P}$. Thus, there exists $u \in  U(R)$, and $u' \in U'(R)$
such that $Q= \, {^{u u'}\!P}$. It follows that $g\me u u' \in N_G(P)(R)=P(R)$. Thus $g \in   U(R) \,   U'(R)\, P(R)$. 
\end{proof}
\sm

We can also extend [XXVI, 5.3] to LG-rings.

\begin{prop} \label{thm_BTD22} Let $R$ be an LG-ring, and let  $G$ be a reductive $R$-group scheme. Let $P$, $P'$, $Q$ be three  parabolic $R$--subgroups of
$G$. Then there exists  $g \in  G(R)$ such that $^g\!Q$ and $P$ as well as ${^g\!Q}$ and $P'$ are in transversal position.
\end{prop}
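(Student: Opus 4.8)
The plan is to transcribe the proof of [XXVI, 5.3] into the LG-setting, feeding in the tools built up above. Put $\bft := \type(Q) \in \Of(\Dyn(G))(R)$ and let $Y := \Par_\bft(G)$ be the smooth projective $R$--scheme of parabolic subgroups of $G$ of type $\bft$ ([XXVI, 3.6]). Over the affine base $\Spec(R)$ every parabolic admits an opposite ([XXVI, 4.3.5(i)]), so applying Theorem~\ref{thm_BTD}\ref{thm_BTD1} to a pair of opposite parabolics of type $\bft$ shows that $G(R)$ acts transitively on $Y(R) = \Par_\bft(G)(R)$. Consequently it is enough to exhibit a single $R$--rational parabolic subgroup $Q_0$ of type $\bft$ which is in transversal position with $P$ and with $P'$; then $Q_0 = {}^gQ$ for a suitable $g \in G(R)$, and we are done.

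Thus the task reduces to producing an $R$--point of the $R$--subscheme $V \subset Y$ of parabolics of type $\bft$ transversal to both $P$ and $P'$; this is an open subscheme, transversality being an open condition ([XXVI, 4.5]). For every maximal ideal $\gm \ideal R$ the residue field $\kappa := R/\gm$ is a (semilocal) field, so [XXVI, 5.3] applied over $\kappa$ yields a $\kappa$--rational parabolic of type $\bft$ transversal to $P_\kappa$ and $P'_\kappa$, i.e.\ $V(\kappa) \ne \emptyset$ for all $\gm$. It remains to pass from this fibrewise information to an $R$--point of $V$, and here the LG-hypothesis enters through Proposition~\ref{prop_baire}\ref{prop_baire-a}. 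For this I would invoke the structure theory of transversal position: fixing an opposite $P^-$ of $P$ with Levi $L := P \cap P^-$, the description of the transversal locus in [XXVI, 4.5] (compare the diagram [XXVI, 4.3.6] used in the proof of Theorem~\ref{thm_BTD}) identifies the open locus $\uTransp(/P)$ of parabolics of type $\bft$ transversal to $P$ with a scheme fibred, via left conjugation by $\rad^u(P)$, over the affine space $\uW(\rad^u(P))$, the fibre being a scheme of parabolic subgroups of the smaller reductive group $L$. This lets one realize $V$ — or, inductively, the relevant slice of it — as an open quasi-compact subscheme of some $\uW(M)$, quasi-compactness coming from the noetherian-reduction argument of Lemma~\ref{lem_reg}, so that Proposition~\ref{prop_baire}\ref{prop_baire-a} together with the non-emptiness of all $V(\kappa)$ gives $V(R) \ne \emptyset$. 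An equivalent organization is an induction on the semisimple rank of $G$: having first arranged transversality to $P$, reduce the extra requirement ``transversal to $P'$'' to finding an $R$--rational parabolic of $L$ transversal to the two parabolics of $L$ cut out by $P'$ — Theorem~\ref{thm_maxtorus} and Theorem~\ref{thm_BTD} being available for $L$ as well — while fixing the residual coordinate in $\uW(\rad^u(P))$ by Proposition~\ref{prop_baire}\ref{prop_baire-a}; the base case (where $G$ is a torus, so $G$ is its only parabolic) is trivial.

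The step I expect to be the main obstacle is this last one: making the reduction of ``simultaneously transversal to $P$ and $P'$'' to ``an open quasi-compact condition in the affine space $\uW(\rad^u(P))$ plus a transversality problem internal to the Levi $L$'' fully precise and compatible with base change, and checking that the $L$--problem produced really has the same shape (three parabolic subgroups of $L$) so that either the transversality formalism of [XXVI, \S4] or the induction closes the loop. By contrast, the recasting via $Y = \Par_\bft(G)$, the field case obtained from [XXVI, 5.3] over the residue fields, and the quasi-compactness obtained by noetherian reduction are all routine given Theorem~\ref{thm_BTD}, Theorem~\ref{thm_maxtorus}, Lemma~\ref{lem_reg} and Proposition~\ref{prop_baire}.
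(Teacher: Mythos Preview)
Your reduction to finding an $R$-point of an open subscheme with nonempty residue-field fibres is exactly right, and invoking [XXVI, 5.3] over each $R/\gm$ together with Proposition~\ref{prop_baire}\ref{prop_baire-a} is also what the paper does. The substantive divergence is in how you build the affine chart to which Proposition~\ref{prop_baire} applies. You work inside $Y = \Par_\bft(G)$ and try to use the transversality structure relative to $P$, which fibres $\uTransp(/P)$ over the affine space $\uW(\rad^u(P))$ with fibres that are \emph{schemes of parabolics} of the Levi $L$ --- projective, not affine --- and you then need either a delicate compatibility argument or an induction on semisimple rank that you yourself flag as the main obstacle and do not carry out.

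The paper sidesteps this entirely: it works not in $Y$ but in $G$ itself, with $X = \Gen(Q/P) \cap \Gen(Q/P') \subset G$, and takes the affine chart supplied by $Q$ rather than by $P$. Choosing an opposite $Q'$ of $Q$ and writing $U = \rad^u(Q)$, $U' = \rad^u(Q')$, the immersion $\io \co U \times_R U' \hookrightarrow G$, $(u,u') \mapsto uu'$ (restricting the open immersion $U \times_R Q' \hookrightarrow G$ of [XXVI, 4.3.2]), exhibits $U \times_R U' \cong \uW(M)$ as an affine space inside $G$; one pulls back $X$ along $\io$ to $V = \io^{-1}(X) \subset \uW(M)$, open and quasi-compact. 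Over each residue field $k$ the decomposition $G(k) = U(k)\,U'(k)\,Q(k)$ of Theorem~\ref{thm_BTD}\ref{thm_BTD2} lets one replace any $g \in X(k)$ by $uu' \in V(k)$ (since ${}^{uu'q}Q = {}^{uu'}Q$), and Proposition~\ref{prop_baire}\ref{prop_baire-a} finishes. Your fibration/induction step is thus unnecessary; the $Q$-chart does it in one stroke.
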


\begin{proof} We denote by $\Gen(Q/P)$ the open subscheme of $G$ representing the subfunctor which assigns to an $S$--scheme $S'$ the set of elements $g\in G(S')$ such that $^g Q_{S'}$  and $P_{S'}$ are in transversal position [XXVI, 4.2.4(iii)]. We have to show that  $X= \Gen(Q/P) \cap \Gen(Q/P') \subset G$ has an $R$-point.

We claim that $\Gen(Q/P)\to G$ is quasi-compact. Indeed, since quasi-compact morphisms allow fpqc descent \cite[Tag 02KQ]{Stacks} %(see for example  \cite[14.51]{GW})
 and since $(G,P)$ is splitable \'etale-locally [XXVI, 1.14], we can assume that $G$ is split and that $P=P_I$ is a standard parabolic subgroup. In this case, $\Gen(Q/P) \to G$ arises by base change from the analogous morphism over $\ZZ$. But the latter is quasi-compact since $\Spec(\ZZ)$ is noetherian \cite[Tag 01OX]{Stacks}. Since quasi-compact morphism allow base change \cite[Tag 01K5]{Stacks}, we are done.
(Alternatively, one can use noetherian reduction as in the proof of \ref{lem_reg}, to prove that $\Gen(Q/P) \to G$ is quasi-compact.) In any case, since $\Gen(Q/P)$ and $\Gen(Q/P')$ are quasi-compact, so is their fibre product $X$.

We pick a parabolic subgroup $Q'$ of $G$ which is opposite to $Q$, ([XXVI, 4.3.5(i)]), put  $U= \rad^u(Q)$ and $U'= \rad^u(Q')$, and let $\io: U \times_R U' \to G$, $(u,u') \mapsto u \, u'$, be the immersion obtained by restricting the open immersion $U \times_R Q' \to G$ ([XXVI, 4.3.2.b(vi)]). We then  consider the fibre product
\begin{equation*}  \vcenter{\xymatrix{
      U \times_R U' \ar[r]^{\io} & G  \\
      V \ar[r] \ar@{^{(}->}[u] &
      \ar@{^{(}->}[u]  X= \mathrm{Gen}(Q/P) \cap \mathrm{Gen}(Q/P')
}}\quad .
\end{equation*}
The right immersion is open and quasi-compact, hence so is $V \hookrightarrow U \times_R U'$. %\cite[I, (6.6.4)]{EGA}.
Moreover, by [XXVI, 2.5], %also \cite[5.4.3]{Co1}
$U= \uW(M)$ for some locally free $R$--module $M$ of finite rank. The analogous fact then holds for $U \times_R U'$.

With the aim of applying  Proposition \ref{prop_baire}\ref{prop_baire-a} to $V \subset U\times_R U' = W(M)$, let $\gm \ideal R$ be a maximal ideal of $R$, and put $k = R/\gm$. By [XXVI, 5.3] we know that there exists $g\in G(k)$ such that $g$ fulfills the claim of the proposition for $R=k$. By \ref{thm_BTD}\ref{thm_BTD2} we can write $g$ in the form $g=uu'q$ with $u\in U(k)$, $u'\in U'(k)$ and $q\in Q(k)$. But then $uu'\in V(k)$. Hence, by \ref{prop_baire}\ref{prop_baire-a}, we get $V(R) \not= \emptyset$. Thus $X(R) \not= \emptyset$, as desired.
\end{proof}
\sm %\newpage

Corollary~\ref{cor_maxtorus} is the special case $P=Q$ of the following Proposition~\ref{prop_stan}.

\subsection{Parabolic subgroups in standard position [XXVI, 4.5.1]}\label{stan-rev} Let $S$ be a scheme and let $G$ be a reductive $S$--group scheme. Two parabolic subgroups $P$ and $Q$ of $G$ are said to be in {\em standard position\/},  if they satisfy the following equivalent conditions \ref{stan-revi}--\ref{stan-reviv}:
\begin{enumerate}[label={\rm (\roman*)}]
\item \label{stan-revi} $P \cap Q$ is smooth. \sm
\item \label{stan-revii} $P \cap Q$ is a subgroup of type {\rm (R)}. \sm
\end{enumerate}\begin{enumerate}[label={\rm (\roman*)$'$}]\setcounter{enumi}{1}
\item \label{stan-reviip} $P \cap Q$ is a subgroup of type {\rm (RC)}.
\end{enumerate}\begin{enumerate}[label={\rm (\roman*)}]\setcounter{enumi}{2}
\item\label{stan-reviii}  $P\cap Q$ contains (fpqc)-locally a maximal torus of $G$. \sm
\item \label{stan-reviv} $P \cap Q$ contains Zariski-locally a maximal torus of $G$.
 \end{enumerate}
Moreover, if $S=\Spec(R)$ for a semilocal ring, then \ref{stan-revi}--\ref{stan-reviv} is equivalent to
\begin{enumerate}[label={\rm (\roman*)}]\setcounter{enumi}{4}
\item\label{stan-revv} $P\cap Q$ contains a maximal torus.
\end{enumerate}
If $S$ is the spectrum of a field, every pair of parabolic subgroups is  in standard position by [XXVI, 4.1.1] or \cite[4.5]{BT}. \sm

Our goal is to generalize condition~\ref{stan-revv} to $S=\Spec(R)$ for $R$ an LG-ring, see \ref{prop_stan}\ref{prop_stanc}.  In order to do so, we need to retake part of the proof of [XXVI, 4.5.1]; at the same time we will add more details.

Regarding the equivalence of the conditions above, we note that the implications
\ref{stan-reviip} $\implies$ \ref{stan-revii} $\implies$ \ref{stan-revi} and
\ref{stan-reviv} $\implies$ \ref{stan-reviii} are trivial. The implication
\ref{stan-revi} $\implies$ \ref{stan-reviip} is easy: because of [XXVI, 4.1.1] and \cite[4.5]{BT}, every geometric fibre $K_{\bar s}$ of $K=P \cap Q$ is connected and contains a maximal torus $T_{\bar s}$ of $G_{\bar s}$ such that the roots of $K_{\bar s}$ with respect to $T_{\bar s}$ are a closed subset of the roots of $(G_{\bar s}, T_{\bar s})$. Thus, if $K$ is smooth, it is a subgroup of type (RC) by definition. We will show the remaining implications \ref{stan-reviii} $\implies$ \ref{stan-revi} and \ref{stan-reviii} $\implies$ \ref{stan-reviv} in the proof of \ref{prop_stan}, since they easily follow from the arguments in the proof of \ref{prop_stan}. The statements \ref{prop_stan}\ref{prop_stana} and \ref{prop_stan}\ref{prop_stanb} below are used in the proof of \ref{stan-reviii} $\implies$ \ref{stan-reviv} in [XXVI, 4.5.1], which is the main point of the proof the equivalences in [XXVI, 4.5.1]
%\footnote{ The implications \ref{stan-reviip} $\implies$ \ref{stan-revii} $\implies$ \ref{stan-revi} and \ref{stan-reviv} $\implies$ \ref{stan-reviii} are trivial; the implication \ref{stan-revi} $\implies$ \ref{stan-reviip} is easy: because of [XXVI, 4.1.1] and \cite[4.5]{BT}, every geometric fibre $K_{\bar s}$ of $K=P \cap Q$ is connected and contains a maximal torus $T_{\bar s}$ of $G_{\bar s}$ such that the roots of $K_{\bar s}$ with respect to $T_{\bar s}$ are a closed subset of the roots of $(G_{\bar s}, T_{\bar s})$. Thus, if $K$ is smooth, it is a subgroup of type (RC) by definition. We will show \ref{stan-reviii} $\implies$ \ref{stan-revi} and \ref{stan-reviii} $\implies$ \ref{stan-reviv} in the proof of \ref{prop_stan}.}.
\sm

Regarding \eqref{prop_stanb1} we note that $P\cap Q$ is a subgroup of type (RC) by \ref{stan-reviip} and hence has a unipotent radical $\rad^u(P\cap Q)$, defined in [XXII, 5.11.4].

%\comments{(03-05) I eliminated that $P\cap Q$ is smooth in \ref{prop_stan}, since it is stated in \ref{stan-rev}; but I mentioned it in the proof of \ref{prop_stan} below}

\begin{prop} \label{prop_stan} Let $P$ and $Q$ be two parabolic subgroups of a reductive $S$--group scheme $G$ such
%which are in standard position in the sense
that $P\cap Q$ contains a maximal torus of $G$ {\rm (fpqc)}-locally, cf.~{\em \ref{stan-rev}}. Then the following hold.
\begin{enumerate}[label={\rm (\alph*)}]
\item \label{prop_stana} The {\em (fpqc)}--image sheaf of the multiplication morphism
\[
      f\co \rad^u(P) \rtimes   (P\cap Q) \to G, \quad (u,h) \mapsto  u\, h
\]
is representable by a parabolic  subgroup $P'$ of  $G$, satisfying
\[ P' = (P\cap Q).\rad^u(P). \]

\item  \label{prop_stanb} The induced map $f'\co \rad^u(P) \rtimes   (P\cap Q) \to P'$ is smooth and
\begin{equation} \label{prop_stanb1}
    \rad^u(P\cap Q)=\rad^u(P') \cap (P \cap Q).
 \end{equation}

\item \label{prop_stanc}  If $S=\Spec(R)$ with $R$ an $LG$--ring,
then  $P\cap Q$ contains  a maximal $S$-torus of  $G$.
\end{enumerate}
\end{prop}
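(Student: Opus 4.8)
The plan is to deduce \ref{prop_stanc} from Lemma~\ref{lemRC}, applied with $H = P \cap Q$. Since $P \cap Q$ is a subgroup of $G$ of type {\rm (RC)} (condition \ref{stan-reviip}), it falls within the scope of Lemma~\ref{lemRC}, so it will suffice to check that the two hypotheses \ref{lemRC}\ref{lemRCi} and \ref{lemRC}\ref{lemRCii} hold for this choice of $H$; the lemma then delivers a maximal torus of $G$ contained in $P \cap Q$, which is exactly the assertion of \ref{prop_stanc}.

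Hypothesis \ref{lemRC}\ref{lemRCi}, that every reductive $R$-group scheme admits a maximal torus, is nothing but Theorem~\ref{thm_maxtorus}, available because $R$ is an LG-ring. For hypothesis \ref{lemRC}\ref{lemRCii} I would invoke parts \ref{prop_stana} and \ref{prop_stanb} of the present proposition, already established above: by \ref{prop_stana}, the product $P' = (P\cap Q).\rad^u(P)$ is a parabolic subgroup of $G$, and by \eqref{prop_stanb1} we have $\rad^u(P\cap Q) = \rad^u(P') \cap (P\cap Q)$. This is precisely the ``e.g.''\ form of hypothesis \ref{lemRC}\ref{lemRCii} (with $P'$ playing the role of the ambient parabolic subgroup and $H = P\cap Q$), which, as recorded in Lemma~\ref{lemRC}, forces $H^1(R,\rad^u(P\cap Q)) = 0$ (via [XXVI, 2.11]). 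With both hypotheses in place, Lemma~\ref{lemRC} finishes the proof.

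I do not anticipate a genuine obstacle at this stage: the substantive work has been front-loaded into parts \ref{prop_stana} and \ref{prop_stanb}, and the only thing to observe is that \eqref{prop_stanb1} matches hypothesis \ref{lemRC}\ref{lemRCii} on the nose, so that the LG-property of $R$ enters the argument only through the existence of maximal tori (Theorem~\ref{thm_maxtorus}). If one preferred not to quote Lemma~\ref{lemRC}, one could instead repeat its proof \emph{in situ} for $H = P\cap Q$: pass to the reductive quotient $M = (P\cap Q)/\rad^u(P\cap Q)$, choose a maximal torus of $M$ by Theorem~\ref{thm_maxtorus}, pull it back to a subgroup $N$ of $G$ of type {\rm (R)} contained in $P\cap Q$ with $\rad^u(N) = \rad^u(P\cap Q)$, and note that the functor of maximal tori of $N$ is a torsor under $\rad^u(P\cap Q)$, which is trivial by \eqref{prop_stanb1} and [XXVI, 2.11] --- but this merely duplicates what Lemma~\ref{lemRC} already packages.
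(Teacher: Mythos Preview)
Your proposal is correct and is essentially the same argument as the paper's: invoke Lemma~\ref{lemRC} with $H = P\cap Q$, using Theorem~\ref{thm_maxtorus} for hypothesis~\ref{lemRCi} and the identity \eqref{prop_stanb1} from part~\ref{prop_stanb} (together with the parabolic $P'$ from part~\ref{prop_stana}) for the ``e.g.''\ form of hypothesis~\ref{lemRCii}. The paper states this in one sentence; your elaboration and the optional in-situ rerun of the lemma's proof add nothing new but are entirely accurate.
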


\begin{proof} We put $K = P \cap Q$ and prove \ref{prop_stana} and \ref{prop_stanb} together. These statements are local with respect to the (fpqc)-topology. Hence, after passing to a suitable cover, we can assume that $K$ contains a split maximal torus $T$ and that $P$ and $Q$ have constant type.

We let $R$ be the root system of $(G,T)$, choose an order on $R$ and denote by $U_r$, $r\in R$, the associated root subgroups. First some general reminders. For each closed subset $R_0$ of $R$, we have an $S$--subgroup $H_{R_0}$ of type (R), which is characterized by its Lie algebra:  $\Lie(H_{R_0})=  \Lie(T) \oplus \bigoplus_{r \in R_0} \Lie(U_r)$, [XXII, 5.4.2, 5.4.7]. The
multiplication map
\begin{equation*}
\textstyle  \prod_{r \in R_0 \cap R_+} U_r  \times_S T \times_S  \prod_{r \in R_0 \cap - R_+} U_r  \; \longto \; H_{R_0}
\end{equation*}
is an open immersion, [XXII, 5.4.4], whose image we denote by  $\Omega_{R_+, R_0}$, the so-called {\em big cell\/} of $H_{R_0}$. We claim that $\Om_{R_+, R_0}$ is schematically dense in $H_{R_0}$. Indeed, each fibre $H_{R_0,s}$, $s\in S$, is an integral scheme in which $\Om_{R_+, R_0}\cap H_{R_0,s}$ is open and nonempty, hence dense, so that the reference \cite[IV$_3$, 11.10.10]{EGA} proves our claim \footnote{ This argument is taken from the web version of [XXII, 5.6.7, N.D.E. (38)]}.

Coming back to the situation at hand, in view of [XXVI, 1.14] we can assume that
$P=H_{R_1}$ and $Q=H_{R_2}$ for parabolic subsets  $R_1$ and $R_2$ of $R$. Since
according to \cite[Cor.\ 4.5]{BT}, the group $K$ has geometrically connected fibers, it is shown in [XXII, 5.4.5] that
\begin{equation}\label{prop_stan10}
               K=H_{R_1 \cap R_2},
\end{equation}
 and that $K$ is $S$--smooth and of type (R). Since $R_1 \cap R_2$ is a closed subset of $R$, it follows from the definition of groups of type (RC) that $K$ is such a group. Note that our arguments so far show the implication \ref{stan-reviii} $\implies$ \ref{stan-revi} of \ref{stan-rev} (as well as \ref{stan-reviii} $\implies$ \ref{stan-reviip}, which is however not needed).

We denote by $R_1^s=R_1 \cap  -R_1$ the symmetric part of $R_1$ and let
$R_1^a = R_1 \setminus R_1^s$ so that $U_{R_1^a} = \rad^u(P)$, [XXVI, 1.12].
Thus, using the notation established so far, the map $f$ of \ref{prop_stana} is defined on
\begin{equation}\label{prop_stan11}
     \rad^u(P) \rtimes K = U_{R_1^a} \rtimes H_{R_1 \cap R_2}.
\end{equation}
We put
\begin{equation*}\label{prop_stan111}
R'= (R_1 \cap R_2) \cup R_1^a= (R_1^s \cap R_2) \cup R_1^a.
\end{equation*}
One easily shows, see \cite[4.4]{BT}, that $R'$ is a closed subset of $R$ satisfying $R'\cup (-R') = R$, i.e., $R'$ is a parabolic subset of $R$. Therefore
$ P'=H_{R'}$ is a parabolic subgroup of $G$, [XXVI, 1.4]. Keeping in mind \eqref{prop_stan11}, we now claim that
\begin{equation}  \label{prop_stan2} \begin{split}
  &\text{\em $f$ factorizes through an $S$-homomorphism} \\
   & \quad f'\co U_{R_1^a} \rtimes H_{R_1 \cap R_2} \to P', \quad (u,h) \mapsto u \, h.
\end{split}
\end{equation}
In other words, we contend that the induced map $f_\sharp \co U_{R_1^a} \rtimes H_{R_1 \cap R_2} \to G/P'$ is the trivial map. Since $R_1^a \subset R'$, this is equivalent to $f_\flat \co H_{R_1 \cap R_2} \to G/P'$ being the trivial map. This  is the case when we restrict $f_\flat$ to the big open cell $\Omega_{R_+, R_1 \cap R_2}$ of $H_{R_1 \cap R_2}$  which is a schematically dense in $H_{R_1 \cap R_2}$. Hence, by
\cite[Tag 01RH]{Stacks}, it follows that $f_\flat$ and therefore also $f_\sharp$ is trivial, finishing the proof of \eqref{prop_stan2}.  Note that \eqref{prop_stan2} is part of the claim \ref{prop_stana} (surjectivity will be established later). Next we show the first part of \ref{prop_stanb}, namely
\begin{equation}  \label{prop_stan3}
\text{\em $f'$ is smooth.}
\end{equation}
It follows from \eqref{prop_stan111} and the definition of $U_{R_1^a}$, $H_{R_1 \cap R_2}$ and $P'$ that $\Lie(f'): \Lie\bigl(U_{R_1^a} \rtimes H_{ R_1 \cap R_2} \bigr) \to \Lie(P')$  admits a section (as $\calO_S$--module map), so is in particular surjective. The reference \cite[IV$_4$, 17.11.1(d)]{EGA} then shows that $f'$ is smooth along the unit section,  so is smooth everywhere. We have thus proven the first part of \ref{prop_stanb}. \sm

It follows, see e.g.~\cite[IV$_4$, 17.5.1]{EGA}, that $\Ker(f')$ is smooth and hence in particular flat,   so that the quotient $\bigl(U_{R_1^a} \rtimes H_{R_1 \cap R_2} \bigr)/\Ker(f')$ is representable by an $S$--group scheme $H'$ which is locally of finite presentation and equipped with a monomorphism $h'\co  H' \to P'$,  [XVI, 2.3].
 We consider the commutative diagram
 \[\vcenter{
\xymatrix@C=45pt{
  U_{R_1^a} \rtimes H_{R_1 \cap R_2}  \ar[rd]_{f'} \ar[rr] && H'
  %=\bigl(U_{R_1^a} \rtimes H_{R_1 \cap R_2} \bigr)/\Ker(f')
    \ar[ld]^{h'}
  \\
 &   P'}} .
\]
The quotient map $U_{R_1^a} \rtimes H_{R_1 \cap R_2} \to H'$ is smooth surjective and  $f'$ is smooth, hence  $h'\co H' \to P'$ is smooth in view of \cite[IV$_4$, 17.7.7]{EGA} (or see \cite[Tag 02K5]{Stacks}).
According to \cite[IV$_4$, 17.9.1]{EGA}, the smooth monomorphism $h'$ is an open immersion. Since $P'$ has smooth connected fibers, $h'$ is surjective and we conclude that $h'$ is an isomorphism. This then proves \ref{prop_stana} in full.

To finish the proof of \ref{prop_stanb}, it remains to show \eqref{prop_stanb1}, i.e., $\rad^u(K) = \rad^u(P') \cap K$. In view of \eqref{prop_stan10} and the definition of $P'$, this boils down to the equality
\[ (R_1 \cap R_2)^a = R'{}^a \cap R_1 \cap R_2,\]
which is a special case of Lemma~\ref{ablem}.

%\quest{There were problems with the previous proof of \eqref{prop_stanb1}, hence the change with the new, very technical  Lemma~\ref{ablem}. }

Once \ref{prop_stanb} established, the implication \ref{stan-reviii} $\implies$ \ref{stan-reviv} of \ref{stan-rev} follows: we apply [XXVI, 2.11] to the parabolic subgroup $P'$ and its subgroup $K$ of type (RC) (recall \ref{stan-rev}\ref{stan-reviii} $\implies$ \ref{stan-rev}\ref{stan-reviip}), and get that $K$ admits a Levi subgroup, hence contains Zariski-locally a maximal torus of $G$ by [XIV, 3.20].
\sm %\noindent

\ref{prop_stanc} By \ref{prop_stanb}, the parabolic subgroup scheme $P'$ and its subgroup $K$ of type (RC) satisfy the assumptions of Lemma~\ref{lemRC}, so that \ref{prop_stanc} follows by applying that lemma. \end{proof}

The notation of the following Lemma~\ref{ablem} is the same as that in the proof of \ref{prop_stan}, except that we consider subsets of an arbitrary free $\ZZ$--module $M$: For a subset $N\subset M$ we put $N^s = N \cap (-N)$ and $N^a = N \setminus N^s = N \setminus (-N)$.

\begin{lem}\label{ablem} Let $A$ and $B$ be subsets of a free $\ZZ$--module $M$. We consider $C= (A\cap B) \cup A^a = (A^s \cap B) \cup A^a$. Then
\[ C^a \cap A \cap B = (A \cap B)^a.\]
\end{lem}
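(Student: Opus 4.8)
The identity to prove is purely combinatorial: with $C = (A\cap B)\cup A^a = (A^s\cap B)\cup A^a$, show $C^a \cap A \cap B = (A\cap B)^a$, where $N^s = N\cap(-N)$ and $N^a = N\setminus(-N)$. The plan is to prove the two inclusions separately, working element-wise and tracking for each relevant element $x$ whether $-x$ lies in the various sets. The only subtlety is that $A$, $B$ are arbitrary subsets (not symmetric, not closed under anything), so $A^a$ need not be disjoint from $-A^a$ in a naive sense — but in fact $A^a\cap(-A^a)=\emptyset$ by definition, and this is the fact I expect to do the real work.

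\emph{First, the inclusion $(A\cap B)^a \subseteq C^a\cap A\cap B$.} Take $x\in (A\cap B)^a$, so $x\in A\cap B$ and $-x\notin A\cap B$. Then $x\in A\cap B\subseteq C$, and $x\in A\cap B$ is immediate, so it remains to check $x\notin C^s$, i.e. $-x\notin C$. Suppose for contradiction $-x\in C = (A\cap B)\cup A^a$. If $-x\in A\cap B$ that contradicts $x\in(A\cap B)^a$. If $-x\in A^a\subseteq A$, then (since also $x\in A$) we would have $x\in A^s$; but I also know $x\in A\cap B$, so $x\in A^s\cap B\subseteq C$ — that's consistent so far, the contradiction must come from elsewhere. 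Here is the correct argument: $-x\in A^a$ means $-x\in A$ and $-(-x)=x\notin A$; but $x\in A\cap B\subseteq A$, contradiction. Hence $-x\notin C$, so $x\in C^a$, and $x\in C^a\cap A\cap B$ as desired.

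\emph{Second, the inclusion $C^a\cap A\cap B\subseteq (A\cap B)^a$.} Take $x\in C^a\cap A\cap B$; I must show $-x\notin A\cap B$. Suppose $-x\in A\cap B$. Then $-x\in A\cap B\subseteq C$, so $-x\in C$; since $x\in C$ as well (from $x\in C^a\subseteq C$), this gives $x\in C^s$, contradicting $x\in C^a$. Hence $-x\notin A\cap B$, so $x\in(A\cap B)^a$, completing the proof.

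\emph{Main obstacle.} The argument is short, but the one place to be careful is the first inclusion, specifically the case $-x\in A^a$: it is tempting to reason using symmetry of $A^s$ in a way that doesn't apply, and the clean resolution is simply that $-x\in A^a$ forces $x\notin A$ by definition of $N^a$, which is incompatible with $x\in A\cap B$. Once one commits to unwinding every membership down to the defining conditions ``$y\in N$'' and ``$-y\in N$'', all four sub-cases close immediately, and no use of the hypothesis that $A$ or $B$ be closed subsets of a root system is needed — which is exactly why the lemma is stated for arbitrary subsets of a free $\ZZ$--module.
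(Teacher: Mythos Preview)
Your proof is correct. Both inclusions close cleanly by unwinding the definitions, and the observation that $-x\in A^a$ forces $x\notin A$ is exactly the right way to dispatch the nontrivial case in the first inclusion.

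Your route differs from the paper's. The paper does not argue by two inclusions; instead it first proves the auxiliary identities $(A^s\cap B)^a = A^s\cap B^a$, then $C^a = (A^s\cap B^a)\cup A^a$, and $(A\cap B)^a = (A^a\cap B)\cup(A\cap B^a)$, and finally computes $C^a\cap A\cap B$ by set algebra using these formulas. That approach makes the structure of $C^a$ explicit as a union, which is conceptually pleasant but involves more bookkeeping. Your direct element-chase is shorter and avoids the intermediate lemmas entirely; the trade-off is that you never write down what $C^a$ looks like on its own, which the paper's proof does along the way. For the purposes of this lemma either method is perfectly adequate.
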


\begin{proof} We will first establish several auxiliary statements, starting with \begin{equation}\label{ablem1}
(A^s \cap B)^a = A^s \cap B^a.
\end{equation}
Indeed, since $A^s = - A^s$ we have $x\in (A^s \cap B)^a \iff x\in A^s\cap B$ and $-x\notin (A^s \cap B) \iff x\in A^s \cap B$ and $-x\notin B\iff x\in A^s\cap B^a$.
Next, we claim
\begin{equation}\label{ablem2}
C^a = (A^s \cap B)^a \cup A^a = (A^s \cap B^a) \cup A^a.
\end{equation}
Because of \eqref{ablem1} we only need to prove the first equality. By definition of $C^a$, we have
\begin{equation}\label{ablem22}\begin{split}
 x\in C^a & \iff x\in (A^s\cap B) \cup A^a
 \\ & \quad \text{ and } -x\notin A^s\cap B \text{ and } -x\notin A^a.
\end{split}\end{equation}
This easily implies the inclusion $A^a \subset C^a$: if $x\in A^a$, then $x\in A$ and $-x\notin A= A^s \cup A^a$, in particular $-x\notin A^s \cap B$ and $-x\notin A^a$.  We also see that  $(A^s \cap B)^a = A^s \cap B^a  \subset C^a$ since $x\in A^s\cap B^a \iff x\in A^s \cap B$ and $-x\notin B$. Moreover $-x\in A^s$, so that $-x\notin A^a$ also holds. We have now proved $(A^s \cap B)^a \cup A^a \subset C^a$. For the proof of the other inclusion, consider $x\in A^s \cap B$, but $-x\notin A^s \cap B$ and $-x\notin A^a$, cf.~\eqref{ablem22}. Since $A^s = \pm A^s$ we get $x\notin B^s$, thus $x\in A^s \cap B^a= (A^s \cap B)^a$ by \eqref{ablem1}. This finishes the proof of \eqref{ablem2}. Next we observe
\begin{equation}\label{ablem4}
  (A\cap B)^a = (A^a \cap B) \cup (A \cap B^a)
\end{equation}
since $x\in (A\cap B)^a \iff x\in A \cap B$, but $-x \notin A$ or $-x\notin B \iff x\in A^a \cap B$ or $x\in A \cap B^a$. Finally, by \eqref{ablem2} and \eqref{ablem4},
\begin{align*}
 C^a \cap A \cap B &= \big( (A^s \cap B^a) \cup A^a \big) \cap (A \cap B)
 \\&= \big((A^s \cap B^a) \cup A^a\big) \cap B
  = (A^s \cap B^a ) \cup (A^a \cap B)
  \\ &= (A\cap B^a) \cup (A^a \cap B)  = (A\cap B)^a. \qedhere
\end{align*} \end{proof}

With Propositions~\ref{thm_BTD22} and \ref{prop_stan} in place, we can now also extend [XXVI, 5.4] to  LG-rings.

\begin{cor} \label{cor_BTD+}  Let $R$ be an LG-ring, let  $G$ be a reductive $R$-group scheme, and let $P$, $Q$ be two parabolic subgroups of $G$. Then there exists  $g \in  G(R)$ such that ${^g\!P} \cap Q$ is a  parabolic  subgroup of $G$.
\end{cor}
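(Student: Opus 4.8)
The plan is to mimic the proof of [XXVI, 5.4] (the semilocal case of the present statement), replacing the appeal to semilocality by Proposition~\ref{prop_baire}\ref{prop_baire-a} and using Propositions~\ref{thm_BTD22} and \ref{prop_stan} as the substitutes, valid over LG-rings, for [XXVI, 5.3] and [XXVI, 4.5.1]. Concretely, I would consider the $R$--subfunctor $X$ of $G$ sending $A\in\Ralg$ to the set of $g\in G(A)$ such that $^{g}P_A\cap Q_A$ is a parabolic subgroup of $G_A$. The first task is to check that $X$ is representable by an \emph{open quasi-compact} subscheme of $G$: openness of the locus in which $^{g}P$ and $Q$ are in osculating position belongs to the structure theory of [XXVI, \S4.5] — Proposition~\ref{prop_stan} being the relevant input there — and quasi-compactness follows by the noetherian reduction used in the proof of Lemma~\ref{lem_reg}: write $G,P,Q$ over a finitely generated $\ZZ$--subalgebra $R_i$ of $R$ so that $X=X_i\times_{R_i}R$ with $X_i$ an open, hence quasi-compact, subscheme of the noetherian scheme $G_i$, and use stability of quasi-compactness under base change. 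Since $^{gp}P={}^{g}P$ for every $p\in P$, the subscheme $X$ is stable under right translation by $P$.

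Next I would pass to a big-cell chart, exactly as in the proof of Proposition~\ref{thm_BTD22}. Choose a parabolic subgroup $P'$ of $G$ opposite to $P$ ([XXVI, 4.3.5(i)], the base being affine), and set $U=\rad^u(P)$, $U'=\rad^u(P')$. By [XXVI, 2.5] there are finite locally free $R$--modules $M_1,M_2$ with $U\cong\uW(M_1)$ and $U'\cong\uW(M_2)$, and restricting the open immersion $U\times_R P'\to G$ of [XXVI, 4.3.2.b(vi)] gives an open immersion $\io\co U\times_R U'\to G$, $(u,u')\mapsto uu'$. Put $V:=\io^{-1}(X)$; it is an open subscheme of $U\times_R U'\cong\uW(M_1\oplus M_2)$, and it is quasi-compact because $X$ is.

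Now I would verify the residue-field hypothesis of Proposition~\ref{prop_baire}\ref{prop_baire-a}. Fix a maximal ideal $\gm\ideal R$ and put $k=R/\gm$. Since $k$ is a field, hence semilocal, the original [XXVI, 5.4] provides $\bar g\in G(k)$ with $^{\bar g}P_k\cap Q_k$ parabolic, i.e.\ $\bar g\in X(k)$. Applying Theorem~\ref{thm_BTD}\ref{thm_BTD2} to the opposite pair $(P_k,P'_k)$ over $k$, write $\bar g=\bar u\,\bar u'\,\bar p$ with $\bar u\in U(k)$, $\bar u'\in U'(k)$, $\bar p\in P(k)$. Because $X$ is right $P$--stable, $\bar u\bar u'=\bar g\,\bar p^{-1}\in X(k)$, whence $(\bar u,\bar u')\in V(k)$; so $V(k)\ne\emptyset$. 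As $R$ is an LG-ring and $V$ is an open quasi-compact subscheme of $\uW(M_1\oplus M_2)$, Proposition~\ref{prop_baire}\ref{prop_baire-a} yields $V(R)\ne\emptyset$. For $(u,u')\in V(R)$ and $g:=uu'=\io(u,u')\in X(R)$, the subgroup $^{g}P\cap Q$ is parabolic, which is the assertion.

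The step I expect to require the most care is the very first one: the representability of $X$ by an open quasi-compact subscheme of $G$, and in particular the openness of the condition ``$^{g}P\cap Q$ is parabolic'' (this is where the structure theory of [XXVI, \S4.5], resting on Proposition~\ref{prop_stan}, is needed — while the quasi-compactness is then routine noetherian reduction). Once $X$ is in hand, everything else is formal and runs exactly parallel to the proof of Proposition~\ref{thm_BTD22}: a big-cell chart turns the global problem into finding a point of an open quasi-compact subscheme of some $\uW(M)$, the residue-field points are supplied by the semilocal [XXVI, 5.4] together with the Bruhat-type decomposition of Theorem~\ref{thm_BTD}\ref{thm_BTD2}, and the LG-property of $R$ does the rest via Proposition~\ref{prop_baire}.
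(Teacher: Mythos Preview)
Your approach has a genuine gap at the very step you flagged as delicate: the functor $X=\{g\in G:\ {}^gP\cap Q\text{ is parabolic}\}$ is \emph{not} representable by an open subscheme of $G$ in general, so Proposition~\ref{prop_baire}\ref{prop_baire-a} cannot be applied to it. A simple counterexample: take $G=\uGL_3$, $P$ the stabilizer of a line $\ell$ and $Q$ the stabilizer of a plane $\pi$. Then ${}^gP\cap Q$ is parabolic precisely when $g\ell\subset\pi$ (giving a Borel), and this is a \emph{closed} condition on $g$; for $g\ell\not\subset\pi$ the intersection is only a Levi subgroup. More drastically, for $G=\uGL_2$ and $P=Q$ a Borel one gets $X=P$, closed in $G$. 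The references you invoke do not help: [XXVI, \S4.5] and Proposition~\ref{prop_stan} treat \emph{standard} position (smoothness of the intersection), which over a field is automatic, whereas the osculating condition (intersection parabolic) is strictly stronger and is not open. The open functor in this circle of ideas is $\Gen(Q/P)$, the \emph{transversal} locus of [XXVI, 4.2.4(iii)], and that is what Proposition~\ref{thm_BTD22} exploits.

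The paper's proof avoids this obstruction by not trying to hit the osculating locus directly. Instead it bootstraps from Proposition~\ref{thm_BTD22}: choose $P'$ opposite to $P$, use \ref{thm_BTD22} to find $P'_1$ of type $\bft(P')$ transversal to both $P$ and $Q$, then use Proposition~\ref{prop_stan}\ref{prop_stanc} to get a maximal torus $T\subset P'_1\cap Q$, take $P_1$ opposite to $P'_1$ relative to $T$ (so $P_1\cap Q$ is parabolic by [XXVI, 4.4.5]), and finally conjugate $P$ to $P_1$ via Theorem~\ref{thm_BTD}\ref{thm_BTD1} since both are opposite to $P'_1$. The LG property enters only through the transversal step \ref{thm_BTD22} and the existence of $T$ in \ref{prop_stan}\ref{prop_stanc}, where openness is genuinely available.
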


\begin{proof} Our proof follows the proof of [XXVI, 5.4]. We include it for the convenience of the reader.

Let $P'$ be an opposite parabolic $R$--subgroup
of $G$. %[XXVI; 4.3.5(i)
Proposition~\ref{thm_BTD22}, applied to the triple $(P',P,Q)$ provides an $R$--parabolic $R$--subgroup  $P'_1$ of $G$ of same type as $P'$
such that $P'_1$ and $P$ (resp.\   $P'_1$ and $Q$)
are in transversal position, hence in particular in standard position. By Proposition~\ref{prop_stan}\ref{prop_stanc}, there exists a maximal torus $T$ of $G$ such that $T \subset P'_1 \cap Q$. Let $P_1$ be the opposite parabolic $R$--subgroup of $P'_1$ related to  $T$, [XXVI, 4.3.3]. It follows that $P_1 \cap Q$ is an $R$--parabolic subgroup [XXVI, 4.4.5]. Since $P$ and $P_1$ are both opposite to $P'_1$, they have same type, so that $P_1= \, {^g\!P}$ for some $g \in G(R)$,  according to Theorem \ref{thm_BTD}\ref{thm_BTD1}. Thus ${^g\!P} \cap Q$ is an $R$--parabolic
subgroup of $G$ as desired. \end{proof}
\sm

Having established Corollary~\ref{cor_BTD+} for LG-rings, we also get [XXVI, 5.5 and 5.7] for LG-rings replacing semilocal rings in {\em loc.~cit.}.
%We only formulate the first parts for later use in \ref{?}.

\begin{cor}[{[XXVI, 5.5(i)]} for semilocal rings]\label{cor55} Let $G$ be a reductive group scheme over an LG-ring, and let $P$ and $Q$ be two parabolic subgroups of $G$ such that $\bft(P)\subset \bft(Q)$. Then there exists $g\in G(S)$ such that $^g P \subset Q$. \end{cor}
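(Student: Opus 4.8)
The plan is to deduce this from Corollary~\ref{cor_BTD+} together with the structure theory of parabolic subgroups that we have assembled, imitating the proof of [XXVI, 5.5(i)]. First I would apply Corollary~\ref{cor_BTD+} to the pair $(P,Q)$: this yields $g_1 \in G(R)$ such that $P_1 \cap Q$ is a parabolic subgroup of $G$, where $P_1 = {}^{g_1}P$. Replacing $P$ by $P_1$, we may assume from the start that $P \cap Q$ is itself a parabolic subgroup of $G$. The point of this reduction is that once $P\cap Q$ is parabolic, one is in the situation governed by the type combinatorics: $P \cap Q$ is a parabolic subgroup contained in both $P$ and $Q$, and its type is related to $\bft(P)$ and $\bft(Q)$.

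Next I would bring in a maximal torus. By Corollary~\ref{cor_maxtorus} (equivalently Proposition~\ref{prop_stan}\ref{prop_stanc}), the parabolic subgroup $P\cap Q$ contains a maximal torus $T$ of $G$, hence so do $P$ and $Q$. Working with $T$ and a choice of Borel subgroup $B \subset P \cap Q$ containing $T$ (which exists Zariski-locally, but since $\Pic(R)=0$ by \eqref{LG-defe1} and by the quasi-split theory in \ref{qsgr}, one can arrange it; alternatively use that $P\cap Q$, being parabolic, contains a Borel after passing to the quasi-split inner form, or simply argue fibrewise), all three of $P$, $Q$, and $P \cap Q$ become standard parabolic subgroups with respect to $(B,T)$, i.e.\ of the form $P_I$, $P_J$, $P_{I\cap J}$ for subsets $I, J$ of the set of simple roots — here $\bft(P_I) \subset \bft(P_J)$ corresponds to an inclusion of the associated subsets of the Dynkin diagram (up to the $*$-action, but over a connected base the type is constant and over a general LG-ring one works componentwise, using \ref{LG-def}\eqref{LG-defa}). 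The hypothesis $\bft(P) \subset \bft(Q)$ translates into an inclusion of types which forces, at the level of standard parabolics containing a common Borel, that $P \subset Q$: indeed a standard parabolic is determined by its type, and type inclusion gives containment for standard parabolics sharing $B$.

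So the final step is purely formal: having reduced (after conjugating $P$ by some $g \in G(R)$) to the case where $P$ and $Q$ both contain a common Borel $B$ and $\bft(P) \subset \bft(Q)$, the classification of parabolic subgroups containing a fixed Borel by their types [XXVI, 3.2, 3.6] gives $P \subset Q$ directly. Composing the conjugations, we obtain the required $g \in G(R)$ with ${}^g P \subset Q$.

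The main obstacle I anticipate is the passage to a common Borel subgroup: over a general LG-ring a reductive group need not be quasi-split, so $P\cap Q$ need not contain a Borel subgroup of $G$ defined over $R$, and one cannot simply say ``reduce to standard parabolics.'' The correct route, as in [XXVI, 5.5], is to avoid Borels and argue instead with minimal parabolic subgroups or directly with the combinatorics of types relative to a maximal torus $T \subset P \cap Q$: one uses [XXVI, 3.8] (or the analogous statement on parabolic subgroups containing a given maximal torus, classified by parabolic subsets of the relative root system stable under the relevant Galois/fundamental-group action) to see that $P$ and $Q$, both containing $T$, correspond to parabolic subsets $R_P \supseteq R_Q$ of the root system once the type inclusion $\bft(P)\subset\bft(Q)$ is taken into account, possibly after conjugating $P$ by an element of the relative Weyl group realized in $G(R)$ — this last realization being exactly where Theorem~\ref{thm_BTD}\ref{thm_BTD1} (conjugacy of parabolics of the same type) is invoked to produce the element $g \in G(R)$. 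Handling the non-connected case and the $*$-action on types carefully, via \ref{LG-def}\eqref{LG-defa} to split off the connected components, is the bookkeeping that needs attention but presents no real difficulty.
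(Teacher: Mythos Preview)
Your first step is right: apply Corollary~\ref{cor_BTD+} to obtain $g\in G(R)$ with $^gP\cap Q$ parabolic. But everything after that is an unnecessary detour, and the detour does not close. You try to pick a Borel $B\subset {}^gP\cap Q$ over $R$ in order to compare $P$ and $Q$ as standard parabolics; you then correctly observe that $G$ need not be quasi-split, so no such $B$ need exist. Your fallback via a maximal torus, relative root systems, and Weyl-group elements realized in $G(R)$ is vague and would require further conjugation and further results you have not established.

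The paper (following the proof of [XXVI, 5.5(i)] verbatim) finishes in one line after Corollary~\ref{cor_BTD+}, with no Borel, no torus, and no second conjugation. The point you are missing is that once $^gP\cap Q$ is a parabolic subgroup, one has
\[
\bft({}^gP\cap Q)=\bft({}^gP)\cap\bft(Q)=\bft(P)\cap\bft(Q)=\bft(P),
\]
the first equality being a general fact about parabolics whose intersection is parabolic (checked fpqc-locally, where a common Borel does exist; no global Borel is needed). Since $^gP\cap Q\subset{}^gP$ are two parabolic subgroups of the same type with one contained in the other, they coincide, hence $^gP\subset Q$. That is the entire argument.
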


\begin{cor}[{[XXVI, 5.7(i)]} for semilocal rings]\label{cor57} Let $G$ be a reductive group scheme over an LG-ring.

\begin{inparaenum}[\rm (a)] \item Let $t,t' \in \mathrm{Of}(\underline{\Dyn}(G)\big)(S)$. If there exist parabolic subgroups of type $t$ and $t'$, then there also exists a parabolic subgroup of type $t\cap t'$.

\item \label{cor57-b} There exists a smallest element $\bft_{\min}$ in the set of types $\bft(P)$, $P$ any parabolic subgroup of $G$.
\end{inparaenum}
\end{cor}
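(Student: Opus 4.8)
The plan is to obtain (a) at once from Corollary~\ref{cor_BTD+}, and then to deduce (b) from (a) together with the characterisation \ref{prop_baire} of LG-rings.

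\emph{Part (a).} Given parabolic subgroups $P$, $Q$ of $G$ of types $t$, $t'$, Corollary~\ref{cor_BTD+} yields $g\in G(R)$ such that ${}^g\!P\cap Q$ is a parabolic subgroup of $G$. Conjugation by $G(R)$ does not change the type, so ${}^g\!P$ still has type $t$, and it remains to see that $\bft({}^g\!P\cap Q)=t\cap t'$. I would check this on geometric fibres: over a field, an intersection of two parabolic subgroups which is again parabolic contains a Borel subgroup, so the two subgroups are standard parabolics $P_I$, $P_J$ with respect to a common pinning and $P_I\cap P_J=P_{I\cap J}$, which gives $\bft(P_I\cap P_J)=\bft(P_I)\cap\bft(P_J)$ with the conventions of [XXVI, 3.2]. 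Since the type is a section of the finite \'etale $R$--scheme $\Of(\uDyn(G))$, and the value of such a section as well as the relations ``$=$'' and ``$\subseteq$'' among two of them are detected on geometric fibres, the fibrewise identity upgrades to $\bft({}^g\!P\cap Q)=t\cap t'$ over $R$.

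\emph{Part (b).} Let $\calT$ denote the set of types of parabolic subgroups of $G$. It contains $\bft(G)$, hence is non-empty, and by (a) it is closed under the operation $(t,t')\mapsto t\cap t'$. I would first reduce to the case that the rank of $G$ is constant: as $\Spec(R)$ is quasi-compact and the rank is locally constant, $R$ is a finite product $R=R_0\times\dots\times R_{r}$ on whose factors the rank is constant; each $R_i$ is an LG-ring by \ref{LG-def}\eqref{LG-defa}, a parabolic subgroup of $G$ over $R$ amounts to a tuple of parabolic subgroups over the $R_i$, and a smallest type over $R$ is assembled componentwise from smallest types over the $R_i$. Assuming now the rank constant, it is enough to produce one \emph{minimal} element $\bft_{\min}$ of $\calT$: for any $t\in\calT$ the type $\bft_{\min}\cap t$ then lies in $\calT$ and is $\subseteq\bft_{\min}$, so minimality forces $\bft_{\min}\cap t=\bft_{\min}$, i.e.\ $\bft_{\min}\subseteq t$, and $\bft_{\min}$ is in fact the smallest element. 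To get such an element I would construct a parabolic subgroup $P$ of $G$ over $R$ whose reduction $P_{\kappa(\gm)}$ is a minimal parabolic subgroup of $G_{\kappa(\gm)}$ for every maximal ideal $\gm\ideal R$. Then $\bft(P)$ is the sought minimal element: for any parabolic $Q$, the locus $\{x\in\Spec(R):\bft(P)(\bar x)\subseteq\bft(Q)(\bar x)\}$ is open and closed (being the comparison locus of two sections of the finite \'etale scheme $\Of(\uDyn(G))$) and contains every maximal ideal of $R$ (at $\gm$ the type $\bft(P)(\bar\gm)$ is, by construction, that of a minimal parabolic of $G_{\kappa(\gm)}$, hence $\subseteq\bft(Q)(\bar\gm)$); since a non-empty closed subset of $\Spec(R)$ meets $\Specmax(R)$, the locus is all of $\Spec(R)$, so $\bft(P)\subseteq\bft(Q)$.

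The construction of $P$ is the heart of the matter, and I expect it to be the main obstacle. It should follow the pattern already used in the proofs of Theorem~\ref{thm_BTD} and Proposition~\ref{thm_BTD22}: one recasts the existence of the desired parabolic --- obtained, starting from $P=G$, by repeatedly passing to a parabolic subgroup that is a proper parabolic of the current one exactly at those residue fields where its reductive quotient is still isotropic --- in terms of the $R$--points of a quasi-compact open subscheme of a scheme $\uW(M)$, where the intervening unipotent radicals of opposite parabolics are of the form $\uW(E)$ by [XXVI, 2.5]; this subscheme has non-empty $\kappa(\gm)$--points because reductive groups over fields possess minimal parabolic subgroups (\cite[Th.~4.13]{BT}), and Proposition~\ref{prop_baire}\ref{prop_baire-a} then furnishes an $R$--point; finitely many steps suffice, since the semisimple rank of the reductive quotient strictly decreases at the modified fibres and these ranks are bounded. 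The delicate point, absent from Theorem~\ref{thm_BTD}, is that the type of the parabolic one is chasing need not be constant over $\Spec(R)$, so one cannot work inside a single $\Par_t(G)$ and invoke \ref{prop_baire} once; isolating a single rank-lowering step --- which \emph{is} governed by a quasi-compact open in an affine space, with quasi-compactness checked by noetherian reduction as in Lemma~\ref{lem_reg} or by descent as in Proposition~\ref{thm_BTD22} --- is what makes the LG-characterisation applicable.
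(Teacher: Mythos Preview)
Your argument for (a) is correct and is exactly the paper's route: apply Corollary~\ref{cor_BTD+} to obtain a parabolic ${}^g\!P\cap Q$ and identify its type as $t\cap t'$ via [XXVI, 4.5.2] (which you recover fibrewise). For (b), though, you have worked far harder than the paper. The paper simply transports the one-line SGA3 deduction: once (a) makes $\calT$ stable under finite intersection, a least element exists because $\calT$ is finite---and finiteness holds because $\Of(\uDyn(G))(R)$ is finite when $G$ has constant type and $\Spec(R)$ is connected (a section of a finite \'etale scheme over a connected base lands in one of its finitely many connected components). No iterative construction of a fibrewise-minimal parabolic is required.

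Your longer route also has a real gap. Each ``shrinking step'' asks for a parabolic of the current Levi $L$ that is proper precisely at those residue fields where $\scD(L)$ is isotropic. But the type of this sought-for parabolic varies across $\Spec(R)$ and is not given in advance, so the problem does not visibly reduce to locating an $R$-point of a quasi-compact open in some $\uW(M)$, and Proposition~\ref{prop_baire} does not apply in the way you suggest. Your instinct that the general (non-connected) LG case deserves care is sound---an LG-ring may have infinitely many idempotents, so $\calT$ can be infinite and the naive finiteness argument then breaks down---but the paper's only use of (b), in~\ref{tti}, is over a connected ring, where the simple argument already suffices.
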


\subsection{The Tits index}\label{tti} Let $G$ be a semisimple group scheme over a {\em connected} LG-ring. In view of \ref{cor57}\eqref{cor57-b}, we can define the Tits index of $G$ as $(\Dyn(G), \bft_{\min})$, following \cite{Tits} for fields and the generalization to the semilocal case \cite[p.~202-203]{PS}.  Moreover, as the diligent reader will verify, all the facts used in \cite{PS} for semilocal rings hold in fact for LG-rings. In particular, \cite[Thm.~3]{PS} extends to connected LG-rings: {\em The Tits index of $G$ is one of those listed in Tits' original table\/} \cite[Table II]{Tits}, reproduced in \cite[App.]{PS}.

%\newpage
\section{Minimal parabolic subgroups, maximal split subtori}\label{sec:minipara}

The topic of this section is minimal parabolic subgroups, their Levi subgroups and maximal split tori in reductive group schemes over a {\em connected\/} scheme $S$. We investigate the relations between these subjects in Proposition~\ref{prop_flag} and in Lemma~\ref{lem_maxi} for arbitrary connected $S$. In case $S=\Spec(R)$ for $R$ a connected LG-ring %satisfying the primitivity criterion
and $G$ a reductive $R$--group scheme, we show in Theorem~\ref{thm_conj} that the group $G(R)$ acts transitively on the minimal parabolic subgroups and on the maximal split tori of $G$. This allows us to define the anisotropic kernel of $G$ in \ref{tak}.

For perspective we note here that Appendix~\ref{app:psr} contains results on parabolic and Levi subgroups based on the dynamic method. \sm

We start by reviewing the concepts used in Proposition~\ref{prop_flag}.

\subsection{Isotropic and irreducible reductive groups.}\label{rev-isog} Let $G$ be a reductive group scheme over an arbitrary scheme $S$.

Generalizing the well-known concept of an (an)isotropic reductive group over a field \cite[4.23]{BT} or over a connected semilocal scheme [XXVI, 6.13], we call $G$ {\em isotropic} if it admits a subgroup isomorphic to $\GG_{m,S}\,$; otherwise, $G$ is said to be {\em anisotropic}, \cite[7.1.1]{G2}. Because of \ref{cor_richardson}, these concepts coincides with those defined in \cite[\S5]{PS} for semisimple group schemes over a connected semilocal scheme.

Following \cite[3.5.1]{G2}, we say that $G$ is {\em reducible\/} (as reductive group scheme) if $G$ admits an everywhere proper parabolic subgroup $P$, i.e., $P_{\bar s} \subsetneq G_{\bar s}$ for all $s\in S$, and $P$ admits a Levi subgroup; otherwise, $G$ is said to be {\em irreducible\/}. The two notions are related by \cite[Thm.~7.3.1(2)]{G2}:

\begin{enumerate}[label={\rm (\alph*)}] \item\label{rev-isog-a}  \noindent {\em A reductive $S$--group scheme $G$ is isotropic if and only if $G$ is reducible or the central torus $\rad(G)$ is isotropic. In particular, a semisimple $S$--group scheme is isotropic if and only if it is reducible.}
\end{enumerate}
%\footnote{\label{fott3} In this paper the term ``irreducible'' will only used in the sense above, and should not be confused with $G$ being irreducible as a scheme}.
If $S$ is affine, the notion of reducibility of $G$ is equivalent to the existence of an everywhere proper parabolic subgroup [XXVI, 2.3], so the definition here agrees with the terminology of \cite[3.2]{GP07}.

We will generalize the concepts of (an)isotropic and (ir)reducible reductive group schemes in \ref{iaa}, and the criterion \ref{rev-isog-a} in \ref{cor_richardson}.

\subsection{Maximal split subtori, minimal parabolics, and faithful representations} A {\em maximal split subtorus\/} of $G$ is a  split subtorus of $G$ which is maximal among all split subtori of $G$. By [XXIV, 2.11], such a torus always exists if $S$ is a scheme with trivial Picard group, e.g., $S$ is an LG-scheme, and $G$ has constant type. Existence is also assured in the setting of Proposition~\ref{prop_flag}.

A parabolic subgroup of a reductive $S$--group $G$ is a {\em minimal parabolic subgroup\/} if for all parabolic subgroups $Q$ of $G$ with $Q\subset P$ we have $Q=P$, [XXVI, 5.6].

We use the term {\em faithful linear representation\/} of an $S$--group scheme $G$ in the sense of \cite{G3}, i.e., it is a group monomorphism $G \to \uGL(E)$ where $E$ is a locally free $\calO_S$--module of finite rank.

Now that we have explained all the concepts used in the following Proposition~\ref{prop_flag}, we can finally state it. In fact, Proposition~\ref{prop_flag} is a refinement of \cite[Prop.~7.4.1]{G2}, which proves the implications \eqref{prop_flag0}.

\begin{prop}\label{prop_flag}  Let $S$ be a {\em connected\/} scheme,  $G$ a reductive $S$-group scheme, and $T_0$  a split subtorus of $G$. Moreover, let $P$ be a parabolic subgroup of $G$ for which $\rmC_G(T_0) := \Cent_G(T_0)$ is a Levi subgroup of $P$; such a parabolic subgroup exists by {\rm [XXVI, 6.2]}.
We consider  the following assertions:

\begin{enumerate}[label={\rm (\roman*)}]
\smallskip

\item \label{prop_flag1}   $T_0$ is a maximal split $S$-subtorus of  $G$;
\smallskip

\item \label{prop_flag2}   $T_0$ is a maximal split $S$-subtorus of  $\rmC_G(T_0)$;
\smallskip

\item \label{prop_flag3}   The  reductive $S$-group $\rmC_G(T_0)/T_0$ is anisotropic;
\smallskip

\item \label{prop_flag4} The  reductive $S$-group $\rmC_G(T_0)/T_0$ is irreducible;
\smallskip

\item  \label{prop_flag5}  The  reductive $S$-group $\rmC_G(T_0)$ is irreducible;
\smallskip

\item\label{prop_flag6}  $P$  is a minimal parabolic subgroup of  $G$.
\end{enumerate} \sm
We then have the implications
\begin{equation}\label{prop_flag0}
\ref{prop_flag1} \Longleftrightarrow \ref{prop_flag2}
 \Longleftarrow \ref{prop_flag3}
 \Longrightarrow  \ref{prop_flag4}
 \Longleftrightarrow \ref{prop_flag5}
\Longleftrightarrow \ref{prop_flag6}
\end{equation}
Furthermore, if $G$ admits a faithful linear representation, then
$\ref{prop_flag2}
 \Longleftrightarrow \ref{prop_flag3}$.
\end{prop}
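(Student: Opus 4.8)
The plan is to take the chain of implications \eqref{prop_flag0} as given — it is exactly \cite[Prop.~7.4.1]{G2} — and to reduce the extra assertion to a single implication. Since \ref{prop_flag3} $\Longrightarrow$ \ref{prop_flag2} is already part of \eqref{prop_flag0}, all that remains is to prove \ref{prop_flag2} $\Longrightarrow$ \ref{prop_flag3} under the additional hypothesis that $G$ carries a faithful linear representation $\rho\co G\hookrightarrow \uGL(E)$ with $E$ a finite locally free $\calO_S$--module. I would argue by contraposition.

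Put $L=\rmC_G(T_0)$; this is a reductive $S$--group scheme containing $T_0$ as a central subtorus ([XIX, 2.8]), and $\rho$ restricts to a faithful representation of $L$. Assume \ref{prop_flag3} fails, so $L/T_0$ is isotropic, i.e.\ admits a subgroup $\bar S$ isomorphic to $\GG_{m,S}$. Let $\pi\co L\to L/T_0$ be the quotient morphism and set $\tilde S=\pi^{-1}(\bar S)\subset L$; it sits in a central extension of $S$--group schemes $1\to T_0\to\tilde S\to\bar S\to 1$. As an extension of $\GG_m$ by the torus $T_0$, the group $\tilde S$ is smooth, with connected commutative fibres, and each geometric fibre — an extension of a torus by a torus — is a torus. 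I claim $\tilde S$ is a \emph{split} $S$--torus; granting this, $T_0\subsetneq\tilde S$ is a split subtorus of $L$ strictly containing $T_0$ (strictly, because $\bar S\ne 1$), which contradicts \ref{prop_flag2} and finishes the proof.

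To prove the claim I would exploit the faithful representation. Since $T_0$ is split, $E$ decomposes over all of $S$ into the finitely many $T_0$--weight subbundles $E=\bigoplus_{\chi\in X^*(T_0)}E_\chi$; as $T_0$ is central in $L\supseteq\tilde S$, the subgroup $\tilde S$ centralizes $T_0$ and hence stabilizes each $E_\chi$, yielding a closed embedding $\tilde S\hookrightarrow\prod_\chi\uGL(E_\chi)$ under which $T_0$ maps into the central split torus $\prod_\chi\GG_m$ of scalar matrices. From this, the rigidity theory for groups of multiplicative type ([IX], [X]), applied to the commutative group $\tilde S$ together with this faithful action compatible with the $T_0$--grading, shows that $\tilde S$ is of multiplicative type, hence a torus since its fibres are connected. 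Finally, a torus which is an extension of the split torus $\bar S\cong\GG_{m,S}$ by the split torus $T_0$ over a connected base is itself split, because $\mathrm{Ext}^1$ of $\GG_m$ by a split torus in the fppf topology over $S$ vanishes — by Cartier duality this reduces to $H^1_{\rmet}(S,\ZZ)=0$, which holds for every connected $S$ since $\pi_1^{\rmet}(S)$ is profinite. This produces the split torus $\tilde S\supsetneq T_0$.

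The step I expect to be the main obstacle is exactly the one just indicated: passing from ``$\tilde S$ has torus fibres'' to ``$\tilde S$ is a torus''. Over an arbitrary connected base (which for our applications need be neither reduced nor normal) a smooth affine group scheme with torus fibres need not be of multiplicative type, and it is precisely here that the faithful linear representation of $G$ is indispensable, as it realises $\tilde S$ concretely inside a product of general linear groups in a way compatible with the $T_0$--weight grading. Everything else — the vanishing of the relevant $\mathrm{Ext}$ group, the compatibility with \cite[Prop.~7.4.1]{G2}, and the bookkeeping around $L=\rmC_G(T_0)$ — is routine.
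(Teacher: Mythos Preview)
Your overall architecture matches the paper's proof: both reduce to the implication \ref{prop_flag2} $\Longrightarrow$ \ref{prop_flag3}, argue by contradiction, pull back a copy of $\GG_{m,S}$ from $L/T_0$ to get an extension $1\to T_0\to \tilde S\to \GG_{m,S}\to 1$ inside $L$, and then try to show $\tilde S$ is a split torus strictly containing $T_0$.

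The gap is in your final step. Your claim that an extension of $\GG_{m,S}$ by a split torus over a connected base is automatically split, because ``$H^1_{\rmet}(S,\ZZ)=0$ for every connected $S$'', is false. The paper's own Remark~\ref{flagrem}\eqref{flagrem-a} (citing [X, 1.6]) exhibits a non-split rank-$2$ torus over a singular projective connected complex curve sitting in an exact sequence $1\to\GG_m\to T\to\GG_m\to 1$. The identification $H^1_{\rmet}(S,\ZZ)=\Hom_{\mathrm{cont}}(\pi_1^{\rmet}(S),\ZZ)$ you invoke is not valid for arbitrary connected $S$; it requires normality or a similar hypothesis, and over singular bases $\ZZ$-torsors need not be trivial.

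You have also misplaced the role of the faithful representation. That $\tilde S$ is of multiplicative type (hence a torus, having connected fibres) follows directly from [XVII, 7.1.1]: an extension of a group of multiplicative type by a group of multiplicative type is again of multiplicative type --- no faithful representation needed. The paper instead uses the faithful linear representation of $G$ (restricted to $\tilde S$) together with \cite[Thm.~3.3]{G3} to conclude that $\tilde S$ is \emph{isotrivial}, i.e.\ split by a finite \'etale cover, which one may take to be Galois with finite group $\Gamma$. The extension of character lattices $0\to\ZZ\to\widehat{\tilde S}(S')\to\ZZ^{r_0}\to 0$ is then an extension of $\Gamma$-modules, and now $H^1(\Gamma,\ZZ)=\Hom(\Gamma,\ZZ)=0$ because $\Gamma$ is \emph{finite}; this splits the extension and hence $\tilde S$. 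The finiteness of $\Gamma$ is exactly what your argument lacks.
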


\begin{proof}  It suffices to prove the implication \ref{prop_flag2}
$\implies$ \ref{prop_flag3} under the assumption that $G$ admits a faithful linear representation. Let $T_0$ be a maximal split $S$-subtorus of $\rmC_G(T_0)$.
We argue by contradiction and therefore assume that $\rmC_G(T_0)/T_0$ contains a  subtorus $\GG_{m,S}$. Denoting by $E\subset \rmC_G(T_0)$  the preimage of this subtorus $\GG_{m,S}$ under the quotient map, we get an exact sequence of $S$--group schemes
\begin{equation}\label{prop_flag00}
 1 \longto T_0 \longto E \longto \GG_{m,S} \longto 1.
\end{equation}
By [XVII, 7.1.1], $E$ is of multiplicative type. Clearly, it is also of finite type and it has a faithful linear representation. Therefore, by \cite[3.3]{G3},
%\comments{(03-25) Corrected reference above from previously \cite[3.3]{G2} to \cite[3.3]{G3}.}
the group scheme $E$ is isotrivial, i.e., it is split by a finite \'etale cover, which we can assume to be a (connected) Galois cover $S'\to S$, whose Galois group we denote by $\Ga$ \footnote{\label{footnote1}  We use here: {\em Given a finite \'etale cover $T\to S$ of a connected scheme $S$, there exists a connected Galois cover $S'\to S$ dominating $T\to S$.} Proof: Since finite \'etale covers form a Galois category with respect to the base change functor induced by any geometric point of $S$ \cite[Tag 0BNB]{Stacks}, the axioms of a Galois category \cite[Tag 0BMZ]{Stacks} imply that $T\to S$ is a product of finite \'etale connected covers. The existence of a Galois cover as claimed then follows from \cite[Tag 0BN2]{Stacks}, see the discussion in \cite[Tag 03SF]{Stacks}.}.
% \cite[Tag 03SF]{Stacks}\quest{This should also be in [SGA1]. Reference needed}
According to the dictionary [X, 1.1], the extension \eqref{prop_flag00}  corresponds to an extension
\[ 0 \longto \ZZ \longto \widehat E(S') \longto \ZZ^{r_0} \longto 0
\]
of $\Ga$--modules, where $\ZZ^{r_0}= \Hom_{S'-{\rm gr}}(E, \GG_{m,S'})$.
%% previously $\ZZ^{r_0}= \widehat T_0(S')$.
The $\Ga$--module $\ZZ$ is the trivial module. Hence $H^1(\Gamma, \ZZ)=\Hom_{\rm Groups}(\Ga, \ZZ) = 0$
%% Weibel, 6.4.6 for the first equation
because $\Ga$ is finite. Thus, this extension splits, so that $E$ is a split $S$--subtorus of $\rmC_G(T_0)$. This contradicts the maximality of $T_0$.
\end{proof}
%\comments{The reference [X; 1.6] in \ref{flagrem}\eqref{flagrem-a} needs to be adjusted to the Example~\ref{ex_tori}.}

\begin{remarks}\label{flagrem} \begin{inparaenum}[(a)] \item \label{flagrem-a}
The implication
$\ref{prop_flag2} \Longrightarrow \ref{prop_flag3}$
is false without an additional assumption. Indeed, according to [X, 1.6],
there exists a non-split rank $2$ torus $T$ over a singular projective connected
complex curve $S$ such that $T$ admits a split subtorus $T_0\cong \GG_m$ and an $S$--exact sequence $1 \to T_0 \to T \to   \GG_m \to 1$. Thus, $T_0$ is a maximal split subtorus of the reductive $S$--group $T=\rmC_T(T_0)$, yet $\rmC_T(T_0)/T_0$ is isotropic.  We note that $T$ is locally split in the sense of \ref{bnr}.
\sm

\item The implication
$\ref{prop_flag4} \Longrightarrow \ref{prop_flag3}$
is already false for $G=\GG_m= T$ and the obvious split subtorus $T_0 = \{1\}$.
One may then be tempted to conjecture that $\ref{prop_flag4} \Longrightarrow \ref{prop_flag3}$ holds at least in the semisimple case. But this is false too: indeed, let $G= \SL_3$ and let $T_0$ be the image of $\GG_{m,S}$ under the diagonal homomorphism $t \mapsto \diag(t, t^2, t^{-3})$. The centralizer $\rmC_G(T_0)$ is the standard diagonal maximal torus of $G$ of rank $2$. Hence $\rmC_G(T_0)/T_0\cong \GG_{m,S}$ is irreducible, but also isotropic.
\sm

\item The reader may have noticed that our proof of  \ref{prop_flag2}
$\implies$ \ref{prop_flag3} in \ref{prop_flag} only uses that $\rmC_G(T_0)$ has a faithful linear representation. This is however equivalent to our assumption on $G$: if  $\rmC_G(T_0)$ has a faithful linear representation, then so does $\rad(G)$, the unique maximal torus of the centre $\euZ(G)$ of $G$, %%[XXII; 4.3.6]
and therefore also $G$ by \cite[Thm.~4.1]{G3}.
\end{inparaenum} \end{remarks}

\subsection{The dynamic description of parabolic and Levi subgroups}\label{dps}
Let $S$ be a scheme and let $H$ be a reductive $S$--group scheme. We recall  the ``dynamic method'' in the sense of \cite[4.1, 5.2]{Co1}.   %section 4.1 and 5.2

We will refer to a homomorphism $\la \co \GG_{m,S} \to H$ of $S$--groups as a {\em cocharacter} (or a {\em $1$--parameter subgroup\/} \cite[Def.~4.1.6]{Co1}). Any cocharacter gives rise to a conjugation action of $\GG_m$ on $H$, and so for a fixed $h\in H(S)$ to an orbit map $\mathrm{orb}_h \co \GG_m \to H$, which assigns to an $S$--scheme $T$ and $t\in \GG_m(T)$ the element $\la(t) h|_T \la(t)\me \in H(T)$.
If $\mathrm{orb}_h$ extends to a morphism $\GG_a \to H$, i.e., an element of $H(\GG_a)$, such a morphism is unique (since $H$ is separated) and we will abbreviate this by ``$\la(t) h \la(t)\me \in H(\GG_a)$'' (the abbreviation ``$\lim_{t\to 0} \la(t) h \la(t)\me$ exists'' is used in \cite[\S4.1]{Co1}).
One then defines an $S$--functor $\underline{\rmP_H(\la)}$ which assigns to an $S$--scheme $T$ the group
\[
\underline{\rmP_H(\la)}(T) = \big\{ h\in H(T) : \la(t) \, h_{\GG_{m,T}}\, \la(t)\me  \in H(\GG_{a,T})\big\}
\]
It is known that $\underline{\rmP_H(\la)}$ is represented by a parabolic subgroup scheme of $H$, denoted $\rmP_H(\la)$. The centralizer $\Cent_H(\la)= \rmC_H(\la)$ of $\la$ is a Levi subgroup of $\rmP_H(\la)$; its unipotent radical is given by those elements for which the limit is $1$ \cite[4.1.7]{Co1}.

If $S$ is a connected scheme, and $(P,L)$ is a pair consisting of a parabolic subgroup of  $G$ and a Levi subgroup $L$ of $P$, then there exists a cocharacter $\la$ such that $(P,L) = \big(\rmP_G(\la), \rmC_G(\la)\big)$, \cite[Thm.~7.3.1(1)]{G2}. We will refine this result in \ref{thm_richardson}. We point out that even over a connected $S$ there may exist parabolic subgroups which do not contain Levi subgroups and hence are not amenable to a dynamic description, \cite[Ex.~5.4.9]{Co1}. However, this cannot happen if $S$ is affine [XXVI, 2.3] (or \cite[5.4.8]{Co1}). Thus, over an affine base every parabolic subgroup $P$ has the form $P=\rmP_H(\la)$ for a suitable cocharacter $\la$ (\cite[7.3.2]{G2}).

In \ref{lem_maxi}, we will use the dynamic method to describe minimal parabolic subgroups and their Levi subgroups over a connected base.

\begin{lem}[Minimal parabolic subgroups and their Levi subgroups]\label{lem_maxi} %Let $R$ be a connected ring and let $G$ be a reductive $R$--group scheme.
Let $G$ be a reductive group scheme over a\/ {\em connected\/} scheme $S$.
\sm

\begin{enumerate}[label={\rm (\alph*)}]

\item \label{lem_maxi1} $G$ admits a minimal parabolic subgroup. \sm

\item \label{lem_maxi2}  Let $L$ be a Levi subgroup of a minimal  parabolic $R$-subgroup $P$ of $G$, and let $T_0$ be a maximal split $S$-subtorus of the torus $\rad(L)$; it is unique by Lemma~{\rm \ref{lem_torus}\ref{lem_torus2}}. Then $L=\Cent_G(T_0)$.  \sm

\item \label{lem_maxi3} We assume furthermore  that $G$ admits a
faithful linear representation, and let $T_0$ be a maximal split subtorus of
$G$. Then $\Cent_G(T_0)$ is a Levi subgroup of a minimal parabolic subgroup of $G$.
\end{enumerate}
\end{lem}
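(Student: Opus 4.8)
The plan is to prove the three parts in turn, relying on the dynamic description recalled in \ref{dps} and on Proposition~\ref{prop_flag}.

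For \ref{lem_maxi1}, I would argue by minimising relative dimension. For a parabolic subgroup $P\subseteq G$ the relative dimension $d(P):=\dim(P/S)$ is a non-negative integer, constant over the connected base $S$ because $P$ is smooth with geometrically connected fibres. Since $G$ itself is a parabolic subgroup of $G$, the set of these integers is non-empty and hence has a least element, realised by some parabolic subgroup $P$; I claim this $P$ is minimal. Indeed, if $Q\subsetneq P$ were a parabolic subgroup of $G$, then $Q$ would also be a parabolic subgroup of $P$: the base-point section $S\to G/P$ is a section of the separated morphism $G/P\to S$, hence a closed immersion, so $P/Q=(G/Q)\times_{G/P}S$ is a closed subscheme of the $S$-proper scheme $G/Q$, whence $P/Q\to S$ is proper. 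As $Q\subsetneq P$, the flag scheme $P/Q$ has positive relative dimension over $S$, so $d(Q)<d(P)$, contradicting the choice of $P$.

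For \ref{lem_maxi2}, I would invoke the dynamic description of the pair $(P,L)$ over the connected base $S$: by \cite[Thm.~7.3.1(1)]{G2} (see \ref{dps}) there is a cocharacter $\la\co\GG_{m,S}\to G$ with $P=\rmP_G(\la)$ and $L=\rmC_G(\la)=\Cent_G\bigl(\la(\GG_{m,S})\bigr)$. Since $\GG_m$ is commutative, $\la$ factors through $\rmC_G(\la)=L$, and every element of $L=\rmC_G(\la)$ commutes with $\la$; thus $\la(\GG_{m,S})$ is a split central subtorus of $L$ and hence lies in $\rad(L)$. By the uniqueness in Lemma~\ref{lem_torus}\ref{lem_torus2}, this split subtorus of $\rad(L)$ is contained in $T_0$, so $\Cent_G(T_0)\subseteq\Cent_G\bigl(\la(\GG_{m,S})\bigr)=L$; and $T_0\subseteq\rad(L)\subseteq\euZ(L)$ gives the reverse inclusion. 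Hence $L=\Cent_G(T_0)$. (Minimality of $P$ plays no role here.)

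For \ref{lem_maxi3}, I would start from [XXVI, 6.2], which provides a parabolic subgroup $P$ of $G$ having $\rmC_G(T_0)$ as a Levi subgroup; then everything reduces to Proposition~\ref{prop_flag}. Since $T_0$ is a maximal split subtorus of $G$, condition \ref{prop_flag1} there holds for $(T_0,P)$, and because $G$ admits a faithful linear representation the implications in \eqref{prop_flag0} together with the equivalence \ref{prop_flag2}$\iff$\ref{prop_flag3} added in Proposition~\ref{prop_flag} yield \ref{prop_flag1}$\implies$\ref{prop_flag6}, i.e.\ $P$ is a minimal parabolic subgroup, so $\rmC_G(T_0)$ is a Levi subgroup of a minimal parabolic. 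I expect the only genuine subtleties to lie in Part~\ref{lem_maxi2} — namely justifying the identification $\rmC_G(\la)=\Cent_G(\la(\GG_{m,S}))$ and the centrality of $\la(\GG_{m,S})$ in $L$ over an arbitrary connected base — while Part~\ref{lem_maxi1} rests on the closed-immersion fact and Part~\ref{lem_maxi3} is a direct bookkeeping consequence of Proposition~\ref{prop_flag}, already proved.
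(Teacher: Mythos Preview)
Your proof is correct and follows essentially the same route as the paper. Part~\ref{lem_maxi1} is the same dimension-bound argument (the paper simply says chains of parabolics are bounded by the relative dimension of $G$); part~\ref{lem_maxi3} is identical, invoking Proposition~\ref{prop_flag}, \ref{prop_flag1}$\Rightarrow$\ref{prop_flag6}. In part~\ref{lem_maxi2} the paper first replaces $\la$ by a monomorphism via $\GG_{m,S}/\Ker(\la)\cong\GG_{m,S}$ (Example~\ref{lem_quo-ex}) and then argues $L\subset\Cent_G(T_0)\subset\rmC_G(\la)=L$, whereas you work directly with the image $\la(\GG_{m,S})$; both amount to the same containment chain, and your observation that minimality of $P$ is not used is correct.
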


\begin{proof} Since $S$ is connected, the type of $G$ is constant [XXII, 2.8], and so is the type of any parabolic subgroup of $G$. \sm

%\noindent
\ref{lem_maxi1} The length of a strictly increasing chain of parabolic subgroups of $G$ is bounded by the relative dimension of $G$. So it is obvious that a minimal $S$--parabolic subgroup of $G$ exists. \sm

%\noindent
\ref{lem_maxi2} According to \cite[Thm.~7.3.1]{G2}, there exists an $S$--group homomorphism $\lambda\co \GG_{m,S} \to G$ such that $P=\rmP_G(\lambda)$ and $L=\rmC_G(\lambda)$ (notation of \ref{dps}). We can suppose that $\la$ is non-trivial. Hence, by \ref{lem_quo-ex}, $\GG_{m,S} / \Ker(\lambda) \cong \GG_{m,S}$. Without loss of generality, we can then assume that
$\lambda$ is a monomorphism. In particular, $\lambda$ factors through the unique
maximal split $S$-subtorus $T_0$ of $\rad(L)$. We have $L \subset \rmC_G(T_0) \subset  \rmC_G(\lambda)=L$ so that $L=\rmC_G(T_0)$ as desired. \sm

\noindent \ref{lem_maxi3} This follows from Proposition~\ref{prop_flag}, \ref{prop_flag1} $\Longrightarrow$ \ref{prop_flag6}.
\end{proof}

\begin{thm}[Transitivity of $G(R)$ on minimal parabolic subgroups and maximal split tori]\label{thm_conj} Let $R$ be a connected LG-ring,  and let  $G$ be a reductive $R$-group scheme.

\begin{enumerate}[label={\rm (\alph*)}]

\item \label{thm_conj1} The group $G(R)$ acts transitively on
the minimal $R$--parabolic subgroups of $G$. \sm

\item \label{thm_conj2}
We assume furthermore that  $G$ admits
a faithful linear representation (e.g.\ $G$ is semisimple).
Then the  group $G(R)$ acts transitively on
the maximal  split $R$--subtori of $G$. \sm
\end{enumerate}
\end{thm}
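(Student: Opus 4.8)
The plan is to obtain \ref{thm_conj1} from Theorem~\ref{thm_BTD}\ref{thm_BTD1} by way of the minimal type $\bft_{\min}$, and then to bootstrap \ref{thm_conj2} out of \ref{thm_conj1} by transporting a minimal parabolic subgroup \emph{together with} a Levi subgroup and recovering the split torus from the radical of that Levi subgroup.

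For \ref{thm_conj1} I would first note that, $R$ being connected, the types of $G$ and of its parabolic subgroups are constant sections [XXII, 2.8]. By Corollary~\ref{cor57}\eqref{cor57-b} there is a smallest element $\bft_{\min}$ in the set of types of parabolic subgroups of $G$, realized by some parabolic subgroup $P_0$. Given any minimal parabolic subgroup $P$ of $G$ (such exist, e.g.\ by Lemma~\ref{lem_maxi}\ref{lem_maxi1}), the inclusion $\bft(P_0)=\bft_{\min}\subseteq\bft(P)$ lets me apply Corollary~\ref{cor55} to produce $g\in G(R)$ with ${}^{g}P_0\subseteq P$; minimality of $P$ then forces ${}^{g}P_0=P$, so $\bft(P)=\bft_{\min}$. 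Hence all minimal parabolic subgroups of $G$ have the single type $\bft_{\min}$, and applying Theorem~\ref{thm_BTD}\ref{thm_BTD1} to a pair $(P,P')$, where $P'$ is an opposite of $P$ (available over $\Spec(R)$ by [XXVI, 4.3.5(i)]), shows that every other one is $G(R)$-conjugate to $P$.

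For \ref{thm_conj2}, let $T_0,T_1$ be maximal split $R$-subtori of $G$ and put $L_i=\Cent_G(T_i)$. Using the faithful linear representation (present when $G$ is semisimple by \cite[Cor.~4.3]{G3}), Lemma~\ref{lem_maxi}\ref{lem_maxi3} tells me that each $L_i$ is a Levi subgroup of a minimal parabolic subgroup $P_i$ of $G$. By \ref{thm_conj1} I may choose $g\in G(R)$ with ${}^{g}P_0=P_1$; then ${}^{g}L_0$ and $L_1$ are both Levi subgroups of $P_1$, so [XXVI, 1.8] yields $v\in\rad^u(P_1)(R)$ with ${}^{v}({}^{g}L_0)=L_1$. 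Setting $h:=vg\in G(R)$, I get
\[
\Cent_G({}^{h}T_0)={}^{h}\Cent_G(T_0)={}^{h}L_0=L_1=\Cent_G(T_1).
\]
It then remains to upgrade this to ${}^{h}T_0=T_1$. Both ${}^{h}T_0$ and $T_1$ are split subtori that are central in $L_1$ (the displayed equality says $L_1$ centralizes each), hence both lie in $\rad(L_1)$; since a split subtorus of $\rad(L_1)$ is in particular a split subtorus of $G$, and since ${}^{h}T_0$ (a conjugate of the maximal split torus $T_0$) and $T_1$ are maximal split in $G$, both ${}^{h}T_0$ and $T_1$ are maximal split subtori of $\rad(L_1)$. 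As $L_1$ is a Levi subgroup of a minimal parabolic subgroup, Lemma~\ref{lem_maxi}\ref{lem_maxi2} (via Lemma~\ref{lem_torus}\ref{lem_torus2}) guarantees that $\rad(L_1)$ has a \emph{unique} maximal split subtorus, so ${}^{h}T_0=T_1$, completing \ref{thm_conj2}.

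I expect the main obstacle to be this final step of \ref{thm_conj2}: passing from the equality of centralizers $\Cent_G({}^{h}T_0)=\Cent_G(T_1)$ to the equality of the tori themselves. This is where the linearity hypothesis on $G$ is genuinely used — it enters through Lemma~\ref{lem_maxi}\ref{lem_maxi3} — and the uniqueness statement of Lemma~\ref{lem_maxi}\ref{lem_maxi2} is the precise input that makes the deduction go through (cf.\ Remark~\ref{flagrem}). A minor but essential point along the way is that \ref{thm_conj1} only conjugates $P_0$ to $P_1$ and not $L_0$ to $L_1$, a gap bridged by the $\rad^u(P_1)(R)$-conjugacy of Levi subgroups [XXVI, 1.8]. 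For \ref{thm_conj1} itself the real weight has already been carried earlier, in Theorem~\ref{thm_BTD} and its Corollaries~\ref{cor55} and \ref{cor57}.
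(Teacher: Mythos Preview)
Your proof is correct; the difference from the paper lies only in part \ref{thm_conj1}. The paper argues more directly: given minimal parabolics $P$ and $Q$, Corollary~\ref{cor_BTD+} produces $g\in G(R)$ with ${}^{g}P\cap Q$ parabolic, and minimality of both $P$ and $Q$ forces ${}^{g}P\cap Q=Q={}^{g}P$. Your route---first showing via Corollaries~\ref{cor55} and \ref{cor57}\eqref{cor57-b} that all minimal parabolics share the type $\bft_{\min}$, then invoking Theorem~\ref{thm_BTD}\ref{thm_BTD1}---is a step longer but has the side benefit of making the equality $\bft(P)=\bft_{\min}$ explicit. Since \ref{cor55} and \ref{cor57} are themselves consequences of \ref{cor_BTD+}, the two arguments rest on the same foundation.

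For part \ref{thm_conj2} your argument is essentially the paper's: reduce to a common minimal parabolic via \ref{thm_conj1}, then to a common Levi via [XXVI, 1.8], and finish by uniqueness of the maximal split subtorus of $\rad(L_1)$. The paper cites Proposition~\ref{prop_flag}, \ref{prop_flag1}$\Rightarrow$\ref{prop_flag2}, to pass from ``maximal split in $G$'' to ``maximal split in $\rad(L_1)$'', whereas you argue this directly; both are fine. One minor remark: the uniqueness you invoke is Lemma~\ref{lem_torus}\ref{lem_torus4} (maximal \emph{split} subtorus over connected $S$), not \ref{lem_torus2} (which concerns the locally split case).
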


\begin{proof} \ref{thm_conj1} Let $P$ and $Q$ be minimal parabolic subgroups of $G$. Corollary \ref{cor_BTD+} shows that there exists $g \in G(R)$ such that ${^g\!P} \cap Q$ is an $R$--parabolic subgroup of $G$. By minimality, we have
${^g\!P} \cap Q=Q={^g\!P}$. \sm

\noindent\ref{thm_conj2} Let $T_1$ and $T_2$ be maximal split $R$-subtori of $G$. We want to show that $T_1$ and $T_2$ are $G(R)$--conjugate. According to Lemma \ref{lem_maxi}\ref{lem_maxi3}, the centralizers $L_i=\rmC_G(T_i)$, $i=1,2$, are Levi subgroups of minimal parabolic subgroups $P_i$ of $G$. Applying \ref{thm_conj1}, reduces to the case $P_1=P_2$. Since Levi subgroups of a fixed parabolic subgroup are $G(R)$--conjugate by [XXVI, 1.8], we can further assume that $L_1=L_2$. According to Proposition \ref{prop_flag}, \ref{prop_flag1} $\Longrightarrow   \ref{prop_flag2}$, $T_1$ and $T_2$ are maximal $R$--split tori of  $L_1=L_2$ and a fortiori a maximal split tori of the torus $\rad(L_1)$. Since we have uniqueness for  maximal split subtori of a given subtorus (Lemma \ref{lem_torus}\ref{lem_torus2}), we conclude that $T_1=T_2$. \end{proof}
\sm

Theorem~\ref{thm_conj}\ref{thm_conj2} is proven in [XXVI, 6.16] for a reductive $R$--group scheme over a connected semilocal $R$.

\begin{remarks}[Witt-Tits decomposition]\label{witide} {\em Let $R$ be a connected LG-ring, and let $G$ be a reductive $R$--group scheme. Then the conclusions of\/ \cite[4.3.1, 4.4.3, 5.2.1]{G2} regarding the Witt-Tits decomposition of $H^1(\Spec(R), G)$ hold.}

Indeed, the reader will easily check that the proofs of the quoted references, stated in \cite{G2} for semilocal rings, only require that $R$ is a connected ring for which $G(R)$ acts transitively on the set of minimal parabolic subgroups of $G$. But this is \ref{thm_conj}\ref{thm_conj1}.
\end{remarks}

\subsection{The anisotropic kernel}\label{tak} Let $R$ be a connected LG-ring and let $G$ be a reductive $R$--group scheme.

By \ref{lem_maxi}\ref{lem_maxi1}, $G$ admits minimal parabolic subgroups and by \ref{thm_conj}\ref{thm_conj1}, they are all conjugate. Let us fix one of them, say $P_{\min}$. Levi subgroups of $P_{\min}$ exist by [XXVI, 2.3], and they are all conjugate by [XXVI, 1.8]. Let us fix one of them, say $L_{\min}$. The derived subgroup $\scD(L_{\min})$ of $L_{\min}$ is a semisimple $R$--group scheme. It follows from [XXVI, 1.20] and \cite[Lem.~3.2.1(2)]{G2} that $\scD(L_{\min})$ is irreducible, hence anisotropic by \ref{rev-isog}\ref{rev-isog-a}. Summarizing, up to isomorphism, there exists up to isomorphism a unique semisimple anisotropic $R$--subgroup of $G$, defined as
\begin{equation} \label{tak1}  G\an = \scD(L_{\min})
\end{equation}
and called the {\em anisotropic kernel of $G$}. The terminology follows Tits \cite{Tits} for the case of a field and Petrov-Stavrova \cite{PS} for the case of semilocal rings.

\subsection{Example} \label{aniex} Let $G$ be a reductive group scheme over a connected LG-ring, and suppose $T_0$ is a split subtorus of $G$ for which $\Cent_G(T_0)/T_0$ is anisotropic. Then
\begin{enumerate}[label={\rm (\roman*)}]
 \item \label{aniex-i} $T_0$ is a maximal split subtorus of $G$;

 \item \label{aniex-ii} there exist parabolic subgroups of $G$ admitting $\Cent_G(T_0)$ as Levi subgroup, and any such parabolic subgroup is minimal;

 \item \label{aniex-iii} $\scD\big(\Cent_G(T_0)\big)$ is an anisotropic kernel of $G$. \end{enumerate}
Indeed, Proposition~\ref{prop_flag} implies \ref{aniex-i} and \ref{aniex-ii}, while \ref{aniex-iii} follows from the definition \eqref{tak1} of an anisotropic kernel. We will specialize this example in \ref{ako} to determine the anisotropic kernel of $\uGL_1(A)$, $A$ an Azumaya $R$--algebra. \sm

In the remainder of this section we investigate the interplay between arbitrary parabolic subgroups of a reductive group over a connected LG-ring, Levi subgroups  and split subtori.
%\comments{There is an overlap with \ref{lem_maxi}\ref{lem_maxi2}: In \ref{lem_maxi}\ref{lem_maxi2} $S$ is only connected, but $P$ is minimal, while in \ref{centlg} $S$ is LG, but $P$ is arbitrary.}

\begin{prop}[{[XXVI, 6.8]} for $R$ semilocal connected] \label{centlg} Let $R$ be a connected LG-ring, let $G$ be a reductive $R$--group scheme, let $P\subset G$ be a parabolic subgroup scheme and let $L$ be a Levi subgroup of $P$, which exists by {\rm [XXVI, 2.3]}. We put $Q=\rad(L)$ and let $Q_0$ the maximal split subtorus of $Q$, which exists by\/ {\rm \ref{lem_torus}\ref{lem_torus4}}. Then
\begin{equation}\label{centlg-1} L = \Cent_G(Q_0).
\end{equation}
\end{prop}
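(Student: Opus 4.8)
The plan is to use the dynamic description of the pair $(P,L)$ recalled in \ref{dps}. As $R$ is connected, \cite[Thm.~7.3.1(1)]{G2} provides a cocharacter $\la\co\GG_{m,R}\to G$ with $(P,L)=\bigl(\rmP_G(\la),\rmC_G(\la)\bigr)$; recall from \ref{dps} that $\rmC_G(\la)=\Cent_G(\la)$ is the centralizer of $\la$. The heart of the proof is to show that $\la$ factors through $Q_0$.

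This is obtained by following $\la$ down a short chain of subgroups. Since $\GG_m$ is commutative, $\la$ factors through $\Cent_G(\la)=L$. Because $L=\Cent_G(\la)$, every element of $L$ commutes with $\la(t)$ for all $t$, so $\la$ factors through the centre $\euZ(L)$ of $L$. Now $\euZ(L)$ is of multiplicative type, and a homomorphism from the torus $\GG_{m,R}$ to a group of multiplicative type factors through its maximal subtorus, which for $\euZ(L)$ is $\rad(L)=Q$; hence $\la$ factors through $Q$. Finally, every cocharacter of the torus $Q$ factors through its maximal split subtorus $Q_0$ (\ref{lem_torus}\ref{lem_torus2}). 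Altogether, $\la$ factors through $Q_0$.

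The two inclusions in \eqref{centlg-1} are now immediate. Since $\la$ factors through $Q_0$, every $g\in G$ that centralizes $Q_0$ also centralizes $\la$, so $\Cent_G(Q_0)\subset\Cent_G(\la)=L$. Conversely, $Q_0\subset Q=\rad(L)\subset\euZ(L)$, so $L$ centralizes $Q_0$, i.e.\ $L\subset\Cent_G(Q_0)$. Hence $L=\Cent_G(Q_0)$, which is \eqref{centlg-1}. I do not anticipate a genuine obstacle here. The substantive inputs are the dynamic cocharacter $\la$ from \cite{G2} and the structure theory of $\rad(L)$ and of maximal split subtori of tori (\ref{lem_torus}), both available; the chain of factorizations is then routine, the only points to keep straight being that $\rad(L)$ is the maximal subtorus of $\euZ(L)$ (whence $Q_0\subset\rad(L)\subset\euZ(L)$) and that each factorization is forced. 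Note that, $Q_0$ being given, the LG hypothesis plays no further role: everything above needs only that $R$ is connected.
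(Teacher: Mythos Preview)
Your argument is correct and genuinely different from the paper's. The paper declares that the proof is identical to that of [XXVI, 6.8], replacing the reference [XIV, 3.20] by Theorem~\ref{thm_maxtorus}; that argument sets $L'=\Cent_G(Q_0)\supset L$, assumes $L\subsetneq L'$, passes to the derived group of $L'$, invokes the existence of a maximal torus (this is where the LG hypothesis enters), and derives a contradiction via [XXVI, 6.6, 6.7]. Your route via the dynamic cocharacter $\la$ is shorter and, as you observe, needs only that $R$ is connected; it is in fact exactly the argument the paper uses in the proof of Lemma~\ref{lem_maxi}\ref{lem_maxi2} for \emph{minimal} parabolics, and the authors flag the overlap in a comment just above Proposition~\ref{centlg}. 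Your observation shows that that argument works for arbitrary parabolics, so Proposition~\ref{centlg} actually holds over any connected base (given a Levi subgroup), not just LG-rings.

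Two small points. First, your citation for ``every cocharacter of $Q$ factors through $Q_0$'' is off: \ref{lem_torus}\ref{lem_torus2} concerns \emph{locally} split subtori. What you need is that the image of $\la$ in $Q$ is a split subtorus (Lemma~\ref{lem_quo}\ref{lem_quo-ii} or Example~\ref{lem_quo-ex}) and hence is contained in the unique maximal split subtorus $Q_0$ of \ref{lem_torus}\ref{lem_torus4}; this is precisely the argument in the proof of Proposition~\ref{prop_normal}\ref{prop_normal1}. Second, the step ``$\la$ factors through $\rad(L)$'' deserves one more word: the image of $\la$ in the multiplicative-type group $\euZ(L)$ is a subtorus, and $\rad(L)$ is by definition the maximal subtorus of $\euZ(L)$, hence contains it.
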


The proof of \ref{centlg} for a connected LG-scheme is the same as the proof of [XXVI, 6.8], up to replacing the reference [XXVI, 3.20] by \ref{thm_maxtorus}. We leave the details to the reader.
\lv{%%%%%%%     begin lv   %%%%%%%%%%%%%%%%%%%%%%%
\begin{proof} The proof is a straightforward adaptation of the proof of [XXVI,  6.8]. We include it here for the convenience of the reader.

Let $L' = \Cent_G(Q_0)$. This is a reductive subgroup of $G$ containing $L$. By [XXVI, 6.2], $L'$ is the Levi subgroup of some parabolic subgroup $\wtl P$ of $G$ containing $P$. We can therefore apply [XXVI, 1.20] to the pair $(\wtl P, L')$ and get that $P'= P \cap L'$ is a parabolic subgroup of $L'$ with Levi subgroup $L$. Assume $L \ne L'$, i.e., $L \subsetneq L'$. Since $L$ is a maximal reductive subgroup of $P$, cf.\~[XXVI, 1.7], it follows that $L'\not\subset P$, and therefore $P'=P \cap L \ne L'$.

Let $G_1 = \scD(G)$ be the derived subgroup of $G$, and let $P_1 = P' \cap G_1$. By [XXVI, 1.19], $P_1$ is a parabolic subgroup of $G_1$, $L_1 = L \cap G_1$ is a Levi subgroup of $P_1$ whose radical $Q_1 = \rad(L_1)$ satisfies $Q_1  = (\rad(L) \cap G_1){}^0 = (Q\cap G_1){}^0$. By \ref{thm_maxtorus},
\comments{The reference \ref{thm_maxtorus} replaces Demazure's reference to [XIV, 3.20]}
$L_1$ admits a maximal torus $T_1$ which is isotrivial by [XXIV, 4.1.5, 4.1.6]. Hence, by [IX; 2.11(ii)], the subtorus $Q_1$ of $T_1$ is isotrivial too. Since $P_1 \ne G_1$, we can apply [XXVI, 6.6, 6.7], and obtain that the maximal split subtorus $(Q_1)_0$ of $Q_1$ satisfies $(Q_1)_0 \ne e$, and therefore $(Q_0 \cap G_1){}^0 \ne e$. A fortiori, $Q_0 \not\subset \rad(L')$ since $\rad(L') \cap G_1$ is finite. But this contradicts the definition of $L'$. \end{proof}
}%%%%%%%%%%       end lv              %%%%%%%%
\ms

Recall [XXVI, 6.2]: {\em If $S$ is any scheme, and $Q$ a split subtorus of a reductive $S$--group scheme, then there exists a parabolic subgroup of $G$ for which $\Cent_G(Q)$ is a Levi subgroup.} \sm

Combining this result with \ref{centlg}, we get the following corollary, which is [XXVI, 6.9] in the semilocal case.

\begin{cor}\label{centlgco} Let $S$ be a connected LG-scheme, $G$ a reductive $S$--group, and $T$ a {\em critical\/} subtorus in the sense that $T= \rad\big( \Cent_G(T)\big)$. We denote the maximal split subtorus of $T$ by $T_0$. Then the following are equivalent:
\begin{enumerate}[label={\rm (\roman*)}]
 \item\label{centlgco-i} $\Cent_G(T)$ is a Levi subgroup of a parabolic subgroup of $G$;

 \item \label{centlgco-ii} $\Cent_G(T) = \Cent_G(T_0)$;

 \item \label{centlgco-iii} $\Lie(G)^T = \Lie(G)^{T_0}$.
\end{enumerate}\end{cor}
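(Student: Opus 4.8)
The plan is to follow the proof of [XXVI, 6.9], substituting Proposition~\ref{centlg} for the reference [XXVI, 6.8] used there. Write $L = \Cent_G(T)$. For any subtorus $H$ of a reductive group scheme, the centralizer $\Cent_G(H)$ is reductive and satisfies $\Lie\bigl(\Cent_G(H)\bigr) = \Lie(G)^H$ (cf.\ [XI, 5.3], [XIX, 2.8]); in particular this makes sense of the hypothesis that $T$ is \emph{critical}, i.e.\ $T = \rad(L)$. Since $T_0 \subseteq T$, conjugation by $T_0$ factors through conjugation by $T$, so we have a closed immersion $j\co L = \Cent_G(T) \hookrightarrow \Cent_G(T_0)$ together with the inclusion of fixed subspaces $\Lie(G)^T \subseteq \Lie(G)^{T_0}$, and moreover $\Lie\bigl(\Cent_G(T)\bigr) = \Lie(G)^T$ and $\Lie\bigl(\Cent_G(T_0)\bigr) = \Lie(G)^{T_0}$.

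For \ref{centlgco-i} $\Rightarrow$ \ref{centlgco-ii}: if $L$ is the Levi subgroup of a parabolic $P$ of $G$, apply Proposition~\ref{centlg} to the pair $(P,L)$. In the notation of that proposition one has $Q = \rad(L) = T$, whose maximal split subtorus is $T_0$, so \ref{centlg} gives $L = \Cent_G(T_0)$, which is \ref{centlgco-ii}. The converse \ref{centlgco-ii} $\Rightarrow$ \ref{centlgco-i} is immediate from [XXVI, 6.2], recalled just before the statement: $\Cent_G(T_0)$ is then a Levi subgroup of a parabolic of $G$, and under \ref{centlgco-ii} this group equals $L$.

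It remains to prove \ref{centlgco-ii} $\Leftrightarrow$ \ref{centlgco-iii}. The direction \ref{centlgco-ii} $\Rightarrow$ \ref{centlgco-iii} follows by taking Lie algebras in $\Cent_G(T) = \Cent_G(T_0)$ and using the two identities recorded in the first paragraph. For \ref{centlgco-iii} $\Rightarrow$ \ref{centlgco-ii}, note that \ref{centlgco-iii} says precisely that the homomorphism $j$ induces an isomorphism on Lie algebras. Since both source and target are smooth over $S$, $j$ is \'etale at the identity section, hence \'etale everywhere because it is a group homomorphism; being also a closed immersion (thus a monomorphism) it is an open immersion. Its image is therefore open and closed in $\Cent_G(T_0)$ and contains the identity section; as the fibres of $\Cent_G(T_0)$ are connected, $j$ is fibrewise surjective, hence surjective, and a surjective open immersion is an isomorphism. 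Thus $\Cent_G(T) = \Cent_G(T_0)$.

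The bulk of the argument is bookkeeping with Proposition~\ref{centlg} and [XXVI, 6.2]; the one step needing a little genuine care is \ref{centlgco-iii} $\Rightarrow$ \ref{centlgco-ii}, where an equality of fixed subspaces of $\Lie(G)$ must be promoted to an equality of the two centralizing subgroup schemes, and this is exactly what the \'etale-closed-immersion argument above accomplishes. (Alternatively one may argue fibrewise — a closed connected subgroup of the same dimension inside a connected algebraic group is the whole group — and then conclude via the quotient $\Cent_G(T_0)/\Cent_G(T)$, which is \'etale over $S$ with one-point fibres and a section, hence $\cong S$.)
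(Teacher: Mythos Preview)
Your proof is correct and matches the paper's approach: the paper does not give a detailed argument but simply says the corollary follows by combining [XXVI, 6.2] with Proposition~\ref{centlg}, i.e.\ by rerunning the proof of [XXVI, 6.9] with \ref{centlg} in place of [XXVI, 6.8]. Your write-up spells this out faithfully, including the step \ref{centlgco-iii} $\Rightarrow$ \ref{centlgco-ii}, which is a general fact about closed immersions of reductive subgroup schemes with equal Lie algebras and does not use the LG hypothesis.
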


More corollaries of Proposition~\ref{centlg} are established in [XXVI, 6.10--6.12] for reductive groups over a connected semilocal scheme. Their proofs carry over without change to groups over a connected LG-scheme. We state these corollaries below for the reader's convenience.

\begin{cor}[{[XXVI, 6.10]} for $R$ semilocal connected] \label{6.10} Let $S$ be a connected LG-scheme, and let G be a reductive $S$--group. Then the following conditions are equivalent for a subgroup scheme $L$ of $G$:
\begin{enumerate}[label={\rm (\roman*)}]
  \item There exists a parabolic subgroup with Levi subgroup $L$.

  \item There exists a split subtorus of $G$ whose centralizer is $L$.

  \item There exists a cocharacter $\GG_{m,S} \to G$ whose centralizer is $L$.
\end{enumerate}\end{cor}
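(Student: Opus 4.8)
The plan is to reproduce the proof of [XXVI, 6.10], substituting for the semilocal ingredients used there the LG-analogues established above. It suffices to prove the two equivalences (i) $\iff$ (iii) and (i) $\iff$ (ii), from which (ii) $\iff$ (iii) follows formally.

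For (i) $\iff$ (iii) I would invoke the dynamic description of parabolic and Levi subgroups recalled in \ref{dps}. If $L = \Cent_G(\la) = \rmC_G(\la)$ for some cocharacter $\la \co \GG_{m,S} \to G$, then $\rmP_G(\la)$ is a parabolic subgroup of $G$ having $L = \rmC_G(\la)$ as a Levi subgroup, which gives (iii) $\implies$ (i). Conversely, if $L$ is a Levi subgroup of a parabolic subgroup $P$ of $G$, then, since $S = \Spec(R)$ is connected, \cite[Thm.~7.3.1(1)]{G2} produces a cocharacter $\la$ with $(P,L) = (\rmP_G(\la), \rmC_G(\la))$, so that $L = \Cent_G(\la)$ and (i) $\implies$ (iii). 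Note that these two implications use only that $R$ is connected, not that it is LG.

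For (i) $\iff$ (ii): the implication (ii) $\implies$ (i) is [XXVI, 6.2], valid over an arbitrary base, which produces from a split subtorus $Q$ with $\Cent_G(Q) = L$ a parabolic subgroup of $G$ whose Levi subgroup is $\Cent_G(Q) = L$. For (i) $\implies$ (ii), let $L$ be a Levi subgroup of a parabolic subgroup $P$ of $G$, put $Q = \rad(L)$ and let $Q_0$ be the maximal split subtorus of $Q$ (it exists, as recorded in the statement of Proposition~\ref{centlg}). Then Proposition~\ref{centlg} — the single step where the LG hypothesis on the connected ring $R$ genuinely enters, in place of Demazure's appeal to the semilocal result [XXVI, 6.8] — yields $L = \Cent_G(Q_0)$, so (ii) holds with the split subtorus $Q_0$.

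Since each of these four implications is short once \ref{centlg}, \ref{dps}, and \cite[Thm.~7.3.1(1)]{G2} are in hand, there is no serious obstacle; the only points requiring a little care are the degenerate case $L = G$ (take $P = G$, $Q_0 = Q = \{1\}$, and $\la$ trivial), and the observation that in (iii) the centralizer of $\la$ agrees with the centralizer of the scheme-theoretic image $\GG_{m,S}/\Ker(\la)$ of $\la$, which is a split subtorus and makes the passage between cocharacters and split subtori transparent.
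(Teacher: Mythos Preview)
Your proposal is correct and follows exactly the approach the paper indicates: the proof of [XXVI, 6.10] carries over verbatim once [XXVI, 6.8] is replaced by Proposition~\ref{centlg}, and you have spelled out the four implications using \ref{dps}, \cite[Thm.~7.3.1(1)]{G2}, [XXVI, 6.2], and \ref{centlg} in the expected places. The paper itself gives no separate proof beyond the remark that the semilocal argument goes through unchanged.
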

\sm

\begin{cor}[{[XXVI, 6.11]} for $R$ semilocal connected] \label{6.11} Let $S$ be a connected LG-scheme, and let G be a reductive $S$--group. Given a torus $T$, we denote by $T_0$ its unique maximal split subtorus, whose existence is guaranteed by {\em \ref{lem_torus}\ref{lem_torus4}}. Then the maps
\[ L \mapsto \rad(L)_0, \qquad Q \mapsto \Cent_G(Q) \]
are order-reversing  inverse bijections between the set of Levi subgroups of parabolic subgroups of $G$ and the set of split subtori $Q$ of $G$ satisfying $Q = \rad( \Cent_G(Q))_0$.
\end{cor}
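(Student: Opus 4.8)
The plan is to recognize the two maps as mutually inverse bijections and then check that each reverses inclusions; as in the semilocal case of [XXVI, 6.11], essentially all the substance is carried by Proposition~\ref{centlg} together with [XXVI, 6.2]. Write $\scL$ for the set of Levi subgroups of parabolic subgroups of $G$ and $\scQ$ for the set of split subtori $Q$ of $G$ with $Q=\rad(\Cent_G(Q))_0$, and put $f(L)=\rad(L)_0$ and $g(Q)=\Cent_G(Q)$; here $f(L)$ makes sense because $\rad(L)$ is a torus whose maximal split subtorus exists by \ref{lem_torus}\ref{lem_torus4}. First I would record that for an \emph{arbitrary} split subtorus $Q$ of $G$ the subgroup $\Cent_G(Q)$ is a Levi subgroup of some parabolic subgroup of $G$: this is exactly [XXVI, 6.2], so $g$ lands in $\scL$. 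Next, Proposition~\ref{centlg}, applied to $L\in\scL$ with its ambient parabolic and with $\rad(L)$, $\rad(L)_0$ playing the roles of $Q$, $Q_0$, yields $g(f(L))=\Cent_G(\rad(L)_0)=L$; thus $g\circ f=\id_{\scL}$.

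The defining property of $\scQ$ reads $f(g(Q))=\rad(\Cent_G(Q))_0=Q$ for every $Q\in\scQ$. Feeding $L\in\scL$ into this after the previous step gives $f\bigl(g(f(L))\bigr)=f(L)$, so $f(L)$ obeys the defining relation of $\scQ$; hence $f(\scL)\subseteq\scQ$, while $g(\scQ)\subseteq\scL$ was already observed. Therefore $f\co\scL\to\scQ$ and $g\co\scQ\to\scL$ are well defined with $g\circ f=\id_{\scL}$ and $f\circ g=\id_{\scQ}$, i.e.\ they are mutually inverse bijections.

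It remains to see that both maps are order-reversing. For $g$ this is immediate, since $Q_1\subseteq Q_2$ forces $\Cent_G(Q_2)\subseteq\Cent_G(Q_1)$. For $f$, take $L_1\subseteq L_2$ in $\scL$. The subtorus $\rad(L_2)_0$ is central in $L_2$, hence centralizes $L_1$, so $\rad(L_2)_0\subseteq\Cent_G(L_1)$; and since $\rad(L_1)_0\subseteq L_1$ and $\Cent_G$ reverses inclusions, $\Cent_G(L_1)\subseteq\Cent_G(\rad(L_1)_0)=L_1$ by Proposition~\ref{centlg}. Thus $\rad(L_2)_0$ is a split central torus of $L_1$, whence $\rad(L_2)_0\subseteq\rad(L_1)_0$ because $\rad(L_1)$ is the maximal central torus of $L_1$. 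This says $f(L_1)\supseteq f(L_2)$, completing the plan.

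The argument is essentially formal once Proposition~\ref{centlg} is in hand, and it is through that proposition (hence through Theorem~\ref{thm_maxtorus}) that the LG hypothesis and connectedness actually enter. The only step that is not pure bookkeeping is the last one for $f$; the mild point to be careful about there is the implication ``$\rad(L_2)_0$ is a split central torus of $L_1$, therefore it is contained in $\rad(L_1)_0$'', which I would justify directly from the definition of the radical of a reductive group scheme as its maximal central subtorus.
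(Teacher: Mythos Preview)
Your proof is correct and follows exactly the approach the paper intends: the paper does not give an explicit argument but simply states that the proof of [XXVI, 6.11] carries over once Proposition~\ref{centlg} replaces [XXVI, 6.8], and your write-up spells out precisely that argument, using [XXVI, 6.2] for the map $g$ and Proposition~\ref{centlg} for $g\circ f=\id$. The extra care you take in verifying that $f$ is order-reversing (via $\rad(L_2)_0\subset\Cent_G(L_1)\subset\Cent_G(\rad(L_1)_0)=L_1$) is a nice explicit check of a point the paper leaves to the cited reference.
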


%%%%%%%%%%%%%%%%%%%%%%%%%%%%
%%%%%%%%%%%%%%%%%   Cancellation section %%%%%%%%%%%%%%%%%%%%%%%%%%%%%%%%
\section{Cancellation Theorems}\label{sec:canc}

In this section we prove several cancellation theorems that are classically known  over fields, in some cases even over semilocal rings, but that we establish here over LG-rings: cancellation of modules and Azumaya algebras in tensor products \ref{canfgp}, cancellation of hermitian forms in \ref{hercan} and cancellation of quadratic forms in \ref{canqf}. All of these cancellation results are applications of the cohomological injectivity result of Theorem~\ref{thm_BTD}\ref{thm_BTD3}. We start with presenting the principle \ref{prin} that describes the basis of cancellation.

\begin{principle}\label{prin} Let $R$ be an LG-ring. We consider the diagram \eqref{prin-1} of $R$--group schemes and $R$--group homomorphisms
\begin{equation} \label{prin-1}
 G \xrightarrow{\;\; \De \;\; } L \xrightarrow{\; \inc_L \;} P \xrightarrow{\inc_P} H
\end{equation}
where $G$ and $H$ are reductive $R$--group schemes, $P$ is a parabolic subgroup of $H$ with Levi subgroup $L$, the map $\De$ is split as group homomorphism, and  where $\inc_L$ and $\inc_P$ are the natural inclusions. Then the canonical map
\begin{equation} \label{prin-2}
 \al\co \rmH^1(R, G) \longto \rmH^1(R,H),
\end{equation}
induced by the composition of the maps in \eqref{prin-1}, is injective.
\end{principle}

\begin{proof} The sequence~\ref{prin-1} of group homomorphisms induces maps in cohomology:
\begin{equation} \label{prin-3} \vcenter{\xymatrix@C=50pt{
     \rmH^1(R, G) \;  \ar@{..>}[d]\ar@{^{(}->}[r]^>>>>>>>>{\De^*} &
         \rmH^1(R, L) \ar[d]_\cong^{\inc_L^*}
  \\ \rmH^1(R, H) & \rmH^1(R, P) \ar[l]^<<<<<<<{\inc^*_P}
}}\quad .\end{equation}
In the diagram \eqref{prin-3} the map $\De^*$ is injective since $\De$ is a split group homomorphism, the map $\inc_L^*$ is a bijection by [XXVI, 2.3] and the map $\inc_P^*$ is injective by Theorem~\ref{thm_BTD}\ref{thm_BTD3}. \end{proof}
\sm

As a first application of the Cancellation Principle~\ref{prin}, we prove a cancellation result for finite projective modules and for Azumaya algebras over LG-rings. Cancellation of Azumaya algebras was proven for semilocal rings by Roy-Sridharan \cite[Prop.~3.2]{RS}, later by Knus \cite[Thm.~3.3]{Knus-Azu} and again by Ojanguren-Sridharan in \cite[Cor.~1]{OS}, see also \cite[III, 5.2.3(2)]{K}.

%\comments{Need reference for cancellation of finite projective modules}

\begin{appli}[Cancellation of modules and Azumaya algebras] \label{canfgp}
Let $R$ be an LG-ring. \sm

\begin{inparaenum}[\rm (a)]
\item\label{canfgp-a} %{\rm (Finite projective modules)}
Let $M_1$ and $M_2$ be finite projective $R$--modules, and let $N$ be a faithfully projective $R$--module. Then
\begin{equation}   \label{canfgp1}
 M_1 \ot_R N \cong M_2 \ot_R N \implies M_1 \cong M_2
\end{equation}
(isomorphism of $R$--modules). In particular, for any $n\in \NN_+$,
\begin{equation}
  \label{canfgp2} M_1^{(n)} \cong M_2^{(n)} \implies M_1 \cong M_2.
\end{equation}

\item \label{apw} Let $A$, $B$ and $C$ be Azumaya $R$--algebras. Then
\begin{equation}\label{apw1}
 A \ot_R C \cong B \ot_R C \implies A \cong B.
 \end{equation}
In particular, for $n\in \NN_+$,
\begin{equation}\label{apw2}
 \Mat_n(A) \cong \Mat_n(B) \implies A \cong B.
 \end{equation}
\end{inparaenum}
\end{appli}

\begin{proof} \eqref{canfgp-a}
We apply the Principle~\ref{prin} with the following choices:
\begin{enumerate}[label={\rm (\roman*)}]
 \item $G= \uGL_d$ is the $R$--group scheme representing the $R$--functor which associates with $T\in \Ralg$ the group $\GL_d(T)$ of invertible matrices in $\Mat_d(T)$;
 \item $H=\uGL_{nd}$;

 \item $L$ the subgroup scheme of $\uGL_{nd}$, representing the $R$--functor which associates with $T\in \Ralg$ the subgroup
    \[ L(T) = \{ \diag(x_1, \ldots, x_n) : x_i \in \GL_d(T)\} \]
  of $\uGL_{nd}(T)$;

 \item $P$ is the product of $L$ and upper triangular matrices in $\uGL_{nd}$;

 \item $\De$ is the diagonal group homomorphism
  \[ \De \co \uGL_d \to L, \quad x \mapsto \diag(x, \ldots, x)\]
  which is split by the projection onto  the first factor
  \[ \pr_1 \co L \to \uGL_d, \quad \diag(x_1, \ldots, x_n) \mapsto x_1.\]
\end{enumerate}

We first prove \eqref{canfgp2}. In view of the rank decomposition of finite projective modules and of \ref{LG-def}\eqref{LG-defa}, it is no harm to assume that $M_1$ and hence also $M_2$ have constant rank, say they are both of rank $d$. Thus, they represent cohomology classes $[M_1]$ and $[M_2]$ in $\rmH^1(R, \uGL_d)$. The assumption in \eqref{canfgp2} is that $\al([M_1]) = \al([M_2]) \in H^1(R, \uGL_{nd})$. Therefore $[M_1] = [M_2]$ by injectivity of $\al$.

We now prove \eqref{canfgp1} as a consequence of \eqref{canfgp2}. Let  $Q$ be an $R$--module such that $N \ot_R Q$ is free of finite rank, say $N\ot_R Q \cong R^{(n)}$, see \ref{fapmod}. Observe that $M_1 \ot_R N \ot_R Q \cong M_1 \ot_R R^{(n)} \cong M_1^{(n)}$. Therefore, the assumption in \eqref{canfgp1} implies $M_1^{(n)} \cong M_2^{(n)}$, which forces $M_1 \cong M_2$ by \eqref{canfgp2}.
%\end{inparaenum}
\sm

\eqref{apw} The proof of \eqref{apw} is a straightforward modification of the proof of \eqref{canfgp-a}. Quotioning the sequence
\[  \uGL_d \xrightarrow{\;\; \De \;\; } L \xrightarrow{\; \inc_L \;} P \xrightarrow{\inc_P} \uGL_{nd} \]
of \eqref{canfgp-a} %\eqref{canfgp-a1}
by the central $\GG_m$, we obtain the sequence of $R$--group schemes
\[
  \uPGL_d \xrightarrow{\; \; \bar \De  \; \;} \ol L = L/\GG_m \;
     \xrightarrow{\; \inc_{\bar L}\;} \ol P = P / \GG_m \;
        \xrightarrow{\; \inc_{\bar P}\;} \uPGL_{nd}
\]
which satisfies the assumptions of the Principle~\ref{prin}, cf.~\cite[3.2.1]{G2}.
%\comments{(04-03) There does not seem to be a reference in \cite{SGA3} corresponding to \cite[3.2.1]{G2}. }
%and then the diagram of cohomology sets
%\begin{equation*}  \vcenter{\xymatrix{
%     H^1(R, \uPGL_d) \ar@{..>}[d]_\be\ar@{^{(}->}[r]^>>>>>>{\wtl \De} & H^1(R, L) %\ar[d]^\cong
%  \\ H^1(R, \uPGL_{nd}) & H^1(R, P) \ar[l]^<<<<{\wtl \inc}
%}}\quad .
%\end{equation*}
Thus the canonical map $\bar \al \co \rmH^1(R, \uPGL_d) \to \rmH^1(R, \uPGL_{nd})$ is injective.
%It is standard, see e.g.~\cite[7.8.14]{Ford}, that because of \eqref{LG-defe1} every automorphism of an Azumaya $R$--algebra is inner. Hence,
The cohomology set $\rmH^1(R, \uPGL_d)$ represents the isomorphism classes of Azumaya algebras of constant degree $d$. The map $\bar \al$ sends the class of an Azumaya $R$--algebra $D$ of degree $d$ to the class $[\Mat_n(D)] \in \rmH^1(R, \uPGL_{nd})$.

For the proof of \eqref{apw2} we can assume that $A$ and $B$ have constant rank, say rank $d$. Then \eqref{apw2} follows from injectivity of the map $\bar \al$ above. To prove \eqref{apw1}, we can suppose that $C$ has constant rank $n$. Then $C\ot_R C \op \cong \End_R(C)$, where on the right hand side we view $C$ as projective $R$--module of rank $n$. Since $C\cong R^n$ by \ref{prop_baireco}\eqref{prop_baireco-b}, we get $C\ot_R C\op\cong \Mat_n(R)$. Now \eqref{apw1} follows:
\begin{align*}
 & A \ot_R C \cong B \ot_R C \implies A \ot_R C \ot_R C\op \cong B \ot_R C \ot_ R C\op
\\ &\implies \Mat_n(A) \cong \Mat_n(B) \implies A \cong B. \qedhere
\end{align*}
\end{proof}
\sm

Based on \ref{canfgp}\eqref{apw}, we will be able to say more about Azumaya algebras over LG-rings in \S\ref{sec:azu}.

\begin{remark}[Additive Cancellation of modules] \label{rem-adc} The additive version of \eqref{canfgp1} is known to be true too, even in a more general setting than modules over LG-rings \cite[Thm.~2.5]{EG}: Let $R$ be an LG-ring, let $S\in \Ralg$ be a direct limit of finite $R$--algebras, and let $M_1$, $M_2$ and $N$ be $S$--modules. If $N$ is finitely generated, then
\begin{equation}\label{adc1}
   M_1 \oplus N \cong M_2 \oplus N \quad \implies \quad M_1 \cong M_2.
\end{equation}
The proof of {\em loc.~cit.} establishes that $\End_S(N)$ has $1$ in the stable range of $\End_R(N)$ and then applies a result of Evans \cite[Thm.~2]{Evans} to show \eqref{adc1}. Additive cancellation goes back to the classical results of Bass \cite[6.6, 9.3]{Bass}; in the semilocal case it is  proven in \cite[VI; (1.3.3)]{K}.
\sm

The point of this remark is that the special case $S=R$ and $M_1$, $M_2$ and $N$ finite projective $R$--modules of \eqref{adc1} can easily be proven using the Cancellation Principle~\ref{prin}.
%We give a quick proof of \eqref{adc1} for the case $R=S$ and finite projective $R$--modules $M_1, M_2$ and $N$, using the techniques of Proposition~\ref{canfgp}:
Indeed, there exists a finite projective $R$--module $N'$ such that $M \oplus N' \cong R^n$ for some $n\in \NN_+$. Therefore $M_1 \oplus R^n \cong M_2 \oplus R^n$. By induction on $n$, it suffices to prove the case $n=1$,
\begin{equation} \label{adc2}
M_1 \oplus R \cong M_2 \oplus R \implies M_1 \oplus M_2.
\end{equation}
It follows that $\rank_R M_1 = \rank_R M_2$. Applying the standard rank decomposition of finite projective modules and \ref{LG-def}\eqref{LG-defa}, we can assume that $M_1$ and $M_2$ have constant rank $r$.

Let $L$ be the Levi subgroup of the parabolic subgroup $P$ of the reductive $R$--group scheme $H=\uGL_{r+1}$, given with obvious meaning by
\[
   L = \begin{pmatrix} \uGL_r & 0 \\ 0 & \GG_m \end{pmatrix}, \qquad
   P= \begin{pmatrix} \uGL_r & *  \\ 0 & \GG_m \end{pmatrix}.
\]
We can apply the Cancellation Principle~\ref{prin} with $\De \co G=\uGL_r \to L$ the obvious embedding and $L \subset P \subset \uGL_{r+1} = H$ as above. Hence,
%and get injective maps in cohomology
%\begin{equation}\label{adc3}
%  \rmH^1(R, L) \simlgr \rmH^1(R, P) \, \hookrightarrow \, \rmH^1(R, H).
%\end{equation}
%We can identify
%\[ H^1(R, L ) = H^1(R, \uGL_r) \times H^1(R, \GG_m)
%\]
%and then restrict the map \eqref{adc3} to $\rmH^1(R, \uGL_r) \equiv \rmH^1(R, %\uGL_r) \times \{ [R]\}$.
we get an injective map
\begin{equation} \label{adc4}
\rmH^1(R, \uGL_r) \, \hookrightarrow \, \rmH^1(R, \uGL_{r+1}),
\end{equation}
which sends the isomorphism class of a finite projective $R$--module $Q$ of rank $r$ to the isomorphism class of $Q\oplus R$. Thus, injectivity of \eqref{adc4} yields a proof of \eqref{adc2}. \end{remark}
\sm

The final two results of this section concern cancellation of hermitian and quadratic forms. For the convenience of the reader, Appendix \ref{sec:hqr} reviews the concepts needed for \ref{hercan} and \ref{canqf} and their proofs.

%\comments{Prop.~\ref{hercan} is Proposition G.15 of our octonion book, stated there fore semilocal rings. }
%\comments{(04-11) Deleted the assumption that $h_1, h_2,h_3$ in \ref{hercan} have constant rank. This is no longer necessary due to the more general version of \ref{dec}. }

\begin{appli}[Cancellation of hermitian forms]\label{hercan}
Let $R$ be an LG--ring, and  let $S/R$ be a quadratic \'etale extension with standard involution $\si$. Then cancellation holds for $(S,\si)$--hermitian spaces: if $h_1$, $h_2$ and $h_3$ are hermitian spaces such that $h_1 \perp h_3$ and $h_2\perp h_3$ are isometric, then already $h_1$ and $h_2$ are isometric:
\begin{equation}  \label{hercan1}
h_1 \perp h_3 \cong h_2 \perp h_3 \implies h_1 \cong h_2.
\end{equation}
\end{appli}

\begin{proof} The form $h_3 \perp (-h_3)$ is hyperbolic \cite[I; 3.7.3]{K}.
Since the assumption implies that $h_1 \perp (h_3 \perp -h_3) \cong h_2 \perp (h_3 \perp - h_3)$, it is no harm to assume that $h_3$ is hyperbolic, say $h_3 = \HH(U_3)$.
%By our rank assumption and \ref{prop_baireco}\eqref{prop_baireco-b} we know that $U_3$ is free, and so is the module on which $h_1 \perp h_3$ is defined.
We can then apply the Cancellation Principle~\ref{prin} with the following choices:
\begin{enumerate}[label={\rm (\roman*)}]
 \item $G= \uU(h_1)$ and $H=\uU(h_1 \perp h_3)$ are the unitary $R$--group schemes of the regular hermitian forms $h_1$ and $h_1 \perp h_3$ respectively, which are reductive $R$--group schemes by \ref{ung}\eqref{ung-d}.

 \item $L = \uU(h_1) \times_R \frR_{S/R}\big(\uGL(U_3)\big)$;

\item $P$ is the stabilizer of $U_3$ in $H$, which by \ref{dec} is a parabolic subgroup of $H$ with Levi subgroup $L$.

 \item $\De \co \uU(h_1) \to \uU(h_1) \times_R \uU(h_3)$ is the canonical embedding.
\end{enumerate}
Thus the natural map $\rmH^1\big(R, \uU(h_1)\big) \to \rmH^1\big(R, \uU(h_1 \perp h_3)\big)$ is injective. Taking the bijection of \ref{ung}\eqref{ung-e} as identification, it sends the isometry class of a hermitian space $h'$ to the isometry  class of $h' \perp h_3$, which implies \eqref{hercan1}. \end{proof}
\sm

\textbf{Remark.} For a semilocal ring $R$, a more general result than \ref{hercan} is proven \new in \cite[(3.4)]{Keller} and reproduced in \cite[VI; (5.7.5)]{K} \enew --- it concerns cancellation of unitary (= hermitian) spaces over unitary rings.

\begin{appli}[Cancellation of quadratic forms]\label{canqf} Let $R$ be an LG-ring, let $q_1$ and $q_2$ be nonsingular quadratic forms and let $q_3$ be a {\em regular\/} quadratic form. Then
\begin{equation}\label{canqf1} q_1 \perp q_3 \cong q_2 \perp q_3 \implies q_1 \cong q_2.
\end{equation}
\end{appli}

\begin{proof} Applying the rank decomposition \ref{qf}\eqref{qf-redc} of the quadratic forms $q_i$, $i=1,2,3$, together with \ref{LG-def}\eqref{LG-defa}, we can assume that the $q_i$ have constant rank. We can further suppose that all three ranks are positive, since otherwise the claim is obvious. Moreover, because
the quadratic form $q_3 \perp -q_3$ is hyperbolic by \cite[I, (4.7)]{Ba},
%% Quad(A) in \cite{Ba} means regular quadratic spaces
we can assume that $q_3$ is hyperbolic. In particular, $q_3$ is regular, so that
$q:=q_1 \perp q_3$ is nonsingular by \eqref{qf-perp1}.

At this point, the reader might be inclined to apply the principle \ref{prin} to $\uO(q_1)$ and $\uO(q)$, which by \ref{ogs-co} describe the isometry classes of nonsingular quadratic forms. This is however not possible because $\uO(\cdot)$ is not reductive in general. A way out of this problem is to use $\uSO(q_1)$ and $\uSO(q)$ instead, which are indeed reductive group schemes by \ref{sogsc}, and then investigate the relation between $\rmH^1(R, \uSO(\cdot))$ and $\rmH^1(R, \uO(\cdot))$.

Following this approach, we apply \ref{prin} with $G=\uSO(q_1)$ and $H=\uSO(q)$, using the parabolic and Levi subgroups of $H$ exhibited in \ref{sopl}. Hence
\[ \al\co \rmH^1(R, \uSO(q_1)) \to \rmH^1(R, \uSO(q)) \]
is injective. To link $\rmH^1(R, \uSO(\cdot))$ and $\rmH^1(R, \uO(\cdot))$, we
use that there exist an $R$--group scheme $A$, an $R$--group homomorphism $D$ and an exact sequence of $R$--group schemes
\begin{equation}\label{canqf2}
1 \longto \uSO(q) \xrightarrow{\;i\;} \uO(q) \xrightarrow{\;D\;} A \longto 1
\end{equation}
where $i$ is the canonical inclusion and where
\[ A = \begin{cases} \bmu_{2,R}, &\text{if $q$ has odd rank}, \\
                      \ZZ/2\ZZ_R, &\text{if $q$ has even rank}.
\end{cases}
\]
Indeed, this follows from \ref{sogsc}\eqref{sogsc-odd} in the odd rank case and from \eqref{sogsc-even1} in the even rank case.

Part of the long exact cohomology sequence associated with \eqref{canqf2} is
the sequence of pointed sets
\[
\orth(q) \xrightarrow{\;D(R)\;} A(R) \xrightarrow{\;\de\;} \rmH^1(R, \uSO(q))
     \xrightarrow{i^*} \rmH^1(R, \uO(q)) ,
\]
in which $D(R)$ is surjective by \ref{LGqdi}\eqref{LGqdi-b}, implying that $i^*$ has trivial kernel.
\lv{%%%%%%%%%%%%%%%%%%%%%%%%%%%%%%%%%%    lv starts  %%%%%%%%%%%%%%%%%%%%%%%%%%%%%
By surjectivity of $D(R)$, $\Ker(\de) = A(R)$, so $\de$ is the trivial map, whose image is the base point of $\rmH^1(R, \uO(q))$, which in turn is the kernel of $i^*$.}%%%%%%%%%%%%%   lv ends %%%%%%%%%%%%%%%%%%%%%%%%%%%%%%%%%%%%%%%%%%%%%
Since the parities of the ranks of $q_1$ and $q$ agree, we have the analogous exact sequence \eqref{canqf2} for $q_1$ replacing $q$, with the same $A$. Moreover, we get a commutative diagram of pointed sets
\[\xymatrix{
   \rmH^1(R, \uSO(q_1)) \ar[r]^{i_1^*} \ar[d]_\al
    & \rmH^1(R, \uO(q_1))\ar[r]^{D^*} \ar[d]^\be
        & \rmH^1(R, A) \ar@{=}[d]
\\
\rmH^1(R, \uSO(q)) \ar[r]^{i^*} & \rmH^1(R, \uO(q))\ar[r]^{D^*}
        & \rmH^1(R, A)
}\]
where $i_1^*$ is associated to the inclusion $i_1 \co \uSO(q_1) \to \uO(q_1)$ and where $\be$ maps the isometry class of $q_2$ to that of $q_2 \perp q_3$ (recall \ref{ogs-co}). Our claim then is that $\be$ has trivial kernel. But this follows from a simple diagram chase, using that $\al$ is injective and that $i^*$ has trivial kernel.
\lv{%%%%%%%%%%%%%%%  lv starts %%%%%%%%%%%%%%%%%%%%%%%%%%%%%%%%%%%
We have $D^*([q_1]) = D^*([q_1\perp q_3]) = D^*([q_2 \perp q_3])= D^*([q_2])$. Hence $[q_2] \in \Ker(\inc_1^*)$ and so there exist cohomology classes $c_i \in H^1(R, \uSO(q_1))$ such that $\inc_1^*(c_i)= [q_i]$ (of course, $c_1$ is the base point of $\rmH^1(R, \uSO(q_1))$). Since $\be([q_1]) = \be([q_2])$ we get
\begin{align*}
  (i^*\circ \al)(c_1)&= (\be \circ i_1^*)(c_1) = \be([q_1])
    = \be([q_2]) \\& = (\be \circ i_1^*)(c_2) = (i^*\circ \al)(c_2).
\end{align*}
Since $i^*$ has trivial kernel, $[q_1] = \al(c_1) = \al(c_2) = [q_2]$ follows.
}%%%%%%%%%%%%%%%%   end of lv %%%%%%%%%%%%%%%%%%%%%%%%%%%%%%%%%%%%%%%%%%
\end{proof}
\sm

\textbf{Remarks.} For regular quadratic forms over fields of characteristic not $2$, the result~\ref{canqf} goes back to Witt, and is therefore referred to as {\em Witt cancellation}, even in more general settings.

Witt cancellation is proven in \cite[Thm.~8.4]{EKM} for quadratic forms over arbitrary fields, in \cite[III, Cor.~4.3]{Ba} for regular quadratic forms $q_1$, $q_2$ and $q_3$ over semilocal rings, and in \cite[II, 6.4]{Dias} for LG-rings, again for regular forms, following Baeza's approach which in turn goes back to Knebusch.

Witt cancellation is not true if all $q_i$ are merely nonsingular, see \cite[p.~49]{EKM} for a counterexample.
%\comments{The result is not true if all $q_i$ are nonsingular. But it remains true if one only assumes that $q_1$ and $q_2$ are primitive forms, but keeps the assumption that $q_3$ is regular (Knebusch project). This generalization cannot be proven with the methods used here, simply because orthogonal groups of non-regular quadratic forms need not be reductive.}

%\newpage
%%%%%%%%%%%%%%%%%%%%%%%%%%%%%%%%%%%%%%%%%%%%%%%%%%%%%%%%%%%%%%%%%%%%%%%%%%%%%%%%%%
%%%%%%%%%%%%%%%%%%%%  section 6 on Azumaya
\section{Azumaya algebras over LG-rings} \label{sec:azu}
In this section we consider Azumaya algebras over rings. One of its goals is Corollary~\ref{raco} on the Brauer decomposition of Azumaya algebras over connected LG-rings. This is a consequence of Theorem~\ref{LGdeM} which says that indecomposable finite projective modules of Azumaya algebras over a connected LG-ring are isomorphic. Another highlight of this section is Proposition~\ref{prop_isotrivial} proving that an Azumaya algebra $A$ of constant degree over an LG-ring admits a splitting ring which is a maximal \'etale subalgebra of $A$.

We start this section with Corollary~\ref{hil}, which is an application of cancellation of tensor products of Azumaya algebras and establishes ``Hilbert 90''. In this corollary, $\Br(R)$ denotes the Brauer group of a ring $R$ as defined in \cite[\S7.3]{Ford}, which is the Brauer-Azumaya group of \cite[3.1.3]{Collio}.

\begin{cor}\label{hil}
Let $R$ be an LG-ring, and let $A$ be an Azumaya $R$--algebra.%
\sm

\begin{inparaenum}[\rm (a)]
  \item\label{hil-a} If $B$ is another Azumaya $R$--algebra with $\deg A = \deg B$, then $A$ and $B$ are isomorphic as $R$--algebras if and only if their Brauer classes agree,
\begin{equation} \label{hil-a1}
           A \cong B \iff [A] = [B] \, \in \Br(R).
\end{equation}

\item\label{hil-b} {\em (``Hilbert 90'')} $\rmH^1 \big(R, \uGL_1(A)\big) = \{1\}$.
% \cite[(29.2)]{KMRT} for A separable associative
\end{inparaenum}
\end{cor}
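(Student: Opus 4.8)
The plan is to obtain \ref{hil-a} directly from the cancellation theorem \ref{canfgp}\eqref{apw1}, and then to deduce \ref{hil-b} from \ref{hil-a}.

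For \ref{hil-a} the implication $A\cong B\Rightarrow[A]=[B]$ is trivial, so assume $\deg A=\deg B$ and $[A]=[B]$ in $\Br(R)$. By the definition of the Brauer group there are faithfully projective $R$--modules $P$ and $Q$ and an $R$--algebra isomorphism $A\ot_R\End_R(P)\cong B\ot_R\End_R(Q)$. Comparing degrees — the degree is multiplicative under $\ot_R$ and $\deg\End_R(P)=\rank_R P$ — and cancelling the everywhere positive locally constant function $\deg A=\deg B$ gives $\rank_R P=\rank_R Q$. As $R$ is an LG-ring, $\Spec R$ decomposes into the finitely many clopen parts where this common rank is constant, and on each of them $P$ and $Q$ are free of the same rank by Corollary~\ref{prop_baireco}\eqref{prop_baireco-b}; hence $P\cong Q$ as $R$--modules. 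Then $C:=\End_R(P)$ is an Azumaya $R$--algebra with $A\ot_R C\cong B\ot_R C$, so $A\cong B$ by \ref{canfgp}\eqref{apw1}.

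For \ref{hil-b}, recall that $\uGL_1(A)$ is the automorphism group scheme of $A$ viewed as a right $A$--module (such automorphisms being left multiplication by invertible elements). Thus a class $\xi\in\rmH^1(R,\uGL_1(A))$ corresponds, via faithfully flat descent, to a right $A$--module $P$ which is fppf-locally isomorphic to $A$; such a $P$ is a progenerator of the category of right $A$--modules and, as an $R$--module, is finite projective of rank $(\deg A)^2$. Put $B:=\End_A(P)$. By Morita theory over the Azumaya algebra $A$ the action map gives an isomorphism $B\ot_R A\op\cong\End_R(P)$, so $B$ is Azumaya over $R$ with $[B]=[A]$ in $\Br(R)$, and visibly $\deg B=\deg A$. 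By part \ref{hil-a}, $B\cong A$ as $R$--algebras.

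It remains to feed this back into cohomology. The central extension $1\to\GG_m\to\uGL_1(A)\to\uPGL(A)\to1$ (with $\uPGL(A)$ the automorphism group scheme of the $R$--algebra $A$) induces an exact sequence of pointed sets $\rmH^1(R,\GG_m)\to\rmH^1(R,\uGL_1(A))\xrightarrow{\;\partial\;}\rmH^1(R,\uPGL(A))$, in which $\partial$ carries $\xi$ to the class of the inner twist ${}^\xi A$ of the algebra $A$. Since ${}^\xi A\cong\End_A(P)=B\cong A$, this class is the base point, so $\xi\in\Ker\partial=\Ima\bigl(\rmH^1(R,\GG_m)\to\rmH^1(R,\uGL_1(A))\bigr)$; and the latter is trivial because $\rmH^1(R,\GG_m)=\Pic(R)=0$ by \eqref{LG-defe1}. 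Hence $\xi=1$ and $\rmH^1(R,\uGL_1(A))=\{1\}$. I expect the step needing the most care to be the bookkeeping linking the torsor-theoretic and module-theoretic pictures — identifying $\partial(\xi)$ with ${}^\xi A$, the latter with $\End_A(P)$, and computing $[\,\End_A(P)\,]=[A]$ — which is standard but should be cited precisely. (Alternatively one can avoid \ref{hil-a}: after reducing to $\deg A=n$ constant one has $A\cong R^{n^2}\cong P$ as $R$--modules by \ref{prop_baireco}\eqref{prop_baireco-b}, the locus of right-$A$-module generators of $P$ is the principal open $X_f\subset\uW(P)\cong\bbA^{n^2}_R$ cut out by the determinant $f$ of the multiplication map and is therefore quasi-compact, and $X_f(R/\gm)\ne\emptyset$ for every maximal ideal $\gm$ by Wedderburn's theorem; Proposition~\ref{prop_baire}\ref{prop_baire-a} then produces a generator $p\in P$, and $a\mapsto pa$ is a surjection $A\to P$ of finite projective $R$--modules of equal rank, hence an isomorphism of right $A$--modules.)
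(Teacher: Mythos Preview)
Your proof is correct and follows essentially the same approach as the paper. For \ref{hil-a} both arguments reduce Brauer equivalence to an isomorphism $\Mat_n(A)\cong\Mat_n(B)$ (or equivalently $A\ot_R C\cong B\ot_R C$) and invoke the cancellation \ref{canfgp}\eqref{apw}; for \ref{hil-b} both use the exact sequence $1\to\GG_m\to\uGL_1(A)\to\uPGL(A)\to1$ together with $\Pic(R)=0$, the only cosmetic difference being that the paper identifies the image of $\alpha$ via the connecting map $\delta\co\rmH^1(R,\uPGL(A))\to\rmH^2(R,\GG_m)$ and Giraud's formula $\delta([B])=[A]-[B]$, whereas you compute $\alpha(\xi)$ directly as the class of $\End_A(P)$---the bookkeeping you flag is exactly what the Giraud citation packages.
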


\begin{proof}  \eqref{hil-a} It suffices to prove that $[A] = [B] \implies A \cong B$. The equality
%Since $\Pic(R) = 0$ by \eqref{LG-defe1},
$[A]=[B]$ means that there exist faithfully projective $R$--modules $P$ and $Q$ such that $A \ot_R \End_R(P) \cong B \ot_R \End_R(Q)$ as $R$--algebras. Using the rank decomposition of $P$ and $Q$, we can easily reduce to the case that both $P$ and $Q$ have constant rank. Hence, by  \ref{prop_baireco}\eqref{prop_baireco-b},  there exist $m,n\in \NN_+$ such that $\Mat_m(A) \cong \Mat_n(B)$ (isomorphism of $R$--algebras). Comparing degrees, we get $m=n$,
%$ \deg \Mat_m(A) = m \deg(A)$
hence $A\cong B$ by \eqref{apw2}. \sm

\eqref{hil-b} Let $\uPGL(A)$ be the automorphism group scheme of $A$, \cite[2.4.4.2]{CF}. The standard exact sequence of $\Spec(R)$--group schemes,
\[ 1 \to \GG_m \to \uGL_1(A) \to \uPGL(A) \to 1 
\] 
gives rise to a long exact sequence of pointed cohomology sets, part of which is
\begin{equation} \label{hil-b1} 
  \rmH^1(R, \GG_m) \to \rmH^1(R, \uGL_1(A)) \xrightarrow{\al}  \rmH^1(R, \uPGL(A)) \xrightarrow{\de}  \rmH^2(R, \GG_m).
\end{equation} 
On the left end, $\rmH^1(R, \GG_m) = \Pic(R) = 0$ by \eqref{LG-defe1}, which implies that $\al$ has trivial kernel. The set $\rmH^1(R, \uPGL(A))$ classifies twisted forms of $A$. If $B$ is such a twisted form, it has the same degree as $A$. By \cite[4.2.12.(iii)]{Gir}, the map $\de$ sends the isomorphism class of $B$ to $[A]-[B] \in \Br(R) \subset \rmH^2(R, \GG_m)$. Hence, part \eqref{hil-a} implies that $\de$ has trivial kernel, i.e., $\al$ has trivial image. This implies 
\eqref{hil-b}. \end{proof}

\begin{prop} \label{prop_isotrivial} Assume that $R$ is  LG-ring.
Let $A$ be an Azumaya $R$--algebra of constant degree $d$. \sm

\begin{enumerate}[label={\rm (\roman*)}]
 \item \label{prop_isotrivial1}
 $A$ admits a maximal \'etale $R$--subalgebra $S$ of finite degree $d$
 and $A \otimes_R S \cong \Mat_d(S)$.

\item \label{prop_isotrivial2} If $R$ is connected, then
$A$ admits a {\em connected\/} finite \'etale $R$--subalgebra $S'$
 such that $A \otimes_R S' \cong \Mat_d(S')$.
\end{enumerate}
\end{prop}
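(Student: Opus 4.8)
\emph{Part (i).} The plan is to produce an element $x\in A$ whose reduced characteristic polynomial $\Pcrd_{A,x}$ is separable, and then set $S=R[x]$. Concretely, inside the affine $R$-scheme $\uW(A)$ consider the locus $U$ where the discriminant of the reduced characteristic polynomial is invertible; this is the non-vanishing set of a single regular function, hence a principal open — in particular quasi-compact — subscheme of $\uW(A)$. For $x$ in $U(T)$, the $T$-algebra $T[X]/(\Pcrd_{A\ot_R T,x})$ is finite étale of degree $d$, and (by Cayley--Hamilton together with a fibrewise rank count) the canonical surjection onto the $T$-subalgebra of $A\ot_R T$ generated by $x$ is an isomorphism; thus this subalgebra is a maximal étale one (an étale commutative subalgebra of an Azumaya algebra of degree $d$ has rank $\le d$). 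Now for every maximal ideal $\gm$ the fibre $A\ot_R(R/\gm)$ is a central simple algebra over the field $R/\gm$, and such an algebra always contains an element with separable reduced characteristic polynomial: over an infinite residue field this is the genericity of the discriminant (it is nonzero after base change to an algebraic closure, where a diagonal matrix with distinct entries witnesses it), and over a finite residue field $A\ot_R(R/\gm)$ is split by Wedderburn's theorem and one takes the companion matrix of a separable monic polynomial of degree $d$, which exists over any finite field. Hence $U(R/\gm)\ne\emptyset$ for all $\gm$, so Proposition~\ref{prop_baire}\ref{prop_baire-a} yields an $x\in U(R)$, and $S:=R[x]$ does the job. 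Finally $A\ot_R S\cong\Mat_d(S)$: viewing $A$ as a module over $S$ via a maximal commutative étale subalgebra, $A$ is finite projective of constant rank $d$ over $S$; since $S$ is an LG-ring by \ref{LG-ex}\eqref{LG-ex-c}, Corollary~\ref{prop_baireco}\eqref{prop_baireco-b} (applied to $S$) gives $A\cong S^d$ as $S$-module, and the double centralizer theorem identifies $A\ot_R S$ with $\End_S(A)\cong\Mat_d(S)$. (Alternatively, (i) follows from Theorem~\ref{thm_maxtorus}: a maximal $R$-torus of the reductive group scheme $\uGL_1(A)$ is of the form $\uGL_1(S)$ for a maximal étale $R$-subalgebra $S$ of $A$, which then splits $A$ as above.)

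\emph{Part (ii).} The plan is an induction on $d$ using (i). By (i) pick a maximal étale $R$-subalgebra $S\subseteq A$ with $A\ot_R S\cong\Mat_d(S)$. As $\Spec R$ is connected, $S=S_1\times\dots\times S_r$ with each $S_i$ a connected finite étale $R$-algebra of some degree $d_i$, and reading off the $i$-th factor of $A\ot_R S\cong\prod_i(A\ot_R S_i)\cong\Mat_d(\prod_i S_i)$ shows $A\ot_R S_i\cong\Mat_d(S_i)$, so every $S_i$ splits $A$. If $r=1$ we take $S'=S$. If $r\ge2$, the idempotent $e:=e_1\in S\subseteq A$ attached to the factor $S_1$ is full: two-sided ideals of an Azumaya algebra are of the form $\mathfrak a A$, and $AeA+\gm A=A$ for every $\gm$ because $e$ has nonzero image in the simple algebra $A\ot_R(R/\gm)$ (as $eS_1$ has positive constant rank), so $AeA=A$. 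Hence $eAe$ is an Azumaya $R$-algebra of degree $d_1<d$, Brauer-equivalent to $A$, to which the inductive hypothesis applies, producing a connected finite étale $R$-subalgebra $T\subseteq eAe$ that splits $eAe$, hence splits $A$; then $A\ot_R T\cong\Mat_d(T)$ by the constant-rank argument over the LG-ring $T$ used in (i). Since $T$ splits $A$ and $\deg_R T$ divides $d$ (see below), $T$ admits a unital $R$-algebra homomorphism into $A$, which is automatically injective because $T$ is connected (its only ideals being $0$ and $T$); we then take $S'=T$.

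\emph{Where the work lies.} The schemes and fibrewise inputs in (i), the fullness of $e$, and the constant-rank/splitting bookkeeping are all routine. The genuine obstacle is the control needed in (ii): the inductive hypothesis must be strengthened so that the connected étale subalgebra it produces has degree dividing \emph{every} degree of an Azumaya $R$-algebra in the Brauer class of $A$ (the ``index''), which divides both $d_1$ and $d$ — a plain ``$\deg_R T\mid d_1$'' is not enough, since $d_1\nmid d$ in general (e.g. a degree-$d$ Azumaya whose maximal étale subalgebra splits into factors of coprime degrees turns out to be split, and then $S'=R$). With this phrasing, the base of the induction is the case where the maximal étale subalgebra $S$ of (i) is itself connected, in which case $A$ has no nontrivial idempotents and $\deg_R S=d$ is the index. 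The second delicate point is the unital embedding $T\hookrightarrow A$ from ``$T$ splits $A$ and $\deg_R T\mid d$'': modulo each $\gm$ this is the classical fact that a separable splitting subalgebra of degree dividing the degree embeds into a central simple algebra, and assembling it over $R$ either invokes the standard theory of Azumaya algebras or proceeds in the spirit of this paper by applying Proposition~\ref{prop_baire} (together with the cancellation/Hilbert-90 results \ref{hil} and \ref{canfgp}) to a suitable scheme of homomorphisms.
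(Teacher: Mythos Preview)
Your approach to \ref{prop_isotrivial1} is correct and in fact more elementary than the paper's. The paper does not search for a separable element directly; instead it invokes Theorem~\ref{thm_maxtorus} to produce a maximal torus $T\subset\uGL_1(A)$, then quotes \cite[Prop.~3.2]{G3} to recognise $T=\uGL_1(S)$ for a maximal \'etale subalgebra $S\subset A$. For the splitting step the paper uses Corollary~\ref{hil}\eqref{hil-a}: $A\ot_R S\cong\End_S(A)$ and $\Mat_d(S)$ are neutral Azumaya $S$--algebras of the same degree over the LG-ring $S$, hence isomorphic. Your route via ``$A$ is free of rank $d$ over $S$'' (using Corollary~\ref{prop_baireco}\eqref{prop_baireco-b}) is equally valid and arguably more direct. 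What your argument buys is independence from the group-scheme machinery; what the paper's buys is that \ref{prop_isotrivial1} becomes an immediate byproduct of the maximal-torus theorem, which is the paper's central theme.

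For \ref{prop_isotrivial2} you have over-read the word ``subalgebra'' and, as a consequence, massively over-engineered the argument. The paper's entire proof is the observation you already make in your second sentence: decompose $S=S_1\times\cdots\times S_c$ into connected factors and read off $A\ot_R S_1\cong\Mat_d(S_1)$ from the factor decomposition of $A\ot_R S\cong\Mat_d(S)$; then $S':=S_1$ works. The paper does not ask for, and does not produce, a \emph{unital} embedding $S'\hookrightarrow A$; the factor $S_1=e_1S\subset A$ is a (non-unital) $R$--subalgebra, and that is all that is intended. Everything from ``If $r\ge 2$'' onward---the fullness of $e$, the induction on degree, the strengthened inductive hypothesis, the delicate embedding $T\hookrightarrow A$---is unnecessary for the statement as written, and your own ``Where the work lies'' section correctly identifies that this extra programme has genuine gaps (the index-divisibility control and the global embedding step are not established). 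Drop it: once you have shown that every $S_i$ splits $A$, you are done.
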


\begin{proof} \ref{prop_isotrivial1}
According to Theorem \ref{thm_maxtorus}, the reductive $R$--group scheme $\uGL_1(A)$ admits a maximal torus $T$. By \cite[Prop.\ 3.2]{G3}, $T$ occurs from an embedding  $S \hookrightarrow A$ of a maximal finite \'etale algebra $S$ of degree $d$ and $T=\uGL_1(S)$. The canonical $S$--module $A$ is finite projective of constant rank $d$ and $A\ot_R S \cong \End_S(A)$ as Azumaya $S$--algebras (\cite[7.4.2]{Ford}, \cite[III, Prop.\ 6.1]{KO}). In particular, $A\ot_R S$ is a neutral Azumaya $S$--algebras of degree $d$. Next, $S$ is an LG-ring by Example \ref{LG-ex}\eqref{LG-ex-c}. Since $A\ot_R S$ and $\Mat_d(S)$ are both neutral Azumaya $S$--algebras of the same degree, Corollary \ref{hil}\eqref{hil-a} enables us to conclude that $A \otimes_R S \cong \Mat_d(S)$. \sm

\noindent \ref{prop_isotrivial2}
Because $R$ is connected, we have a decomposition $S=S_1 \times \dots \times S_c$ with $S_i$ finite \'etale connected. %\cite[2.3.12]{Ford}
Since $A \ot_R S \cong \Mat_d(S)$, we obtain that $A \otimes_R S' \cong \Mat_d(S')$ for  $S'=S_1$.
\end{proof}

%\quest{(05-01) New addition; \ref{raa}\eqref{raa-b} says that a connected $R$ with $\Pic(R) = 0$ has the Wedderburn property. One needs \eqref{raa-b} for uniqueness. I suspect that \eqref{raa-a} and \eqref{raa-b} are known to the experts (assuming that they are correct). }
%\quest{Below is a summary for discussion tomorrow (05-15)}

\subsection{Indecomposable finite projective modules}\label{ifpm}
Let $R$ be an arbitrary ring and let $A$ be an Azumaya $R$--algebra. Recall that an $A$--module $M$ is called {\em decomposable\/} if there exists a family $(M_i)_{i\in I}$, $|I|\ne 1$,  of submodules of $M$ such that $M=\bigoplus_{i\in I} M_i$ and every $M_i \ne 0$. Otherwise, it is called {\em indecomposable\/}.
%(Bourbaki; Alg\`ebre VII, p. 23; and again Alg\`ebre VIII, \S2.3).
Hence $M=\{0\}$ is decomposable by taking $I=\emptyset$, and an $A$--module $M$ is indecomposable if and only if $M\ne 0$ and $\idemp\big( \End_A(M)\big) = \{0,1\}$.  Here $\idemp(B) = \{b \in B: b^2=b\}$ is the set of idempotents of an $R$--algebra $B$.
\sm

Following \cite[V, p.~131]{DI}, we define an equivalence relation $\sim$ on the set of indecomposable finite projective left $A$--modules by $P\sim Q \iff$ there exists an invertible $R$--module $E$ such that $P \cong Q \ot_R E$ as (left) $A$--modules. Obviously, if $\Pic(R) = \{0\}$, then ``equivalence'' reduces to ``isomorphism".
We will use the following result.

\begin{thm}[{\cite[V; Thm.~1.1]{DI}}] \label{dbi} Let $A$ be an Azumaya algebra over a connected ring $R$. Then
\[ P \mapsto \End_A(P)\op
\] induces a bijection between the set of equivalence classes of indecomposable finite projective $A$--modules and the set of isomorphism classes of Azumaya $R$--algebras $B$ satisfying
\begin{equation} \label{dbi-1}
     A \sim_{\rm Br} B \quad \text{and} \quad \idemp(B) = \{0,1\},
 \end{equation}
where $\sim_{\rm Br}$ indicates Brauer equivalence. \end{thm}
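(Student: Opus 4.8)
The statement is \cite[V; Thm.~1.1]{DI}; here is the plan I would follow to prove it. The basic tool is the double centralizer theorem for Azumaya algebras: if $A$ is an $R$-subalgebra of an Azumaya $R$-algebra $E$ and $A$ is itself Azumaya over $R$, then the centralizer $C_E(A)$ of $A$ in $E$ is Azumaya over $R$, multiplication gives an isomorphism $A\otimes_R C_E(A)\cong E$, and $C_E\big(C_E(A)\big)=A$ (see \cite[III]{DI} or \cite{KO}). First I would verify that $P\mapsto\End_A(P)\op$ has the stated target. Let $P$ be an indecomposable finite projective $A$-module. Then $P\neq 0$ and, being finite projective over $R$ (because $A$ is), $P$ has constant rank $\geq 1$ over the connected ring $R$, hence is faithfully projective over $R$; thus $E:=\End_R(P)$ is a (necessarily neutral) Azumaya $R$-algebra, $A\hookrightarrow E$ through its action on $P$, and $C_E(A)=\End_A(P)$. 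The double centralizer theorem gives $\End_R(P)\cong A\otimes_R\End_A(P)$, so $[A]+[\End_A(P)]=0$ in $\Br(R)$; since $[\End_A(P)\op]=-[\End_A(P)]$ (because $\End_A(P)\otimes_R\End_A(P)\op$ is neutral), we get $A\sim_{\rm Br}B$ for $B:=\End_A(P)\op$. Finally $\idemp(B)=\idemp(\End_A(P))=\{0,1\}$ precisely because $P$ is indecomposable (cf.\ \ref{ifpm}); the same remark shows that any $P$ whose image $B$ satisfies $\idemp(B)=\{0,1\}$ is indecomposable.

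For well-definedness on equivalence classes and for injectivity I would use the identity $\End_A(P\otimes_R L)\cong\End_A(P)\otimes_R\End_R(L)$, valid for $L$ finite projective over $R$; when $L$ is invertible the right side is $\cong\End_A(P)$, so the assignment factors through $\sim$. For injectivity, assume $\End_A(P)\cong\End_A(Q)=:B'$ as $R$-algebras. The double centralizer theorem identifies both $\End_R(P)$ and $\End_R(Q)$ with $C:=A\otimes_R B'$ compatibly with the embeddings of $A$, so $P$ and $Q$ are both progenerators over the split Azumaya algebra $C$. In the Morita equivalence $C\text{-Mod}\simeq R\text{-Mod}$ coming from $C\cong\End_R(P)$, the module $P$ corresponds to $R$ and $Q$ to some faithfully projective $R$-module $L$, so $Q\cong P\otimes_R L$ as $C$-modules, hence as $A$-modules. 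Then $B'\cong\End_A(Q)\cong\End_A(P)\otimes_R\End_R(L)\cong B'\otimes_R\End_R(L)$, and comparing $R$-ranks forces $\rank_R(L)=1$; thus $L$ is invertible and $P\sim Q$.

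For surjectivity, let $B$ be an Azumaya $R$-algebra with $A\sim_{\rm Br}B$ and $\idemp(B)=\{0,1\}$. Since $[A]=[B]$ in $\Br(R)$, the algebra $A\otimes_R B\op$ is neutral, say $A\otimes_R B\op\cong\End_R(P)$ with $P$ faithfully projective over $R$. Regarding $P$ as an $A$-module through $A\hookrightarrow A\otimes_R B\op\cong\End_R(P)$, it is finite projective over $A$ (it is a progenerator over $A\otimes_R B\op$, which is finite projective over $A$), and the double centralizer theorem gives $\End_A(P)=C_{\End_R(P)}(A)=B\op$, whence $\End_A(P)\op\cong B$; moreover $\idemp(\End_A(P))=\idemp(B)=\{0,1\}$, so $P$ is indecomposable. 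The main obstacle is precisely this last step, or rather the input that a Brauer-trivial Azumaya algebra over $R$ is neutral --- in general this rests on the injectivity of the ``naive'' Brauer group into $\rmH^2(R,\GG_m)$, but over an LG-ring it follows at once from cancellation of Azumaya algebras, \ref{canfgp}\eqref{apw}, combined with \ref{prop_baireco}\eqref{prop_baireco-b}; the remaining steps are routine bookkeeping with the double centralizer theorem and Morita theory.
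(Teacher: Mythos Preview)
Your argument is correct and follows the standard double-centralizer\slash Morita approach of \cite[V, Thm.~1.1]{DI}; the paper does not give its own proof but simply cites \cite{DI} and spells out the inverse map exactly as you do (choose $P$ with $A\otimes_R B\op\cong\End_R(P)$, view $P$ as $A$--module, and recover $B\cong\End_A(P)\op$ via the double centralizer theorem).

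One comment on your closing paragraph: the statement is for an arbitrary connected ring $R$, so your appeal to the LG hypothesis via \ref{canfgp}\eqref{apw} is out of place here. The fact you need --- that $[A]=[B]$ forces $A\otimes_R B\op\cong\End_R(P)$ for some faithfully projective $P$ --- is a basic property of the Brauer group over any commutative ring (this is how the paper phrases it: ``By definition of $A\sim_{\rm Br} B$ \ldots''), and it follows directly from Morita theory without passing through $\rmH^2(R,\GG_m)$: if $A\otimes_R\End_R(M)\cong B\otimes_R\End_R(N)$ then $A\otimes_R B\op$ is Morita equivalent to $R$, hence isomorphic to $\End_R(P)$ for the $A\otimes_R B\op$--progenerator $P$ corresponding to $R$. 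So drop the LG caveat and cite the general fact (e.g.\ \cite[7.5]{Ford} or \cite[II, Thm.~5.7]{DI}).
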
 \sm

The inverse map assigns to $B$ satisfying \eqref{dbi-1} the module $P$ constructed as follows: By definition of $A \sim_{\rm Br} B$ there exists a faithfully projective $R$--module $P$ satisfying
\begin{equation} \label{dbi-2}
  A \ot_R B\op \cong \End_R (P).
 \end{equation}
We use the canonical $R$--algebra homomorphism $A \to A \ot_R B\op$ and \eqref{dbi-2} to endow $P$ with an $A$--module structure. It is finite projective as $A$--module by \cite[4.4.1]{Ford}. Viewing $B$ as a subalgebra of $\End_R(P)$  the double centralizer theorem implies
\begin{equation}  \label{dbi-3}
 B \cong \End_A(P).
\end{equation}
Hence $\idemp\big(\End_A(P)\big) = \{0,1\}$ which says that $P$ is an indecomposable $A$--module by the characterization of indecomposability mentioned in  \ref{ifpm}.

\begin{cor}\label{dbic}
  Let $R$ be a connected ring. Then every Brauer class $\al \in \Br(R)$ has a representative $B$ satisfying $\idemp(B) = \{0,1\}$.
\end{cor}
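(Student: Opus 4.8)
The plan is to deduce this immediately from Theorem~\ref{dbi}. Given a class $\al \in \Br(R)$, there is by definition of the Brauer group an Azumaya $R$--algebra $A$ with $[A] = \al$, and it suffices to exhibit an indecomposable finite projective left $A$--module $P$: then Theorem~\ref{dbi} shows that $B := \End_A(P)\op$ is an Azumaya $R$--algebra with $A \sim_{\rm Br} B$ and $\idemp(B) = \{0,1\}$, whence $[B] = [A] = \al \in \Br(R)$, and $B$ is the required representative.

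To produce such a $P$, I would peel off an indecomposable direct summand of the regular module $A$, using that $R$ is connected. First I would record the elementary fact that over a connected ring every finite projective module has constant rank, so that a nonzero finite projective module has everywhere strictly positive rank, while a nonzero proper direct summand of a finite projective module has strictly smaller rank. As an $R$--module $A$ is finite projective of (constant, positive) rank $d^2$, where $d = \deg A$. Consider the family $\mathcal F$ of all nonzero left $A$--submodules of the regular module $A$ that are $A$--module direct summands of $A$; it is nonempty since $A \in \mathcal F$, and every member is finite projective as an $A$--module, hence (being a summand of some $A^n$, which is finite projective over $R$) finite projective as an $R$--module. Choose $P \in \mathcal F$ of minimal $R$--rank.

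Finally I would check that $P$ is indecomposable in the sense of \ref{ifpm}. It is nonzero by construction; if it were decomposable we could write $P = P_1 \oplus P_2$ with $P_1, P_2 \ne 0$, and then $P_1$ would be a nonzero left-$A$ direct summand of $A$ of $R$--rank strictly smaller than that of $P$, contradicting the minimality of $P$ in $\mathcal F$. Hence $\idemp\big(\End_A(P)\big) = \{0,1\}$, so $P$ is an indecomposable finite projective left $A$--module, and Theorem~\ref{dbi} applies to give $B = \End_A(P)\op$ as above. The only point that requires any care is the rank bookkeeping, which hinges on the connectedness of $R$ to guarantee both that a proper summand strictly drops the rank and that the ranks occurring in $\mathcal F$ are bounded, so that a minimal-rank element exists; beyond this there is no real obstacle.
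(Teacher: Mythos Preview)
Your proposal is correct and follows essentially the same approach as the paper: both reduce via Theorem~\ref{dbi} to exhibiting an indecomposable finite projective $A$--module, and both obtain one by decomposing the regular module $_AA$. The paper compresses this to a single sentence, while you spell out the minimal-$R$-rank argument explicitly; the connectedness of $R$ is used in exactly the way you describe.
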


\begin{proof}
  By Theorem~\ref{dbi}, it suffices to show that for every Azumaya $R$--algebra $A$ there exists an indecomposable finite projective $A$--module. This can be seen by decomposing the finite ``projective'' left $A$--module $_AA$.
  \lv{%%%%%%%%%%%%%%%%%   lv begins  %%%%%%%%%%%%%%%%%%%%%%%%%%%%%%%
  (likely folklore):
The left $A$--module $_A A$ is finite projective. If $A$ in not indecomposable = decomposable, then $A=I_1 \oplus I_2$ where $I_1$ and $I_2$ are left ideals satisfying $0 \ne I_j \ne A$ for $j=0,1$. Hence $I_j$ are finite projective $A$--modules and hence also as $R$--modules with $0 < \rank_R I_j < \rank_R A$. Continuing with decomposing the left ideals if necesary, we arrive after a finite number of steps at an indecomposable finite projective $A$--module $P$.
}%%%%%%%%%%%%%%%   lv ends
\end{proof}

\begin{lem}\label{diba} Let $R$ be a connected ring and let $A$ be an Azumaya $R$--algebra. Then the following conditions are equivalent:
\begin{enumerate}[label={\rm (\roman*)}]
  \item \label{diba-i} Any two indecomposable finite projective $A$--modules are equivalent.

  \item \label{diba-ii} Up to isomorphism, the Brauer class of $A$ contains a unique $B$ satisfying $\idemp(B) = \{0,1\}$.
\end{enumerate}
If these conditions hold, then, for $B$ as in \ref{diba-ii}, we have  that
\begin{enumerate}[label={\rm (\roman*)}]\setcounter{enumi}{2}

 \item \label{diba-iii} $\deg(B) = \ind([A])$, in particular $\deg(B) \big| \deg (A)$. \sm

 \item \label{diba-iv} Up to equivalence, the left $B$--module $_BB$ is the unique indecomposable finitely generated projective left $B$--module. \sm

 \item \label{diba-v} In addition to \ref{diba-i} and \ref{diba-ii},  suppose $\Pic(R) = \{0\}$, and let $M$ and $N$ be finite projective $A$--modules satisfying $\rank_R M \ge \rank_R N$. Then $N$ is a homomorphic image of $M$.
\end{enumerate}
\end{lem}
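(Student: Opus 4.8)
The plan is to deduce \ref{diba-i}$\iff$\ref{diba-ii} directly from the DeMeyer--Ingraham theorem \ref{dbi} and Corollary~\ref{dbic}: by \ref{dbic} the set of equivalence classes of indecomposable finite projective $A$--modules is non-empty (decompose $_AA$), and by \ref{dbi} it is in bijection with the set of isomorphism classes of Azumaya $R$--algebras $B$ with $A\sim_{\rm Br}B$ and $\idemp(B)=\{0,1\}$; hence ``any two such modules are equivalent'' says exactly that this set is a singleton, which is \ref{diba-ii}. From here on assume \ref{diba-i}/\ref{diba-ii} and fix $B$ as in \ref{diba-ii}.

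For \ref{diba-iii}: let $C$ be any Azumaya $R$--algebra with $[C]=[A]$ and write $_CC=\bigoplus_{i=1}^{t}Q_i$ as a direct sum of indecomposable finite projective $C$--modules (the decomposition terminates since the $R$--ranks of proper summands strictly drop). Applying \ref{dbi} to $C$, which is legitimate because $C\sim_{\rm Br}A$, each $\End_C(Q_i)^{\op}$ is Azumaya, Brauer--equivalent to $A$, and has only the idempotents $0,1$, hence $\cong B$ by the uniqueness in \ref{diba-ii}. The double centralizer theorem (as recalled after \ref{dbi}, giving $C\otimes_R\End_C(Q_i)\cong\End_R(Q_i)$) then yields $\rank_R Q_i=\deg(C)\deg(B)$ for all $i$; summing, $t\,\deg(C)\deg(B)=\rank_R({}_CC)=\deg(C)^2$, so $\deg(B)\mid\deg(C)$. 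With $C=A$ this gives $\deg(B)\mid\deg(A)$, and since $B$ is itself a representative of $[A]$ whose degree divides that of every representative, $\deg(B)=\ind([A])$.

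For \ref{diba-iv}: as $[B]=[A]$ and $B\sim_{\rm Br}B$ (via $B\otimes_R B^{\op}\cong\End_R(B)$), Theorem~\ref{dbi} applies with $B$ in place of $A$; the algebras $B'$ with $B\sim_{\rm Br}B'$, $\idemp(B')=\{0,1\}$ coincide with those with $A\sim_{\rm Br}B'$, $\idemp(B')=\{0,1\}$, so by \ref{diba-ii} there is, up to equivalence, exactly one indecomposable finite projective left $B$--module. Since $\End_B({}_BB)\cong B^{\op}$ and $\idemp(B^{\op})=\idemp(B)=\{0,1\}$, the nonzero module $_BB$ is indecomposable, hence it is that unique representative. (As a byproduct, writing $_AA\cong P^{(m)}$ with $P$ the unique indecomposable finite projective $A$--module and $\End_A(P)\cong B^{\op}$ gives $A\cong\Mat_m(B^{\op})^{\op}\cong\Mat_m(B)$, the Wedderburn form of $A$.)

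For \ref{diba-v}: now $\Pic(R)=\{0\}$, so ``equivalence'' of $R$--modules means ``isomorphism'', and it suffices to prove that every finite projective $R$--module is free --- then $M\cong R^m$, $N\cong R^n$ with $m\ge n$ and the coordinate projection $R^m\twoheadrightarrow R^n$ does it; conversely the case $M=R^{\rank N}$ of \ref{diba-v} already forces $N$ free, so nothing is lost. Suppose some finite projective $R$--module is not free; decomposing into indecomposables and using that rank--one projectives are free ($\Pic(R)=\{0\}$), there is an indecomposable finite projective $R$--module $P_0$ of rank $r\ge 2$. Put $D=B\otimes_R\End_R(P_0)$; it is Azumaya with $[D]=[A]$ and $\deg(D)=r\deg(B)$. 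Running the argument of \ref{diba-iii} with $D$ in place of $C$ --- decomposing $_DD$ and invoking \ref{dbi} and \ref{diba-ii} --- every indecomposable summand has endomorphism algebra $\cong B^{\op}$ and the rank count forces $_DD\cong Q^{(r)}$ for one indecomposable $Q$; hence $D\cong\Mat_r(\End_D(Q))^{\op}\cong\Mat_r(B^{\op})^{\op}\cong\Mat_r(B)\cong B\otimes_R\Mat_r(R)$, i.e.\ $B\otimes_R\End_R(P_0)\cong B\otimes_R\Mat_r(R)$. Cancelling the Azumaya factor $B$ gives $\End_R(P_0)\cong\Mat_r(R)=\End_R(R^r)$, so $P_0\cong R^r$ (using $\Pic(R)=\{0\}$), contradicting indecomposability since $r\ge 2$. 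The cancellation of the Azumaya factor $B$ over a general connected ring with trivial Picard group is the crux of this last step; when $R$ is an LG--ring it is furnished by \ref{canfgp}\eqref{apw} (and in that case every finite projective $R$--module is free anyway by Corollary~\ref{prop_baireco}\eqref{prop_baireco-b}, so \ref{diba-v} holds at once), so I expect the genuine obstacle to be justifying Azumaya cancellation --- or finding another route from \ref{diba-iv} to the freeness of finite projective $R$--modules --- in the general connected case.
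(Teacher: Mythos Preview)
Your arguments for \ref{diba-i}$\iff$\ref{diba-ii}, \ref{diba-iii} and \ref{diba-iv} are correct and essentially identical to the paper's; the paper derives \ref{diba-iii} from the same rank count applied to the decomposition $_AA=P_1\oplus\cdots\oplus P_n$ and then observes that the argument works for any $A'\sim_{\rm Br}A$, and it obtains \ref{diba-iv} by specializing \eqref{diba-2} to $A=B$, which forces $n=1$.

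For \ref{diba-v} you have been misled by a typo in the statement: $M$ and $N$ are meant to be finite projective \emph{$A$--modules}, not $R$--modules. This is clear from the paper's own proof (``We fix an indecomposable finite projective $A$--module $K$, and then get $M\cong K^a$ and $N\cong K^b$\ldots'') and from the only place \ref{diba-v} is used, namely in the proof of Theorem~\ref{LGdeM}, where it is applied to $A$--modules $P$ and $Q$. With the intended reading the proof is immediate: since $\Pic(R)=0$, condition \ref{diba-i} says all indecomposable finite projective $A$--modules are \emph{isomorphic} to a fixed $K$, so $M\cong K^a$, $N\cong K^b$, the rank condition forces $a\ge b$, and the coordinate projection $K^a\twoheadrightarrow K^b$ finishes. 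Your elaborate detour through freeness of $R$--modules and Azumaya cancellation is aimed at a statement the paper never intended --- and, as you correctly suspected, that stronger statement is not available over a general connected ring with trivial Picard group.
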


\begin{proof}
  The equivalence \ref{diba-i}$ \iff$ \ref{diba-ii} is a special case of Theorem~\ref{dbi}. \sm

\ref{diba-iii} We can write $A$ as a finite sum of indecomposable finitely generated projective $A$--modules, say $A = P_1 \oplus \cdots \oplus P_n$. By \ref{diba-i} we can assume that there exists an indecomposable finite projective $A$--module $P$ such that $P_i = P \ot_R E_i$ for some invertible $R$--module $E_i$. Thus, $\rank_R P_i = \rank_R P$ and we get
\begin{equation}  \label{diba-1}
\rank_R A = n \rank_R P\quad \text{and} \quad \rank_R A \cdot \rank_R B = (\rank_R P)^2
\end{equation}
where the second equation in \eqref{diba-1} follows from \eqref{dbi-2}. The two equations in \eqref{diba-1} imply
\begin{equation}
  \label{diba-2} \rank_R P = n \rank_R B, \quad \rank_R A = n^2 \rank_R B.
\end{equation}
The second equation then forces $\deg B \big| \deg (A)$.

Let $[A]=\al \in \Br(R)$. Recall that by definition $\ind (\al) = \gcd\{ \deg A' : A' \sim_{\rm Br} A \}$. The equivalence \ref{diba-i}$ \iff$ \ref{diba-ii} shows that the condition~\ref{diba-i} also holds for $A'$. Since $A \sim_{\rm Br} A' \implies B \sim_{\rm Br} A'$, it then follows that $\deg(B) \big| \deg(A')$, and therefore $\deg(B) = \ind(\al)$. \sm

\ref{diba-iv} As observed in \ref{diba-iii}, the condition \ref{diba-i} holds for $B$ replacing $A$. Hence we get \eqref{diba-2} for $A=B$, which forces $n=1$ and therefore \ref{diba-iv}. \sm

\ref{diba-v} Since $\Pic(R)= 0$, ``equivalence'' in \ref{diba-i} becomes ``isomorphism''. We fix an indecomposable finite projective $A$--module $K$, and then get $M \cong K^a$ and $N \cong K^b$ for positive integers $a$ and $b$. The rank assumption on $M$ and $N$ implies that $a\ge b$. The claim then follows.
\end{proof}
%\newpage

\begin{thm} [{\cite[Thm.~1]{DeM}} for $R$ semilocal] \label{LGdeM}
Let $R$ be a connected LG-ring. Then the condition~{\rm \ref{diba}}\ref{diba-i}  holds with ``equivalence" replaced by ``isomorphism''. Hence the conditions \ref{diba-ii}--\ref{diba-v} of\/ {\rm \ref{diba}} are satisfied too.
\end{thm}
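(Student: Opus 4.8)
The plan is to reduce the statement to Theorem~\ref{dbi} and Lemma~\ref{diba} by verifying condition~\ref{diba}\ref{diba-i} --- \emph{any two indecomposable finite projective $A$--modules are isomorphic} --- for a connected LG-ring $R$, using that $\Pic(R)=0$ by \eqref{LG-defe1} so that ``equivalence'' and ``isomorphism'' coincide. First I would recall, via Theorem~\ref{dbi} and its inverse construction, that indecomposable finite projective $A$--modules correspond bijectively to isomorphism classes of Azumaya $R$--algebras $B$ in the Brauer class of $A$ with $\idemp(B)=\{0,1\}$; so it suffices to show that the Brauer class of $A$ contains, up to isomorphism, a \emph{unique} such $B$. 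Existence of at least one $B$ is Corollary~\ref{dbic} (decompose $_AA$), so the whole point is uniqueness.

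For uniqueness, suppose $B_1$ and $B_2$ are Azumaya $R$--algebras, both Brauer-equivalent to $A$, both with only trivial idempotents. Since $R$ is connected, each has constant degree, say $d_i=\deg(B_i)$. Brauer equivalence $B_1 \sim_{\mathrm{Br}} B_2$ together with $\Pic(R)=0$ and \ref{prop_baireco}\eqref{prop_baireco-b} means there are free modules $R^m$, $R^n$ with $B_1 \ot_R \Mat_m(R) \cong B_2 \ot_R \Mat_n(R)$, i.e.\ $\Mat_m(B_1) \cong \Mat_n(B_2)$ as $R$--algebras; comparing degrees gives $m d_1 = n d_2$. The key cancellation input is Application~\ref{canfgp}\eqref{apw}, specifically \eqref{apw1}: I would like to cancel a common tensor factor. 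Concretely, from $\Mat_m(B_1)\cong\Mat_n(B_2)$ one first reduces, using $\gcd$ considerations and \eqref{apw2}, to the case where one of $m,n$ divides the other, and then argues that if $B_1 \ot_R \Mat_k(R) \cong B_2$ (say $d_2 = k d_1$) with $\idemp(B_2)=\{0,1\}$, then $k=1$: indeed $B_2 \cong \Mat_k(B_1)$ would contain the nontrivial idempotent $\diag(1,0,\dots,0)$ unless $k=1$, and then $B_1\cong B_2$. The symmetric possibility is handled the same way, so in all cases $B_1 \cong B_2$, establishing \ref{diba}\ref{diba-i}.

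With \ref{diba}\ref{diba-i} in hand, the equivalence \ref{diba}\ref{diba-i}$\iff$\ref{diba}\ref{diba-ii} (a special case of Theorem~\ref{dbi}) gives \ref{diba-ii}, and then Lemma~\ref{diba} yields \ref{diba-iii}, \ref{diba-iv}, and --- since $\Pic(R)=0$ --- also \ref{diba-v}. It remains to note that the theorem asserts condition \ref{diba-i} holds with ``equivalence'' replaced by ``isomorphism'': this is automatic because $\Pic(R)=\{0\}$ by \eqref{LG-defe1}, so the two invertible-module-twist equivalence relation is trivial and collapses to isomorphism of $A$--modules.

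The main obstacle I expect is the cancellation bookkeeping in the middle paragraph: going from $\Mat_m(B_1)\cong\Mat_n(B_2)$ to $B_1\cong B_2$ cleanly. One cannot apply \eqref{apw1} directly since there is no common tensor factor on the nose; the honest route is to let $g=\gcd(m,n)$, write $m=g m'$, $n=g n'$ with $\gcd(m',n')=1$, use \eqref{apw2} to cancel $\Mat_g$ and get $\Mat_{m'}(B_1)\cong\Mat_{n'}(B_2)$, then observe $m' d_1 = n' d_2$ forces $n' \mid d_1$ and $m'\mid d_2$ (using $\gcd(m',n')=1$), write $d_1 = n' e$, $d_2 = m' e$, and compare ranks to see that $B_1$ and $B_2$ are both Brauer-equivalent forms of degree divisible by a common ``index'' algebra; the indecomposability hypothesis $\idemp(B_i)=\{0,1\}$ then pins down $m'=n'=1$ via the argument that $\Mat_k$ of anything with $k>1$ has a nontrivial idempotent. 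I would present this carefully but without excessive computation, since each individual step is elementary once the strategy is fixed.
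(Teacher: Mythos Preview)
Your approach has a genuine gap at the step you yourself flag as the ``main obstacle''. After cancelling the $\gcd$ via \eqref{apw2} you arrive at $\Mat_{m'}(B_1)\cong\Mat_{n'}(B_2)$ with $\gcd(m',n')=1$ and $d_1=n'e$, $d_2=m'e$, but you never actually show that $m'=n'=1$. Your idempotent argument works only once you know one side is literally $B_i$ (i.e., $m'=1$ or $n'=1$); the sentence ``compare ranks to see that $B_1$ and $B_2$ are both Brauer-equivalent forms of degree divisible by a common `index' algebra'' is precisely what is at stake and is circular. Over a field one concludes $m'=n'=1$ because $B_1\cong B_2$ are both \emph{the} division algebra in the class, but over a ring this is exactly the uniqueness you are trying to prove. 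Cancellation \eqref{apw1}, \eqref{apw2} only lets you strip a \emph{common} tensor factor; it gives no leverage on $\Mat_{m'}(B_1)\cong\Mat_{n'}(B_2)$ with coprime $m',n'$ both $>1$.

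The paper avoids this entirely by working on the module side and using the geometric LG criterion \ref{prop_baire}\ref{prop_baire-a} directly. Given indecomposable finite projective $A$--modules $P,Q$ with $\rank_R P\ge\rank_R Q$ (both free over $R$ by \ref{prop_baireco}\eqref{prop_baireco-b}), one shows $Q$ is a quotient of $P$: the surjections form an open quasi-compact subscheme $U\subset\uW(\Hom_A(P,Q))$, and $U(R/\gm)\ne\emptyset$ for every maximal $\gm$ because the result is classical over fields; hence $U(R)\ne\emptyset$ by \ref{prop_baire}\ref{prop_baire-a}. Any surjection $P\to Q$ splits since $Q$ is projective, and indecomposability of $P$ forces $P\cong Q$. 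This is the missing idea: you need a local--global input that goes beyond tensor cancellation, and the LG property supplies it via \ref{prop_baire}.
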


\begin{proof} Let $P$ and $Q$ be indecomposable finite projective $A$--modules.
Since $R$ is connected, $\rank_R P$ and $\rank_R Q$ are constant, so that Corollary~\ref{prop_baireco} says that $P$ and $Q$ are free as $R$--modules. Without loss of generality we can therefore suppose that $\rank_R P \ge \rank_R Q$. We claim:
\begin{equation} \label{LGdeM1}
\text{$Q$ is a homomorphic image of $P$.}
\end{equation}
Assuming \eqref{LGdeM1} for a moment, we can quickly finish the proof. Indeed, because $Q$ is projective, any epimorphism $P \to Q$ splits. But $P$ is indecomposable. Therefore $P \cong Q$.

It remains to prove \eqref{LGdeM1}. This can be done by observing that \ref{diba}\ref{diba-i} and hence \ref{diba}\ref{diba-v} holds for the localization $R_\gm$ in a maximal ideal $\gm \ideal R$, thanks to \cite[Thm.~1]{DeM}. Applying then  \cite[Thm.~2.6(i)]{EG}, proves the claim. A more direct proof of \eqref{LGdeM1} goes as follows. Since $P$ and $Q$ are free of finite rank as $R$--modules, $\Hom_R(P,Q)$ is an affine space; it contains the open quasi-compact subscheme $U'= \{ \vphi \in \Hom_R(P,Q) : \vphi \text{ is surjective }\}$ (note that
the complement of $U'$ is given by the vanishing of finitely many minors). The $A$--linear maps $\Hom_A(P,Q)$  is  a subspace of $\Hom_R(P,Q)$, hence is again an affine space. It contains $U = \{ \vphi \in \Hom_A(P,Q): \vphi \text{ is surjective}\} = \Hom_A(P,Q) \cap U'$ as an open quasi-compact subscheme. If $\gm\ideal R$ is any maximal ideal of $R$, it is well-known that \ref{diba}\ref{diba-i} and hence \ref{diba}\ref{diba-v} hold for $R$ replaced by the field $R/\gm$, i.e., $U(R/\gm) \ne \emptyset$. But then Proposition~\ref{prop_baire}\ref{prop_baire-a} shows that $U(R) \ne \emptyset$, which is \eqref{LGdeM1}.
\lv{ OLD PROOF:
Let $\gm \ideal R$ be a maximal ideal and let $R_\gm$ be the localization of $R$ at $\gm$. Then $A_\gm = A\ot_R R_\gm \cong A/\gm A$ is an Azumaya $R_\gm$--algebra. Also, $P_\gm = P\ot_A A_\gm \cong P\ot_R R_\gm$ and $Q_\gm$ are finite projective $A_\gm$--modules satisfying $\rank_{R_\gm} P_\gm \ge \rank_{R_\gm} Q_\gm$. By \cite[Thm.~1]{DeM}, the condition \ref{diba}\ref{diba-i} holds for the Azumaya $R_\gm$--algebra $A_\gm$. Hence \ref{diba}\ref{diba-v} says that $Q_\gm$ is a homomorphic image of $P_\gm$. As this holds for every maximal ideal $\gm$ of $R$, we can apply \cite[Thm.~2.6(i)]{EG} with $(R,S, M,N)$ there replaced by $(R_\gm, A_\gm, P, Q)$ here. Thus, $Q$ is a homomorphic image of $P$.}
\end{proof}

\subsection{The Wedderburn property} \label{twp} \new One says that a connected ring $R$ has the {\em Wedderburn property} if for every Azumaya $R$-algebra $A$ there is up to isomorphism a unique representative $B$ in the Brauer class of $A$ such that $\mathrm{idemp}(B) = \{0, 1 \}$ and $B\op\cong  \End_A(M)$ for some $A$-progenerator module $M$, \cite{AW14} or  \cite[\S 7.6]{Ford}. Condition~\ref{diba-ii} of \ref{diba} and Theorem~\ref{LGdeM} say that a connected LG-ring $R$ has the Wedderburn property.  This was previously known for connected semilocal rings \cite[Cor.~1]{DeM}. 

Obvious examples of rings having the Wedderburn property are connected rings for which every Azumaya algebra is a matrix algebra, i.e., connected rings $R$ with $H^1(R, \uPGL_n) = 1$. An example of such a ring is any connected ring $R$ satisfying the two conditions \ref{twbi} and \ref{twbii} below, 
\begin{enumerate}[label={\rm (\roman*)}]
  \item \label{twbi} $\Br(R) = 1$ and 
  \item\label{twbii} any finitely generated projective $R$--module (necessarily of constant rank $n$) is free, i.e., $H^1(R, \uGL_n) = 1$. 
\end{enumerate}
Indeed, let $A$ be an Azumaya $R$--algebra of degree $n$. Since $[A]=0\in \Br(R)$, the algebra $A$ is Brauer equivalent to $R$ so that $A \cong \End_R(P)$ for a projective $R$--module $P$ of rank $n$ by \cite[Prop.\ III.5.6]{KO}. Since $P$ is free by \ref{twbii}, we conclude that $A\cong \Mat_n(R)$. Thus $H^1(R,\uPGL_n)=1$. 

For example, any PID satisfies condition~\ref{twbii}, while condition~\ref{twbi} is fulfilled for $R=\ZZ$ (\cite[Prop.~2.4]{Gro}, see also \cite[14.3.8]{Ford})
and $R=k[X]$, $k$ an algebraically closed field (see for example \cite[7.5.1, 13.6.2(a)]{Ford}). 
\enew 
\sm

Examples of rings that do not have the Wedderburn property are given in \cite[Ex.~1.5]{AW14}, \cite[page 46]{Bass} and \cite{Childs}.
\ms

\new
The existence of a Brauer decomposition, which we address in Corollary~\ref{raco} for LG-rings, is established over fields in \cite[Prop.~2.8.13]{GiS}. 
\enew

\begin{cor}[Brauer Decomposition for LG-rings] \label{raco}
Let $R$ be a connected LG-ring, let  $A$ be an Azumaya $R$--algebra with $\idemp(A)  = \{0,1\}$, and let $\ind([A]) = p_1^{m_1} \dots p_c^{m_c}$ be the prime factor decomposition of $\ind([A])$. Then there exist Azumaya $R$--algebras $B_1, \ldots, B_c$ satisfying
 \begin{equation}\label{raco0}
   \deg(B_i) = p_i^{m_i}\; \text{ and } \; \idemp(B_i) = \{1,0\}
 \end{equation}
 for $i=1, \ldots, c$,  as well as
\begin{equation}\label{raco1}  A \cong B_1 \ot_R \cdots \ot_R B_c.
\end{equation}
The conditions \eqref{raco0} and \eqref{raco1} determine the family $(B_1, \ldots, B_c)$ up to isomorphism.
\end{cor}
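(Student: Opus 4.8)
The plan is to reduce the Brauer Decomposition statement to an application of the Wedderburn property (Theorem~\ref{LGdeM}) together with the classical index/exponent arithmetic for Brauer groups, which the excerpt's hypotheses now make available over connected LG-rings. Write $\ind([A]) = p_1^{m_1}\cdots p_c^{m_c}$. Since $R$ is connected LG and $A$ is Azumaya with $\idemp(A) = \{0,1\}$, Theorem~\ref{LGdeM} gives that $A$ is, up to isomorphism, the unique representative of its Brauer class $\al = [A]$ with only trivial idempotents, and by \ref{diba}\ref{diba-iii} we have $\deg(A) = \ind(\al)$.

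First I would produce the factors. For each $i$, consider the Brauer class $\al_i = (\prod_{j\ne i} p_j^{m_j})\cdot \al \in \Br(R)$; equivalently $\al_i$ is the $p_i$-primary component of $\al$, characterized by $\ind(\al_i) = p_i^{m_i}$ and $\al = \al_1 + \cdots + \al_c$ with the $\al_i$ of pairwise coprime index. (To justify $\ind(\al_i) = p_i^{m_i}$ and the coprimality one invokes the standard facts that $\ind(\beta)\mid \deg B$ for any $\beta$-representing $B$, that index is multiplicative on coprime-index classes, and that index and exponent have the same prime divisors — these are all valid for Azumaya algebras over any commutative ring; cf.\ the references already cited for \ref{diba}.) By Corollary~\ref{dbic}, each $\al_i$ has a representative $B_i$ with $\idemp(B_i) = \{0,1\}$, and by \ref{diba}\ref{diba-iii} applied with $A$ replaced by $B_i$ (legitimate since \ref{diba}\ref{diba-i} holds for every Azumaya algebra over the connected LG-ring $R$ by Theorem~\ref{LGdeM}) we get $\deg(B_i) = \ind(\al_i) = p_i^{m_i}$, giving \eqref{raco0}.

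Next I would verify \eqref{raco1}. The tensor product $B_1 \ot_R \cdots \ot_R B_c$ is an Azumaya $R$-algebra of degree $p_1^{m_1}\cdots p_c^{m_c} = \ind(\al)$ whose Brauer class is $\al_1 + \cdots + \al_c = \al = [A]$. Since $\deg(B_1 \ot_R \cdots \ot_R B_c) = \ind(\al) = \deg(A)$, Corollary~\ref{hil}\eqref{hil-a} (equality of Brauer class plus equality of degree forces isomorphism over an LG-ring) yields $A \cong B_1 \ot_R \cdots \ot_R B_c$. Note this also re-proves $\idemp$ of the tensor product is trivial, consistent with \eqref{raco0}.

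For uniqueness: suppose $(B_1', \ldots, B_c')$ is another family satisfying \eqref{raco0} and \eqref{raco1}. Then $[B_i'] $ has index dividing $\deg(B_i') = p_i^{m_i}$, hence is $p_i$-primary, and $\sum_i [B_i'] = [A] = \sum_i \al_i$; comparing $p_i$-primary components gives $[B_i'] = \al_i$ for each $i$. Thus $B_i$ and $B_i'$ are Brauer-equivalent Azumaya algebras of the same degree $p_i^{m_i}$ with only trivial idempotents, so Corollary~\ref{hil}\eqref{hil-a} (or directly the uniqueness clause of Theorem~\ref{LGdeM}/\ref{diba}\ref{diba-ii}) gives $B_i \cong B_i'$.

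I expect the main obstacle to be purely bookkeeping rather than conceptual: making sure the primary-decomposition arithmetic for the Brauer group (existence and characterization of $p$-primary components, multiplicativity of index on coprime-index classes, agreement of prime divisors of index and exponent) is genuinely available for Azumaya algebras over an arbitrary commutative ring, not just a field, and citing it correctly — everything else follows immediately from Theorem~\ref{LGdeM}, Corollary~\ref{dbic} and Corollary~\ref{hil}\eqref{hil-a}, which are all in place. An alternative, slightly more self-contained route avoids appealing to primary-decomposition theory abstractly: pass to a splitting object. By Proposition~\ref{prop_isotrivial}\ref{prop_isotrivial2} there is a connected finite \'etale $S'$ with $A\ot_R S' \cong \Mat_d(S')$; but this does not obviously simplify the decomposition, so I would keep the direct argument above and only fall back on noetherian-reduction plus the field/semilocal case if a clean ring-theoretic reference for the index arithmetic proves elusive.
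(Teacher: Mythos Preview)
Your approach is essentially the same as the paper's, and you have correctly identified where the real work lies: the index arithmetic for the primary decomposition of a Brauer class over a general commutative ring. However, you underestimate this point when you call it ``standard facts \ldots\ valid for Azumaya algebras over any commutative ring.'' It is not. The two ingredients the paper actually invokes are Gabber's theorem \cite[II, Thm.~1]{Ga}, which guarantees that each $p_i$-primary class $\al_i$ is represented by an Azumaya algebra of $p_i$-power degree (hence $\ind(\al_i)$ is a $p_i$-power once you apply \ref{diba}\ref{diba-iii}), and the Antieau--Williams result \cite[Thm.~3]{AW17}, which supplies the multiplicativity $\prod_i \ind(\al_i) = \ind(\al)$ over a ring with the Wedderburn property. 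Your formula $\al_i = (\prod_{j\ne i} p_j^{m_j})\cdot \al$ defines the correct primary component, but deducing $\ind(\al_i)=p_i^{m_i}$ from it without these references is a genuine gap: neither ``index and period have the same prime divisors'' nor ``index is multiplicative on coprime-index classes'' is elementary over a non-field base. Once you plug in Gabber and \cite{AW17} at that step, your argument is complete and coincides with the paper's; the existence via Corollary~\ref{dbic}, the degree computation via \ref{diba}\ref{diba-iii}, the isomorphism via Corollary~\ref{hil}\eqref{hil-a}, and the uniqueness are all exactly as in the paper.
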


\begin{proof} \new
We first prove the existence of a Brauer decomposition writing $\al = [A]$ and $\per(\al) = |\lan \al \ran |$, the order of the finite subgroup of $\Br(R)$ generated by $\al$. Let $\per(\al) = q_1^{\ell_1} \cdots q_d^{\ell_d}$ be the prime factor decomposition or $\per(\al)$, i.e., $q_1, \ldots, q_d$ are distinct primes and $\ell_i \in \NN_+$. The prime factor decomposition in finite cyclic subgroups yields $\al = \al_1 + \cdots + \al_d$ with $\al_i \in \lan \al \ran$ and $\per(\al_i) = q^{\ell_i}$. By \cite[Thm.~6]{AW15}, $\per(\al_i)$ and $\ind(\al_i)$, have the same prime divisors, so that $\ind(\al_i) = q_i^{n_i}$ for some $n_i \in \NN$. But by \cite[Thm.~3]{AW17} we have   
\begin{equation}\label{raco2}
  \ind(\al) = \ind(\al_1) \cdots \ind(\al_d) = q_1^{n_1} \cdots q_d^{n_d}, 
\end{equation} 
and therefore $d=c$, $q_i = p_i$ and $n_i = m_i$ for $1 \le i \le c$. 

Next, by \ref{LGdeM}, we can apply \ref{diba}. Thus, there exist Azumaya algebras $B_i$ satisfying $[B_i] = \al_i$, $\idemp(B_i) = \{0,1\}$ and $\deg(B_i) = \ind(\al_i) = p_i^{m_i}$ for $i = 1, \ldots, c$, proving \eqref{raco0}. 
(The existence of such Azumaya algebras is a special case of Gabber's Theorem \cite[II, Thm.~1]{Ga}). Observe that $\deg(A) = \ind(A)$ by \ref{diba}\ref{diba-iii}. 
 We then obtain 
\begin{align*}
 & \deg (B_1 \ot_R \cdots \ot_R B_c) = \textstyle \prod_{i=1}^c \deg(B_i)
  \\& \quad = \textstyle \prod_{i=1}^c \ind(\al_i)
   \overset{\eqref{raco2}}{=}  \ind(\al) = \deg(A)
\end{align*}
where $(*)$ holds by \cite[Thm.~3]{AW17}. Similarly,
\begin{align*}
  [A] &= \al = \al_1 + \cdots + \al_c = [A_1] + \cdots + [A_c]
  \\ &= [B_1] + \cdots + [B_c] = [B_1 \ot_R \cdots \ot_R B_c],
\end{align*}
so that Corollary~\ref{hil}\eqref{hil-a} yields the isomorphism \eqref{raco1}. 
\enew
 
Unicity: Let $(B'_1, \ldots, B'_c)$ be a family of Azumaya $R$-algebras satisfying \eqref{raco0} and \eqref{raco1}. By  \ref{diba}\ref{diba-iii} we then get $\ind([B'_i]) = \deg(B_i') = p_i^{m_i}$, thus $\per([B_i])= p_i^{\ell_i}$ for some $\ell_i \le m_i$ (\cite[Thm.~3]{AW15}). It follows that $[A] = [B_1'] + \cdots + [B_c']$ is the decomposition of $[A]$ into primary components. Hence $[B'_i] = [B_i]$ for $i=1, \ldots, c$. As $\deg(B'_i) = p_i^{m_i} = \deg(B_i)$, another application of Corollary~\ref{hil}\eqref{hil-a} yields $B'_i \cong B_i$. \end{proof}

\sm

\new
\textbf{Remark on generalization.} The Brauer decomposition holds for arbitrary Azumaya algebras in a modified form. 

Namely, let $A$ be an Azumaya algebra over a connected LG-ring. By \ref{dbi} and {\rm \ref{LGdeM}} we know $A=A_0 \ot_R \Mat_n(R)$ for a unique (up to isomorphism) Azumaya algebra $A_0$ with $\idemp(A_0) = \{0,1\}$ and a unique $n\in \NN$. We then apply \ref{raco} to $A_0$ and get $B_1, \ldots, B_c$ satisfying \eqref{raco0} and \eqref{raco1}. Hence $A \cong \Mat_n(R) \ot_R B_1 \ot_R \cdots \ot_R B_c$. 
\enew
\ms

The results in \ref{dbi}--\ref{raco} show the importance of Azumaya algebras $A$ satisfying $\idemp(A) = \{0,1\}$. While this is a purely algebraic condition, it is perhaps not surprising that these algebras also have a natural group theoretic characterization, which in fact holds beyond the case of connected LG-rings. We will prove this in \ref{aniso}, using the following Lemmas~\ref{anisole} and \ref{anid} which (in our view) are of independent interest.

For an Azumaya algebra $\calA$  over a scheme $S$, the $S$--group scheme $\uGL_1(\calA)$ is for example defined in \cite[2.4.2.2]{CF}.

\begin{lem}\label{anisole} Let $\calA$ be an Azumaya algebra over a {\em connected\/} scheme $S$, and let $e$ be an idempotent of $A = \calA(S)$. % which is an Azumaya algebra over the rings $\calO_S(S)$
\sm

\begin{inparaenum}[\rm (a)] \item \label{anisole-a} Then
\begin{equation} \label{ansiole1} \begin{split}
\la_e \co \GG_{m,S} &\longto \, \uGL_1(\calA)=: G, \\
   t(\in \GG_{m,S}(T))  &\;\mapsto \; t e_T + t\me (1_{\calA(T)} - e_{T})
\end{split} \end{equation}
($T$ an $S$--scheme) is a cocharacter such that
\begin{equation}\label{anisole2}\begin{split}
 \text{$\la_e$ is central} \quad &\iff \quad e\in \{0, 1\} \\
 &\iff \quad G=\rmP_G(\la) = \rmC_G(\la).
\end{split}\end{equation}

\item \label{anisole-b} $e=0 \iff e_T = 0$ for some $S$--scheme $T\ne \emptyset$; analogously for $e=1$.
\end{inparaenum}\end{lem}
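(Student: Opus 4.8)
The plan is to establish all three assertions by direct computation, relying only on two standard properties of an Azumaya algebra $\calA$ over a scheme $X$: its centre is $\calO_X$ and this formation commutes with base change, so that $Z\big(\uGL_1(\calA)\big)=\GG_{m,X}$ (the scalars); and, when $X$ is connected, the only idempotents of $\Gamma(X,\calO_X)$ are $0$ and $1$. First I would check that $\la_e$ is a cocharacter: since $e_T$ and $1_{\calA(T)}-e_T$ are orthogonal idempotents, one computes $\big(se_T+s\me(1-e_T)\big)\big(te_T+t\me(1-e_T)\big)=(st)e_T+(st)\me(1-e_T)$ for all $s,t\in\GG_{m,S}(T)$, so each $\la_e(t)$ is invertible with inverse $t\me e_T+t(1-e_T)$, and $\la_e\co\GG_{m,S}\to\uGL_1(\calA)=G$ is a homomorphism of $S$-group schemes.

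For the first equivalence in (a): if $e\in\{0,1\}$ then $\la_e(t)$ equals the scalar $t\me 1_{\calA(T)}$ or $t\,1_{\calA(T)}$, which is central, so $\la_e$ is central. Conversely, if $\la_e$ is central then its image lies in $Z(G)=\GG_{m,S}$; I would evaluate this at the tautological point $t\in\GG_{m,S}(\GG_{m,S})$ and restrict to an affine open $U=\Spec(R)\subset S$ with $\calA(U)=B$. Then $te+t\me(1-e)$ must lie in $Z(B[t,t\me])=R[t,t\me]$, and comparing the coefficient of $t$ forces $e\in R$. Hence $e$ is an idempotent of $\Gamma(S,\calO_S)$, so $e\in\{0,1\}$ by connectedness of $S$.

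For the second equivalence in (a): if $\la_e$ is central then conjugation by $\la_e(t)$ is the identity on $G$, so for every $S$-scheme $T$ and every $g\in G(T)$ the orbit map $\mathrm{orb}_g$ is the constant morphism at $g$, which extends over $\GG_{a,T}$; thus $\rmP_G(\la_e)=G$, and likewise $\rmC_G(\la_e)=G$. Conversely, $G=\rmC_G(\la_e)$ already says that every section of $G$ commutes with $\la_e$, i.e.\ $\la_e$ is central (and then $\rmP_G(\la_e)=G$ is automatic). For part (b): the left ideal $\calA e$ is a direct summand of $\calA$ as an $\calO_S$-module, hence finite locally free; its rank is locally constant, hence a constant $\rho$ since $S$ is connected, and as $e\in\calA e$ we get $e=0\iff\calA e=0\iff\rho=0$. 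For any nonempty $S$-scheme $T$, the pullback $(\calA e)_T=\calA_T\,e_T$ is again finite locally free of rank $\rho$ at every point (ranks are unchanged by base change, and a nonzero idempotent stays nonzero over a field extension), so $e_T=0\iff\rho=0\iff e=0$; the statement for $e=1$ follows by applying this to $1-e$.

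I expect the only delicate point to be the coefficient comparison in the first equivalence: one must phrase "central" through the tautological $\GG_{m,S}$-point and use that base change to $\GG_{m,S}$ preserves the identification of the centre of the Azumaya algebra with the structure sheaf. The remaining verifications are routine.
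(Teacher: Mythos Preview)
Your argument is correct. The overall strategy matches the paper's, but the two key implications are handled differently.

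For the implication ``$\la_e$ central $\Rightarrow e\in\{0,1\}$'' in (a), you pass to the tautological point over $\GG_{m,S}$ and compare coefficients in $R[t,t^{-1}]$ to force $e$ into $\calO_S$; the paper instead uses the Peirce decomposition $\calA=\bigoplus_{i,j}\calA_{ij}$ with respect to $(e,1-e)$, observes that centrality forces $\calA_{10}=\calA_{01}=0$, and then invokes the ideal correspondence for Azumaya algebras to get a splitting of $\calO_S$. Both are short; your version is perhaps slightly more direct, while the paper's makes the Peirce structure (which reappears implicitly in the computation of $\rmP_G(\la_e)$ and $\rmC_G(\la_e)$) explicit.

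For (b), your rank argument for the locally free summand $\calA e$ is more elementary than the paper's proof, which deduces $e\in\{0,1\}$ from the fact that the type of the parabolic $\rmP_G(\la_e)$ is locally constant on $S$ (invoking [XXVI,~3.3]) together with part~(a). Your approach avoids the detour through parabolic types entirely. One small point worth making explicit in both proofs: the $S$-scheme $T$ must be nonempty for the implication $e_T=0\Rightarrow e=0$ to have content.
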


\begin{proof} \eqref{anisole-a} It is immediate that $\la_e$ is a cocharacter, which is central if $e\in \{0,1\}$.
In general, let $\calA_{ij}$, $i,j\in \{1,0\}$  be the Peirce spaces of $\calA$ with respect to the orthogonal idempotents $e_1 = e$ and $e_0 = 1-e$. Thus $\calA_{ij} = e_i \calA e_j$. The centralizer of $\la_e$ in $\calA$ is $\rmC_{\calA}(\la_e) = \calA_{11} \oplus \calA_{00}$.

Assume now that $\la_e$ is central. Then $\calA = \rmC_{\calA}(\la) = \calA_{11}\times \calA_{00}$. It follows that $\calA_{11}$ and $\calA_{00}$ are ideals of $\calA$. They induce ideals $\calI_1$ and $\calI_0$ of $\calO_S$ by $\calI_i = \calA_{ii} \cap \calO_S$, $i=1,0$,  which satisfy $\calA_{ii} = \calI_i \calA$. Since $\calA = \calA_{11} \times \calA_{00}$ we have $\calO_S = \calI_1 \times \calI_0$. Because $S$ is connected, either $\calI_i = \calO_S$, i.e., $e=1$, or $\calI_1 = 0$, i.e., $e = 0$.

Clearly, $\la_e$ is central $\iff G=\rmC_G(\la)$. Since $\rmC_G(\la) \subset \rmP_G(\la) \subset G$ in general, this proves the second equivalence. \sm

\eqref{anisole-b} The parabolic subgroup $\rmP_G(\la_e)$ has constant type since $S$ is connected [XXVI, 3.3]. Hence, if $e_T = 0$, then $\rmP_G(\la_e) (T) = G(T)$. Because $\rmP_G(\la_e)$ has constant type, we get $\rmP_G(\la) = G$ and therefore $e\in \{0,1\}$ by \eqref{anisole2}. The assumption $e=1$ contradicts $e_T = 0$. Thus $e=0$. The other direction is obvious. \end{proof}

\begin{lem} \label{anid} Let $\calA$ be an Azumaya algebra over a {\em connected\/} scheme $S$, and suppose $\la \co \GG_{m,S} \to \GL_1(\calA)$ is a non-central cocharacter. Then $\{0,1\} \subsetneq \idemp\big(\calA(S)\big)$.
\end{lem}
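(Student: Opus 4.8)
The plan is to extract from $\la$ a finite orthogonal family of idempotents of $\calA(S)$ summing to $1_\calA$, and then to argue that if no non-trivial idempotent exists, then $\la$ must be central. First I would note that left translation $t \mapsto \bigl(a \mapsto \la(t)\,a\bigr)$ is a homomorphism $\GG_{m,S} \to \uGL(\calA)$, i.e.\ a linear $\GG_{m,S}$-action on the finite locally free $\scO_S$-module $\calA$; by the standard correspondence between such actions and $\ZZ$-gradings [I, 4.7.3] this is a decomposition $\calA = \bigoplus_{n\in\ZZ}\calA_n$ into $\scO_S$-direct summands on which $\la(t)$ acts by multiplication by $t^n$, and this grading is finite since, $S$ being connected, each $\rank\calA_n$ is constant while $\sum_n\rank\calA_n$ is finite at each point of $S$. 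Writing the unit as $1_\calA = \sum_n e_n$ with $e_n \in \Gamma(S,\calA_n)$ (a finite sum), I get $\la(t) = \la(t)\cdot 1_\calA = \sum_n t^n e_n$ for every $S$-scheme $T$ and $t \in \GG_{m,S}(T)$. Expanding the identity $\la(t)\,\la(s) = \la(ts)$ on $T = \GG_{m,S}\times_S\GG_{m,S}$ and comparing the coefficients of the monomials $t^n s^m$ then gives $e_n e_m = \de_{nm}\,e_n$. Hence $(e_n)_{n\in\ZZ}$ is a finite family of pairwise orthogonal idempotents of $\calA(S) = \Gamma(S,\calA)$ with $\sum_n e_n = 1_\calA$.

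Next I would conclude by a case distinction. Suppose at least two of the $e_n$ are nonzero, and choose such an $e := e_{n_0}$ together with some $n_1 \ne n_0$ with $e_{n_1}\ne 0$. Then $e\cdot e_{n_1} = 0 \ne e_{n_1} = 1_\calA\cdot e_{n_1}$ forces $e \ne 1_\calA$, so $e \in \idemp\bigl(\calA(S)\bigr)\setminus\{0,1\}$, which is what we want. Suppose instead that at most one $e_n$ is nonzero; since $\sum_n e_n = 1_\calA \ne 0$ (note $\calA \ne 0$, as $\la$ is non-central), exactly one $e_{n_0}$ equals $1_\calA$ and all others vanish, so $\la(t) = t^{n_0}\,1_\calA$ for all $t$. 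But then $\la(t)$ is a scalar, conjugation by $\la$ is trivial, and $\la$ is central (i.e.\ $\rmC_{\uGL_1(\calA)}(\la) = \uGL_1(\calA)$), contradicting the hypothesis. This completes the argument.

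The proof is short and essentially formal, so I do not expect a real obstacle; the one place that requires a little care is the passage from $\la$ to the finite Laurent expansion $\sum_n e_n t^n$ and the legitimacy of comparing coefficients, which relies on the $\GG_m$-action/grading dictionary together with flatness and finite rank of $\calA$, and is where connectedness of $S$ genuinely enters (to make the grading finite, and to make sense of the exponent $n_0$ in the scalar case). As a consistency check, this construction reverses Lemma~\ref{anisole}: for an idempotent $e$ of $\calA(S)$ the cocharacter $\la_e$ there has homogeneous components $e_1 = e$ and $e_{-1} = 1_\calA - e$ (and $e_n = 0$ otherwise), and $\la_e$ is non-central precisely when both are nonzero, i.e.\ precisely when $e \notin \{0,1\}$.
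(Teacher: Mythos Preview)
Your proof is correct and takes a genuinely different, more elementary route than the paper's. The paper argues structurally: it forms the centralizer subalgebra $\calB = \calA^{\la}$ and its centre $\calC$, invokes \cite[Prop.~3.2]{G3} to see that $\calC$ is a finite \'etale $\calO_S$--algebra, and then uses the Weil restriction identity $\Hom_{S\text{-gr}}(\GG_{m,S},\frR_{C/S}(\GG_{m,C})) = \Hom_{C\text{-gr}}(\GG_{m,C},\GG_{m,C})$ together with connectedness of $S$ to rule out $\calC$ being connected, so that the decomposition of $\calC$ into connected components furnishes the desired nontrivial idempotent. Your argument bypasses the \'etale structure and the external reference entirely: you read off the orthogonal idempotents $e_n$ directly from the Laurent expansion $\la(t)=\sum_n t^n e_n$ via the $\GG_m$--grading dictionary. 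This is shorter and self-contained. The two constructions are compatible: your $e_n$ in fact lie in $\calC$, since for any $b\in\calB$ the relation $\la(t)b=b\la(t)$ yields $e_n b = b e_n$ upon comparing coefficients of $t^n$. So the paper's approach locates the idempotents more conceptually (inside a finite \'etale centre), while yours gets there with less machinery.
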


\begin{proof} Let $\calB = \calA^\la$ be the subalgebra of $\calA$ centralizing the image of $\la$, and let $\calC$ be the centre of $\calB$. By \cite[Prop.~3.2]{G3} we know that $\calC$ is a finite \'etale $\calO_S$--algebra of positive rank. We will eventually show that $\calC(S)$ contains a nontrivial idempotent $e$ defined by using the decomposition of $\calC$ into its connected components. But first we need to eliminate the case that $\calC$ is connected.

By construction of $\calC$, the image of $\la$ lies in $\calC$, and this also holds for the central cocharacter $\De$. Thus  $\la$, $\De \in \Hom_{S-{\rm gr}}(\GG_{m,S}, \uGL_1(\calC))$. To compare the two cocharacters, let $C$ be the scheme associated with the finite \'etale $\calO_S$--algebra $\calC$ and let $\frR_{C/S}(\cdot)$ be the Weil restriction. Then $\GL_1(\calC) = \frR_{C/S}(\GG_m,C)$, $(\GG_{m,S})_C = \GG_{m,C}$ and the fundamental identity of the Weil restriction $\frR_{C/S}$ becomes (after a canonical identification)
\[ \Hom_{S-{\rm gr}}\big(\GG_{m,S}, \frR_{C/S}(\GG_{m,C})\big) =
   \Hom_{C-{\rm gr}}(\GG_{m,C}, \GG_{m,C}).\]
Suppose that $C$ is connected. Then $\Hom_{C-{\rm gr}}(\GG_{m,C}, \GG_{m,C})\cong \ZZ$ with basis $\De$. Therefore $\la = \De^n$ for some $n\in \ZZ$, in particular $\la$ is central, contradiction. Therefore $C$, equivalently $\calC$, is not connected.

Let $\calC_1 \times \cdots \times \calC_m$ be the decomposition of $\calC$ corresponding to the decomposition of $C$ into its connected components, and let $e_1, \ldots, e_m$ be the identity elements of the algebras $\calC_i(S)$. Then the $e_i$ are non-zero idempotents of $\calA$ since $\calC_i(S) \ne 0$, and $e_1 \ne 1$ because $m \ge 2$.
\end{proof}

We use the concepts of (ir)reducible and (an)isotropic group schemes as defined in \ref{rev-isog}. For an Azumaya algebra $\calA$  over a scheme $S$ the $S$--group scheme $\uSL_1(\calA)$ is for example defined in \cite[3.5.0.91]{CF}; it is
semisimple by \cite[3.5.0.92]{CF}. The $S$--group scheme $\uPGL(\calA)$ is a semisimple $S$--group scheme by \cite[3.0.5.82]{CF}.

\begin{prop}\label{aniso} Let $\calA$ be an Azumaya algebra over a {\em connected\/} scheme $S$. The following are equivalent:
\begin{enumerate}[label={\rm (\roman*)}]
  \item \label{aniso-i} $\uSL_1(\calA)$ is isotropic,

  \item \label{aniso-ii} $\uSL_1(\calA)$ is reducible,

  \item \label{aniso-iii} $\uGL_1(\calA)$ is reducible,

  \item \label{aniso-v} $\uPGL(\calA)$ is reducible,

  \item  \label{aniso-vi} $\uPGL(\calA)$ is isotropic,

  \item \label{aniso-iv} $\idemp\big(\calA(S)\big) \supsetneq \{0,1\}$.

  %contains an idempotent $e$ satisfying $0\ne e\ne 1_{\calA(S)}$.
\end{enumerate}
\end{prop}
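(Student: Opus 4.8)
The plan is to prove the equivalences by a combination of elementary implications among the "reducible" conditions, the two preceding lemmas, and a "central isogeny" argument linking the three group schemes $\uSL_1(\calA)$, $\uGL_1(\calA)$, $\uPGL(\calA)$. First I would record the trivial implications: $\ref{aniso-ii}\Rightarrow\ref{aniso-iii}$ (a proper parabolic with a Levi subgroup in $\uSL_1(\calA)$, being the derived group of $\uGL_1(\calA)$, extends to one in $\uGL_1(\calA)$ by intersecting with the central torus; more precisely, parabolic subgroups and Levi subgroups of $\uGL_1(\calA)$ correspond to those of its adjoint/derived quotients, cf.~\cite[3.2.1]{G2}), and similarly $\ref{aniso-v}\Leftrightarrow\ref{aniso-ii}$ since $\uPGL(\calA)=\uSL_1(\calA)/\mu$ and parabolic/Levi subgroups are preserved under central quotient. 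The equivalence $\ref{aniso-v}\Leftrightarrow\ref{aniso-vi}$ is the criterion \ref{rev-isog}\ref{rev-isog-a} applied to the semisimple (hence radical-free) group $\uPGL(\calA)$. The same criterion, applied to $\uSL_1(\calA)$, gives $\ref{aniso-i}\Leftrightarrow\ref{aniso-ii}$. So the "reducibility/isotropy" half of the statement reduces to showing $\ref{aniso-iii}\Rightarrow\ref{aniso-v}$, which again is immediate since $\uPGL(\calA)$ is the adjoint quotient of $\uGL_1(\calA)$ and a proper parabolic (with Levi) of $\uGL_1(\calA)$ descends to one of $\uPGL(\calA)$.

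The substantive content is the equivalence of these with $\ref{aniso-iv}$, and this is where Lemmas~\ref{anisole} and \ref{anid} do the work. For $\ref{aniso-iv}\Rightarrow\ref{aniso-iii}$: given a nontrivial idempotent $e\in\idemp(\calA(S))$, Lemma~\ref{anisole}\ref{anisole-a} produces the cocharacter $\la_e\co\GG_{m,S}\to\uGL_1(\calA)=G$ which by \eqref{anisole2} is \emph{not} central (since $e\notin\{0,1\}$), so $\rmP_G(\la_e)\subsetneq G$ is an everywhere-proper parabolic subgroup admitting the Levi subgroup $\rmC_G(\la_e)$; hence $G=\uGL_1(\calA)$ is reducible (using that over an affine base a proper parabolic suffices, \ref{rev-isog}). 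Conversely, for $\ref{aniso-v}\Rightarrow\ref{aniso-iv}$ (or $\ref{aniso-vi}\Rightarrow\ref{aniso-iv}$): if $\uPGL(\calA)$, equivalently $\uSL_1(\calA)$ or $\uGL_1(\calA)$, is reducible then over the connected affine base there is a proper parabolic $P\subset\uGL_1(\calA)$ with a Levi subgroup, so by the dynamic description \ref{dps} (using \cite[Thm.~7.3.1(1)]{G2}) there is a cocharacter $\la\co\GG_{m,S}\to\uGL_1(\calA)$ with $P=\rmP_G(\la)\ne G$, hence $\la$ is non-central; Lemma~\ref{anid} then yields a nontrivial idempotent in $\calA(S)$. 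Chaining these gives the full cycle of equivalences.

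One point that needs care is the passage between "$G$ is reducible" (requires a Levi subgroup to exist) and the existence of a mere everywhere-proper parabolic subgroup: over an affine base these coincide by [XXVI, 2.3] (cited in \ref{rev-isog}), so since $S$ is connected but possibly not affine I would either restrict attention to the affine situation implicitly used in $\calA(S)$-idempotent arguments, or argue fibrewise. The cleanest route is probably: $\ref{aniso-iv}\Rightarrow\ref{aniso-ii}\Rightarrow\ref{aniso-i}$ and $\ref{aniso-i}\Rightarrow\ref{aniso-iv}$ can each be handled using only the dynamic/cocharacter description, which does not require affineness, while the purely "reducible"-flavored equivalences $\ref{aniso-ii}\Leftrightarrow\ref{aniso-iii}\Leftrightarrow\ref{aniso-v}\Leftrightarrow\ref{aniso-vi}$ follow from the central-isogeny compatibility of parabolics and Levis together with \ref{rev-isog}\ref{rev-isog-a}.

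The main obstacle I anticipate is precisely bookkeeping the compatibility of parabolic subgroups and their Levi subgroups under the three central isogenies $\uSL_1(\calA)\to\uGL_1(\calA)$ (an embedding of the derived subgroup), $\uGL_1(\calA)\to\uPGL(\calA)$, and $\uSL_1(\calA)\to\uPGL(\calA)$: one must check that an everywhere-proper parabolic \emph{with a Levi} upstairs gives one downstairs and conversely, in a way uniform over the connected base $S$. All of this is standard (it is used e.g.\ in \cite[3.2.1]{G2} and throughout [XXVI]), so the proof is really an assembly of known facts; the only genuinely new ingredients are Lemmas~\ref{anisole} and \ref{anid}, which translate the algebraic condition $\idemp(\calA(S))\supsetneq\{0,1\}$ into the existence of a non-central cocharacter, and vice versa.
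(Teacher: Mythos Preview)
Your proposal is correct and follows essentially the same approach as the paper: the equivalences \ref{aniso-i}$\Leftrightarrow$\ref{aniso-ii} and \ref{aniso-v}$\Leftrightarrow$\ref{aniso-vi} via \ref{rev-isog}\ref{rev-isog-a} for semisimple groups, the equivalences \ref{aniso-ii}$\Leftrightarrow$\ref{aniso-iii}$\Leftrightarrow$\ref{aniso-v} via central-isogeny compatibility of parabolic/Levi subgroups (which the paper packages as a direct citation of \cite[3.5.3(b) and 3.2.1(2)]{G2}), and the link to \ref{aniso-iv} via Lemmas~\ref{anisole} and~\ref{anid} together with the dynamic description \cite[7.3.1(1)]{G2}. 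Your affineness worry is unnecessary: the cited results from \cite{G2} work over a connected scheme, and the definition of ``reducible'' in \ref{rev-isog} already builds in the existence of a Levi subgroup, so no restriction to the affine case is needed.
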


\begin{proof}
By \ref{rev-isog}\ref{rev-isog-a}, the equivalences \ref{aniso-i} $\iff$ \ref{aniso-ii} and \ref{aniso-v} $\iff$ \ref{aniso-vi} hold since $\uSL_1(\calA)$ and $\uPGL(\calA)$ are semisimple $S$--group schemes. The equivalences \ref{aniso-ii} $\iff$ \ref{aniso-iii} $\iff$ \ref{aniso-v} are special cases of \cite[3.5.3(b)\footnote{ The reference refers  to the web version of the article:
https://hal.science/hal-01063601v2/document. The published version does not distinguish between $H$ and $H^{\rm ss} = H/\rad(H)$.} and 3.2.1(2)]{G2}.

If $e$ is an idempotent as in \ref{aniso-iv}, then $\la_e$ is not central by \eqref{ansiole1}. Hence $G=\uGL_1(\la_e) \ne \rmC_G(\la_e)$ and a fortiori $G \ne \rmP_G(\la_e)$, which proves \ref{aniso-iii}.

Conversely, if \ref{aniso-iii} holds, then $G$ contains a proper parabolic subgroup $P$ with a Levi subgroup $L$. By \cite[7.3.1(1)]{G2} we have $(P,L) = (\rmP_G(\la), \rmC_G(\la))$ for a non-central cocharacter $\la$. Thus \ref{aniso-iv} follows from Lemma~\ref{anid}.  \end{proof}

\begin{cor}[Anisotropic kernel of $\uGL_1(A)$] \label{ako} Let $R$ be a connected LG-ring and let $A$ be an Azumaya $R$--algebra. By\/ {\rm \ref{dbi}} and {\rm \ref{LGdeM}} there exist unique $n\in \NN$ and an Azumaya algebra $B$ with $\idemp(B) = \{0,1\}$ such that $A= \Mat_n(B)$. Then
%   \item \label{ako-i}
a minimal parabolic subgroup $P_{\min}$ of $G= \uGL_1(A)$ and a minimal Levi subgroup $L_{\min}$ are
      \[ P_{\min} = \begin{pmatrix} \uGL_1(B) & * & * \\
                                    0 &  \ddots & *  \\  0 & 0 &  \uGL_1(B)
      \end{pmatrix}, \quad
      L_{\min} = \begin{pmatrix} \uGL_1(B) & 0 & 0 \\
                                    0 &  \ddots & 0  \\  0 & 0 &  \uGL_1(B)
      \end{pmatrix}. \]

\noindent   \item \label{ako-ii} A maximal split torus $T_0$ of $G$ and an anisotropic kernel $G\an$ of $G$ are \[ T_0 = \begin{pmatrix} \GG_m & 0 & 0 \\
                                    0 &  \ddots & 0  \\  0 & 0 &  \GG_m
      \end{pmatrix}, \qquad
   G\an = \begin{pmatrix} \uSL_1(B) & 0 & 0 \\
                                    0 &  \ddots & 0  \\  0 & 0 &  \uSL_1(B)
      \end{pmatrix}. \]
%\end{inparaenum}
\end{cor}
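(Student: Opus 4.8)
The plan is to derive all four assertions from Example~\ref{aniex}, applied to the ``diagonal scalar'' split torus $T_0$. Since $A=\Mat_n(B)$, the group $G=\uGL_1(A)$ is the $R$--group scheme $\uGL_n(B)$ with $T$--points $\GL_n(B\ot_R T)$ ($T\in\Ralg$), in which $P_{\min}$, $L_{\min}$, $T_0$ and the displayed group of \ref{ako-ii} are the block upper triangular, block diagonal, diagonal--scalar, and block--diagonal--$\uSL_1$ subgroups. First I would compute $\Cent_G(T_0)$: a functorial point $(g_{ij})$ commutes with every $\diag(t_11_B,\dots,t_n1_B)$ iff $t_ig_{ij}t_j\me=g_{ij}$ for all $t_i\in\GG_m$, i.e.\ iff $g$ is block diagonal with $g_{ii}\in\uGL_1(B)$, because the characters $t_it_j\me$ of $T_0$ are pairwise distinct; hence $\Cent_G(T_0)=\uGL_1(B)\times_R\cdots\times_R\uGL_1(B)=L_{\min}$. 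As the centre of $\uGL_1$ of any Azumaya algebra is $\GG_m$ (scalars), we get $\rad(L_{\min})=\GG_m^n=T_0$, a split torus.

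The key input is then that the reductive quotient $\Cent_G(T_0)/T_0=(\uGL_1(B)/\GG_m)^n=\uPGL(B)^n$ is anisotropic. Since $\idemp(B)=\{0,1\}$, Proposition~\ref{aniso} (equivalence \ref{aniso-vi} $\iff$ \ref{aniso-iv}) shows that the semisimple group $\uPGL(B)$ is anisotropic, and a finite direct product of anisotropic reductive group schemes is anisotropic: a nontrivial homomorphism $\GG_m\to H_1\times\cdots\times H_n$ has nontrivial composite with some projection $H_1\times\cdots\times H_n\to H_i$, and such a composite has finite kernel (every subgroup scheme of $\GG_m$ is $\GG_m$ or a finite $\mu_d$), hence image a split rank-one subtorus of $H_i$, contradicting anisotropy of $H_i$.

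With the hypothesis of Example~\ref{aniex} verified for $T_0$, part \ref{aniex-i} gives that $T_0$ is a maximal split $R$--subtorus of $G$, which together with the conjugacy statement Theorem~\ref{thm_conj}\ref{thm_conj2} is the description of $T_0$ in \ref{ako-ii}. To pin down $P_{\min}$ and $L_{\min}$ I would use the dynamic method of \ref{dps} with the cocharacter $\la(t)=\diag(t^n1_B,t^{n-1}1_B,\dots,t1_B)$ factoring through $T_0$: conjugation by $\la(t)$ scales the $(i,j)$--entry by $t^{\,j-i}$, so $\rmC_G(\la)$ is the block-diagonal subgroup $L_{\min}$ and $\rmP_G(\la)$ is the subgroup of $g$ with $g_{ij}=0$ for $j<i$, namely $P_{\min}$. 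By \ref{dps}, $P_{\min}=\rmP_G(\la)$ is a parabolic subgroup of $G$ with Levi $\rmC_G(\la)=L_{\min}=\Cent_G(T_0)$; hence Example~\ref{aniex}\ref{aniex-ii} shows $P_{\min}$ is minimal and $L_{\min}$ is a minimal Levi subgroup, while Example~\ref{aniex}\ref{aniex-iii} gives that an anisotropic kernel of $G$ is $\scD(\Cent_G(T_0))=\scD(L_{\min})=\scD(\uGL_1(B))^n=\uSL_1(B)^n$, the block-diagonal group of \ref{ako-ii} (using $\scD(\uGL_1(\calB))=\uSL_1(\calB)$ for Azumaya $\calB$ and that $\scD$ commutes with finite products).

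I do not expect a serious obstacle: the argument is essentially a clean application of Example~\ref{aniex}. The only real work is routine bookkeeping --- verifying the block-matrix identifications of $\Cent_G(T_0)$, $\rmP_G(\la)$, $\rmC_G(\la)$, and recording the standard facts $\rad(\uGL_1(B))=\GG_m$ and $\scD(\uGL_1(B))=\uSL_1(B)$, both of which reduce to the split case $B\cong\Mat_d(R)$ by \'etale descent. The one genuinely substantive point, the anisotropy of $\uPGL(B)^n$, is already delivered by Proposition~\ref{aniso} together with the elementary product argument above.
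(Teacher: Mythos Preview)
Your proof is correct and follows essentially the same route as the paper's: compute $\Cent_G(T_0)=L_{\min}$, identify $L_{\min}/T_0\cong\uPGL(B)^n$, invoke Proposition~\ref{aniso} for anisotropy, and then apply Example~\ref{aniex}. You supply more detail than the paper does---the explicit product argument for anisotropy of $\uPGL(B)^n$ and the dynamic description of $P_{\min}$ via the cocharacter $\la(t)=\diag(t^n,\dots,t)$---but these are elaborations rather than a different strategy.
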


\begin{proof}
  It is clear that $T_0$ as displayed above is a split torus whose centralizer in $G$ has the form of the matrix group $L_{\min}$. The quotient $L_{\min}/T_0$ can be identified with
  \[ \begin{pmatrix} \uPGL(B) & 0 & 0 \\
                                    0 &  \ddots & 0  \\  0 & 0 &  \uPGL(B)
      \end{pmatrix}
  \]
which is anisotropic by \ref{aniso}. Example~\ref{aniex} can therefore be applied and yields that the matrix groups displayed above are a minimal parabolic subgroup, a minimal Levi subgroup,  a maximal split torus and an anisotropic kernel respectively. \end{proof}

%\newpage
\appendix
\section{Maximal split subtori of groups of multiplicative type}\label{sec:app-maxiss}

In this appendix, we study maximal split and maximal locally split $S$-subtori of groups of finite multiplicative type. We prove their existence in \ref{lem_torus}, and show in the Example~\ref{ex_tori}(b) that the two notions are different in general.

\subsection{Basic notions, review} \label{bnr}
We mainly use the terminology and notation of \cite[VIII, IX]{SGA3}, as before abbreviated by [VIII] and [IX], see also \cite[App.~B]{Co1} and \cite{O}. Throughout this appendix, $S$ is an arbitrary scheme.

Given an abelian group $M$, one denotes by $\rmD_S(M) = \Spec(\calO_{\calS}[M]))$ the abealin $S$--group scheme representing the functor of characters of the constant $S$--group $M_S$. An $S$--group scheme $G$ is called {\em split}, equivalently {\em diagonalizable\/}, if $G$ is isomorphic to an $S$--group $\rmD_S(M)$ for some abelian group $M$. A possible confusion with the notion of a split reductive $S$--group cannot occur, if the reader takes the context into account. A {\em split $S$--torus\/} is an $S$--group $T$ isomorphic to $\rmD_S(\ZZ^r)$ for some $r\in \NN_+$. We call an $S$--group scheme $G$ {\em locally split} if there exists a family $(S_i)_{i\in I}$ of open subschemes $S_i$ of $S$ such that $S= \bigcup_{i\in I} S_i$ and every $G|_{S_i} = G\times_S S_i$ is diagonalizable. If in this case all $G|_{S_i}$ are split tori, we call $T$ a {\em locally split torus\/}.

An $S$--group $G$ has {\em multiplicative type\/} if $G$ is locally diagonalizable for the fpqc topology, i.e., for every $s\in S$ there exists an open affine neighbourhood $U$ of $s$ in $G$ and a finite family $(X_i)_{i\in I}$ of affine schemes together with flat morphisms $f_i \co X_i \to U$ such that $U = \bigcup_i f_i(X_i)$ and every $G|_{X_i}$ is diagonalizable. In this case, we call $G$ a {\em torus\/}, if all $G_{X_i}$ are split tori. A group of {\em finite multiplicative type\/} is a group of multiplicative type which is also of finite type.

Let $G$ be a group of multiplicative type. For $s\in S$ let $G_s$ be the fibre of $s$. There exists an extension $k$ of the residue field of $s$ such that $(G_s)_k$ is diagonalizable, say $\cong \rmD_S(M_s)$ for some abelian group $M_s$. The isomorphism class of $M_s$ is independent of the choice of $k$, and called the {\em type\/} of $G$ at $s$, [IX; 1.4]. The function on the underlying topological space $S_{\rm top}$ of $S$, associating with $s\in S$ the type of $G$ in $s$, is locally constant. Assume now that $G$ has finite multiplicative type. For every $s\in S$ the type of $G$ is a finitely generated abelian group $M_s$. Hence we get a well-defined, locally constant function $\rank (G)\co S_{\rm top} \to \NN$,
associating with $s\in S$ the rank of the finitely generated abelian group $M_s$. It follows that we obtain a partition $S = \bigsqcup_{r \geq 0} S_r$ such that every $S_r$ is an open and closed subscheme of $S$ and $G|_{S_r}$ is of constant rank $r$. If $G=T$ is a  torus, this is the so-called {\em partition by type}, \cite[5.4]{O}.

A {\em subtorus\/} of a group $G$ of multiplicative type is a monomorphism $T\to G$ where $T$ is torus. By [IX; 2.5] or \cite[B.1.3]{Co1} in the finite type case, $T\to G$ is a closed immersion. Obviously, $T$ is called  a {\em split subtorus\/} or a {\em locally split subtorus}, if $T$ is a split torus or a locally split torus. We say $T$ is a {\em maximal split subtorus\/} or {\em maximal locally split subtorus}, if the image of $T$ in $G$ is maximal with respect to inclusion among all split subtori or locally split subtori respectively.

\begin{lem} \label{lem_quo} Let $T$ be a torus and let $Q$ be the quotient of $T$ by a subgroup of multiplicative type. Then $Q$ is a torus. Moreover,

\begin{enumerate}[label={\rm (\roman*)}]
  \item \label{lem_quo-i} if $T$ is locally split, then $Q$ is a locally split torus.

  \item \label{lem_quo-ii} If $S$ is connected and $T$ is a split torus, then $Q$ is a split torus with $\rank Q \le \rank T$.
\end{enumerate}
\end{lem}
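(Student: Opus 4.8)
The plan is to dualise. That $Q$ exists as an $S$-group scheme of multiplicative type, and that its formation commutes with arbitrary base change --- in particular with restriction to an open subscheme ---, is standard (cf.\ [IX]); and since being a torus among $S$-groups of multiplicative type may be checked on geometric fibres (again [IX]), it remains to see that each $Q_{\bar s}$ is a torus. Over the algebraically closed residue field at $\bar s$ the exact sequence $1\to H_{\bar s}\to T_{\bar s}\to Q_{\bar s}\to 1$ dualises, by Cartier duality for diagonalizable groups over a field, to an exact sequence $0\to\widehat{Q_{\bar s}}\to\widehat{T_{\bar s}}\to\widehat{H_{\bar s}}\to 0$ of abelian groups. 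As $T_{\bar s}$ is a torus, $\widehat{T_{\bar s}}$ is free, hence so is its subgroup $\widehat{Q_{\bar s}}$; therefore $Q_{\bar s}$ is a torus, and $Q$ is a torus.

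For the two refinements I would work with the Cartier-dual short exact sequence of $S$-group schemes $1\to\widehat Q\to\widehat T\to\widehat H\to 1$, in which $\widehat T$ and $\widehat Q$ are \'etale of finite type (Cartier duals of $S$-groups of finite multiplicative type) and torsion-free (Cartier duals of tori), and in which $\widehat Q\to\widehat T$ is a closed immersion (Cartier dual of a surjection); being moreover \'etale over $S$, it is simultaneously an open and a closed immersion. Recall that a torus is split, resp.\ locally split, exactly when its Cartier dual is a constant, resp.\ Zariski-locally constant, $S$-group scheme. In the situation of \ref{lem_quo-ii}, $S$ is connected and $T$ is split, so $\widehat T\cong\ZZ^n_S=\coprod_{v\in\ZZ^n}S$. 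Since $\widehat Q$ is a clopen subgroup scheme of this disjoint union and $S$ is connected, each copy of $S$ is either contained in $\widehat Q$ or disjoint from it; hence $\widehat Q=\coprod_{v\in N}S=N_S$ for the subgroup $N=\{v\in\ZZ^n : \text{the }v\text{-th copy of }S\text{ lies in }\widehat Q\}\subseteq\ZZ^n$. Thus $\widehat Q$ is constant, $Q$ is split, and $\rank Q=\rank_\ZZ N\le n=\rank T$. (Equivalently, one may quote that over a connected base the $S$-subgroups of multiplicative type of a diagonalizable group $\rmD_S(M)$ are precisely the $\rmD_S(M/N)$, $N\subseteq M$; cf.\ [VIII], [IX].)

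For \ref{lem_quo-i}, pick an open cover $S=\bigcup_i S_i$ with each $T|_{S_i}$ split. As the quotient commutes with restriction to opens, $Q|_{S_i}=T|_{S_i}/H|_{S_i}$, so it suffices to treat the case $T$ split over $S$, say $\widehat T=\ZZ^n_S=\coprod_{v\in\ZZ^n}S$ with $S$ arbitrary; then, as above, $\widehat Q=\coprod_{v\in\ZZ^n}U_v$ with each $U_v$ a clopen subscheme of the $v$-th copy of $S$, so that for every connected component $c$ of $S$ the restriction $\widehat Q|_c$ is the constant group $(N_c)_c$ for a subgroup $N_c\subseteq\ZZ^n$, by \ref{lem_quo-ii}. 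The clopen sets $U_v$ show that for each fixed $v$ the condition $v\in N_c$ is clopen in $c$, so $c\mapsto N_c$ is continuous for the natural (profinite) topology on the set of connected components of $S$; and since $H$ is of multiplicative type its type is locally constant [IX, 1.4], whence $\rank_\ZZ N_c$ is locally constant, so $c\mapsto N_c$ takes values in the set of sublattices of $\ZZ^n$, which is discrete once the rank is fixed. By compactness its image is finite, so $S$ admits a finite clopen partition over each piece of which $\widehat Q$ is constant; hence $Q$ is locally split. The one genuinely delicate point is exactly this last book-keeping over a disconnected base --- controlling how the character lattice $\widehat Q_s$ varies ---; it can be sidestepped by invoking the lattice--torus dictionary of [X], under which locally split $S$-tori correspond to Zariski-locally constant torsion-free lattices, $Q\mapsto\widehat Q$ carries the exact sequence above to an exact sequence of such lattices, and a sublattice of a Zariski-locally constant lattice is again Zariski-locally constant.
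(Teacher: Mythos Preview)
Your treatment of the general statement and of \ref{lem_quo-ii} is correct and close in spirit to the paper's: the paper also reduces to the diagonalizable case and reads off the conclusion from the character lattice, quoting [IX;~2.11(i)] rather than arguing directly with the clopen subgroup $\widehat Q\hookrightarrow \ZZ^n_S$.

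For \ref{lem_quo-i}, however, your ``delicate bookkeeping'' has a genuine gap. After reducing to $T$ split you assert that the map $c\mapsto N_c$ is continuous into ``the set of sublattices of $\ZZ^n$, which is discrete once the rank is fixed,'' and then invoke compactness. Both steps fail in general. First, the set of rank-$r$ sublattices of $\ZZ^n$ is infinite for $0<r\le n$ (already $\ZZ(1,k)\subset\ZZ^2$ gives infinitely many rank-$1$ sublattices), so there is no discrete target making your continuity statement useful with rank alone. Second, the space of connected components of an arbitrary scheme need not be compact or profinite, and connected components need not be open, so even granting continuity you cannot conclude a finite clopen partition, nor pass from components to Zariski-open neighbourhoods. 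Your final ``sidestep'' (that a sublattice of a Zariski-locally constant lattice is again Zariski-locally constant) is exactly the content of [IX;~2.11(i)] and is not something you have proved.

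There are two clean repairs. The paper's route is simply to cite [IX;~2.11(i)]: the quotient of a diagonalizable $S$-group by a subgroup of multiplicative type is Zariski-locally diagonalizable (and diagonalizable if $S$ is connected); then the Cartier-dual argument you already gave identifies the local pieces as tori. Alternatively, your direct argument can be salvaged by using the \emph{full} type of $H$, not just the rank: for each $s$ choose generators $v_1,\dots,v_r$ of $N_s$ and set $V_1=\bigcap_i U_{v_i}$, so $N_s\subseteq N_t$ for $t\in V_1$; since the type of $H$ is locally constant there is an open $V_2\ni s$ on which $\ZZ^n/N_t\cong \ZZ^n/N_s$; on $V_1\cap V_2$ the surjection $\ZZ^n/N_s\twoheadrightarrow\ZZ^n/N_t$ between isomorphic finitely generated abelian groups is an isomorphism, whence $N_t=N_s$ and $\widehat Q$ is constant there. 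This avoids any appeal to compactness or to connected components being open.
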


\begin{proof} We first prove \ref{lem_quo-i}. By definition of ``locally split'', it suffices to show:
\begin{equation}\label{lem_quo1}
  \text{\em If $T$ is a split torus, then $Q$ is a locally split torus. }
\end{equation}
For the proof of \eqref{lem_quo1} we write $T$ in the form $T=\rmD_S(M)$ with $M \cong \ZZ^r$ and observe that any torus is a group of finite multiplicative type, so that we can apply [IX; 2.11(i)]: the group $Q$ is a locally split group of multiplicative type. Hence, Zariski-locally $Q = \rmD_S(N)$ for some abelian group $N$. Cartier duality [VIII; \S3] provides a monomorphism $N_S \to M_S$ of constant group schemes, so that $N$ is torsion free and finitely generated. Therefore, $N \cong \ZZ^{r'}$ with $r'\le r$, in particular $Q$ is a split $S$--torus, proving \eqref{lem_quo1} and thus also \ref{lem_quo-i}.
\sm

We next show \ref{lem_quo-ii} by modifying the proof of \eqref{lem_quo1}. Indeed, the reference [IX; 2.11(i)] also says that $Q$ is a split group of multiplicative type if $S$ is connected. Hence, as the proof of \ref{lem_quo-i} shows, $Q$ is a split torus. \sm

Finally, we can prove the general case: {\em If $T$ is a torus, then $Q$ is a torus.} Being a torus is local for the fpqc topology. We can therefore assume that $T$ is a split torus. Then $Q$ is locally split by \ref{lem_quo-i}, which is all we needed to show.
\end{proof}

\begin{example}\label{lem_quo-ex}  By \cite[B.3.3]{Co1}, any fppf-closed subgroup  of a group $G$ of multiplicative type is a group of multiplicative type. Hence, Lemma~\ref{lem_quo} holds for fppf-closed subgroups of tori.

For example, by \ref{lem_quo}\ref{lem_quo-ii}, {\em if $S$ is a connected scheme and if $\la \co \GG_{m,S} \to H$ is a non-trivial group homomorphism, then $\GG_{m,S}/ \Ker(\la) \cong \GG_{m,S}$.}
\end{example}

The following lemma is the first step towards the existence of a maximal locally split subtorus.

%\quest{(03-04) The new Lemma~\ref{mul} gives more details and adds references in comparison to the previosu version}

\begin{lem} \label{mul} Let $G$ be an $S$--group of finite multiplicative type.
\sm

\begin{inparaenum}[\rm (a)]
\item \label{mul-b} Then the family of locally split $S$--subtori of $G$
  is a directed poset with respect to inclusion. \sm

\item\label{mul-c} If $S$ is connected, the family of split $S$--subtori of $G$ is a directed poset with respect to inclusion.
\end{inparaenum}\end{lem}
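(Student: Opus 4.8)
The statement to prove is Lemma~\ref{mul}: for an $S$--group $G$ of finite multiplicative type, (a) the locally split $S$--subtori form a directed poset under inclusion, and (b) if $S$ is connected, the split $S$--subtori form a directed poset under inclusion.

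\medskip

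The plan is to reduce both parts to showing that the subgroup generated by two (locally) split subtori is again a (locally) split subtorus, which then dominates each of them. First I would recall that for subgroups $T_1, T_2$ of a group $G$ of multiplicative type, the multiplication morphism gives a homomorphism $T_1 \times_S T_2 \to G$, and since $T_1$ and $T_2$ commute (being subgroups of a commutative group scheme $G$), the image is a subgroup scheme $T_1\cdot T_2$. Concretely, $T_1 \cdot T_2$ is the fppf-image sheaf of $T_1\times_S T_2 \to G$, which is representable because $T_1\times_S T_2 \to G$ factors as a faithfully flat morphism onto a closed subscheme (using that subgroups of multiplicative type are closed immersions, [IX; 2.5] or \cite[B.1.3]{Co1}). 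The key point is that $T_1\cdot T_2$ is a quotient of $T_1\times_S T_2$, hence of multiplicative type by \cite[B.3.3]{Co1} (quotients of groups of multiplicative type by fppf-closed normal subgroups are of multiplicative type), and in fact of finite multiplicative type since $T_1\times_S T_2$ is. It remains to check it is a torus and, in the split case, a split torus.

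\medskip

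For part~\ref{mul-b}: by definition of ``locally split torus'', the property is Zariski-local on $S$, so I may pass to an open cover on which both $T_1$ and $T_2$ (and hence $T_1\times_S T_2$) are split. Then $T_1\cdot T_2$ is a quotient of a split torus, and by Lemma~\ref{lem_quo}\ref{lem_quo-i} (using Example~\ref{lem_quo-ex} to allow fppf-closed subgroups, since $\Ker(T_1\times_S T_2 \to T_1\cdot T_2)$ is fppf-closed of multiplicative type) the quotient is a locally split torus. Since this holds on each member of the cover, $T_1\cdot T_2$ is a locally split torus on all of $S$. It evidently contains both $T_1$ and $T_2$, establishing directedness. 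For part~\ref{mul-c}: if $S$ is connected and $T_1, T_2$ are split subtori, then $T_1\times_S T_2$ is a split torus, and by Lemma~\ref{lem_quo}\ref{lem_quo-ii} the quotient $T_1\cdot T_2$ is again a split torus (with rank at most $\rank T_1 + \rank T_2$). Again $T_1\cdot T_2$ dominates both, so the poset of split subtori is directed. The empty case (no subtori) or the one-element case is trivial, and one also checks the poset is nonempty since the trivial torus is both split and locally split.

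\medskip

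The main obstacle I anticipate is the bookkeeping around representability of $T_1\cdot T_2$ as a subgroup scheme and the verification that it really is a \emph{torus} (not merely a group of multiplicative type) — this is exactly what Lemma~\ref{lem_quo} is designed to handle, so the argument should go through cleanly once one observes that $T_1\cdot T_2$ is a quotient of the torus $T_1\times_S T_2$. A secondary subtlety is ensuring the kernel of $T_1\times_S T_2 \to T_1\cdot T_2$ is fppf-closed of multiplicative type so that \cite[B.3.3]{Co1} and Example~\ref{lem_quo-ex} apply; this follows since it is the kernel of a homomorphism of groups of finite multiplicative type, hence closed (\cite[B.1.3]{Co1}), and a closed subgroup of a group of multiplicative type is of multiplicative type (\cite[B.3.3]{Co1}).
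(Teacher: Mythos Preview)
Your proof is correct and follows essentially the same approach as the paper: form the multiplication map $T_1\times_S T_2 \to G$, observe its kernel is of multiplicative type, and apply Lemma~\ref{lem_quo} to the quotient to obtain a (locally) split subtorus dominating both $T_1$ and $T_2$. The only cosmetic differences are that the paper cites [IX;~2.7] where you invoke \cite[B.3.3, B.1.3]{Co1}, and the paper verifies directly that $T_1\times_S T_2$ is locally split over all of $S$ (via $\rmD_S(M_1)\times_S\rmD_S(M_2)\cong\rmD_S(M_1\times M_2)$) before applying Lemma~\ref{lem_quo}\ref{lem_quo-i}, rather than passing to a Zariski cover first as you do.
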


\begin{proof} \eqref{mul-b} It suffices to show:
\begin{enumerate}[label={\rm (\roman*)}]
\item \label{mul-bi} {\em If $E_1$ and $E_2$ are locally split subtori of $G$, then there exists a locally split subtorus of $G$ containing $E_1$ and $E_2$.}
\end{enumerate}
To prove this, we consider the $S$--group scheme $E_1 \times_S E_2$ and claim that
$E_1\times_S E_2$ is a locally split torus. Indeed, Zariski-locally $E_1$ and $E_2$ have the form $\rmD_S(M_1)$ and $\rmD_S(M_2)$ for free abelian groups of finite type. Since for arbitrary abelian groups $N_1$ and $N_2$ we have
\begin{equation}
  \label{mul-b1} \rmD_S(N_1) \times_S \rmD_S(N_2) \cong \rmD_S(N_1 \times N_2),
\end{equation}
see e.g.~\cite[5.1]{O}, it follows that $E_1 \times_S E_2$ is a locally split torus.

Let $h \co E_1 \times_S E_2 \to G$ be the group homomorphism given by multiplication. By [IX; 2.7], its kernel $\Ker(h)$ is a subgroup of multiplicative type, so that $E_3 := (E_1 \times E_2)/\Ker(h)$ is a locally split torus by \ref{lem_quo}\ref{lem_quo-i}. Moreover, again by [IX; 2.7], the canonical map $E_3 \to G$ is a monomorphism. Clearly, $E_1 \cap \Ker(h) = \{0\}$. Therefore, $E_1 \to E_3$ is a closed immersion, and the same holds for $E_2 \to E_3$. Thus, \ref{mul-bi} holds. \sm

The proof of \eqref{mul-c} is a straightforward modification of the proof of \eqref{mul-b}, see [XXVI, 6.5]: If $E_1$ and $E_2$ are split tori, then so is $E_1 \times_S E_2$ by \eqref{mul-b1}. The quotient $E_3$ is then a split torus by \ref{lem_quo}\ref{lem_quo-ii}.
\end{proof}

\begin{lem}\label{lem_quot}
\begin{inparaenum}[\rm (a)]
%\begin{enumerate}[label={\rm (\alph*)}]
%\item \label{lem_quot1} A quotient of $T$ by an $S$--subgroup of multiplicative type is an $S$--torus.
 \item \label{lem_quot2}  A monomorphism $T   \hookrightarrow T'$ between
 $S$--tori with the same rank functions is an isomorphism.

\item \label{lem_quot3}  Assume that $S$ is connected and that $G$ is an $S$--group of finite multiplicative type. Then every family of $S$--subtori of $G$, which is a directed poset with respect to inclusion, admits a unique maximal element.
\end{inparaenum}
\end{lem}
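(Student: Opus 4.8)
The plan is to prove \ref{lem_quot2} first and then deduce \ref{lem_quot3} from it by a short rank argument.

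\emph{Part \ref{lem_quot2}.} Given the monomorphism $f \colon T \hookrightarrow T'$, I would view $T$ (via $f$) as a closed subgroup of multiplicative type of $T'$ --- recall from \ref{bnr} that a monomorphism of $S$-groups of finite multiplicative type is a closed immersion --- and form the quotient $Q = T'/T$, which is a torus by Lemma~\ref{lem_quo}. We then have an exact sequence $1 \to T \to T' \to Q \to 1$ of $S$-group schemes. Passing to a geometric fibre over $s \in S$ and then to character groups, an exact contravariant functor on groups of multiplicative type over an algebraically closed field, yields an exact sequence of free finitely generated abelian groups $0 \to \widehat{Q}_{\bar s} \to \widehat{T'}_{\bar s} \to \widehat{T}_{\bar s} \to 0$, whence $\rank \widehat{Q}_{\bar s} = \rank \widehat{T'}_{\bar s} - \rank \widehat{T}_{\bar s}$. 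By hypothesis the rank functions of $T$ and $T'$ coincide, so this difference is $0$; thus $Q$ has rank $0$ at every point of $S$, i.e.\ $Q$ is the trivial group scheme. Consequently the $T$-torsor $T' \to T'/T = S$ admits a section, so $f$ is an isomorphism. (Alternatively: since $f$ is a closed immersion between flat, finitely presented $S$-schemes, one could check directly that $f_{\bar s}$ is an isomorphism for each $s$ --- a closed, reduced subscheme of the integral scheme $T'_{\bar s}$ of the same dimension is all of it --- and conclude by the fibrewise isomorphism criterion \cite[IV$_4$, 17.9.5]{EGA}.)

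\emph{Part \ref{lem_quot3}.} Let $\Sigma$ be a non-empty family of $S$-subtori of $G$ that is directed under inclusion. Since $S$ is connected and $G$ is of finite multiplicative type, the rank function of $G$ is constant, say equal to $n$ (\ref{bnr}). For each $T \in \Sigma$, the closed immersion $T \hookrightarrow G$ exhibits, on a geometric fibre, the free group $\widehat{T}_{\bar s}$ as a quotient of the finitely generated group $\widehat{G}_{\bar s}$ of rank $n$, so $\rank T \le n$ (and $\rank T$ is constant, $S$ being connected). Hence $\{\rank T : T \in \Sigma\}$ is a non-empty subset of $\{0, 1, \dots, n\}$; I would fix $T_0 \in \Sigma$ attaining its maximum $r_0$ and claim $T_0$ is the greatest element of $\Sigma$. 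Indeed, for any $T \in \Sigma$, directedness provides $T' \in \Sigma$ with $T_0 \subseteq T'$ and $T \subseteq T'$; then $r_0 \le \rank T' \le r_0$, so $T_0 \hookrightarrow T'$ is a monomorphism of $S$-tori with equal (constant) rank functions and therefore an isomorphism by \ref{lem_quot2}, giving $T \subseteq T' = T_0$. A greatest element is in particular the unique maximal element.

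\emph{Main obstacle.} The crux is \ref{lem_quot2}: upgrading an equality of rank functions (equivalently, a fibrewise isomorphism) to a global isomorphism of these flat, finitely presented group schemes. This needs either the quotient-torus construction of Lemma~\ref{lem_quo} together with exactness of the character-group functor, or the fibrewise isomorphism criterion \cite[IV$_4$, 17.9.5]{EGA} applied to the closed immersion $f$. Once \ref{lem_quot2} is available and the ranks of subtori of $G$ are seen to be bounded (from finiteness of the multiplicative type and connectedness of $S$), part \ref{lem_quot3} reduces to the elementary poset argument above.
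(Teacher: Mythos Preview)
Your proof is correct. Part~\ref{lem_quot3} is essentially identical to the paper's argument: pick a subtorus of maximal rank, use directedness to find a common upper bound, and apply \ref{lem_quot2} to conclude.

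For part~\ref{lem_quot2} you take a slightly different route from the paper. The paper argues directly: the statement is fpqc-local, so one may assume $T=\rmD_S(M)$ and $T'=\rmD_S(M')$ are split with $M,M'\cong\ZZ^r$; Cartier duality turns the monomorphism $f$ into an epimorphism $\widehat f\colon M'_S\to M_S$ of constant group schemes, and a surjection $\ZZ^r\twoheadrightarrow\ZZ^r$ is an isomorphism. You instead form the quotient torus $Q=T'/T$ via Lemma~\ref{lem_quo}, compute its rank fibrewise to be zero, and conclude $Q$ is trivial. Both approaches are short and valid; the paper's is marginally more self-contained (it avoids appealing to Lemma~\ref{lem_quo} and the formation of the quotient), while yours is perhaps more conceptual (kill the cokernel). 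Your alternative via the fibrewise isomorphism criterion is also fine, since $f$ is a closed immersion between smooth affine $S$-schemes of finite presentation and $f_{\bar s}$ is an isomorphism of tori of equal rank.
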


\begin{proof} We can assume that $S$ is non-empty. All statements are local for the fpqc topology, allowing us to deal with split $S$-tori.
\sm

%\noindent
\eqref{lem_quot2} We are given a monomorphism  $f\co T=\rmD_S(M) \to \rmD_S(M')=T'$, where $M$ and $M'$ are free $\ZZ$--modules of rank $r$. Cartier duality provides an epimorphism $\wdh f\co  M'_S \to M_S$ of constant $S$--schemes.
For each  point $s$ of $S$, we obtain a surjective map $\wdh f_{\ol s}\co M'_{\ol s} \to M_{\ol s}$ between free $\ZZ$-modules
of same finite rank. The map $\widehat f_{\ol s}$ is then an isomorphism, and hence so is $\wdh f$ by [IX, 2.9], which in turn implies that $f$ is an isomorphism. 
\sm

%\noindent
\eqref{lem_quot3} Let $(T_i)_{i\in T}$ be a family as in the statement of \eqref{lem_quot3}. Since $S$ is connected, $G$ has of constant rank, say rank $r$. By the same reason, each $T_i$ has constant rank, say $\rank T_i = r_i$. Note $r_i \leq r$. Let $r_j$ be maximal among all $r_i$, $i\in I$. We claim that $T_j$ is a maximal element of the given family. Indeed, fix $i\in I$ and let $T_k$ be a member of the family such that $T_i \subset T_k$ and $T_j \subset T_k$. We have $r_j = r_k$ by maximality of $r_j$. Applying \eqref{lem_quot2} yields $T_j= T_k$, hence $T_i \subset T_j$. Thus $T_i$ is a maximal element; unicity is obvious.
\end{proof}

We can now prove the existence of maximal (locally) split subtori.

\begin{prop}\label{lem_torus} Let $G$ be an $S$--group of finite multiplicative type.

\begin{enumerate}[label={\rm (\alph*)}]
  \item \label{lem_torus1} If $E$ is a maximal locally split $S$--subtorus of $G$, then $E$ is unique.

  \item \label{lem_torus2} If $S$ is connected, $G$ admits a unique
   maximal locally split $S$--subtorus.

 \item \label{lem_torus3} $G$ admits a maximal split $S$--subtorus.

 \item \label{lem_torus4} If $S$ is connected,
  $G$ admits a unique maximal split $S$--subtorus.
\end{enumerate}
\end{prop}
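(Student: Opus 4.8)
The plan is to read Proposition~\ref{lem_torus} off Lemmas~\ref{lem_quo}, \ref{mul} and \ref{lem_quot}; the four assertions are essentially bookkeeping once those lemmas are in place, the one point requiring a small extra observation being \ref{lem_torus3}, which I will handle last. I would start with \ref{lem_torus1}: if $E$ and $E'$ are both maximal locally split $S$--subtori of $G$, then by Lemma~\ref{mul}\ref{mul-b} the set of locally split $S$--subtori of $G$ is directed under inclusion, so there is a locally split $S$--subtorus $\wtl{E}$ with $E\subseteq\wtl{E}\supseteq E'$; maximality of $E$ forces $\wtl{E}=E$, hence $E'\subseteq E$, and maximality of $E'$ then forces $E'=E$. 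Note that this argument makes no use of connectedness.

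Next I would dispatch \ref{lem_torus2} and \ref{lem_torus4} together, under the hypothesis that $S$ is connected. The family of locally split $S$--subtori of $G$ (resp. the family of split $S$--subtori of $G$) is a directed poset by Lemma~\ref{mul}\ref{mul-b} (resp. by Lemma~\ref{mul}\ref{mul-c}, which uses connectedness of $S$). Applying Lemma~\ref{lem_quot}\ref{lem_quot3} to that poset produces a maximal element, which is itself locally split (resp. split) since it is a member of the family, and hence is a maximal locally split (resp. split) $S$--subtorus of $G$. Uniqueness in \ref{lem_torus2} is \ref{lem_torus1}; uniqueness in \ref{lem_torus4} follows since two maximal split subtori are, by directedness, contained in a common split subtorus and therefore equal.

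Finally \ref{lem_torus3}, for an arbitrary base $S$. Here the directedness mechanism is unavailable, because a quotient of a split torus is only guaranteed to be \emph{locally} split over a disconnected base; so instead I would exhibit a split subtorus of largest possible rank. We may assume $S\ne\emptyset$ and fix a point $s\in S$; let $N$ be the rank of $G$ at $s$ in the sense of~\ref{bnr}. Any split $S$--subtorus $T\cong\rmD_S(\ZZ^{r})$ of $G$ has constant rank $r$, and its restriction to the fibre at $s$ is a split subtorus of $G_s$ of dimension $r$, whence $r\le N$. Moreover, if $T\subseteq T'$ are split $S$--subtori with $\rank T=\rank T'$, then they have the same (constant) rank function, so Lemma~\ref{lem_quot}\ref{lem_quot2} gives $T=T'$; hence a strict inclusion of split $S$--subtori strictly increases the rank. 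Since the trivial subgroup $\rmD_S(\ZZ^0)$ is a split $S$--subtorus of rank $0$, the set of ranks of split $S$--subtori of $G$ is a non-empty subset of $\{0,1,\dots,N\}$ and so has a largest element; any split $S$--subtorus of $G$ attaining this largest rank is then maximal. The only content beyond citing the preceding lemmas is this boundedness of the ranks, which I regard as the (mild) crux and which is precisely what makes \ref{lem_torus3} survive the loss of connectedness.
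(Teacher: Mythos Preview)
Your proof is correct and follows essentially the same approach as the paper: parts \ref{lem_torus1}, \ref{lem_torus2}, \ref{lem_torus4} are handled identically via Lemmas~\ref{mul} and~\ref{lem_quot}\ref{lem_quot3}, and for \ref{lem_torus3} the paper likewise bounds the rank of a split subtorus and invokes Lemma~\ref{lem_quot}\ref{lem_quot2} to see that strict inclusions increase rank (the paper phrases the bound via the partition $S=\bigsqcup_r S_r$ by rank of $G$, whereas you bound by the rank at a single fixed point, but the content is the same). One cosmetic point: the paper's convention in \ref{bnr} requires $r\in\NN_+$ for a split torus, so strictly speaking $\rmD_S(\ZZ^0)$ is not a split torus there; this is harmless, since your rank argument only needs that the set of attainable ranks is bounded above, not that $0$ is attained.
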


\begin{proof}%\ref{lem_torus1}
Lemma~\ref{mul}\eqref{mul-b} %{lem_zorn}\ref{lem_zorn1}
tells us that the family of  locally split $S$--subtori of $G$ is a directed poset, implying \ref{lem_torus1}, while Lemma~ \ref{lem_quot}\eqref{lem_quot3} shows the existence of a maximal element in case $S$ is connected. Hence \ref{lem_torus2} holds.\sm

\ref{lem_torus3} We use the partition $S = \bigsqcup_{r \geq 0} S_r$ by rank, see \ref{bnr}.  Up to localizing at some $S_r$, we can then assume that $G$ has constant  rank $r$ and that $S \not = \emptyset$. Since an ascending chain of split tori of bounded rank is stationary, it follows from Lemma \ref{lem_quot}\eqref{lem_quot2} that $T$ admits a maximal split $S$--subtorus. \sm

%\noindent
\ref{lem_torus4} The reasoning is the same as for \ref{lem_torus2}, using this time Lemma \ref{mul}\eqref{mul-c}. \end{proof}
%\sm
%\newpage

Proposition~\ref{lem_torus} implies that a torus $T$ over a connected $S$ admits both a maximal split subtorus $T_0$ and a maximal locally split subtorus $T_{l0}$. Clearly, $T_0 \subset T_{l0}$.
%We have already mentioned Grothendieck's example [X, 1.6] of a locally split, but non-split torus.
The following Proposition~\ref{prop_normal} gives a sufficient criterion for the two subtori to coincide, while the Examples~\ref{ex_tori} show that in general the two tori do not coincide. \sm

In Proposition~\ref{prop_normal} we denote by $\rmX_*(T)=\Hom_{S-{\rm gr}} (\GG_m, T)$ the commutative $S$--group of cocharacters of a torus $T$; it is a locally constant $S$--group scheme. %in the sense of \cite[5.1]{O}
We also recall \cite[Tag 033N]{Stacks}: a connected, normal  locally noetherian scheme is integral and hence has a field of fractions.

\begin{prop} \label{prop_normal} Assume that $S$ is connected, normal and locally noetherian with field of fractions $K$. By Lemma~{\rm \ref{lem_torus}\ref{lem_torus4}},  the $S$--torus $T$ has a unique maximal split $S$--subtorus, denoted $T_0$.

\begin{enumerate}[label={\rm (\alph*)}]
\item \label{prop_normal1} We have
$\rmX_*(T_0) (K) = \rmX_*(T_0)(S)=\rmX_*(T)(S)=\rmX_*(T)(K)$. \sm

\item \label{prop_normal2} $T_0$ is the unique maximal locally split  $S$--subtorus of $T$. \sm

\item  \label{prop_normal3} The formation of $T_0$ commutes with Zariski localization.
\end{enumerate}
\end{prop}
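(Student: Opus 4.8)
The plan is to deduce all three parts from a single property of the base. Since $S$ is connected, normal and locally noetherian it is integral with generic point $\eta=\Spec K$ by \cite[Tag 033N]{Stacks}, and, being irreducible, \emph{every} nonempty open of $S$ has the same generic point $\eta$ and is therefore dense. The key input, which I would isolate as a preliminary lemma (or cite, the underlying fact being that $\pi_1(\Spec K,\bar\eta)\to\pi_1(S,\bar\eta)$ is surjective for $S$ normal connected), is: for a locally constant étale sheaf $\mathcal F$ on a connected normal locally noetherian scheme $Y$ with function field $L$ and any nonempty open $U\subseteq Y$, the restriction maps $\mathcal F(Y)\to\mathcal F(U)\to\mathcal F(\Spec L)$ are bijective, and a locally constant $\mathcal F$ that is constant over some nonempty open of $Y$ is already constant on $Y$. (Injectivity holds because the vanishing locus of a section of a locally constant sheaf is open and closed, hence all of $Y$ once it meets $\eta$; the rest follows from surjectivity of $\pi_1(U,\bar\eta)\to\pi_1(Y,\bar\eta)$, which holds because a connected finite étale cover of the normal scheme $Y$ stays connected after restriction to the dense open $U$.) I would then apply this throughout to the locally constant $S$--group schemes $\rmX_*(T)$, $\rmX^*(T)$ and their analogues for subtori.

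For (a): two of the three equalities are immediate --- $\rmX_*(T)(S)=\rmX_*(T)(K)$ by the preliminary lemma, and $\rmX_*(T_0)(S)=\ZZ^{\rank T_0}=\rmX_*(T_0)(K)$ because $T_0$ is split, so $\rmX_*(T_0)$ is a constant sheaf. It remains to show $\rmX_*(T_0)(S)=\rmX_*(T)(S)$; the inclusion $T_0\hookrightarrow T$ gives ``$\subseteq$''. For ``$\supseteq$'' I would take a cocharacter $\lambda\colon\GG_{m,S}\to T$, assume it nontrivial, invoke Example~\ref{lem_quo-ex} to get $\GG_{m,S}/\Ker(\lambda)\cong\GG_{m,S}$, observe that $\lambda$ factors through the induced monomorphism $\GG_{m,S}\hookrightarrow T$, which is a closed immersion by [IX, 2.5], so its image is a split rank--one subtorus of $T$ and hence lies in $T_0$ by maximality; thus $\lambda$ factors through $T_0$.

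For (b): I would first establish the lemma that a locally split $S$--torus $E$ is in fact split. Indeed $E$ is split over some nonempty --- hence dense --- open $U\subseteq S$, so $\rmX^*(E)$ is a locally constant étale sheaf which is constant over $U$; by the preliminary lemma $\rmX^*(E)$ is constant on $S$, whence $E\cong\rmD_S\big(\rmX^*(E)\big)$ is split. Granting this, any locally split $S$--subtorus of $T$ is split, hence contained in $T_0$ by Lemma~\ref{lem_torus}\ref{lem_torus4}, while $T_0$ itself --- being split --- is locally split; therefore $T_0$ is the maximal locally split $S$--subtorus of $T$ (and it is the unique one by Lemma~\ref{lem_torus}\ref{lem_torus1}).

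For (c): let $U\subseteq S$ be a nonempty open; the case $U=\Spec\mathcal O_{S,s}$ is handled identically. Then $U$ is connected, normal and locally noetherian with function field $K$, so by Lemma~\ref{lem_torus}\ref{lem_torus4} the torus $T|_U$ has a unique maximal split subtorus $(T|_U)_0$, and clearly $(T_0)|_U\subseteq(T|_U)_0$. Applying part (a) to $T|_U$ over $U$ yields $\rmX_*\big((T|_U)_0\big)(U)=\rmX_*(T|_U)(U)=\rmX_*(T|_U)(K)=\rmX_*(T)(K)=\rmX_*(T_0)(S)$, so $(T|_U)_0$ has rank $\rank T_0$, the same as $(T_0)|_U$; being a monomorphism of $U$--tori with equal constant rank functions, $(T_0)|_U\hookrightarrow(T|_U)_0$ is an isomorphism by Lemma~\ref{lem_quot}\ref{lem_quot2}. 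Hence $(T_0)|_U=(T|_U)_0$, i.e.\ the formation of $T_0$ commutes with Zariski localization. The only real obstacle is isolating and proving the preliminary normality fact about locally constant sheaves; after that, (a)--(c) are essentially formal, the single genuinely group--scheme--theoretic point being the ``image of a cocharacter is a split subtorus'' step used in (a).
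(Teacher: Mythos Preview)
Your argument is correct, and for part \ref{prop_normal1} it is essentially the paper's proof repackaged: the paper invokes isotriviality of $T$ over a normal base [X, 5.16], passes to a connected Galois cover $S'/S$ with group $\Gamma$, and reads off $\rmX_*(T)(S)=\rmX_*(T)(K)$ by taking $\Gamma$-invariants of $\rmX_*(T)(S')=\rmX_*(T)(K')$; your ``preliminary lemma'' on sections of locally constant \'etale sheaves is the same fact phrased sheaf-theoretically. One point worth making explicit is that applying your lemma to $\rmX_*(T)$ requires $\rmX_*(T)$ to be trivialized by a \emph{finite} \'etale cover (so that the $\pi_1$-argument applies), and this is exactly [X, 5.16]; the paper cites it, and you should too.

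Your route to \ref{prop_normal2} is genuinely different and arguably cleaner. The paper argues indirectly: it takes the maximal locally split subtorus $T_{l0}$, applies \ref{prop_normal1} to $T_{l0}$ (whose $K$-fiber is split), concludes $\rank T_0=\rank T_{l0}$, and finishes with Lemma~\ref{lem_quot}\ref{lem_quot2}. You instead prove the stronger intermediate statement that over a connected normal locally noetherian base \emph{every} locally split torus is already split, which immediately gives \ref{prop_normal2} and is a useful fact in its own right. For \ref{prop_normal3} the paper simply says ``this follows immediately from \ref{prop_normal1}''; your version unpacks this and supplies the rank comparison via Lemma~\ref{lem_quot}\ref{lem_quot2}, which is exactly what that one-liner means.
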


%\quest{(03-04) I do not understand the proof of \ref{prop_normal}. Why $T^0(S')= T^0(K')$? In \ref{prop_normal2}, why  do $T_0$ and $T_{l0}$ have the same rank? }

\begin{proof}
We shall use that $T$ is isotrivial [X, 5.16], that is, there exists a finite \'etale cover $S'$ of $S$ which splits $T$. Grothendieck's Galois theory \cite[V, \S8]{SGA1} (or see footnote~\ref{footnote1}) allows us to assume that the cover $S'$ is a connected Galois cover over $S$. We denote the Galois group of $S'/S$ by $\Gamma$; this is also the Galois group of $K'/K$, where $K'$ is the fraction field of $S'$.
\sm

%\noindent
\ref{prop_normal1} We have $\rmX_*(T)(S')= \rmX_*(T)(K')$; by taking $\Gamma$-invariants, we obtain $\rmX_*(T)(S)=\rmX_*(T)(K)$ and, similarly, $\rmX_*(T_0)(S)=\rmX_*(T_0)(K)$. We have the obvious inclusion $\rmX_*(T_0)(S) \subset \rmX_*(T)(S)$; so it remains  to establish the other inclusion.
To do so, we are given $\lambda: \GG_{m,S} \to T$. Its image $E$ is a split $S$-torus, so that Lemma \ref{mul}\eqref{mul-c} implies that $E \subset T_0$. It follows that $\lambda$ factorizes through $T_0$, so that $\lambda$ comes from $\rmX_*(T_0)(S)$. \sm

%\noindent
\ref{prop_normal2} Let $T_{l0}$ be the maximal
locally $S$--split subtorus of $T$. Clearly $T_0$ is the maximal $S$-split subtorus of $T_{l0}$. Since $T_{l0,K}$ is split, applying \ref{prop_normal1} to $T_{l0}$, shows that $\rmX_*(T_0)(K) = \rmX_*(T_{l0})(K)$. It follows that $T_0$ and $T_{l0}$ have  the same rank, so that $T_0=T_{l0}$ in view of Lemma~\ref{lem_quot}\eqref{lem_quot2}. \sm

\ref{prop_normal3} This follows immediately from \ref{prop_normal1}. \end{proof}

\begin{remarks} \begin{inparaenum}[(a)] \item That $\rmX_*(T)(S)=\rmX_*(T)(K)$ can be also proven by applying [X, 8.4]. \sm

\item  The argument of Proposition~\ref{prop_normal}\ref{prop_normal2} can be generalized as follows. Suppose that $S$ is an integral scheme with field of fractions $K$ and that $T$ is an $S$--torus.  Let $T_0$ and $T_{l0}$ be the maximal split and locally maximal split subtorus of $T$. If $T_{0,K} = T_{l0,K}$ (this is the crucial assumption), then $T_0 = T_{l0}$.
\lv{%%%%%%%%%%%%%%%%%%%%%%%%%%%%%%%%%%%%%%%%%%%%%%%%%
Suppose $S$ is an integral scheme and let $T$ be an $S$--torus. Since $S$ is connected, $T$ has a unique maximal split subtorus $T_0$ and a unique maximal locally split subtorus $T_{l0}$. Clearly $T_0$ is a subtorus of $T_{l0}$. We have $T_0 = T_{l0} \iff \rank T_0 = \rank T_{0l}$, by Lemma~\ref{lem_quot}\eqref{lem_quot2}. Again since $S$ is connected, the rank functions are constant (partition by rank). So it suffices to have equality at one point.

Now let $K$ be the function field of $S$. Then $K$ is the field of fractions of any local ring $\calO_{S,s}$.  Hence the rank of $T_0$ and of $T_{l0}$ is the rank of $T_{0,K}$ and of $T_{l0, K}$. But $T_{l0,K}$ is a locally-split torus over a field, and is therefore split. IF $T_{0,K} = T_{l0, K}$, then in particular they have the same rank, which implies $\rank T_0 = \rank T_{l0}$.

The point is that both $T_{0,K}$ and $T_{l0,K}$ are split tori, but it is in general not clear that $T_{0,K} = T_{l0, K}$. }%%%%%%%%%%%%%%%%%   end lv
\end{inparaenum}
\end{remarks}

\begin{examples} \label{ex_tori} %{\bf Grothendieck examples.}
Example~\eqref{ex_tori-a} below shows that \ref{prop_normal}\ref{prop_normal3} is wrong without the normality assumption, while Example~\eqref{ex_tori-b} shows that \ref{prop_normal}\ref{prop_normal2} is wrong without the normality assumption. These examples are taken from \cite[Exc.~2.4.12]{Co1}. \sm

Let $k$ be an algebraically closed field of characteristic zero. For each $n \geq 1$, let $X_n = \Spec(A_n)$  be the $k$-scheme obtained
by gluing $2^n$ affine lines in a loop, with $0$ on the $i$-th line glued to $1$ on the $(i + 1)$-th line ($i \in \ZZ/2^n \ZZ$). This means that
\[
A_n= \bigl\{ (P_i) \in \prod\limits_{i \in \ZZ/2^n \ZZ} k[x_i] \, \mid \,
   P_i(0)=P_{i+1}(1)\enskip \forall \enskip i \in  \ZZ/2^n \ZZ  \bigr\}.
\]
We have then a canonical $k$--map $f_n: \widetilde X_n=\bigsqcup_{i \in  \ZZ/2^n \ZZ} \mathbf{A}^1_k \to  X_n$, obtained by inclusion $A_n \subset \prod_{i \in \ZZ/2^n \ZZ} k[x_i]$. \sm

\begin{inparaenum}[(a)]\item \label{ex_tori-a}
The group $\ZZ/2 \ZZ$ acts on $X_1$ by permuting $x_0$ and $x_1$.
Since $\ZZ/2 \ZZ$ acts freely on $X_1(k) = k \sqcup k/ \sim$, where the equivalence relation $\sim$ identifies $0$ of each  summand  with $1$  of the following summand, the action of $\ZZ/2\ZZ$ on $X_1$ is free \cite[Cor.~ III, 2.2.5]{DG}. The algebraic group  $\ZZ/2 \ZZ \cong \mu_2$ is  diagonalizable, so that the fppf quotient $X= X_1/ (\ZZ/2\ZZ)$ is representable by the affine scheme $X=\Spec(A)$ where $A= (A_1)^{\ZZ/2\ZZ}= \{ P \in k[x] \, \mid \, P(0)=P(1) \}$, [VIII.5.1].
Furthermore, the quotient map $q: X_1 \to X$ is a $\ZZ/2\ZZ$--torsor.
We observe that $X$ is a nodal affine curve. We denote by $a$
its nodal point and note that $X \setminus \{a\} \cong \GG_m$.
Summarizing, we have a cartesian diagram of  $\ZZ/2\ZZ$-torsors
 \begin{equation*} \label{diag_pinch} \vcenter{\xymatrix@C=45pt{
     \mathbf{A}^1_k \sqcup   \mathbf{A}^1_k  \ar[d]^{f_1} \ar[r]^{\widetilde q} &   \mathbf{A}^1_k \ar[d]   \\
     X_1 \ar[r]^q & X \\
}}\qquad .\end{equation*}
Clearly the $\ZZ/2\ZZ$-torsor   $q: X_1 \to  X$ is trivial once restricted to $X\setminus \{a\}$. We claim:
\begin{equation}\label{diag_pinch1}
  \text{\em The $\ZZ/2\ZZ$-torsor   $q\co X_1 \to  X$
is not trivial at the local ring $\calO_{X,a}$.}
\end{equation}
%\begin{claim} The $\ZZ/2\ZZ$-torsor   $q: X_1 \to  X$ is not trivial at the local ring $\calO_{X,a}$.
%\end{claim}
Assume to the contrary that there exists a splitting $s_a\co \Spec(\calO_{X,a}) \to X_1$ of $q$. It extends to a splitting $s\co  U\to X_1$ of $q$ defined on an affine open neighborhood $U$ of $a$ in $X$. Let $V$ be the inverse image of $U$ in $\mathbf{A}^1_k$; this is an affine open neighborhood of $0$ and $1$.
Then $s$ induces a section $\widetilde s$ of $V \sqcup  V \to V$.
Since $\mathbf{A}^1_k$ is irreducible, $V$ is connected, % \cite[1.15]{GW}
hence $\widetilde s$ is the first (or the second) component map.
It follows that $\widetilde s(V)$ contains the two nodal points $a_0,a_1$ of $X_1$, so that $s(a)=a_0=a_1$. This is a contradiction, proving the claim \eqref{diag_pinch1}.

We consider the isotrivial $X$--torus $T=\frR_{X_1/X}(\GG_m)/\GG_m$, that is,
the quotient of the Weil restriction $\frR_{X_1/X}(\GG_m)$ by the diagonal $\GG_m$.
Its isomorphism class is determined by $[X_{1}] \in H^1(X, \ZZ/2\ZZ)$.
Then $T|_{X \setminus \{a\}}$ is split,  but $T \times_S \Spec(\calO_{X,a})$ is not split according to \eqref{diag_pinch1}.
\ms

\item \label{ex_tori-b} We now assume $n \geq 2$. The map $\ZZ/2^{n+1}\ZZ \to \ZZ/2^n\ZZ$, $[i] \mapsto [i]$
induces a morphism
\[ \widetilde g_{n}: \bigsqcup\limits_{i \in  \ZZ/2^{n+1} \ZZ} \mathbf{A}^1_k \to \bigsqcup\limits_{i \in  \ZZ/2^n \ZZ} \mathbf{A}^1_k
\]
which is a $\ZZ/2\ZZ$--torsor, where $\ZZ/2\ZZ$ is the kernel of $\ZZ/2^{n+1}\ZZ \to \ZZ/2^n\ZZ$ and acts by permuting the factors. This map induces a $\ZZ/2\ZZ$--torsor   $g_{n}: X_{n+1} \to X_n$ fitting in the cartesian diagram
\begin{equation*} \label{diag_pinch2} \vcenter{\xymatrix@C=45pt{
     \widetilde X_{n+1}  \ar[d]^{\widetilde f_{n}} \ar[r]^{\widetilde g_{n}} &    \widetilde X_n \ar[d] ^{f_n}  \\
     X_{n+1} \ar[r]^{g_{n}} & X_n \\
}}\quad .\end{equation*}
We claim
\begin{equation} \label{diag_pinch22} \begin{split}
  &\text{\em The $\ZZ/2\ZZ$--torsor $g_n \co X_{n+1} \to X_n$ is locally trivial}
  \\ &\text{\em  for the Zariski topology, but is not trivial.}
\end{split}
\end{equation}
%\begin{claim}
%The $\ZZ/2\ZZ$--torsor $X_{n+1} \to X_n$ is locally trivial  for the Zariski topology but is not trivial. \end{claim}
Since a  section of $g_n$ must come from a section of $\widetilde g_{n}$,
 the  argument of \eqref{ex_tori-a} shows that such a section does not exist.
For each $i \in [0, 2^{n}-2]$, we denote by $\widetilde U_i$ (resp.\ $\widetilde V_i$) the open subset of $\widetilde X_n$ (resp.\ $\widetilde X_{n+1}$)
which is $\mathbf{A}^1_k   \sqcup \mathbf{A}^1_k  $ located at  $i$ and $i+1$.
Also, we denote by $U_i$ (resp.\, $V_i$) the fiber product
 \begin{equation*} \label{diag_pinch5} \vcenter{\xymatrix@C=45pt{
   A_n \ar@{^{(}->}[r]  &  \prod\limits_{j \in \ZZ/2^n \ZZ} k[x_j] \\
    k[U_i] \ar@{^{(}->}[u]  \ar@{^{(}->}[r]  & \ar@{^{(}->}[u] k[x_i ] \times k[x_{i+1}]
}}\quad .\end{equation*}
(resp.\ with $k[V_n] \subset A_{n+1}$). We then get a cartesian square of isomorphisms
 \begin{equation*} \label{diag_pinch55} \vcenter{\xymatrix@C=60pt{
     \widetilde V_{i}  \ar[d]^{\widetilde f_{n}}_{\wr} \ar[r]^{\widetilde g_{n}}_{\sim} &
     \widetilde U_i  \ar[d] ^{f_n}_{\wr}  \\
    V_i \ar[r]^{g_{n}}_{\sim} & U_i \\
}}\quad .\end{equation*}
Since the $U_i$'s cover $X_n$, we obtain that $g_n: X_{n+1} \to X_n$
admits  sections for the Zariski topology, establishing the claim \eqref{diag_pinch22}.

We consider  the $X_n$--torus $T_n=\frR_{X_{2n}/X_n}(\GG_m)/\GG_m$ of rank one,
whose isomorphism class is determined by $[X_{2n}] \in H^1(X_n, \ZZ/2\ZZ)$.
Then $T_n$ is isotrivial and \eqref{diag_pinch22} shows that $T_n$ is  locally split of rank 1, but is not split. \end{inparaenum}\end{examples}

%\newpage
\section{Parabolic subgroups of reductive groups via the dynamic method}\label{app:psr}

%\comments{(02-27) Based on PG's file of 02-26; adjusted the TeX-file to the format and concepts of this paper (macros etc); made some minor editorial changes; the place here is preliminary; EN still needs to read the file}

In this appendix we consider parabolic and Levi subgroups of a reductive  group $H$ over a {\em connected\/} scheme $S$ that can be described by the dynamic method, reviewed in \ref{dps}. The appendix complements \S\ref{sec:minipara} on minimal parabolic subgroups and their Levi subgroups.
\sm

We start with an improvement of \cite[Thm.~7.3.1]{G2}. According to part (2) of that theorem, a reductive $S$--group scheme $H$ over a connected $S$
is isotropic in the sense of \ref{rev-isog} if and only if its radical torus $\rad(H)$ is isotropic or $H$ is reducible, also defined in \ref{rev-isog}. This characterization is a consequence of part (1) of \cite[Thm.~7.3.1]{G2} which proves Theorem~\ref{thm_richardson} without the important invariance of $\la$ under the $S$--group scheme $\Aut(H,P,L)$, the automorphisms of $H$ normalizing $P$ and $L$.

\begin{thm} \label{thm_richardson}
Let $S$ be a connected scheme and let $P$ be a parabolic subgroup of the reductive $S$--group $H$, equipped with a Levi subgroup $L \subset P$. Then there exists a cocharacter $\lambda: \GG_m \to H$ which is fixed by $\Aut(H,P,L)$ and which allows a dynamic description of $P$ and $L$ as $P=\rmP_H(\lambda)$ and  $L=\rmC_H(\lambda)$.
\end{thm}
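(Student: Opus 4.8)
The plan is to start from the known part (1) of \cite[Thm.~7.3.1]{G2}, which already gives \emph{some} cocharacter $\mu\co \GG_m \to H$ with $P=\rmP_H(\mu)$ and $L=\rmC_H(\mu)$, and then to average $\mu$ over the action of the affine $S$--group scheme $\uAut(H,P,L)$ so as to produce a new cocharacter $\la$ with the same property that is moreover fixed by $\uAut(H,P,L)$. The first point to settle is that the set of cocharacters $\nu$ satisfying $\rmP_H(\nu)=P$ and $\rmC_H(\nu)=L$ is precisely a single orbit — more precisely, a torsor — under a well-understood group. Indeed such $\nu$ all factor through $\rad(L)$ (since they are central in $L=\rmC_H(\nu)$ and, being cocharacters, land in the maximal torus of the centre of $L$, i.e.\ in $\rad(L)$), and conversely a cocharacter of $\rad(L)$ gives back $(P,L)$ exactly when it is ``regular'' with respect to the roots of $H$ appearing in $\Lie(P)$. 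So the relevant object is the set $C\subset \rmX_*(\rad(L))$ of such regular cocharacters; since $S$ is connected, $\rmX_*(\rad(L))$ is a locally constant (hence constant) $S$--group scheme, and $C$ is the set of lattice points in an open rational polyhedral cone, in particular a nonempty convex subset stable under $\uAut(H,P,L)$ and under addition.

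Next I would analyse the action of $\Gamma:=\uAut(H,P,L)$ on $\rmX_*(\rad(L))$. Since $P$ and $L$ are defined over $S$ and $S$ is connected, $\Gamma$ is a \emph{smooth affine} $S$--group scheme, and it is in fact an extension of a finite \'etale group scheme (the ``outer'' part, acting on the based root datum of $L$ and on $\rmX_*(\rad(L))$) by a smooth connected affine normal subgroup which acts trivially on $\rmX_*(\rad(L))$ (inner automorphisms act trivially on the cocharacter lattice of a central torus). Therefore the $\Gamma$--action on the constant group scheme $\rmX_*(\rad(L))$ factors through a \emph{finite} group $W_0$ (the image of $\Gamma$ in $\uAut(\rmX_*(\rad(L)))$, which over a connected base is constant). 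Now I would take any $\mu\in C$ and set
\[
 \la \;=\; \sum_{w\in W_0} w\cdot \mu \;\in\; \rmX_*(\rad(L)).
\]
Since $C$ is convex and stable under $W_0$ and under (positive integer) sums, $\la$ lies again in $C$, hence $\rmP_H(\la)=P$ and $\rmC_H(\la)=L$; and by construction $w\cdot\la=\la$ for all $w\in W_0$, i.e.\ $\la$ is fixed by $\Gamma=\uAut(H,P,L)$.

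The steps to fill in, in order, are: (i) prove that cocharacters inducing $(P,L)$ are exactly the points of the regularity cone $C\subset \rmX_*(\rad(L))$, using \cite[4.1.7, 5.2]{Co1}; (ii) check $C\ne\emptyset$ — this is exactly part (1) of \cite[Thm.~7.3.1]{G2}; (iii) show that $\uAut(H,P,L)$ acts on $\rmX_*(\rad(L))$ through a finite constant group $W_0$, which rests on the description of $\uAut(H,P,L)$ as an extension of a finite \'etale group by a smooth connected group acting trivially on $\rmX_*(\rad(L))$ and on the fact that $S$ is connected so locally constant sheaves of finite groups are constant; (iv) perform the averaging and verify $\la$ stays in $C$. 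The main obstacle I anticipate is step (iii): one must argue carefully that over the connected (but not necessarily noetherian, nor normal) base $S$ the automorphism functor $\uAut(H,P,L)$ is representable by a smooth affine group scheme with the stated structure, and that its action on the cocharacter lattice of $\rad(L)$ — a locally constant sheaf — really does descend to a genuine finite group action rather than only an action ``\'etale-locally''; the convexity/averaging argument in step (iv), by contrast, is elementary once (i)--(iii) are in place (one could also normalize by dividing out the gcd of the coefficients of $\la$, or just leave $\la$ as is since only the conjugation action matters).
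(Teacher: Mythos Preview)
Your strategy is different from the paper's and is morally sound, but step (iii) contains a genuine error: the assertion that ``$S$ is connected so locally constant sheaves of finite groups are constant'' is simply false (think of any non-trivial finite \'etale cover of $S$). Neither $\rmX_*(\rad(L))$ nor the image $W_0$ of $\uAut(H,P,L)$ in its automorphisms need be constant over a merely connected base; $\rad(L)$ may well be a non-split torus, and the outer part of $\uAut(H,P,L)$ may be a non-trivial twist of a finite constant group. Consequently your formula $\la=\sum_{w\in W_0}w\cdot\mu$ has no direct meaning, because there is no finite group $W_0$ to sum over.

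This gap is, however, repairable: $W_0$ is a finite \'etale $S$--group scheme, so after passing to a connected Galois cover $S'\to S$ trivializing $W_0$ one can form $\la'=\sum_{w\in W_0(S')} w\cdot \mu_{S'}$; since $\mu$ is defined over $S$ and $\Gal(S'/S)$ permutes the summands, $\la'$ descends to a global section $\la\in \rmX_*(\rad(L))(S)$, which lies in $C$ (closure of $C$ under addition is checked fpqc-locally) and is $\uAut(H,P,L)$--fixed. You should rewrite (iii) in these terms. The paper sidesteps the issue altogether by a different route: it first passes to the split Chevalley model $(G,P_I,L_I)$ over $\ZZ$, where $W_0$ really is constant and an $\Aut(G,P_I,L_I)$--invariant cocharacter $\la_I\ad$ of $G\ad$ is available explicitly (essentially a sum of fundamental coweights, cf.\ \cite[Ex.~7.2]{G2}); this is then transported to $H\ad$ by twisting along the torsor $\underline{\Isom}\bigl((G,P_I,L_I),(H,P,L)\bigr)$, and finally lifted from $H\ad$ to $\scD(H)$ by replacing $\la\ad$ with $(\la\ad)^n$ for $n$ killing the centre, the uniqueness of the lift forcing $\uAut(H,P,L)$--invariance. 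Your averaging argument, once repaired, is arguably more direct and avoids the adjoint detour; the paper's twisting argument, on the other hand, makes the existence of an invariant $\la$ transparent without any analysis of $\uAut(H,P,L)$ as a group scheme over $S$.
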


\begin{proof} Our proof  is a refinement of the proof of \cite[Thm.~7.3.1]{G2}.%

Because $S$ is connected, the reductive group $H$  has constant type [XXII, 2.8], so it is a form of a Chevalley $S$-group $G$ equipped with a Killing couple $(B,T)$ and defined over $\ZZ$. This gives rise to a root datum and a Dynkin diagram $\Delta$. For $I \subset \De$ we let $P_I$ and $L_I$ be the standard parabolic and Levi subgroup associated with $I$. Since triples of type $(H,P,L)$ allow descent, there exists $I \subset \De$ such that $(H,P,L)$ is the twist of $(G, P_I, L_I)$ under the sheaf torsor $E=\underline{\Isom}\bigl((G,P_I,L_I), (H,P,L)\bigr)$.
%\comments{I changed $E$; before it was  $E=\underline{\Isom}\bigl( (H,P,L), (G,P_I,L_I)\bigr)$.}

We now move to the adjoint quotients $H\ad = H/\rmZ_H$ and $G\ad = G/\rmZ_G$ where $\rmZ_H$ and $\rmZ_G$ are the centres of $H$ and $G$ respectively. By \cite[Lem.~3.2.1]{G2}, the pairs $(P\ad, L\ad) = (P/\rmZ_H, L/\rmZ_H)$ and  the analogously defined $(P\ad_I, L\ad_I)$ are  pairs of parabolic subgroups and Levi subgroups of $H\ad$ and $G\ad$ respectively. Using the last line of \cite[Ex.~7.2]{G2},  the original proof of \ref{thm_richardson} shows that $(P\ad_I, L\ad_I)$ has a dynamic description with respect to a cocharacter $\la_I\ad \co \GG_m \to G\ad$, which is fixed by $\Aut(G\ad, P\ad_I, L\ad_I)$. Since $(H\ad, P\ad, L\ad)$ is a twisted form  of $(G\ad, P_I\ad, L_I\ad)$, the pair $(P\ad, L\ad)$ has a dynamic description with respect to the twist $\la\ad \co \GG_m \to H\ad$ of $\la_I\ad$. By construction, $\la\ad$ is fixed by
\[ \Aut(H\ad, P\ad, L\ad)= \Aut(G\ad, P_I\ad, L_I\ad)^E. \]

Next, we consider the derived groups $\scD(H)$ and $\scD(G)$ and recall that $H\ad$ and $G\ad$ are quotients of $\scD(H)$ and $\scD(G)$ respectively. Let $n$ be an integer annihilating the finite diagonalizable $S$--group scheme $\rmZ_{\scD(G)}$. Then $n$ also annihilates the centre $\rmZ_{\scD(H)}$ of $\scD(H)$. Hence
the cocharacter %$(\la_I\ad)^n\co \GG_m \to G\ad$ and
$(\la\ad)^n\co \GG_m \to H\ad$ uniquely lifts to a cocharacter
% $\la_I^\scD\co \GG_m \to \scD(G)$ and
$\la\co \GG_m\to \scD(H)$. The original proof of \ref{thm_richardson} shows that $(P,L)$ has a dynamic description with respect to $\la$. It remains to be shown that $\la$ is fixed by $\Aut(H,P,L)$. This follows from the unicity of the lifting of
$(\la\ad)^n$, since the action of $\Aut(H,P,L)$ on $\Hom_{S-{\rm gp}}(\GG_m,H\ad)$ factorizes through the map $\Aut(H\ad,P\ad,L\ad)$. \end{proof}

%\quest{(05-21) I am puzzled by \ref{iaa}: We consider a reductive group scheme $H$ acting on itself by inner automorphisms. Then the action of $H=M$ on itself is isotropic if there exists a $\la \co \GG_{m,S} \to H$ whose image lies in the center $\euZ(H)$. So, $H$ being isotropic in the sense of \ref{rev-isog} is not the same as the action of $H$ on itself being isotropic. So, we can not say that \ref{iaa} extends the definition of \ref{rev-isog}. }

\subsection{Irreducible and anisotropic actions}\label{iaa} We extend the notion of a reducible or isotropic reductive group scheme (\ref{rev-isog}) to the setting where an $S$-group scheme $M$ acts on a reductive $S$--group scheme $H$ by automorphisms of $H$. 
\new
In this situation, $M$ acts canonically on the set of pairs $(P,L)$ consisting of  an everywhere  proper parabolic subgroup $P$ of $H$ in the sense of \ref{rev-isog} and a Levi subgroup $L$  of $P$. We say that the action is {\em reducible} if it normalizes such a pair $(P,L)$ in the sense of \cite{SGA3}, see for example [I; 2.3.3] or [VI$_B$, 6.4.4]. 
\enew 
Otherwise, the action is called {\em irreducible}. 

If $S$ is connected, as in \ref{cor_richardson} and \ref{prop_morozov}, then $P$ is everywhere proper if and only if $P$ is proper because the type of a parabolic subgroup is locally constant [XXVI, 3.2].

Similarly, we say that the  action of $M$ on $H$ is  {\em  isotropic}, if it centralizes  an $S$--subgroup of $H$ isomorphic to $\GG_{m,S}$. Otherwise, the action is called {\em anisotropic}. \sm

For $S$ the spectrum of a field and $M=\GG_{m,S}$, Corollary~\ref{cor_richardson} is \cite[4.23]{BT}.

%\quest{Is Corollary~\ref{cor_richardson} in \cite{SGA3}? Check \cite{BMT}}

\begin{cor} \label{cor_richardson}
Assume that $S$ is connected and that an $S$--group scheme $M$ acts on the reductive $S$--group $H$. Then the following are equivalent:
\begin{enumerate}[label={\rm (\roman*)}]
\item \label{cor_richardson-i} The action of $M$ on $H$ is isotropic.
\sm

\item \label{cor_richardson-ii} The action of $M$ on $\rad(H)$ is isotropic or
the action of $M$ on $H$ is reducible.
\end{enumerate} \end{cor}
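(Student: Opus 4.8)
The plan is to deduce this from Theorem~\ref{thm_richardson} together with the dynamic description of parabolic and Levi subgroups recalled in \ref{dps}. I would prove the two implications separately, using throughout that, since $S$ is connected, parabolic subgroups of $H$ have constant type, so that ``proper'' and ``everywhere proper'' coincide (cf.~\ref{iaa}). As a notational convention I write $\rho(m)$ for the automorphism of $H$ corresponding to a section $m$ of $M$.

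For $\ref{cor_richardson-ii}\Rightarrow\ref{cor_richardson-i}$: if the action of $M$ on $\rad(H)$ is isotropic, then $M$ centralizes a copy of $\GG_{m,S}$ inside $\rad(H)\subseteq\euZ(H)\subseteq H$, so the action on $H$ is isotropic and there is nothing more to do. If instead the action on $H$ is reducible, I would apply Theorem~\ref{thm_richardson} to the normalized pair $(P,L)$, with $P$ a proper parabolic subgroup of $H$, to obtain a cocharacter $\la\colon\GG_{m,S}\to H$ fixed by $\Aut(H,P,L)$ and satisfying $P=\rmP_H(\la)$, $L=\rmC_H(\la)$. Since the action of $M$ on $H$ normalizes both $P$ and $L$, it factors through $\Aut(H,P,L)$, hence fixes $\la$; this means precisely that $M$ centralizes the image subgroup $\la(\GG_{m,S})\subseteq H$. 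Because $P\neq H$, the cocharacter $\la$ is non-central, in particular non-trivial, so Example~\ref{lem_quo-ex} (here connectedness of $S$ is used) gives $\la(\GG_{m,S})\cong\GG_{m,S}/\Ker(\la)\cong\GG_{m,S}$, and the action of $M$ on $H$ is isotropic.

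For $\ref{cor_richardson-i}\Rightarrow\ref{cor_richardson-ii}$: starting from a subgroup $\mu\colon\GG_{m,S}\hookrightarrow H$ centralized by $M$, i.e.\ with $\rho(m)\circ\mu=\mu$ for all sections $m$ of $M$, I would form $P=\rmP_H(\mu)$ and $L=\rmC_H(\mu)$; recall from \ref{dps} that $L$ is a Levi subgroup of the parabolic $P$. The key observation is that an automorphism of $H$ fixing $\mu$ pointwise commutes with the conjugation action of $\GG_m$ via $\mu$, and therefore preserves the subfunctors of \ref{dps} defining $\rmP_H(\mu)$ and $\rmC_H(\mu)$; hence $M$ normalizes the pair $(P,L)$. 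Since $P$ has constant type, either $P$ is proper — in which case the action of $M$ on $H$ is reducible by definition, and we are done — or $P=H$. In the latter case $\rmC_H(\mu)=H$ as well, so $\mu$ is central; then $\mu(\GG_{m,S})$ is a subtorus of $\euZ(H)$, hence (being connected) is contained in the maximal central torus $\rad(H)$, and it is centralized by $M$, so the action of $M$ on $\rad(H)$ is isotropic.

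The routine points — that automorphisms fixing $\mu$ preserve $\rmP_H(\mu)$ and $\rmC_H(\mu)$, that the $M$-action on a normalized pair factors through $\Aut(H,P,L)$, and that a central subtorus lies in $\rad(H)$ — are all straightforward. The place that genuinely needs the earlier work is the invocation of Theorem~\ref{thm_richardson} in the reducible-to-isotropic direction: the essential point is the $\Aut(H,P,L)$-invariance of $\la$, which is exactly what permits transferring the dynamic description of $(P,L)$ along the $M$-action to conclude that $M$ centralizes $\la(\GG_{m,S})$. I expect this to be the main obstacle, but it is already packaged in Theorem~\ref{thm_richardson}, so the remaining argument is bookkeeping, with connectedness of $S$ re-entering only through the constancy of types in the case analysis $P=H$ versus $P$ proper.
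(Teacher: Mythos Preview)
Your proposal is correct and follows essentially the same approach as the paper's proof: both directions use the dynamic description of $(P,L)$ via a cocharacter, with Theorem~\ref{thm_richardson} supplying the $\Aut(H,P,L)$-invariant $\la$ in the reducible-to-isotropic direction and the case split $P=H$ versus $P$ proper handling the converse. The only cosmetic difference is that in $\ref{cor_richardson-i}\Rightarrow\ref{cor_richardson-ii}$ the paper starts from a non-trivial cocharacter and passes to its image, whereas you start directly from an embedded $\GG_{m,S}$; these are equivalent via Example~\ref{lem_quo-ex}.
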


\begin{proof}
\ref{cor_richardson-i} $\Longrightarrow$ \ref{cor_richardson-ii}:
Suppose the image of the non-trivial map $\la \co \GG_{m,S} \to H$ is
centralized by the action of $M$, and consider the parabolic subgroup
$P=\rmP_H(\la)$ together with its Levi subgroup $L=\rmC_H(\la)$, see \ref{dps}.
%\comments{Deleted the sentence {\sf Then $P$ has constant type.} Not clear where this is used.}
If $P=H$, then $\la$ is central and takes values
in $\rad(H)$, so that  the action of $M$ on $\rad(H)$ is isotropic. Otherwise,  $P$ is a proper parabolic subgroup. Since $M$ normalizes $(P,L)$, its action on $H$ is reducible. \sm

\ref{cor_richardson-ii} $\Longrightarrow$ \ref{cor_richardson-i}:
If the action of $M$ on $\rad(H)$ is isotropic, so is its action on $H$. Assume now that the action of $M$ on $H$ is reducible, that is, there exists a pair $(P,L)$ consisting of a proper parabolic subgroup $P$ of $H$ and a Levi subgroup $\subset P$, such that $(P,L)$ is
normalized by $M$. Theorem~\ref{thm_richardson} provides a group homomorphism $\la \co \GG_{m,S} \to H$ such that $(P,L)=(\rmP_H(\la), \rmC_H(\la))$
and such that $\la$ is fixed by $\Aut(H,P,L)$. In particular, $\la$ is non-central. After quotioning by $\Ker(\la)$, we can assume that $\la$ is a monomorphism, \ref{lem_quo-ex}. Since $M$ acts on $H$ through $\Aut(H,P,L)$, the image of $\la$ is fixed by $M$. Thus, the action of $M$ on $H$ is isotropic.
\end{proof}

%\comments{(03-18) EN still needs to read \ref{prop_morozov}. }

The following Proposition~\ref{prop_morozov} is a complement to Theorem~\ref{thm_richardson}.

\begin{prop} \label{prop_morozov} Assume that $S=\Spec(R)$ is affine and connected, that $M$ is a flat affine  $R$--group scheme whose geometric fibers are linearly reductive, e.g., $M$ is of multiplicative type, and that $M$ acts on the reductive group $H$. If $M$ normalizes a  parabolic subgroup $P$ of $H$,  there exists an $M$--invariant group homomorphism $\la\co \GG_{m,S} \to H$
%which is $M$-invariant
such that $P=\rmP_H(\la)$ and such that $\rmC_H(\lambda)$ is a Levi subgroup of $P$ which is normalized by $M$.
\end{prop}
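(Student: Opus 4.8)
The statement to be proven is Proposition~\ref{prop_morozov}: given an affine connected base $S = \Spec(R)$, a flat affine $R$--group scheme $M$ with linearly reductive geometric fibres acting on a reductive $R$--group $H$ and normalizing a parabolic subgroup $P$, one wants an $M$--invariant cocharacter $\la\co \GG_{m,S}\to H$ with $P=\rmP_H(\la)$ and with $\rmC_H(\la)$ an $M$--stable Levi subgroup of $P$. The approach I would take is to reduce to the case already handled by Theorem~\ref{thm_richardson} by producing an $M$--stable Levi subgroup of $P$ first, and then invoking the invariance of the cocharacter under $\Aut(H,P,L)$.

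First I would recall that over an affine base $P$ admits a Levi subgroup (by [XXVI, 2.3] or \cite[5.4.8]{Co1}), and that any two Levi subgroups of $P$ are conjugate by a unique element of $\rad^u(P)(R)$ ([XXVI, 1.8]). The set of Levi subgroups of $P$ is therefore a $\rad^u(P)$--torsor; since $\rad^u(P)\cong \uW(E)$ for a finite locally free $R$--module $E$ ([XXVI, 2.5]), this torsor is trivial as a scheme, i.e.\ its scheme of sections is an affine space over $R$ on which $\rad^u(P)$ acts simply transitively. Now $M$ acts on this scheme of Levi subgroups through its action on $(H,P)$. The key point is that $M$ being linearly reductive (fibrewise) forces a fixed point: the action of $M$ on the $\rad^u(P)$--torsor of Levi subgroups is an affine-space action by a unipotent group together with a linearly reductive group, and a standard averaging/cohomology-vanishing argument (the analogue of the Levi--Mostow theorem, or more precisely the fact that $\rmH^1(M,\rad^u(P))$ vanishes fibrewise and one can descend) produces an $M$--stable Levi subgroup $L\subset P$. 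This is the step I expect to be the main obstacle: making the fixed-point argument work over a general affine connected base rather than fibrewise requires either a deformation-theoretic argument (lift the fixed point modulo powers of an ideal, using linear reductivity to kill obstructions in the relevant $\rmH^i$ of the unipotent radical filtered by vector groups with linear $M$--action) or an appeal to an existing result on reductive subgroups of parabolic subgroups under linearly reductive actions. One should be able to filter $\rad^u(P)$ by $M$--stable vector-group subquotients (using that $M$ acts on the associated graded of the descending central series by linear representations) and then use $\rmH^1(M, V)=0$ for $V$ a vector group with linear $M$--action and $M$ linearly reductive to descend the fibrewise fixed point to an actual $M$--invariant section.

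Once an $M$--stable Levi $L\subset P$ is in hand, the rest is quick: apply Theorem~\ref{thm_richardson} to the triple $(H,P,L)$ over the connected scheme $S$ to obtain a cocharacter $\la\co \GG_{m,S}\to H$ with $P=\rmP_H(\la)$, $L=\rmC_H(\la)$, and $\la$ fixed by $\Aut(H,P,L)$. Since $M$ normalizes $P$ and stabilizes $L$, the action of $M$ on $H$ factors through $\Aut(H,P,L)$ on the relevant locus, hence $\la$ is $M$--invariant. This gives exactly the assertion of the proposition, with $\rmC_H(\la)=L$ the desired $M$--normalized Levi subgroup. I would also remark that when $M$ is of multiplicative type, the linear reductivity of its geometric fibres is automatic, so the stated example is covered; and the affineness of $S$ is used precisely to guarantee existence of a Levi subgroup of $P$ in the first place (over a general connected base this could fail, cf.\ \cite[Ex.~5.4.9]{Co1}).
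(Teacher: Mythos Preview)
Your proposal is correct and follows essentially the same route as the paper: first produce an $M$--stable Levi subgroup of $P$ via a cohomological vanishing argument exploiting the filtration of $\rad^u(P)$ by vector groups and the linear reductivity of $M$, then apply Theorem~\ref{thm_richardson} to the resulting triple $(H,P,L)$. The paper makes the ``averaging/cohomology-vanishing'' step precise by constructing an explicit Hochschild $1$--cocycle $u\in\rmZ^1_{coc}(M,U)$ in the sense of Demarche \cite[\S3.2]{De}, and then proving $\rmH^1_{coc}(M,U)=1$ by d\'evissage along the composition series of $U$, invoking \cite[IX,~3.1]{SGA3} (for $M$ of multiplicative type) and Margaux \cite[Th.~1.2]{Mg} (in general) for the vanishing on each vector-group subquotient.
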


\begin{proof} Let $U$ be the unipotent radical of $P$ which, % and put $L=P/U$.
according to [XXVI, 2.1], is $\Aut(P)$--linearizable, that is, $U$ admits a composition series $1=U_n \subset U_{n-1} \subset \dots \subset U_1= U$ which is $\Aut(P)$-stable and  such that  $U_i/U_{i+1} = W(E_i)$
where each $E_i$ is a locally free $R$--module of finite type on which the action of $\Aut(H,P)$ is linear. A fortiori,  the action of $M$ on $U$ is linearizable.

By [XXVI, 2.3 and 1.9], the set of sections of $P \to X := P/U$ is
non-empty and is a principal homogeneous space under $U(R)$.
%\comments{``Principal homogeneous space'' is used here in the sense of abstract groups/sets, as in \cite[(18.15)]{KMRT}. }
For each $R$--ring $A$, the group $M(A)$ acts on the $A$-sections of  $P \to X$.
Let $s: X \to P$ be such a section. For each $m \in M(A)$,
there exists a unique $u_A(m)\in U(A)$ such that
${^m s_A}= \, ^{u_A(m)^{-1}}\!s_A$. For $m_1,m_2 \in M(A)$, we apply $m_1$ to
 ${^{m_2} s_A}= ^{u_A(m_2)^{-1}}\!s_A$ and get
\begin{align*}
  {^{m_1 m_2} s_A} &= \, ^{^{m_1}\!(u_A(m_2))^{-1}}(^{m_1}\!s_A)
  =  ^{^{m_1}(u_A(m_2))^{-1}}(^{u_A(m_1)^{-1}}\!s_A)
\\ &= ^{^{m_1}(u_A(m_2))^{-1} \,  u_A(m_1)^{-1}}\!(s_A),
\end{align*}
so that $u_A(m_1 m_2)=u_A(m_1) \cdot {^{m_1}\!(u_A(m_2))}$.
The map $u_A: M(A) \to U(A)$, $m \mapsto u_A(m)$, is therefore
a $1$-cocycle for the action on $M(A)$ on $U(A)$, so that the data of the $u_A$'s for $A$ running over all $R$--rings define an Hochschild  $1$-cocycle (or crossed homomorphism) $u \in \rmZ^1_{coc}(M,U)$, as defined by Demarche \cite[\S 3.2]{De}.
Thus, we obtain  a cohomology class $[u] \in \rmH^1_{coc}(M,U)$. We now claim \begin{equation} \label{prop_morozov-1} \rmH^1_{coc}(M,U)=1.
\end{equation}
We postpone the proof of \eqref{prop_morozov-1} for the moment.
Assuming \eqref{prop_morozov-1}, we get an element $v \in U(R)$ such that
$u_A(m)= v_A^{-1}   \, ^m \! v_A$ for all $m \in U(A)$.
It follows that ${^m s_A}= ^{^m v_A^{-1} v_A}\!s_A$,
so that  $s'= \, ^v\!s$ is an  $M$-invariant
section of $P \to X$. In particular, $s(X)$ is a Levi subgroup of $P$
which is $M$--invariant.
Theorem~\ref{thm_richardson} then shows that there exists
%The same reasoning as in Corollary~\ref{cor_richardson} yields
an $M$--invariant homomorphism $\la \co \GG_m \to H$
such that $(P, s(X))= (P_H(\lambda), C_H(\lambda))$ as desired. \sm

We now come to the proof of \eqref{prop_morozov-1}. According to \cite[Prop.~3.2.8]{De}, we have an exact sequence of pointed sets
\[
1 \to \rmH^1_{coc}(M,U) \to \rmH^1_0(M,U) \to \rmH^1(R, U)
\]
where $\rmH^1_0(M,U)$ is the set of isomorphism classes of $M$-$U$--torsors over $R$ \cite[3.2.6]{De}. We know from [XXVI, 2.2] that  $\rmH^1(R, U)=1$,  %\cite[XXVI.2.2]{SGA3}
so that it is enough to show that $\rmH^1_0(M,U)=1$.

According to Grothendieck for $M$ of multiplicative type
[IX, 3.1] and Margaux in general \cite[Th.~1.2]{Mg},
we have $\rmH^1_{coc}(M,U_i/U_{i+1})=0$ for $i=1,\dots, n-1$
so that $\rmH^1_0(M,U_i/U_{i+1})=0$ by the exact sequence above applied
to $U_i/U_{i+1}$. On the other hand, by \cite[Prop.~3.5.1]{De}  for $i=1,\dots, n-1$
we have an exact sequence of pointed sets
\[ \rmH^1_0(M,U_{i+1})  \to \rmH^1_0(M,U_i) \to  \rmH^1_0(M,U_i/U_{i+1}),
\]
so that we conclude by ``d\'evissage'' that $\rmH^1_0(M,U)=1$, finishing the proof of \eqref{prop_morozov-1}. \end{proof}

\begin{remarks}\label{morem} \begin{inparaenum}[(a)] \item The proof of Proposition~\ref{prop_morozov} is clearly  inspired by Demarche's paper \cite{De} and by  McNinch's approach \cite{Mc}.
\sm

\item \label{morem-b}Let $k$ be a field, let $H$ be a reductive $k$--group and let $M \subset H$ be a subgroup of $H$. Following Serre \cite{Serre1, Serre2}, see also \cite{BMR}, the subgroup $M$ is called {\em $H$--completely reducible\/}, if whenever $M\subset P$, where $P$ is a parabolic subgroup of $H$, then $M\subset L$ for some Levi subgroup of $P$.

Provisionally, we will use the completely analogous terminology for reductive groups over a scheme $S$. Specializing Proposition~\ref{prop_morozov} to the case of $M$ being a subgroup of $H$ acting on $H$ by inner automorphisms, the proposition implies that $M$ is $H$--completely reducible. This generalizes \cite[11.24]{Jantzen}, proven there for algebraically closed fields and restated in \cite[Lem~2.6]{BMR}. In characteristic $0$, the result is due to Mostow. \end{inparaenum} \end{remarks}

%\comments{(03-21) New, expanded  version of Remark~\ref{morem}\eqref{morem-b}. I added Serre and Jantzen. Note that \cite{BMR} does not prove the result -- they quote \cite{Jantzen}. }

%\newpage
%\quest{New appendix \ref{sec:hqr} which collects bits and pieces from the appendices of the octonion book.}

\section{Hermitian and quadratic forms over rings}\label{sec:hqr}

In this appendix we review the concepts needed for Propositions~\ref{hercan} and \ref{canqf}, but not more. This appendix is not an introduction to the topics of the section heading, like for example the book~\cite{K}, nor did the authors strive for the most general setting, i.e., considering forms over schemes instead of base rings. Despite this limitation, the appendix will hopefully still serve as a useful quick introduction to regular hermitian and nonsingular quadratic forms over rings, their associated isometry groups and the link to their flat cohomology sets.
\sm

Throughout this appendix, $R$ is an arbitrary base ring, $S$ is an algebra in $\Ralg$, equipped with an $R$--linear involution $\si$. This will later be specialized to the {\em hermitian case}, where $S$ is a quadratic \'etale extension with standard involution $\si$, and to the {\em quadratic case}, where $S=R$ and $\si = \Id_R$. The general setting avoids a duplication of definitions.

%\comments{\ref{sef} is a short version of E.1 -- E.3 of the octonion book}

\subsection{Sesquilinear forms} \label{sef} \begin{inparaenum}[(a)] \item\label{sef-a} ({\em Modules})
Unless specified otherwise, all modules considered will be finite projective $S$--modules, \new usually considered as right $S$--modules. \enew Given such a module $M$,  we denote by $_\si M$ its $\si$--conjugate, i.e., the $S$--module with the same additive group as $M$, but with the $S$--action given by $({_\si m})\star s = m \, \si(s) $ for $s\in S$ and $m\in M$, using the given $S$--action of $M$ on the right-hand side of the equation.

Instead of the usual dual space $\Hom_S(M,S)$ we will use its twisted version
\[ M^* := {_\si \Hom_S(M,S)}. \]
% Notation of \cite[I, (2.1)]{K}.
One has an isomorphism of $S$--modules
$M^* \simlgr \Hom_S({_\si M}, S)$, given by $\vphi \mapsto \big({_\si m} \mapsto (\si \circ \vphi)(m)\big)$ (\cite[I; Lem.~(2.1.1)]{K}). \sm

\item\label{sef-b} ({\em Sesquilinear forms}) A {\em sesquilinear form\/}, also called an  {\em $(S,\si)$--sesqui\-linear form} is a bi-additive map $h \co M \times M \to S$, which is defined on a finite projective $S$--module $M$ and which satisfies
    \[ h(m_1 s_1, \, m_2s_2) = \si(s_1) \, h(m_1, m_2) \, s_2 \]
    for all $m_i \in M$ and $s_i \in S$. We will denote such a sesquilinear form by $(M,h)$ or simply by $h$.

    Given two sesquilinear forms $(M_1, h_1)$ and $(M_2, h_2)$, an $S$--linear map $f\co M_1 \to M_2$ is an {\em isometry\/} if $f$ is bijective and if $h_2\big(f(m_1), \, f(m_1')\big) = h_1(m_1, m'_1)$ holds for all $m_1, m_1' \in M_1$.  In this case, we say that $(M_1, h_1)$ and $(M_2, h_2)$ are {\em isometric}, abbreviated by $(M_1, h_1) \cong (M_2, h_2)$ or simply by $h_1 \cong h_2$.
    \sm

\item ({\em Hermitian and symmetric forms}) A sesquilinear form $(M,h)$ is called {\em $(S,\si)$-hermitian\/} or simply {\em hermitian}, if $h(m,m') = \si \big( h(m', m) \big)$ holds for all $m,m'\in M$. In the quadratic case, i.e., $(S,\si) = (R, \Id)$, we say $h$ is {\em symmetric\/} instead of hermitian. \sm

\item \label{sef-reg} ({\em Adjoints, regularity}) A sesquilinear form $(M,h)$ gives rise to the $S$--linear map $h^* \co M \to M^*$, $m \mapsto h(m, -)$, called the {\em adjoint of $(M,h)$}. Mapping $h$ to $h^*$ gives rise to a bijection between the $S$--module of sesquilinear forms on $M$ and $\Hom_S(M,M^*)$.
 We call $h$ {\em regular\/} if $h^*$ is  a bijection, hence an isomorphism of $S$--modules.
\sm

\item \label{sef-orth} ({\em Orthogonality})
    Given two sesquilinear forms $(M_1, h_1)$ and $(M_2, h_2)$, their {\em orthogonal sum\/} is the sesquilinear form $h_1 \perp h_2$ defined on $M = M_1 \oplus M_2 $ by
    \[ (h_1 \perp h_2)\, (m_1 + m_2, \, m_1' + m_2') = h_1(m_1, m'_1) + h(m_2, m'_2)\]
    for $m_1, m'_1 \in M_1$ and $m_2, m'_2\in M_2$. The sesquilinear form $h_1 \perp h_2$ is regular if and only if $h_1$ and $h_2$ are regular (\cite[I, (3.6.2.2)]{K}). The analogous statement holds for the property ``hermitian''.
\sm

\item\label{sef-bc} ({\em Base change}) Let $T\in \Ralg$. Then $S_T = S\ot_R T$ is an object in $\Ralg$ with a $T$--linear involution $\si_T = \si\ot 1_T$, and $S_T/T$ satisfy the assumptions of this appendix.     Given a finite projective $S$--module $M$ we put $M_T = M \ot_R T = M\ot_S S_T $, which is canonically a finite projective $S_T$--module.
    The twisted dual respects this base change: the canonical map
    $\om \co   M^* \ot_R T \simlgr (M_T)^* $
    is an isomorphism because $M$ is finite projective.

    Let now $(M,h)$ be an $(S,\si)$--sesquilinear form. Then $(M, h)$ uniquely extends to an $(S_T, \si_T)$-sesquilinear form $h_T$ on $M_T$ by requiring
    \[   h_T(m\ot t, \, m'\ot t') = h(m,m')\ot tt'\]
    for $m,m'\in M$ and $t,t'\in T$. The adjoint maps $(h_T)^*$ and the base change $h^* \ot 1_R$ are related by the commutative diagram
    \begin{equation}\label{sesquip-a} \vcenter{
     \xymatrix{ M \ot_R T  \ar[rr]^{ h^* \ot \Id_T }
            \ar[dr]_{( h_T )^*}
                 && M^* \ot_R T  \ar[dl]_\om^\cong \\
                   & (M_T)^* }
     } \quad .
     \end{equation}
%Lemma~\ref{nonc-fapn}\eqref{nonc-fapn-b}.
\lv{%%%%%%%%%%%%%%%%   lv starts
Commutativity of the diagram: $ (h^* \ot \Id_{\scB'})(m \ot b') = h^*(m) \ot
b'$ and $ \om(h^*(m) \ot b')\, (\m_1 \ot c') = \tau'(b') \m h^*(m)(m_1) \, c' =
h_{\scB'}(m \ot b', \, m_1 \ot c') = h_{\scB'} (m\ot b')(m_1 \ot c')$
}%%%%%%%%%%%%%%  lv ends
Hence, if $(M,h)$ is regular, then so is $(M_T, h_T)$.
\sm

\item\label{sef-hyp} ({\em Hyperbolic forms} \cite[I, (3.5)]{K}) Let $U$ be a finitely generated projective $S$-module. One defines a hermitian module $\HH(U) = (U \oplus U^*, h_{\HH(U)})$ by $h_{\HH(U)}(u_1 + \vphi_1, \, u_2 + \vphi_2) = \vphi_1(u_2) + \si (\vphi_2(u_1))$ for $u_i \in U$ and $\vphi_i \in U^*$. One calls $(M, h)$ {\em hyperbolic\/} if it is isometric to $\HH(U)$ for some $U$.
% If $(N,h)$ is arbitrary hermitian, then $(N \oplus N, h \perp -h)$ is hyperbolic.

A hyperbolic space $(M,h)=\HH(U)$ is equipped with a natural action of the $R$-group scheme $\frR_{S/R}\big( \uGL_S(U)\big)$, $\frR(\cdot)$ = Weil restriction, as follows. For each $R$-ring $T$,  each $g \in \frR_{S/R}\big(\uGL_S(U)\big)(T) = \GL_{S \ot_R T}(U \ot_R T)$ and each pair $( u, f) \in (U \ot_R T) \oplus (U^*\ot_R T)$, we have
 $g.( u, f)= (g.u, f \circ g^{-1})$. It follows that we have a closed embedding
 \begin{equation}\label{sef-hyp1}
   \frR_{S/R}\bigl(\uGL_S(U)\big) \hookrightarrow \uU(M,h)
 \end{equation}
 of $R$--group schemes, \new where the $R$--group scheme $\uU(M,h)$ represents the $R$--functor of isometries of $(M,h)$, cf.~\ref{ung}\eqref{ung-c}.\enew 
 \sm

\item\label{sef-ti}  ({\em Totally isotropic submodules}) Let $(M,h)$ be a hermitian module. A submodule $U\subset M$ is called {\em totally isotropic\/} if $U$ is complemented and $U \subset U^\perp$, where, in general,
    \[ U^\perp = \{ m\in M : h(m, U) = 0 \}. \]
    A vector $m\in M$ is {\em isotropic\/} if $m$ is unimodular and $h(m,m) = 0$. In case $(S,\si) = (R,\Id)$ we also require $q(m) = 0$. 
\sm

%\item\label{sef-fre} ({\em $M$ free}) Let $M$ be a free $S$--module and let $(e_1, \ldots, e_n)$ be an $S$--basis of $M$. Any sesquilinear form $h$ on $M$ is uniquely determined by the matrix $H=\big( h(e_i, e_j)\big)_{1\le i,j\le n}\in \Mat_n(S)$ via
%\[ \textstyle h\big( \sum_{i=1}^n e_i s_i, \sum_{j=1}^n e_j s'_j\big) =
%   \sum_{i,j=1}^n \si(s_i) h(e_i, e_j) s_j'. \]
%The form $h$ is  hermitian if and only if $H$ is hermitian, i.e., $H =
%     {^t\ol H}$, where $\overline{H} = \big( \si(h_{ij}) \big)$ for
%     $H = (h_{ij})$. The form is regular if $H$ is invertible.
%\sm

\item\label{sef-diag} A sesquilinear form $(M,h)$ is {\em diagonalizable\/} if there exists a basis $(e_1, \ldots, e_n)$ of the $S$--module $M$ such that $h(e_i, e_j) = 0$ whenever $i\ne j$. \sm

\item \label{sef-split} The free $S$--module $M=S^n$, $n\in \NN_+$, carries the {\em split hermitian form $h_{0,n}$\/}, defined by
\begin{equation}  \label{sef-fre1}
\textstyle h_{0, n}\big( \sum_{i=1}^n e_i s_i, \, \sum_{j=1}^n e_j s'_j\big) = \sum_{i=1}^n \si(s_i) s_i'.
\end{equation}
The term ``split'' will be justified in Lemma~\ref{dia} where we consider regular hermitian forms. 
\end{inparaenum}

We characterize regularity of sesquilinear forms in the following lemma. We call a family $(f_1, \ldots, f_n)$ of elements of $R$ a {\em Zariski cover} if $R=Rf_1 + \cdots + Rf_n$. Equivalently, $\{ \Spec(R_{f_i}) \to \Spec(R)\}_{i=1, \ldots, n}$ is a standard Zariski covering of $\Spec(R)$ in the sense of \cite[Tag 020R]{Stacks}.

\begin{lem}\label{sesquip-b} Assume $S\in \Ralg$ is finite projective as $R$--module.
Then the following are equivalent for a sesquilinear form $h$.
\begin{enumerate}[label={\rm (\roman*)}]

\item\label{quadfob-i} $h$ is regular;

\item\label{quadfob-vi} $h_T$ is regular for all $T\in \Ralg$;

\item\label{quadfob-vii} $h_K$ is regular for all algebraically closed fields
    $K\in \Ralg$;

%\item\label{quadfob-p} $h_{R_\frp}$ is regular for all $\frp \in \Spec(R)$;

\item\label{quadfob-ii} $h_{R_\m}$ is regular for all maximal\/ $\m \in
    \Spec(R)$;

\item\label{quadfob-iv} there exists a Zariski cover $(f_1, \ldots, f_n)$ of
    $R$ such that $h_{R_{f_i}}$  is regular for all $i$;

\item\label{quadfob-iii} $h_{R/\m}$ is regular for all maximal\/  $\m \in
    \Spec(R)$;

\item\label{quadfob-v}$h_T$ is regular for some faithfully flat $T\in \Ralg$.
\end{enumerate}
\end{lem}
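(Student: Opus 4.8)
The plan is to prove the implications in the cycle $\ref{quadfob-i}\Rightarrow\ref{quadfob-vi}\Rightarrow\ref{quadfob-vii}\Rightarrow\ref{quadfob-iii}$, together with $\ref{quadfob-i}\Leftrightarrow\ref{quadfob-ii}\Leftrightarrow\ref{quadfob-iv}$, and finally $\ref{quadfob-iii}\Rightarrow\ref{quadfob-i}$ and $\ref{quadfob-i}\Leftrightarrow\ref{quadfob-v}$, so that all conditions are seen to be equivalent. The unifying principle is that regularity of $h$ is the statement that the $S$-linear map $h^*\co M\to M^*$ is an isomorphism, and since $M,M^*$ are finite projective $S$-modules (here using that $S$ is finite projective over $R$, so $M^*={_\si\Hom_S(M,S)}$ is again finite projective over $S$), being an isomorphism is a local condition that can be checked fibrewise. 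Concretely, $h^*$ is an isomorphism of finite projective modules iff it is surjective iff its reductions modulo all maximal ideals are surjective (Nakayama), and this is exactly the content of the equivalences sought.

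First I would record the base-change compatibility from \ref{sef}\eqref{sef-bc}: the diagram \eqref{sesquip-a} shows $(h_T)^* = \om\circ(h^*\ot\Id_T)$ with $\om$ an isomorphism, so $(h_T)^*$ is an isomorphism iff $h^*\ot\Id_T$ is. Thus \ref{quadfob-i}$\Rightarrow$\ref{quadfob-vi} is immediate (tensoring an isomorphism), and \ref{quadfob-vi}$\Rightarrow$\ref{quadfob-vii}, \ref{quadfob-vi}$\Rightarrow$\ref{quadfob-ii}, \ref{quadfob-vi}$\Rightarrow$\ref{quadfob-iv}, \ref{quadfob-vi}$\Rightarrow$\ref{quadfob-iii}, \ref{quadfob-vi}$\Rightarrow$\ref{quadfob-v} are all trivial specializations. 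For the converses, note that $h^*\co M\to M^*$ is a morphism of finite projective $S$-modules. By faithfully flat descent, $h^*\ot\Id_T$ an isomorphism for some faithfully flat $T\in\Ralg$ forces $h^*$ an isomorphism, giving \ref{quadfob-v}$\Rightarrow$\ref{quadfob-i}; likewise \ref{quadfob-iv}$\Rightarrow$\ref{quadfob-i} since $\bigsqcup\Spec(R_{f_i})\to\Spec(R)$ is an fpqc cover. For \ref{quadfob-ii}$\Rightarrow$\ref{quadfob-i}: a morphism of finite projective modules is an isomorphism iff it is so after localizing at every maximal ideal of $R$ (being an iso is equivalent to the cokernel and kernel vanishing, which is a local property; here one uses that $S$ is finite over $R$, so maximal ideals of $S$ lie over maximal ideals of $R$). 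This gives the equivalence of \ref{quadfob-i}, \ref{quadfob-ii}, \ref{quadfob-iv}, \ref{quadfob-v}, \ref{quadfob-vi}, \ref{quadfob-vii}.

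The one genuinely substantive step is \ref{quadfob-iii}$\Rightarrow$\ref{quadfob-i}: regularity over all residue fields $R/\m$ implies regularity over $R$. Here I would reduce to the local case by \ref{quadfob-ii}, i.e. assume $R$ local with maximal ideal $\m$ and residue field $k=R/\m$. Then $S$ is a finite $R$-algebra, hence semilocal, and $M$, $M^*$ are free $S$-modules of the same rank (finite projective over the semilocal ring $S$, and one checks the ranks agree by looking at residue fields). Choosing bases, $h^*$ is represented by a square matrix $\Theta$ over $S$; regularity of $h_k$ says the reduction $\bar\Theta$ over $S\ot_R k = S/\m S$ is invertible. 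Since $S/\m S$ is a finite $k$-algebra, hence a product of local Artinian rings, and $\m S\subset \Jac(S)$, invertibility of $\bar\Theta$ modulo $\Jac(S)$ lifts to invertibility of $\Theta$ over $S$ (the determinant lies outside every maximal ideal of $S$), so $h^*$ is an isomorphism. The main obstacle is being careful about the interplay between maximal ideals of $S$ and of $R$ and about which modules are free over which ring; modulo that bookkeeping, everything reduces to the standard fact that a matrix over a ring $S$ is invertible iff it is invertible modulo $\Jac(S)$. Note that the hypothesis that $S$ is finite projective over $R$ is used precisely to guarantee $M^*$ is finite projective over $S$ and that $S$ is a finite (hence semilocal-when-$R$-is-local) $R$-algebra.
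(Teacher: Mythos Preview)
Your approach is essentially the same as the paper's: reduce everything to whether the adjoint $h^*\co M\to M^*$ is an isomorphism, then use locality and Nakayama. Two small points are worth flagging.

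First, the paper makes one simplifying move you do not: it observes at the outset that $h^*$ is bijective as an $S$-linear map iff it is bijective as an $R$-linear map, and that $M,M^*$ are finite projective as $R$-modules (transitivity of ``finite projective''). All subsequent localizing and Nakayama-type arguments are then run over $R$, where in the local case $M,M^*$ are genuinely free. Your step \ref{quadfob-iii}$\Rightarrow$\ref{quadfob-i} instead works over $S$ and asserts that $M,M^*$ are free over the semilocal ring $S$; this fails in general, since a finite projective module over a semilocal ring need not have constant rank (e.g.\ $S=R\times R$, $M=R\times 0$). The fix is easy---either decompose $\Spec(S)$ into connected components, or adopt the paper's device and argue over $R$---but as written the sentence ``choosing bases, $h^*$ is represented by a square matrix $\Theta$ over $S$'' is not justified.

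Second, your claim after the second paragraph that \ref{quadfob-i}, \ref{quadfob-ii}, \ref{quadfob-iv}, \ref{quadfob-v}, \ref{quadfob-vi}, \ref{quadfob-vii} are already equivalent is premature: you have shown \ref{quadfob-vi}$\Rightarrow$\ref{quadfob-vii} but nothing coming back out of \ref{quadfob-vii}. Your outlined plan \ref{quadfob-vii}$\Rightarrow$\ref{quadfob-iii} does work (take $K=\overline{R/\m}$ and apply your \ref{quadfob-v}$\Rightarrow$\ref{quadfob-i} over the base $R/\m$), but you should say so explicitly. The paper handles this by proving \ref{quadfob-vii}$\Rightarrow$\ref{quadfob-vi} directly via the base-change diagram over each field.
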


\begin{proof} Let $h=(M,h)$. We will use that the adjoint map $h^* \co M \to M^*$ is bijective as $S$--linear map if and only if the underlying $R$--linear map is bijective. Moreover, by transitivity of ``finite projective'' (\cite[1.1.8]{Ford}), $M$ and $M^*$ are finite projective as $R$--modules. We will use the following scheme to  prove \ref{sesquip-b}.
\[\xymatrix{
   \ref{quadfob-v} \ar@{=>}[r] & \ref{quadfob-i} \ar@{=>}[r] %\ar@{=>}[d]
         &\ref{quadfob-vi}
       \ar@{=>}[r] \ar@{=>}[d] &\ref{quadfob-vii} \ar@{=>}[dl]     \\
      %& \ref{quadfob-p} \ar@{=>}[d]\\
   \ref{quadfob-iv} \ar@{=>}[u] & \ref{quadfob-ii} \ar@{=>}[l] \ar@{<=>}[r] &
           \ref{quadfob-iii}
}
\]

As already noted in \ref{sef}\eqref{sef-bc}, the implication \ref{quadfob-i} $\implies$  \ref{quadfob-vi}  %and\ref{quadfob-i} $\implies$  \ref{quadfob-ii} 
follows from the commutative diagram \eqref{sesquip-a}, while  \ref{quadfob-vi} $\implies$ \ref{quadfob-vii} and \ref{quadfob-vi} $\implies$ \ref{quadfob-iii} are trivial. \new To see  \ref{quadfob-vii} $\implies$ \ref{quadfob-iii}, let $\m$ be a maximal ideal of $R$ and let $K$ be an algebraic closure of the field $F=R/\m$. Then \eqref{sesquip-a} for $(R,S,T)$ replaced by $(F, S\ot_R F, K)$ shows that $h_F \ot 1_K$ is invertible, hence so is $h_F$. \enew
For \ref{quadfob-ii} $\iff$ \ref{quadfob-iii} we need $h^*_{R_\m}$ bijective $\iff h^*_{R/\m}$ bijective, which is \cite[II,
\S3.2, Cor. de la Prop.~6]{BAC}.

\ref{quadfob-ii} $\implies$ \ref{quadfob-iv}:
By \cite[II, \S5.1, Prop.~2(ii)]{BAC} there exists for every $\m \in \Spec(R)$
a $g_\m\in R$ such that $h^*_{g_\m}$ is bijective. The existence of a Zariski
cover with the stated property then follows as usual: $\{g_\m : \m \in \Spec(R)
\hbox{ maximal} \}$ generates $R$ as an ideal; one takes a generating set
$\{f_1, \ldots, f_n\}$ of this ideal.  \ref{quadfob-iv} $\implies$
\ref{quadfob-v}:  By \cite[II, \S5.1, Prop.~3]{BAC} the algebra $S=R_{f_1}
\times \cdots   \times R_{f_n}$ is faithfully flat and $h^*_S = \prod_i
h^*_{f_i}$ is bijective. Finally, the implication \ref{quadfob-v} $\implies$ \ref{quadfob-i} follows from \cite[I, \S3.1, Prop.~2]{BAC}.
\end{proof}
\sm

\subsection{Hermitian spaces} \label{hsp}
In \ref{hsp} -- \ref{dec} we consider the ``hermitian case'', i.e.,
\begin{equation}\label{hsp0}
\text{$S$ is a quadratic \'etale $R$--algebra with standard involution $\si$.}
\end{equation}
A hermitian sesquilinear form will be hermitian with respect to $(S,\si)$.
A {\em hermitian space $(M,h)$\/} is a hermitian module $(M,h)$ with $h$ a regular hermitian form; a {\em hermitian space of rank $r$\/} is a hermitian space $(M,h)$ for which $M$ is a
locally free $S$--module of constant rank $r$.
\comments{Lemma~\ref{dec} is not true for regular symmetric bilinear forms, but it holds for regular quadratic forms by \cite[I, (3.6), (4.6)]{Ba}. Lemma~\ref{LGdia} is not true for $(S,\si) = (R, \Id)$ and characteristic $2$. }

\begin{lem}[LG diagonalizability, hermitian case] \label{LGdia} Let $R$ be an LG-ring, let $(S,\si)$ be a quadratic \'etale $R$--algebra with standard involution $\si$, and let
$(M,h)$ be a hermitian space with $M$ having constant positive rank. Then $(M,h)$ is diagonalizable, in particular
\begin{equation}\label{LGdia1}
  \{ h(m,m): m\in M\}\cap R\ti \ne \emptyset.
\end{equation}
\end{lem}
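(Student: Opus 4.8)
The key point is that over an LG-ring the existence of elements of a finite projective module satisfying an open condition can be checked fibrewise (Proposition~\ref{prop_baire}\ref{prop_baire-a}), so I would reduce diagonalizability to the fact \eqref{LGdia1} that $h$ represents a unit, and then reduce \eqref{LGdia1} to the corresponding statement over the residue fields $R/\gm$. The hermitian case over a field is classical: a regular hermitian form over a field with respect to a quadratic \'etale (hence either separable quadratic field extension or split) involution always represents a nonzero value of the norm, because if $h(m,m)=0$ for all $m$ then $h$ would be alternating, forcing $S$ to be a field of characteristic~$2$ with $\si=\id$, contradicting \'etaleness of $S/k$; and in the split case $S = k\times k$ the form is automatically diagonalizable.

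First I would set up the open subscheme realizing \eqref{LGdia1}. Consider $\uW(M)$, the affine $R$-scheme with $\uW(M)(T)=M\ot_R T$, and inside it the subfunctor $U$ assigning to $T$ the set of $m\in M\ot_R T$ with $h_T(m,m)\in (S\ot_R T)^\times$. Since $S$ is finite projective over $R$ and the norm form $N_{S/R}\co S\to R$ is a polynomial map, the condition ``$h_T(m,m)$ invertible in $S\ot_R T$'' is an open condition: $U$ is the non-vanishing locus of the section $m\mapsto N_{S\ot_R T/T}(h_T(m,m))\in T$, so $U$ is a principal open subscheme of the affine scheme $\uW(M)$, in particular open and quasi-compact. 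For each maximal ideal $\gm\ideal R$ the fibre $U(R/\gm)$ is nonempty by the field case recalled above (using that $(M,h)$ has positive rank, so $M_{R/\gm}\ne 0$). Proposition~\ref{prop_baire}\ref{prop_baire-a} then gives $U(R)\ne\emptyset$, which is precisely \eqref{LGdia1}: there is $m_1\in M$ with $h(m_1,m_1)\in R^\times\subset S^\times$.

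Next I would run the standard orthogonal-splitting induction on $r=\rank M$. Given $m_1$ as above with $a_1:=h(m_1,m_1)\in R^\times$, the submodule $S m_1$ is free of rank~$1$, $h|_{S m_1}$ is regular (its adjoint is multiplication by the unit $a_1$ composed with $\si$), and hence $M = S m_1 \oplus (S m_1)^\perp$ with $h|_{(Sm_1)^\perp}$ again a regular hermitian form by \cite[I, (3.6.2.2)]{K}; the orthogonal complement $M':=(Sm_1)^\perp$ is finite projective of constant rank $r-1$. If $r-1>0$ I apply the induction hypothesis to $M'$ (noting $M'$ still has constant positive rank and $S/R$ is unchanged) to get a diagonalizing basis of $M'$; together with $m_1$ this diagonalizes $M$. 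The base case $r=1$ is immediate since $M$ is then free of rank~$1$ by Corollary~\ref{prop_baireco}\ref{prop_baireco-b}, with basis $m_1$.

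\textbf{Main obstacle.} The only genuinely delicate point is the field case underlying the nonemptiness of $U(R/\gm)$: I must be careful that $(S,\si)$ over a field $k=R/\gm$ is either a separable quadratic field extension with $\si$ the nontrivial Galois automorphism, or the split algebra $k\times k$ with the swap involution, and in both situations a regular hermitian form of positive dimension represents an element of $k^\times$ (equivalently, represents a norm unit). In the field-extension case this is the classical statement that a regular hermitian form over a quadratic extension has a diagonalization $\langle a_1,\dots,a_r\rangle$ with $a_i\in k^\times$; in the split case $\uU(M,h)\cong\uGL$ and the form is hyperbolic-type, again obviously diagonalizable. Handling characteristic~$2$ correctly (where ``hermitian'' still forces $h(m,m)\in k$ but alternating behaviour is excluded precisely because $\si\ne\id$) is where one has to be most careful; everything else is routine bookkeeping that I would not spell out in detail.
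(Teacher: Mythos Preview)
Your proposal is correct and follows essentially the same route as the paper's proof (which is only sketched there, with details deferred to \cite{PRbook-sol}): first establish \eqref{LGdia1} by checking it over each residue field and invoking the LG property, then run the standard orthogonal-splitting induction using \cite[I,~(3.6.2)]{K} and Corollary~\ref{prop_baireco}\eqref{prop_baireco-b}. One cosmetic simplification: since $h$ is hermitian, $h(m,m)$ already lies in the $\si$-fixed subring $R$, so you can take $U$ to be the principal open of $\uW(M)$ defined directly by the polynomial function $m\mapsto h(m,m)\in R$, without passing through the norm $N_{S/R}$.
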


\begin{proof} This is \cite[Exc.~21.21(a)]{PRbook}; its proof is given in \cite{PRbook-sol}: one can assume that $M$ has constant rank; one then shows \eqref{LGdia1} and finally proves diagonalizability by induction on the rank of $M$ (recall \cite[I, (3.6.2.1)]{K} and \ref{prop_baireco}\eqref{prop_baireco-b}).
\end{proof}

\begin{lem} \label{dia} Let $R$ be arbitrary and let $(S,\si)$ be a quadratic \'etale $R$--algebra with standard involution $\si$. \sm

\begin{inparaenum}[\rm (a)] \item \label{dia-a} {\em (Zariski-diagonalizability)} Let $(M,h)$ be a hermitian space  with $M$ being faithfully projective. Then $(M,h)$ is Zariski-locally diagonalizable, i.e., there exists a Zariski cover $(f_1, \ldots, f_m)$ of $R$ such that for every $g\in \{f_1, \ldots, f_m\}$ the base change $(M,h)_{R_g} := (M_{R_g}, h_{R_g})$ is 
diagonalizable of rank $n\in \NN_+$. 
\ms

\item \label{dia-b} %% E.16 of the octonion book
The following are equivalent for a hermitian form $(M,h)$.
\end{inparaenum} \begin{enumerate}[label={\rm (\roman*)}]

\item\label{diagc-a} $M$ is locally free of rank $n$ and $h$ is
    regular.

\item\label{diagc-b} There exists a flat cover $T\in \Ralg$ such that
    $(M, h)_T$ is  isometric to the split form \eqref{sef-fre1}.
 \end{enumerate}
\end{lem}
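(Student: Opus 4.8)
\textbf{Plan for Lemma~\ref{dia}.} The two parts are closely related, so I would treat \ref{dia-a} first and then deduce \ref{dia-b}. For \ref{dia-a}, the key point is that an étale-locally-split object over a ring is already Zariski-locally split once one has enough units, but here one does not yet have an LG hypothesis; instead the hermitian form itself provides the units via local diagonalizability. So my plan is: fix a maximal ideal $\gm \ideal R$ and consider the residue field $k = R/\gm$. The base change $(M,h)_k$ is a hermitian space over the quadratic étale $k$--algebra $S_k$. Over a field every regular hermitian form with respect to a quadratic étale algebra is diagonalizable --- if $S_k$ is a separable quadratic field extension this is the classical diagonalization of hermitian forms, and if $S_k \cong k\times k$ then a hermitian $(k\times k, \mathrm{swap})$--space is just a pair $(V, V^*)$ with the evaluation pairing, which is visibly the split form. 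In both cases $(M,h)_k$ represents a unit: some $h_k(\bar m, \bar m) \in k\ti$. Now lift $\bar m$ to a unimodular $m \in M_{R_\gm}$ (possible since $M$ is finite projective and $\bar m \ne 0$) --- or work directly: the function $m \mapsto h(m,m)$ on $\uW(M)$ is a polynomial map into $S$, and composing with the norm $N_{S/R}$ gives a polynomial map into $R$ which takes a unit value mod $\gm$; hence by \cite[II, \S5.1, Prop.~2(ii)]{BAC} there is $g \notin \gm$ such that $h(m,m) \in S_{R_g}\ti$ for a suitable $m \in M_{R_g}$. Over $R_g$ the cyclic module $S_{R_g}\cdot m$ is then a regular hermitian subspace of rank $1$, hence a direct orthogonal summand: $(M,h)_{R_g} \cong \langle h(m,m)\rangle \perp (M', h')$ with $M'$ still faithfully projective (after possibly shrinking $g$). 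Moreover $h(m,m) \in S\ti \cap R$ up to scaling, so the rank-one piece is the split form $h_{0,1}$ after rescaling $m$ --- here one uses that $S/R$ is quadratic étale, so $N_{S/R}(S\ti) $ is large enough, more precisely one adjusts by an element of $S\ti$ to make $h(m,m) = 1$, which is possible Zariski-locally. Iterating this finitely many times (the rank drops at each step, and we only ever produce a Zariski refinement) covers $\Spec(R)$ by a finite collection of $R_{f_i}$ over each of which $(M,h)$ is the split form.

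The main obstacle, as I see it, is the bookkeeping at the rank-one reduction step: showing that after localizing one can arrange $h(m,m)$ to be exactly $1$ rather than merely a unit of $S$ fixed by $\si$. This is where the étale quadratic hypothesis \eqref{hsp0} is essential. In the separable field extension case one divides $m$ by an element $s \in S\ti$ with $\si(s)s = h(m,m)$, which exists over a field but only Zariski-locally over a ring; in the split case $S \cong R\times R$ one just scales the two components against each other. I would package this as: a rank-one regular hermitian space $\langle a \rangle$ with $a = \si(a) \in S\ti$ is étale-locally, in fact Zariski-locally, isometric to $h_{0,1}$, because the torsor of elements $s$ with $\si(s)s = a$ is a torsor under $\uU(h_{0,1}) = $ the norm-one torus $\rmR^{1}_{S/R}(\GG_m)$, and one checks this torsor is Zariski-trivial by exhibiting a local section --- or, more concretely, by noting $a$ becomes a square times its conjugate after the étale base change $S$, and then descending. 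Actually the cleanest route is to avoid reducing to $h_{0,1}$ on the nose and instead only claim \ref{dia-a} with ``split form'' replaced by ``diagonal form with unit entries'', then observe separately that any $\langle a\rangle$, $a \in S\ti$, $\si(a) = a$, satisfies $\langle a \rangle_S \cong h_{0,1}$ over $S$ (trivially), which already gives what \ref{dia-b} needs; but the statement as written asks for the split form Zariski-locally, so the torsor-triviality argument above is the honest thing to do.

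For \ref{dia-b}, the implication \ref{diagc-a} $\Rightarrow$ \ref{diagc-b} is then immediate: $M$ locally free of rank $n$ with $h$ regular forces $M$ to be faithfully projective (rank $n \ge 1$; if $n = 0$ the statement is vacuous with $T = R$), so \ref{dia-a} applies and yields a Zariski cover --- hence a fortiori a flat cover --- over which $(M,h)$ is the split form \eqref{sef-fre1}; one then takes $T = \prod_i R_{f_i}$, which is faithfully flat (as in the proof of Lemma~\ref{sesquip-b}, \ref{quadfob-iv} $\Rightarrow$ \ref{quadfob-v}), and over $T$ the form is split of the appropriate rank (the local ranks all agree since $M$ has constant rank $n$). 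For the converse \ref{diagc-b} $\Rightarrow$ \ref{diagc-a}: the split form \eqref{sef-fre1} is visibly regular (its adjoint matrix is the identity), so $h_T$ regular implies $h$ regular by faithfully flat descent, i.e. by Lemma~\ref{sesquip-b} \ref{quadfob-v} $\Rightarrow$ \ref{quadfob-i}; and $M_T$ free of rank $n$ implies $M$ is locally free of rank $n$ by faithfully flat descent of the rank. So \ref{dia-b} reduces entirely to \ref{dia-a} plus the already-established descent statements of Lemma~\ref{sesquip-b}, and requires no new ideas once \ref{dia-a} is in hand.
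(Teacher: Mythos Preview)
Your instinct that the passage from ``diagonal with unit entries'' to ``split form $h_{0,n}$'' is the delicate point is exactly right, but your torsor-triviality argument for it is wrong. The torsor of $s\in S$ with $\si(s)s=a$ is \emph{not} Zariski-locally trivial in general: take $R=\RR$, $S=\CC$, $a=-1$. Then $\langle -1\rangle$ is a regular hermitian space of rank~$1$, but it is not isometric to $h_{0,1}=\langle 1\rangle$ over $\RR$ since $h(m,m)=-|m|^2\le 0$ for the former. Since $\Spec(\RR)$ is a point, no Zariski refinement helps. So part~\ref{dia-a} as literally stated (``is the split hermitian form'') cannot be proved; the paper's own proof of \ref{dia-a} in fact only establishes Zariski-local \emph{diagonalizability} in the ordinary sense, and that is all that is used downstream.

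The ``cleanest route'' you sketch as an alternative is precisely what the paper does. For \ref{dia-a} the paper applies Lemma~\ref{LGdia} directly to the local ring $R_\gm$ (a local ring is LG) to get diagonalizability over $R_\gm$, then spreads out to some $R_f$ with $f\notin\gm$ via the direct-limit description $R_\gm=\limind_{f\notin\gm}R_f$; this is functionally the same as your residue-field-then-lift argument but avoids redoing the induction. For \ref{dia-b}, \ref{diagc-a}$\Rightarrow$\ref{diagc-b}, the paper first uses \ref{dia-a} to reduce to $M=S^n$ with $h$ diagonal, say $h(e_i,e_i)=r_i\in R^\times$, and then passes to the explicit faithfully flat extension
\[
T=R[X_1,\ldots,X_n]\big/(X_1^2-r_1,\ldots,X_n^2-r_n),
\]
over which $e_i\mapsto X_i^{-1}e_i$ gives an isometry to $h_{0,n}$. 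This replaces your étale base change to $S$ (which also works, as you note) with a concrete flat cover. Your argument for \ref{diagc-b}$\Rightarrow$\ref{diagc-a} via Lemma~\ref{sesquip-b} matches the paper exactly.
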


\begin{proof} \eqref{dia-a} Let $\m\ideal R$ be a maximal ideal of $R$. By Lemma~\ref{LGdia}, $(M,h)$ is diagonalizable over the local ring $R_\gm$. Therefore, since $R_\m \cong \limind_{f\notin \m} R_f$, there exists $f\notin \m$ such that $h_{R_f}$ is diagonalizable. Hence the ideal generated by $\{f\in R: h|_{R_f} \hbox{ is diagonalizable}\}$ is all of $R$, from which \eqref{dia-a} easily follows.%
\sm

\eqref{dia-b}   Assume \ref{diagc-a}. Let $(f_1, \ldots, f_m)$ be a Zariski cover as in \eqref{dia-a}, and put $T' = R_{f_1} \times  \cdots \times  R_{f_m}$. Then $T'$ is a flat cover of $R$ and $(M, h)_T \cong (S^n, h')$ with a diagonalizable $h'$.
Since a flat cover $T''$ of $T'$ is a flat cover of $R$, it
suffices to prove \ref{diagc-b} under the assumption that $M \cong S^n$
and that $h$ is diagonalizable with respect to the standard basis $(e_1, \dots,
e_n)$ of $S^n$.
%, i.e., $h(s,s') = \sum_i \si(s_i) \, r_i \, s'_i$ for $s$, $s'\in \scS^r$.
Note $h(e_i, e_i) = r_i \in R\ti$ since $h$ is supposed to be hermitian  and regular. Let $R[X_1, \ldots, X_n]$ be the polynomial ring in $n$ variables. Then \[ T=R[X_1, \ldots, X_r]\big/ (X_1^2 - r_1, \ldots, X_n^2 - r_n)\] is a flat cover
of $R$ such that $h_T \cong h_{0, n}$, the split rank $n$ form over $T$.

Conversely, if \ref{diagc-b} holds, then $M$ is locally free of
rank $n$ (\cite[II, \S5.3, Prop.~4]{BAC}) and $h$ is regular by Lemma~\ref{sesquip-b}. \end{proof}

\begin{prop}[Unitary groups]\label{ung} Let $(S,\si)$ be quadratic \'etale with standard involution $\si$, and let $(M,h)$ be a hermitian space. Then the following hold. \sm

\begin{inparaenum}[\rm (a)]
  \item \label{ung-a} $A := \End_S(M)$ is an Azumaya $S$-algebra. \sm

  \item \label{ung-b} There exists a unique involution $\tau_h$ on $A$,
  the\/ {\em adjoint involution},  satisfying
  \begin{equation} \label{ung-b1}
    h( m_1, f(m_2)) = h( \tau_h(f)(m_1), m_2)
  \end{equation}  for all $m_i \in M$ and $f\in A$.  The triple $(S, A, \tau_h)$ is an Azumaya algebra with involution of second
  kind in the sense of\/ \cite[2.7.0.33]{CF}. \sm

  \item \label{ung-c} We define
$\uU(M, h) := \uU(A, \tau_h)$ where $\uU(A, \tau_h)$ is the unitary $R$--group scheme of\/ \cite[3.5.0.84]{CF}. Then, for every $R$--ring $T$,
\[ \uU(M,h)(T) = \{ f\in \GL_{S\ot_R T}(M\ot_R T) : f \hbox{ is an isometry of } h_T \}\]
where $h_T$ is the base change of $h$ in the sense of {\rm \ref{sef}\eqref{sef-bc}}. \sm

\item \label{ung-d} The group $\uU(M,h)$ is a reductive $R$--group scheme. If
$M$ has constant rank $n$, then $\uU(M,h)$ has type $\rmA_{n-1}$. \sm

\item \label{ung-e}  Fix $n\in \NN_+$. The map
$  [(M,h)] \mapsto [\uIsom\big( (S^n, h_{0,n}), \, (M,h)\big)] $
is a bijection between the set of isometry classes of hermitian spaces over $(S,\si)$ of rank $n$ and $\rmH^1\big(R, \uU(S^n, h_{0,n})\big)$. Equivalently,
for every hermitian space $(M,h)$ of rank $n$, the set of twisted forms of $(M,h)$ is in bijection with $\rmH^1\big(R, \uU(M,h)\big)$.
\end{inparaenum}
\end{prop}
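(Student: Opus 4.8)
The plan is to establish the five parts in order, reducing \ref{ung-a}--\ref{ung-d} to standard structural facts about Azumaya algebras with involution of the second kind and \ref{ung-e} to a descent argument resting on the local triviality in Lemma~\ref{dia}.

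For \ref{ung-a} and \ref{ung-b}, the starting observation is that a hermitian space $(M,h)$ has $M$ faithfully projective over $S$: regularity of $h$ gives an isomorphism $h^*\co M\simlgr M^*$, so $M$ has positive rank wherever it is nonzero, and on the clopen locus where $M$ vanishes every assertion of the proposition is vacuous; hence $A=\End_S(M)$ is an Azumaya $S$--algebra. For the adjoint involution I would write down, for $f\in A$, the transpose $f^\vee\co M^*\to M^*$, $\vphi\mapsto \vphi\circ f$, and set $\tau_h(f)=(h^*)^{-1}\circ f^\vee\circ h^*$; unwinding the definitions of $h^*$ and $M^*$ shows both that $\tau_h(f)$ is again $S$--linear and that $\tau_h$ is the unique $R$--linear endomorphism of $M$ satisfying \eqref{ung-b1}. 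That $\tau_h$ reverses products is formal, $\tau_h^2=\id$ is exactly where the hermitian symmetry $h(m,m')=\si(h(m',m))$ enters, and $\tau_h(s\,\id_M)=\si(s)\,\id_M$ follows from $\si$--sesquilinearity; thus $\tau_h$ induces $\si$ on the centre and $(S,A,\tau_h)$ is an Azumaya algebra with involution of the second kind in the sense of \cite[2.7.0.33]{CF}.

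For \ref{ung-c}, I would first record that forming the adjoint involution commutes with base change, i.e.\ $\tau_{h_T}=(\tau_h)_T$ under the canonical identification $\End_S(M)\ot_R T=\End_{S_T}(M_T)$, which is read off from the commutative diagram \eqref{sesquip-a}; it therefore suffices to identify the $R$--points. Substituting $m_1\mapsto f(m_1)$ into \eqref{ung-b1} yields $h(f(m_1),f(m_2))=h\big((\tau_h(f)f)(m_1),m_2\big)$, so $f$ is an isometry of $h$ iff $\tau_h(f)f-\id$ lies in the radical of $h$, iff $\tau_h(f)f=\id$ (as $h$ is regular); and since a left inverse in an Azumaya algebra is a two-sided inverse (checked \'etale-locally, where $A$ is a matrix algebra), this is precisely membership in $\uU(A,\tau_h)(R)$. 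Part \ref{ung-d} is then a citation: $\uU(A,\tau)$ of an Azumaya algebra with involution of the second kind is a reductive $R$--group scheme, and for $M$ of constant rank $n$ the algebra $A$ has degree $n$ over $S$, so $\uU(A,\tau_h)$ is an outer form of type $\rmA_{n-1}$, becoming $\uGL_1(A)$ after the quadratic base change $S/R$ \cite{CF}.

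For \ref{ung-e} I would run the standard identification of twisted forms with a nonabelian $\rmH^1$. By Lemma~\ref{dia}\ref{dia-b}, every hermitian space of rank $n$ is, fppf-locally on $\Spec R$, isometric to the split space $(S^n,h_{0,n})$ of \eqref{sef-fre1}; and by \ref{ung-c} the automorphism group scheme of $(S^n,h_{0,n})$ as a hermitian space is exactly $\uU(S^n,h_{0,n})$. Hence the stack of rank $n$ hermitian spaces over $(S,\si)$ is a gerbe banded by $\uU(S^n,h_{0,n})$, and the usual dictionary (\cite{Gir}; see also \cite{K}) gives the bijection $[(M,h)]\mapsto[\,\uIsom((S^n,h_{0,n}),(M,h))\,]$ onto $\rmH^1\big(R,\uU(S^n,h_{0,n})\big)$. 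The ``equivalently'' reformulation comes from inner twisting: $\uU(M,h)$ is the twist of $\uU(S^n,h_{0,n})$ by the torsor $\uIsom((S^n,h_{0,n}),(M,h))$, and the torsion bijection of $\rmH^1$--sets carries the set of forms of $(S^n,h_{0,n})$ onto $\rmH^1\big(R,\uU(M,h)\big)$, pointed at $[(M,h)]$. The only parts requiring genuine care are the semilinear dual bookkeeping in \ref{ung-b}--\ref{ung-c} and, in \ref{ung-e}, feeding the descent machinery the correct local model --- which is exactly Lemma~\ref{dia}\ref{dia-b}, itself a consequence of LG-diagonalizability (Lemma~\ref{LGdia}); I expect \ref{ung-e} to be the step demanding the most attention, though none of it is deep.
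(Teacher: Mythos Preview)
Your proposal is correct and follows essentially the same route as the paper: parts \ref{ung-a}--\ref{ung-d} are handled by the standard construction of the adjoint involution and citations to \cite{Ford}, \cite{KMRT}, \cite{CF}, while \ref{ung-e} is the gerbe/descent argument fed by the local triviality of Lemma~\ref{dia}\ref{dia-b}. Your write-up is in fact more explicit than the paper's (you spell out $\tau_h=(h^*)^{-1}\circ f^\vee\circ h^*$ and flag the one-sided-inverse issue in \ref{ung-c}, neither of which the paper pauses over), but the underlying arguments coincide.
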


\begin{proof}
For \eqref{ung-a} see for example \cite[7.1.10]{Ford}. In case $S$ is a field, \eqref{ung-b} is proven in \cite[4.1]{KMRT}; the same proof works in our setting.
\eqref{ung-c} Since $h\big( (\tau_h(f) \circ f)(m_1), m_2\big) = h\big(
f(m_1), f(m_2)\big)$ by \eqref{ung-b1}, the equation $\tau_h(f) \circ f = \Id_M $, defining $\uU(M,h)(T)$, is equivalent to $h\circ (f \times f) = h$, i.e., $f$ is an isometry. \eqref{ung-d} follows from \cite[3.5.0.87]{CF}.

\eqref{ung-e} Using \ref{dia}\eqref{dia-b}, it is straightforward (but somewhat technical) to define the gerbe of hermitian spaces of rank $n$ over the big affine fppf site of $\Spec(R)$. The claim then becomes a special case of \cite[2.2.4.5]{CF}. Alternatively (and more down-to-earth), one can easily adjust the notion of a tensor system of \cite[54.7]{PRbook} to apply to hermitian spaces, cf.~\ref{sef}\eqref{sef-reg}. The claim then becomes a consequence of the Descent Theorem \cite[54.15]{PRbook}. \end{proof}
\sm

\textbf{Remark.} Since $\uU(M,h)$ is smooth, it is known (\cite[III,
Remark~4.8]{milne} or \cite[2.2.5.15]{CF}) that the canonical map $\rmH^1_{\et}(R, \uU(M, h)) \simlgr \rmH^1(R, \uU(M, h))$ is a bijection. Thus, one can replace twisted forms with respect to the flat topology in \ref{ung}\eqref{ung-e} by twisted forms in the \'etale topology. The analogous remark applies to \ref{dia}\eqref{dia-b}.

\begin{prop}[Parabolic and Levi subgroups of $\uU(M,h)$]\label{dec} Assume \eqref{hsp0}, and let $(M,h)$ be a hermitian space over $(S,\si)$ containing a totally isotropic submodule $U\subset M$. \sm

\begin{inparaenum}[\rm(a)]
  \item \label{dec-a} Then there exists a totally isotropic submodule $V$ and a submodule $M'$ such that
  \begin{equation}\label{dec-a1}
   M= (U \oplus V) \perp M', \quad (U\oplus V, h|_{U\oplus V}) \cong \HH(U),
   \end{equation}
 in particular $(M', h|_{M'})$ is a hermitian space. \sm

 \item \label{dec-b} The subgroup scheme $P$ of $G=\uU(M,h)$ which stabilizes $U$ is a parabolic subgroup of the reductive $R$--group scheme $G$. \sm

\item \label{dec-c} Consider a  decomposition  $(M,h)= (U \oplus V) \perp M'$ as in \eqref{dec-a1}, and the embedding $\frR_{S/R}(\uGL(U)) \hookrightarrow G$ defined in \eqref{sef-hyp1}. Then the $R$--group scheme $L=\frR_{S/R}(\uGL(U)) \times_R \uU(M',h|_{M'})$ is a Levi subgroup of $P$.
\end{inparaenum}
\end{prop}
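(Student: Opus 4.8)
\textbf{Proof plan for Proposition~\ref{dec}.}
The plan is to follow the classical field/semilocal picture, making sure that every ingredient is either already available over an arbitrary base ring or can be produced by a flat descent argument. For part~\eqref{dec-a}, since $U$ is complemented (by definition of totally isotropic) and $h$ is regular, the adjoint $h^*\co M \to M^*$ restricts to a surjection $M \to U^*$ (composition with the restriction map $M^*\to U^*$, which is split surjective because $U$ is a direct summand). First I would choose an $S$--linear splitting of $M\twoheadrightarrow U^*$, i.e.\ a submodule $W\subset M$ with $h$ inducing an isomorphism $W\conlgr U^*$; then $U\cap W = 0$ and $U\oplus W$ is a direct summand of $M$ carrying a regular restriction of $h$ (its adjoint is invertible by a block computation using $U\subset U^\perp$). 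I then set $M' = (U\oplus W)^\perp$, so $M = (U\oplus W)\perp M'$ by regularity of $h|_{U\oplus W}$ and of $h$. Finally, using that $U$ is totally isotropic, a standard modification of $W$ by an element of $\Hom_S(U^*,U)$ produces the desired totally isotropic complement $V$ with $(U\oplus V, h|_{U\oplus V})\cong \HH(U)$; this is the hyperbolic-splitting lemma and the only thing to check is that the defining equation for the correction term is solvable, which it is because $h|_W$ is regular and $h$ is hermitian (so the relevant operator is, up to sign, the one computing $h$ on $W$, hence invertible). That $(M',h|_{M'})$ is a hermitian space then follows from \ref{sef}\eqref{sef-orth}.

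For part~\eqref{dec-b}, I would argue by flat descent. By \ref{dia}\eqref{dia-b} (and \ref{sesquip-b}) there is a faithfully flat $T\in\Ralg$ over which $(M,h)_T$ becomes the split form $h_{0,n}$; after a further flat base change I may also assume that the summands $U_T$, $V_T$, $M'_T$ become free of constant rank and that the chosen isometry $(U_T\oplus V_T)\cong \HH(U_T)$ is the standard hyperbolic one. Over $T$, the stabilizer of $U_T$ in $\uU(M,h)_T$ is then visibly the standard maximal parabolic of a unitary group attached to an isotropic flag of length one, which is a parabolic subgroup scheme of the reductive $T$--group $\uU(M,h)_T$ (this is the content of the structure theory of unitary groups, e.g.\ via their description as groups with involution in \cite[3.5.0.84]{CF}, or by passing to $\uSL_1$ and using [XXVI]). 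Since ``being a parabolic subgroup scheme'' is a property that descends along fppf covers (a subgroup scheme which is fppf-locally parabolic is parabolic, because parabolicity of a smooth closed subgroup can be checked fibrewise and after faithfully flat base change, cf.\ [XXVI, 1.2, 1.3]), the stabilizer $P$ of $U$ in $G=\uU(M,h)$ is a parabolic $R$--subgroup. The reductivity of $G$ is \ref{ung}\eqref{ung-d}.

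For part~\eqref{dec-c}, fix the decomposition $(M,h) = (U\oplus V)\perp M'$ of \eqref{dec-a1}. The subgroup $L = \frR_{S/R}(\uGL(U))\times_R \uU(M',h|_{M'})$ embeds in $G$: the first factor acts on $U$ by $g$, on $V\cong U^*$ by the inverse transpose $f\mapsto f\circ g\me$ as in \eqref{sef-hyp1} (this preserves $\HH(U)$, and acts trivially on $M'$), and the second factor acts on $M'$ and trivially on $U\oplus V$; one checks that $L$ lands in $P$ since it stabilizes $U$. To see that $L$ is a Levi subgroup of $P$, I would again descend to a faithfully flat $T$ over which everything is split and the statement is the classical identification of the Levi factor of a maximal parabolic of a unitary group with a product $\GL\times\uU$; the property of being a Levi subgroup (a smooth closed subgroup which is fppf-locally a Levi factor, equivalently a reductive complement to $\rad^u(P)$) descends. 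Concretely, over $T$ one verifies $P_T = L_T\ltimes\rad^u(P_T)$ by a direct matrix computation in block-triangular form, and that $L_T$ is reductive with the correct root datum; since $\rad^u(P)$ is formed compatibly with flat base change and $L\to P/\rad^u(P)$ is an isomorphism after the cover, it is an isomorphism over $R$.

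\textbf{Main obstacle.} The routine linear algebra of \eqref{dec-a} over a general base (producing the totally isotropic complement $V$, not merely a complement $W$ on which $h$ is regular) is the one genuinely computational point; everything in \eqref{dec-b} and \eqref{dec-c} then reduces, via \ref{dia}\eqref{dia-b}, to the split case over a flat cover plus the general principle that ``parabolic'' and ``Levi'' descend along fppf morphisms, so the real work is bookkeeping the descent and the block-matrix identifications rather than any new idea.
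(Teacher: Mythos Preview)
Your plan is correct, but for parts \eqref{dec-b} and \eqref{dec-c} the paper takes a different and more direct route: instead of descending to a flat cover where everything is split and then invoking that ``parabolic'' and ``Levi'' descend, it uses the dynamic method of \ref{dps} over $R$ itself. One writes down the explicit cocharacter
\[
\lambda\co \GG_{m,R}\to G,\qquad t\mapsto t\,\Id_U + t^{-1}\,\Id_V + \Id_{M'},
\]
and computes, for $g=(g_{ij})_{1\le i,j\le 3}$ in block form relative to $(U,V,M')$, that $\lambda(t)\,g\,\lambda(t)^{-1}$ multiplies the $(2,1)$, $(3,1)$, $(2,3)$ blocks by negative powers of $t$. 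Thus $\rmP_G(\lambda)$ is the stabilizer of $U$ (stabilizing $U$ forces stabilizing $U^\perp=U\oplus M'$ since $g$ is an isometry), and $\rmC_G(\lambda)$ is the block-diagonal subgroup, which one checks equals $L$. This avoids descent entirely and gives $P$ and $L$ in one stroke as $\rmP_G(\lambda)$ and $\rmC_G(\lambda)$.

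For part \eqref{dec-a}, your direct construction of $V$ is the content of the hyperbolic splitting lemma; the paper instead observes that surjectivity of the trace $S\to R$ makes $h$ an \emph{even} hermitian form in the sense of \cite[I, (3.1)]{K} and then quotes \cite[I, (3.7.1)]{K}. Your sketch is fine in principle, but the step ``the defining equation for the correction term is solvable because $h|_W$ is regular and $h$ is hermitian'' is exactly where the evenness (trace surjectivity) is used, and you should say so explicitly --- regularity of $h|_W$ alone is not what makes the correction work.
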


\begin{proof} \eqref{dec-a} Since the trace map $S \to R$, $s \mapsto s + \si(s)$ is surjective \cite[III, (4.2.1)]{K}, it follows from \cite[I, (3.1.1)]{K} that $h$ is an even hermitian module in the sense of \cite[I, (3.1)]{K}. The lemma is therefore a consequence of \cite[I, (3.7.1)]{K}. That $V$ can be chosen totally isotropic follows from the proof of \cite[I, (3.7.1)]{K}. \sm

\eqref{dec-b} We will use the dynamic method of \ref{dps}. It is easily verified that
\[ \la \co \GG_m \to \uU(M,h), \quad t \mapsto t\Id_U + t\me \Id_V + 1_{M'} \]
defines a cocharacter. Writing a $g\in \uU(M,h)(T)$, $T\in \Ralg$, as a matrix $g=(g_{ij})_{1\le i,j\le 3}$ with respect to the submodules $(U,V, M')$, one finds
\[
  \la(t) \, (g_{ij}) \, \la(t)\me =
  \begin{pmatrix} g_{11} & t^2 g_{12} & t g_{13} \\
             t^{-2}g_{21} & g_{22} & t^{-1} g_{23} \\
             t\me g_{31} & t g_{32} & g_{33} \end{pmatrix} \; .
\]
Hence, $g\in \rmP_G(\la)(T) \iff 0 = g_{21} = g_{31} = g_{23}$, i.e., $g$
stabilizes $U$ and $U^\perp = U \oplus M'$. Since $g$ is an isometry, the latter conditions are equivalent to $g$ stabilizing $U$. \sm

\eqref{dec-c} The centralizer of the cocharacter $\la$ above consists of those $g$ that are diagonal with respect to $(U,V, M')$. These are easily seen to be the isometries in $L(T)$. The claim then follows from the dynamic method. \end{proof}
\sm

\textbf{Remarks.} \begin{inparaenum} \item That $P$ in \ref{dec}\eqref{dec-b} is parabolic with Levi subgroup $L$ as in \eqref{dec-c}, is not surprising and likely folklore. It is in the spirit of Appendix $T$ of Conrad's course  \cite{Co3}, which identifies the parabolic subgroups for symplectic and special orthogonal groups over fields. \sm

\item \label{dec-2} Suppose that $M$ is free of rank $n\ge 2$ and that $U$ has rank $i$ for some $1\le i < n/2$. Then $P$ is of type $\rmA_{n-1} \setminus \{i\}$, while the derived group $\scD(L)$ has absolute type $\rmA_{i-1} \times \rmA_{n-1-i}$ for $i\ge 2$ and type $\rmA_{n-2}$ for $i=1$.
\end{inparaenum}

%\comments{(04-11) \ref{dec} is part of Proposition G.13 of the octonion book which  contains the additional result: \sm
%
%{\tt Suppose $U$ is free of rank $i$ and let $\calF_i$ be the $R$--functor of totally isotropic subspaces of rank $i$ of $(M,h)$.  Then the natural $R$-map $G/P \to \calF_i$ is an isomorphism of $R$-functors, so that the $R$-scheme $G/P$ represents $\calF_i$.} \sm
%
%We do not need this for this paper.}

\subsection{Quadratic forms}\label{qf} From now on until the end of this appendix we consider quadratic forms. Thus, we specialize \ref{sef} to $(S, \si) = (R, \Id)$. Unless stated otherwise, $R$ is arbitrary and, we recall, $M$ is a finite projective $R$--module.  Below is a quick reminder of the basics of quadratic forms over rings; proofs and more can \new for example be found in \cite{Ba}. %\cite[I, II]{K}. 

\comments{(2025-02-22) I deleted the reference \cite[I,II]{K}, which before was here. Was this your suggestion? I am not sure I understood your suggestion in your email of 2025-02-21 correctly}
\enew 

\sm

\begin{inparaenum}[(a)]
 \item A {\em quadratic form (over $R$)\/} is a pair $(M,q)$ consisting of a finite projective $R$--module $M$ and a map $q \co M \to R$ satisfying $q(rm) = r^2 q(m)$ for all $r\in R$ and $m\in M$ and for which the {\em polar form $b_q$\/}, defined by $b_q(m, m') = q(m+m') - q(m) - q(m')$ for $m,m'\in M$, is a (symmetric) bilinear form on $M$. We often abbreviate $(M,q)= q$ and refer to $(M,q)$ as a quadratic module.
     We call $(M,q)$ a {\em faithful\/} quadratic $R$--module if $M$ is a faithfully  projective $R$--module, \ref{fapmod}. We say that $(M,q)$ is a quadratic module of rank $n\in \NN$, if $M$ has constant rank $n$.

An {\em isometry\/} $f\co (M_1, q_1) \to (M_2, q_2)$ is an $R$--linear isomorphism $f\co M_1 \to M_2$ satisfying $q_2 \circ f = q_1$.
  %In this case, $f$ is also an isometry of the associated polar forms.
If $(M_1, q_1) = (M_2, q_2) = (M,q)$, the isometries of $(M,q)$ form a group $\orth(M,q) = \orth(q)$, the {\em orthogonal group of $q$}.      \sm

\item ({\em Regularity}) By definition, a quadratic form $q$ is {\em regular\/}, if $b_q$ is regular in the sense of \ref{sef}\eqref{sef-reg}.
%\inparcom{ For example, any $u\in R$ gives rise to a quadratic form $\lan u \ran \co R \to R$ given by $\lan u \ran (r) = u r^2$ for $r\in R$. Its polar form is the symmetric bilinear form $b_{\lan u \ran}(r_1, r_2) = 2 u r_1 r_2$. Hence, $\lan u \ran$ is regular if and only if $2\in R\ti$. In general, if $(M,q)$ is a regular quadratic form with $M$ of constant odd rank, then necessarily $2\in R\ti$.}
\sm

\item ({\em Base change}) \label{qf-bc} Let $T\in \Ralg$ and let $(M,q)$ be a quadratic form. Analogous to \ref{sef}\eqref{sef-bc} we consider the $T$--module $M_T = M \ot_R T$. There exists a quadratic form $(M_T, q_T)$ uniquely determined by the condition $q_T(m\ot t) = q(m) t^2$ for all $m\in M$ and $t\in T$ (\cite[11.5]{PRbook}, \cite[Thm.~1]{Sah}). The polar of $q_T$ is the base change of the polar $b_q$ of $q$, i.e., $b_{q_T} = (b_q)_T$. In particular, if $q$ is regular, then so is $q_T$. \sm

\item\label{qf-rad} ({Radical}) The {\em radical\/} of a quadratic module $(M,q)$ is the submodule $\rad(q)= \{m\in M : q(m) = 0 = b_q(m,M)\}$ of $M$. Given $T\in \Ralg$, clearly $\rad(q) \ot_R T \subset \rad(q_T)$. This inclusion is in general not an equality, cf.~\eqref{qf-ns}. \sm

\item \label{qf-ns} ({\em Nonsingularity}) A quadratic form $(M,q)$ is called {\em nonsingular\/} if $\rad(q_F) = 0$ for all fields $F\in \Ralg$. In this case, we call $(M,q)$ a {\em quadratic space}. We point out that a ``quadratic space'' in the sense of \cite{Ba} or \cite{PRbook} is a quadratic module $(M,q)$ with a regular $q$.

    If $q$ is regular, then $\rad(q) = 0$ and hence \ref{sesquip-b}\ref{quadfob-vi} implies
\begin{equation}\label{qf-ns1}
   \text{$q$ regular} \implies \text{$q$ nonsingular.}
\end{equation}

\inparcom{The referee suggested to change the reference 
 \ref{sesquip-b}\ref{quadfob-vi} to \ref{sesquip-b}\ref{quadfob-vii}. I did not do this, since I think that \ref{sesquip-b}\ref{quadfob-vi} is more natural. But maybe I misunderstood the referee.}
 
The converse of \eqref{qf-ns1} is not true. For example, any $u\in R\ti$ gives rise to a nonsingular quadratic form $\lan u \ran \co R \to R$, given by $\lan u \ran (r) = u r^2$ for $r\in R$. Its polar form is the symmetric bilinear form $b_{\lan u \ran}(r_1, r_2) = 2 u r_1 r_2$. Hence, $\lan u \ran$ is regular if and only if $2\in R\ti$.

However, if $2\in R\ti$, then $\rad(q) = \{ m\in M : b_q(m,M) = 0 \}$ for any quadratic form $q$ and so $\rad(q)$ is stable under base change. Hence,%
\begin{equation}\label{qf-ns2}
\text{ if  $2 \in R\ti$, then $q$ is nonsingular $\iff q$ is regular.}
\end{equation}%

\item ({\em Orthogonality}) \label{qf-perp} Given two quadratic forms $(M_1, q_1)$ and $(M_2, q_2)$, their {\em orthogonal sum\/} is the quadratic form $q_1 \perp q_2$, defined on $M= M_1 \oplus M_2$ by $(q_1 \perp q_2)(m_1, m_2) = q_1(m_1) + q_2(m_2)$ for $m_1 \in M_1$ and $m_2 \in M_2$. The polar form of $q_1 \perp q_2$ is the orthogonal sum $b_{q_1 \perp q_2} = b_{q_1} \perp b_{q_2}$ defined in \ref{sef}\eqref{sef-orth}.

     By \ref{sef}\eqref{sef-orth}, the quadratic form $q= q_1 \perp q_2$ is regular if and only if $q_1$ and $q_2$ are regular. Regarding nonsingularity,
     one easily sees:
     \begin{equation} \label{qf-perp1} \begin{split}
      q_1 \perp q_2 \text{ nonsingular } &\implies \text{$q_1$ and $q_2$ nonsingular, } \\
     \text{$q_1$ {\em regular\/} and $q_2$ nonsingular} &\implies
        \text{$q_1 \perp q_2$ nonsingular.}%
    \end{split} \end{equation}
     \sm

\item \label{qf-redc} ({\em Direct products})
Let $R= R_0 \times \cdots \times R_n$ be a direct product. Every $R$--module $M$ uniquely decomposes $M= M_0 \times \cdots \times M_n$ as a direct product of $R_i$-modules $M_i = R_i M$. The $R$--module $M$ is finite projective if and only if every $R_i$--module $M_i$ is finite projective.

Let $(M,q)$ be a quadratic $R$--module. Then $b_q(M_i,M_j) = 0$ for $i\ne j$ and
$(M_i, q_i)$ with $q_i = q|_{M_i}$ is a quadratic $R_i$--module. Thus
\[ (M,q) = (M_0, q_0) \perp \ldots \perp (M_n,q_n),\quad q_i = q|_{M_i}. \]
The quadratic $R$--module $(M,q)$ is regular (nonsingular respectively) if and only if every $(M_i, q_i)$ is a regular (nonsingular respectively) quadratic $R_i$--module.

A standard way to obtain the situation considered here occurs by letting $M=M_0\times \cdots \times M_n$ be the {\em rank decomposition\/} of a finite projective $R$--module $M$ for which $M_i$, $0\le i \le n$, is a finite projective $R_i$--module of constant rank $i$.
\sm

\item\label{qf-hyp} ({\em Hyperbolic spaces}) Let $U$ be a finite  projective $R$--module. The associated {\em hyperbolic space $\HH(U)$} is the quadratic module $(U^* \oplus U, \hyp)$ with quadratic form $\hyp_U(\vphi \oplus u) = \vphi(u)$, where $\vphi \in U^*$ and $u\in U$. The quadratic form $\hyp_U$ is regular, hence nonsingular by \eqref{qf-ns1}. The polar form of $\hyp$ is the hyperbolic symmetric bilinear form of \ref{sef}\eqref{sef-hyp}. In general, a {\em hyperbolic space\/}  is a quadratic module $(M,q)$ isometric to some $\HH(U)$.
\sm

\item\label{quadfoc} ({\em Split quadratic forms}) Let $m\in \NN$. The quadratic  form $q_{0, 2m}$ is the hyperbolic form associated with the free $R$--module $R^m$. After identifying $R^m{}^* = R^m$, it is given on $R^{2m}$ by
\begin{equation}  q_{0, 2m} (r_{-m}, \ldots, r_{-1}, r_1, \ldots, r_m) = \textstyle \sum_{i=1}^m r_i r_{-i}, \label{quadfoc1}
\end{equation}
It is regular, hence also nonsingular by \eqref{qf-ns1}. The quadratic form
$q_{0, 2m+1} = \lan 1 \ran \perp q_{0, 2m}$ on $R^{2m+1}$, defined by
\begin{equation}
q_{0, 2m+1} (r_{-m}, \ldots, r_{-1}, r_0 ,r_1 \ldots, r_{m}) = r_0^2
+ \textstyle \sum_{i=1}^m r_i r_{-i},
\label{quadfoc2}
\end{equation}
is nonsingular, e.g.,  by the even rank case, by \eqref{qf-perp1} and by nonsingularity of $\lan 1 \ran$, see \eqref{qf-ns}. We will refer to $q_{0, n}$ for
$n$ even or odd as the {\em split quadratic forms\/}, see
Proposition~\ref{qfnsp} for a justification for this
terminology.
\end{inparaenum}

\begin{prop} \label{qfnsp}
For a faithful quadratic module $(M,q)$ the following are equivalent:
\begin{enumerate}[label={\rm (\roman*)}]
  \item \label{qfnsp-i} $q$ is nonsingular;

  \item \label{qfnsp-ii} $q_T$ is nonsingular for all $T\in \Ralg$;

  \item \label{qfnsp-iii} $q_{R/\gm}$ is nonsingular for all maximal ideals $\gm \ideal R$;

  \item\label{qfnsp-iv}  there exists a flat cover $(R_1, \ldots, R_n)$ such
    that each $M \ot_R R_i$ is a free $R_i$-module of finite rank $r_i$ and $q_{R_i}$  is the split quadratic form $q_{0,r_i}$ over $R_i$ defined in {\rm
    \ref{qf}\eqref{quadfoc}};

  \item \label{qfnsp-v} $q_S$ is nonsingular for some faithfully flat $S\in \Ralg$;
\end{enumerate}
If $M$ has constant rank $n\in \NN_+$, then \ref{qfnsp-i}--\ref{qfnsp-v} are equivalent to
\begin{enumerate}[label={\rm (\roman*)}]\setcounter{enumi}{5}
 \item\label{qfnsp-vi} $q$ is regular if $n$ is even and $q$ is semiregular in the sense of {\rm \cite[IV, (3.1)]{K}} if $n$ is odd;
\end{enumerate}
If $R$ is a field, then $q$ is nonsingular if and only if
\begin{enumerate}[label={\rm (\roman*)}]\setcounter{enumi}{6}
  \item \label{qfnsp-vii} $q$ is {\em nondegenerate\/} in the sense of {\rm \cite[(7.17)]{EKM}}, i.e., one of the following two conditions hold:

   \begin{enumerate}[label={\rm (\alph*)}]
     \item $q$ is regular, or

     \item $\Char(R) = 2$, $\rad(q) = 0$, $\dim_R\{ m\in M : b_q(m,M) = 0 \} = 1$, and $\dim_R M$ is odd.
\end{enumerate}
\end{enumerate}
\end{prop}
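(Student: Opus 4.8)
The plan is to prove \ref{qfnsp} by establishing the implications along the cycle \ref{qfnsp-i} $\implies$ \ref{qfnsp-iv} $\implies$ \ref{qfnsp-v} $\implies$ \ref{qfnsp-ii} $\implies$ \ref{qfnsp-iii} $\implies$ \ref{qfnsp-i}, then handling the constant-rank equivalence \ref{qfnsp-i} $\iff$ \ref{qfnsp-vi} separately, and finally disposing of the field case \ref{qfnsp-i} $\iff$ \ref{qfnsp-vii}. The structure closely mirrors the proof of Lemma~\ref{sesquip-b} for sesquilinear forms, with nonsingularity replacing regularity and the split quadratic forms $q_{0,n}$ of \ref{qf}\eqref{quadfoc} taking the role of the split hermitian form. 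First I would record the trivial implications: \ref{qfnsp-ii} $\implies$ \ref{qfnsp-iii} is immediate (a residue field is an $R$-algebra), \ref{qfnsp-iv} $\implies$ \ref{qfnsp-v} follows since a finite product of the $R_i$ is faithfully flat over $R$ and each $q_{0,r_i}$ is nonsingular (see \ref{qf}\eqref{quadfoc}), and \ref{qfnsp-v} $\implies$ \ref{qfnsp-ii} is faithfully flat descent for nonsingularity: if $q_S$ is nonsingular, then for any $T\in\Ralg$ and field $F\in\Talg$ one base-changes further to a field over $S\ot_R T$ and uses that $\rad$ of a quadratic form over a field is detected after any field extension (the polar form is determined by a matrix whose relevant rank is stable, together with the behaviour of $q$ on the kernel, which in characteristic $2$ is again stable under field extension by \cite[Thm.~1]{Sah} or \cite[11.5]{PRbook}).

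The two implications requiring real work are \ref{qfnsp-i} $\implies$ \ref{qfnsp-iv} and \ref{qfnsp-iii} $\implies$ \ref{qfnsp-i}. For \ref{qfnsp-iii} $\implies$ \ref{qfnsp-i}: a field $F\in\Ralg$ factors through a residue field $R/\gm$ followed by a field extension $R/\gm \hookrightarrow F$, so $q_F = (q_{R/\gm})_F$; since $q_{R/\gm}$ is nonsingular by hypothesis and nonsingularity over fields is stable under field extension (same remark as above), $q_F$ is nonsingular, and as $F$ was arbitrary this is \ref{qfnsp-i}. For \ref{qfnsp-i} $\implies$ \ref{qfnsp-iv}: the standard structure theory of nonsingular quadratic forms (\cite[I, II]{K}, \cite{Ba}) shows that over a local ring a nonsingular quadratic module is, up to isometry, an orthogonal sum of hyperbolic planes and at most a one-dimensional anisotropic piece, and after a flat (in fact étale or quadratic) cover of the local ring the anisotropic piece becomes split; concretely one shows that each localization $q_{R_\gm}$ becomes $q_{0,r}$ after a suitable flat cover of $R_\gm$, spreads this out to a principal open $R_f$, and assembles a Zariski cover $(f_1,\dots,f_m)$ of $R$ together with flat covers of each $R_{f_i}$ — exactly the pattern used in Lemma~\ref{dia}\eqref{dia-b} and in Lemma~\ref{sesquip-b}, \ref{quadfob-ii}$\implies$\ref{quadfob-iv}$\implies$\ref{quadfob-v}. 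The one subtlety absent in the hermitian case is the odd-rank/characteristic-$2$ phenomenon: over a field of characteristic $2$ a nonsingular odd-rank form is not regular but only semiregular, so the flat cover splitting it off must be allowed to introduce the element needed to diagonalize the one remaining "defect" coordinate, giving $q_{0,2m+1} = \lan 1\ran \perp q_{0,2m}$.

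For the constant-rank equivalence \ref{qfnsp-i} $\iff$ \ref{qfnsp-vi}, I would argue locally: nonsingularity and (semi)regularity are both fpqc-local and detected on fibres, so after passing to the flat cover of \ref{qfnsp-iv} one reduces to $q = q_{0,n}$, where the claim is a direct computation — $q_{0,2m}$ is regular (its polar form is the split hyperbolic bilinear form, which is regular), while $q_{0,2m+1}$ has polar form with a one-dimensional radical in characteristic $2$ but is semiregular in the sense of \cite[IV, (3.1)]{K}, because the Clifford/Arf-type invariant that semiregularity requires is non-degenerate for the split form. One must check that semiregularity, like regularity, descends along the flat cover; this is standard since it is defined via the even Clifford algebra which commutes with base change. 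Finally, the field case \ref{qfnsp-i} $\iff$ \ref{qfnsp-vii} is essentially a translation of definitions: over a field, nonsingularity in our sense means $\rad(q_{\ol F}) = 0$ over the algebraic closure, and unwinding the possibilities for the polar form (full rank, giving regularity; or a one-dimensional kernel with $q$ anisotropic on it, forcing characteristic $2$ and odd dimension) reproduces verbatim the two cases (a) and (b) of nondegeneracy in \cite[(7.17)]{EKM}. The main obstacle I expect is the bookkeeping in \ref{qfnsp-i} $\implies$ \ref{qfnsp-iv} around characteristic $2$ and odd rank — making sure the flat cover is chosen so that $q$ becomes \emph{exactly} $q_{0,r}$ (not merely isometric after a further twist) on each piece, and that the local-to-Zariski spreading-out step is valid for quadratic forms whose polar form may be degenerate.
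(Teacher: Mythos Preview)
Your overall architecture matches the paper's (a cycle of implications, with the flat-splitting step \ref{qfnsp-iv} as the pivot), and your treatments of \ref{qfnsp-iv}$\implies$\ref{qfnsp-v}$\implies$\ref{qfnsp-ii} and of \ref{qfnsp-vi}, \ref{qfnsp-vii} are fine. The paper proceeds the same way but outsources the two hard steps to the literature: \ref{qfnsp-i}$\iff$\ref{qfnsp-iii}$\iff$(an \'etale version of \ref{qfnsp-iv}) to \cite[Prop.~1.1, Cor.~1.3]{Sw}, and \ref{qfnsp-v}$\iff$\ref{qfnsp-i} to \cite[Lem.~6.2]{Lo2}. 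Where you try to redo these by hand, two concrete errors creep in.

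First, your argument for \ref{qfnsp-iii}$\implies$\ref{qfnsp-i} asserts that any field $F\in\Ralg$ factors through some residue field $R/\gm$ with $\gm$ maximal. This is false: take $R=\ZZ$ and $F=\QQ$. What is true is that $F$ factors through $\kappa(\frp)$ for the \emph{prime} $\frp=\ker(R\to F)$, which need not be maximal. The correct argument (implicit in \cite{Sw}) is that the locus $\{\frp\in\Spec R: q_{\kappa(\frp)}\text{ nonsingular}\}$ is \emph{open} --- it is the nonvanishing locus of the discriminant in even rank and of the half-discriminant in odd rank --- so if it contains all closed points it is all of $\Spec R$; then for arbitrary $F$ one goes through $\kappa(\frp)$ and uses that $\rad(q_F)\supseteq\rad(q_{\kappa(\frp)})\ot F$ together with stability of nonsingularity under field extension in the direction you actually need.

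Second, your local structure claim in \ref{qfnsp-i}$\implies$\ref{qfnsp-iv} is wrong as stated: over a local ring a nonsingular form is \emph{not} in general an orthogonal sum of hyperbolic planes plus at most a one-dimensional piece (consider $x^2+y^2$ over $\RR$ or over $\ZZ_{(3)}$). The correct normal form (see \cite{Ba}, \cite[I]{K}) is an orthogonal sum of \emph{regular binary} forms and at most one $\langle c\rangle$ with $c$ a unit. One then needs an \'etale cover to turn each binary piece into a hyperbolic plane --- this is precisely the intermediate statement $(\mathrm{iv})_{\rm\acute et}$ that the paper isolates --- and only then a further flat (non-\'etale) extension adjoining $\sqrt{c}$ to reach the split form $q_{0,r}$. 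Your sketch collapses these two covers into one and misstates what happens over the local ring itself.
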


\begin{proof} \ref{qfnsp-i} $\iff$ \ref{qfnsp-ii} is easy. It is shown in \cite[Prop.~1.1, Cor.~1.3]{Sw} that \ref{qfnsp-i} $\iff$ \ref{qfnsp-iii} $\iff$ \ref{qfnsp-ivet}, defined as
\begin{enumerate}[label={\rm (\roman*)$\et$}]\setcounter{enumi}{3}
\item \label{qfnsp-ivet} there exists an \'etale cover $(R_1, \ldots, R_n)$ of $R$ such that each $M \ot_R R_i$ is a free $R_i$-module of finite rank $r_i$ and $q_{R_i}$ is hyperbolic as in {\rm \eqref{quadfoc1}} if $r_i$ is even,   while $q_{R_i}$ is  the orthogonal sum of a hyperbolic form and a  $1$-dimensional form $\lan a_i \ran$ with $a_i \in  R_i^\times$ in case $r_i$ is odd.
\end{enumerate}
One shows \ref{qfnsp-ivet} $\implies$ \ref{qfnsp-iv} as in the proof of \ref{dia}\eqref{dia-b}\ref{diagc-a} $\implies$ \ref{diagc-b}.  If \ref{qfnsp-iv} holds, then \ref{qfnsp-v} is satisfied with $S= R_1 \times \cdots \times R_n$. For \ref{qfnsp-v} $\iff$ \ref{qfnsp-i} see \cite[Lem.~6.2]{Lo2}. Thus, we proved that \ref{qfnsp-i} -- \ref{qfnsp-v} are all equivalent using the diagram
\[ \xymatrix@C=45pt{
  \ref{qfnsp-ii} \ar@{<=>}[r] & \ref{qfnsp-i} \ar@{<=>}[r]
      \ar@{<=>}[d]
      &\ref{qfnsp-iv}\et \ar@{<=>}[r]  \ar@{=>}[d] %&\ar@{<=>}[r]
& \ref{qfnsp-iii}
 \\ & \ref{qfnsp-v} & \ar@{=>}[l] \ref{qfnsp-iv}
}\]
If $R$ is a field, then \ref{qfnsp-i} $\iff$ \ref{qfnsp-vii} by \cite[7.16]{EKM}. Again over a field, \ref{qfnsp-vii} implies that $q$ is semiregular in the odd rank case by \cite[IV, (3.1.7)]{K}. Finally, if $M$ has constant rank over an arbitrary $R$, then $q$ is regular,  respectively semiregular, if and only if it is so over all residue fields $R/\gm$ (\cite[IV; (3.1.5)]{K} for odd rank, \ref{sesquip-b} for even rank). Thus, to prove that \ref{qfnsp-vi} is equivalent to $q$ being nonsingular, we can assume that $R$ is a field, in which case the equivalence \ref{qfnsp-vi} $\iff$ \ref{qfnsp-vii} is easily established using \cite[IV, (3.1.7)]{K} for a semiregular $q$. \end{proof}

\subsection{Quadratic algebras} \label{qua}
We recall (\cite[I, (1.3.6), and III, \S4]{K}) that a {\em quadratic $R$--algebra\/} is an algebra $S$ in $\Ralg$ whose underlying $R$--module is projective of rank $2$. Here are some properties of quadratic algebras: \sm

\begin{inparaenum}[(a)] \item \label{qua-inv} A quadratic $R$--algebra $S$ has a {\em standard involution $\si_S$\/}. If $S$ is free with basis $\{1, z\}$ and hence $z^2 = az + b$ with $a,b\in R$, it is given by $\si_S(z) = a - z$. In general, $\si_S$ is defined by Zariski-descent. \sm

\item \label{qua-aut} ({\em Automorphisms}) We denote by $\ZZ/2\ZZ_R$ the constant group scheme of locally constant functions with values in $\ZZ/2\ZZ = \{0,1\}$. By \cite[III, (4.1.2)]{K}, there exists a natural $R$--group homomorphism
\begin{equation}\label{qua-aut-1}
    \psi \co \ZZ/2\ZZ_R \longto \uAut_R(S),
    \end{equation}
    which is an isomorphism if $S$ is \'etale \cite[III, (4.1.2)]{K}.
%Let $\ZZ/2\ZZ= \{0,1\}$, viewed as abelian group. We identify the constant group scheme $\ZZ/2\ZZ_R$ of locally constant functions with the $R$--group scheme of idempotents     (\cite[p.~128]{K}).
%: a locally constant function $f\in \ZZ/2\ZZ_R(T)$ uniquely determines an idempotent $\veps \in T$ by $f\me(1) = \Spec(\veps R)$ and $f\me(0) = \Spec\big( (1-\veps) R)$. Its group structure is given by $\veps_1 \cdot \veps_2 = \veps_1 + \veps_2 - 2 \veps_1 \veps_2$. \cite[p. 128]{K}
%Let now $S\in \Ralg$ be a quadratic $R$--algebra. An idempotent $\veps\in R$ determines an automorphism $\psi (\veps)$ of $S$ by $\psi(\veps)|_{\veps S} = \si_{\veps S}$ and $\psi{\veps}|_{(1-\veps)S} = \Id$ (note that $\veps S$ is a quadratic $\veps R$--algebra). This extends naturally to $S\ot_R T$, $T\in \Ralg$, and defines an $R$-group functor
\sm

\item\label{qua-ex} ({\em Example}) Let $S=S_0 \oplus S_1\in \Ralg$ be a $\ZZ/2\ZZ$--graded $R$--algebra. We denote by $\Aut(S, S_1)$ the group of $R$--linear automorphisms of the $R$--algebra $S$, i.e., the automorphisms $\al$ of $S$ satisfying $\al(S_1) = S_1$,  and by
\begin{equation}\label{qua-ex0} \uAut(S, S_1)
\end{equation}
the $R$--group scheme representing the $R$--functor $T \mapsto \Aut(S_T, (S_1)_T)$.

Suppose now that $S=S_0 \oplus S_1$ satisfies \end{inparaenum}
\begin{enumerate}[label={\rm (\roman*)}]
 \item\label{qua-exi} $R \simlgr S_0$, $r \mapsto r1_S$, and
 \item\label{qua-exii} $\theta \co S_1 \ot_R S_1 \simlgr S_0$, $s_1 \ot s_1' \mapsto s_1 s_1'$ (isomorphism of $R$--modules).
\end{enumerate}
Thus $S$ is a quadratic $R$--algebra, $(S_1, \theta)$ is a discriminant module in the sense of \cite[III, \S3]{K}, and its standard involution $\si_S$ is the grading automorphism, \begin{equation}\label{qua-ex2}
 \si_S(s_i ) = (-1)^i s_i, \quad s_i \in S_i, i=0,1,
\end{equation}
which follows from the free case by localization.
We have isomorphisms of $R$--group schemes
\begin{equation}\label{qua-ex3}
 \uAut(S, S_1) \simlgr \uAut(S_1, \theta)  \simla \bmu_{2,R},
 \end{equation}
where the first (obvious) isomorphism is obtained by restriction and where the second isomorphism is $x \mapsto x \Id_{S_1}$ \cite[III, (3.2.1)]{K}.

\subsection{Discriminant algebras} \label{qfdi}
Let $(M,q)$ be a faithful quadratic $R$--space and let $\Cli(M,q)= \Cli(q)$ be its Clifford algebra, \cite[IV, \S1]{K}. It is a $(\ZZ/2\ZZ)$--graded $R$--algebra: $\Cli(q) = \Cli_0(q) \oplus \Cli_1(q)$. The {\em discriminant algebra $\Dis(q)$ of $(M,q)$\/} is the subalgebra of $\Cli(q)$ centralizing $\Cli_0(q)$:
\[ \Dis(q) = \Cli(q)^{\Cli_0(q)}. \]
Some facts that we will use (see \cite[IV, \S4]{K}). \sm

\begin{inparaenum}[(a)]
\item \label{qfdi-a} Discriminant algebras respect base change and direct products of base rings. \sm

\item $\calD := \Dis(q)$ inherits the $\ZZ/2\ZZ$--grading of $\Cli(q)$,
\[ \calD = \calD_0 \oplus \calD_1, \quad
       \calD_j =  \calD \, \cap \, \Cli_j(q), \; j=0,1.
\]
It is a quadratic $R$--algebra in the sense of \ref{qua}.
\sm

\item ({\em The group homomorphism $\Dis$}) By the universal property of the Clifford algebra $\Cli(q)$, every $g\in \orth(q)$ induces an automorphism $\Cli(g)$ of the algebra $\Cli(q)$ stabilizing $\Cli_0(q)$ and $\Cli_1(q)$, and hence an automorphism of the $\ZZ/2\ZZ$--graded algebra $\calD = \calD_0 \oplus \calD_1$. Thus, we get a homomorphism of groups,
\begin{equation}\label{sog-dishom}
 \Dis \co \orth(q) \longto \Aut (\calD, \calD_1), \quad g \mapsto
    \Cli(g)|_{\calD} =:\Dis(g).
\end{equation}

\item \label{discralg-even} ({\em Even rank}) Assume $M$ has constant even rank. Thus $q$ is regular by
\ref{qfnsp}\ref{qfnsp-vi}. In this case $\calD$ is the centre of $\Cli_0(q)$, in particular $\calD_1 = 0$, and $\calD$ a quadratic \'etale  $R$--algebra. Hence, by \eqref{qua-aut-1}, % (\cite[III, (4.1.2)]{K})
\begin{equation}\label{evdi1}
 \uAut(\calD, \calD_1) = \uAut(\calD) \simla (\ZZ/2\ZZ)_R.
\end{equation}
The standard involution $\si_{\calD}$ of the quadratic $R$--algebra $\calD$ is (of course) the standard involution of the quadratic \'etale $R$--algebra $\calD$.
\sm

\item \label{discralg-d} ({\em Odd rank}) Assume $M$ has constant odd rank. Then the   discriminant algebra $\calD $ is the centre of $\Cli(q)$: $\calD = \rmZ(\Cli(q))$. Moreover, $\calD = \calD_0 \oplus \calD_1$ is a $\ZZ/2\ZZ$--graded $R$--algebra satisfying the conditions \ref{qua-exi} and \ref{qua-exii} of \ref{qua}\eqref{qua-ex}. Thus
\begin{equation}\label{discralg-d1}
 \uAut(\calD, \calD_1) \cong \bmu_{2,R}.
\end{equation} \end{inparaenum}

% Lemma 4.6 of Knebusch
\begin{lem}[Realizing the standard involution of $\Dis(q)$]\label{refso}
Let $(M,q)$ be a faithful quadratic space, let $x\in M $ with $q(x) \in R\ti$, and let $\rho_x$ be the associated reflection, given by
$ \rho_x(m) = m - b_q(m,x) q(x)\me x$ for $m\in M$.
Then the automorphism $\Dis(\rho_x) \in \Aut(\calD, \calD_1)$, $\calD=\Dis(q)$, is the standard involution of\/ $\calD$:
\begin{equation}\label{refso1}
 \Dis(\rho_x) = \si_{\calD}.
 \end{equation}
%In particular,
%\begin{equation}\label{refso2} \begin{split}
% \Refl^+(q) &\; \subset \; \SO(q), \quad\text{and} \\
%   \text{$q$ regular} \quad & \implies \quad \Di(\si_x) = 1_R \in \ZZ/2\ZZ(R).
%\end{split}   \end{equation}
\end{lem}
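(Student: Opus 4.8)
The plan is to realise $\Dis(\rho_x)$ inside the Clifford algebra $\Cli(q)$, where $x$ is a unit. First I would record, for $m\in M\subset\Cli(q)$, the relation $xm+mx=b_q(x,m)\cdot 1$, and, using $x^2=q(x)\in R\ti$ and hence $x\me=q(x)\me x$, deduce by a short computation that $x\,m\,x\me=-\rho_x(m)$. Let $\pi$ denote the parity automorphism of $\Cli(q)$ (identity on $\Cli_0(q)$, $-\Id$ on $\Cli_1(q)$, in particular $-\Id$ on $M$). Then $\pi\circ\Int(x)$ is an $R$--algebra automorphism of $\Cli(q)$ whose restriction to the generating submodule $M$ is $\rho_x$, so by the universal property of the Clifford algebra it coincides with the canonical extension $\Cli(\rho_x)$ of $\rho_x$; that is, $\Cli(\rho_x)=\pi\circ\Int(x)$. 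Since $\Cli(\rho_x)$ preserves $\Cli_0(q)$, it preserves its centralizer $\calD=\Dis(q)=\Cli(q)^{\Cli_0(q)}$, and by the definition \eqref{sog-dishom} of $\Dis$ we get $\Dis(\rho_x)=(\pi\circ\Int(x))|_{\calD}$. Decomposing $\Spec(R)$ by the rank of $M$ (discriminant algebras respect such decompositions, \ref{qfdi}\ref{qfdi-a}) and grouping the pieces by parity, it remains to treat the constant even-rank and constant odd-rank cases.

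In the odd-rank case $\calD=\rmZ(\Cli(q))$ by \ref{qfdi}\ref{discralg-d}, so $\Int(x)$ acts trivially on $\calD$ and $\Dis(\rho_x)=\pi|_{\calD}$, which is the grading automorphism $s_i\mapsto(-1)^i s_i$ of the $\ZZ/2\ZZ$--graded quadratic algebra $\calD=\calD_0\oplus\calD_1$. Since $\calD$ satisfies conditions \ref{qua-exi}, \ref{qua-exii} of \ref{qua}\eqref{qua-ex} (see \ref{qfdi}\ref{discralg-d}), formula \eqref{qua-ex2} identifies this grading automorphism with $\si_\calD$, which is \eqref{refso1}.

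In the even-rank case $\calD=\rmZ(\Cli_0(q))\subseteq\Cli_0(q)$ is quadratic \'etale, $\calD_1=0$, and $\pi|_{\calD}=\Id$, so $\Dis(\rho_x)=\Int(x)|_{\calD}$; I must show this is $\si_\calD$. Since $\uAut(\calD)\cong(\ZZ/2\ZZ)_R$ by \eqref{evdi1}, two $R$--algebra automorphisms of $\calD$ coincide as soon as they coincide on every fibre $\calD\ot_R\ka(\p)$, and the formations of $\Cli$, of $\Dis(\cdot)$ and of $\rho_x$ commute with base change, so I may assume $R=k$ is a field with $q(x)\in k\ti$. If $\Char k\ne 2$, I would complete $x$ to an orthogonal basis $e_1=x,e_2,\dots,e_{2m}$ of $M$ and compute $\Int(x)(z)=-z$ for the canonical generator $z=e_1\cdots e_{2m}$ of $\calD=k[z]$. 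If $\Char k=2$, I would complete $x=e_1$ to a symplectic basis $e_1,f_1,\dots,e_m,f_m$ of the nondegenerate alternating polar form $b_q$, take $z=\sum_{i=1}^m e_if_i$ as generator of $\calD=k[z]$, and, using $e_1f_1+f_1e_1=1$ and the fact that $e_1$ commutes with each $e_if_i$ for $i\ge 2$, compute $\Int(x)(z)=z+1$. In either case $\Int(x)|_{\calD}$ sends the canonical generator of $\calD$ to its $\si_\calD$--image, so $\Int(x)|_{\calD}=\si_\calD$ and \eqref{refso1} holds.

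I expect the even-rank, characteristic-$2$ subcase of the last step to be the main obstacle: no orthogonal basis is available, so one must instead use a symplectic basis of $b_q$ adapted to $x$ together with the Arf-type generator $z=\sum_i e_if_i$ of $\rmZ(\Cli_0(q))$, and carry out the conjugation computation there by hand. Everything else is routine: the universal property of $\Cli(q)$, a base-change/descent reduction to residue fields, and the classical computation with an orthogonal basis over a field of characteristic $\ne 2$.
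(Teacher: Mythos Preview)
Your proof is correct; the setup (the identity $xmx\me=-\rho_x(m)$, the identification $\Cli(\rho_x)=\pi\circ\Int(x)$, the rank reduction) and the odd-rank case match the paper's argument essentially verbatim.

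The even-rank case is where you diverge. The paper avoids your reduction to residue fields and the characteristic split entirely: it simply quotes \cite[IV, (4.3.1.4)]{K}, which over any base ring gives the identity $\si_{\calD}(d)\,x = x\,d$ for $d\in\calD$; since $\calD\subset\Cli_0(q)$, this yields $\Dis(\rho_x)(d)=xdx\me=\si_{\calD}(d)$ in one line. Your route---passing to fibres via $\uAut(\calD)\cong(\ZZ/2\ZZ)_R$ and computing with an orthogonal basis in characteristic~$\ne 2$ and a symplectic basis with the Arf generator $z=\sum_i e_if_i$ in characteristic~$2$---is perfectly sound and self-contained, but it reproves by hand a special case of what Knus already packages. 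The trade-off is clear: the paper's approach is a one-line citation, while yours is independent of \cite{K} but requires the explicit characteristic-$2$ computation you flagged as the main obstacle.
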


\begin{proof}
 The element $x\in M \subset \Cli_1(q)$ is invertible in $\Cli(q)$ with inverse $x\me = q(x)\me x$. We will use the well-known formula relating $\rho_x(m)$, $m\in M$, with the inner automorphism of $\Cli(q)$ induced by $x$:
\begin{equation}\label{reso0}
 \rho_x(m) = - xmx\me
\end{equation}
which follows from $ xmx\me = (xm)(xq(x)\me) = (-mx + b_q(m,x))(xq(x)\me)
= - m + b_q(m,x) q(x)\me x = - \si_x(m)$.
%\comments{(2020-12-04) $xmx\me $ is the function that re-appears in the following %section on spheres, see Lemma~\ref{lem_sphere_field}. Is it of any help there?}
%\begin{align*}
%   xmx\me &= (xm)(xq(x)\me) = (-mx + b_q(m,x))(xq(x)\me)
%   \\ & = - m + b_q(m,x) q(x)\me x = - \si_x(m).
% \end{align*}
It implies
\begin{equation*}\label{reso00}
 \Dis(\rho_x)(c_j) = (-1)^j xc_jx\me, \qquad (c_i \in \Cli_j(q), \, j = 0,1).
\end{equation*}
For the proof of \eqref{refso1}, we can without loss of generality assume that $M$ has constant rank.

Suppose $M$ has constant even rank. By \cite[IV, (4.3.1.4)]{K}, $\si_{\calD}(d) x = x d$ holds for $d\in \calD$. Since $\calD \subset \Cli_0(q)$ we get $\Dis(\rho_x)(d) = xdx\me = \si_{\calD}(d)$.

Suppose $M$ has constant odd rank. Since then $\calD = \rmZ(\Cli(q))$, we obtain for $d_j \in \calD_j$, $j=0,1$,  that $\Dis(\rho_x)(d_j) = (-1)^j xd_j x\me = (-1)^j d_j x x\me = (-1)^j d_j = \si_{\calD}(d_j)$ by \ref{qua}\eqref{qua-ex}.
%If $q$ is regular, then $\Di(\si_x) = \psi\me(\si_{\Dis(q)}) = 1$ by \eqref{sog-d0}, proving the implication \eqref{refso2}.
\end{proof}

\begin{lem}\label{LGqdi}  Let $(M,q)$ be a quadratic space over an LG-ring $R$. \sm

\begin{inparaenum}[\rm (a)] \item\label{LGqdi-a}
The following are equivalent: \begin{inparaenum}[\rm (i)] %\begin{enumerate}[label={\rm (\roman*)}]

\quad \item\label{LGqdi-i} $q(M) \cap R\ti \ne \emptyset$;

%\quad \item\label{LGqdi-ii} $q$ is {\em primitive\/} in the sense that $R = \Span\, q(M)$;

\quad \item \label{LGqdi-iii} $M$ is faithfully projective.
\end{inparaenum} \sm

\item \label{LGqdi-b} If $M$ is faithfully projective, the group homomorphism
\[ \Dis\co \orth(q) \to \Aut(\calD, \calD_1)\]
of \eqref{sog-dishom}    is surjective.
\end{inparaenum} \end{lem}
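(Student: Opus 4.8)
The plan is to treat the two parts in turn, with part \eqref{LGqdi-a} serving as a preparation for part \eqref{LGqdi-b}. For \eqref{LGqdi-a}, the implication \ref{LGqdi-i} $\implies$ \ref{LGqdi-iii} is immediate: if $q(x) \in R\ti$ for some $x \in M$, then $x$ is a unimodular vector (it is unimodular already because $q(x) \in R\ti$ forces $x \ot 1_{\ka(\p)} \ne 0$ for all $\p$), hence $M_{R/\gm} \ne 0$ for every maximal ideal $\gm$, so $M$ is faithfully projective by the list in \ref{fapmod}. For the converse \ref{LGqdi-iii} $\implies$ \ref{LGqdi-i}, I would reduce to constant rank via the rank decomposition \ref{qf}\eqref{qf-redc} together with \ref{LG-def}\eqref{LG-defa} (a unit in a finite product is detected componentwise), discard the rank-$0$ factor, and then apply Proposition~\ref{prop_baire}\ref{prop_baire-a} to the open subscheme $U \subset \uW(M)$ defined by $q$ being a unit, i.e. $U = \uW(M) \setminus V(q)$ where $V(q)$ is the closed subscheme cut out by the coordinate polynomial of $q$ (the complement of a principal open in an affine space, hence quasi-compact). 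It remains to see $U(R/\gm) \ne \emptyset$ for each maximal $\gm$: this is exactly the classical statement that a nonsingular quadratic form of positive rank over a field represents a nonzero value — which follows from \ref{qfnsp}\ref{qfnsp-iv} (the split forms $q_{0,n}$ visibly represent units) after an \'etale base change, or more directly from nondegeneracy \ref{qfnsp}\ref{qfnsp-vii}; over a finite field one can also invoke the Chevalley--Warning count. Then \ref{prop_baire}\ref{prop_baire-a} gives $U(R) \ne \emptyset$, which is \ref{LGqdi-i}.

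For part \eqref{LGqdi-b}, the target group scheme $\uAut(\calD, \calD_1)$ is, by \eqref{evdi1} in the even-rank case and \eqref{discralg-d1} in the odd-rank case, either $(\ZZ/2\ZZ)_R$ or $\bmu_{2,R}$; in both cases $\uAut(\calD,\calD_1)(R)$ is generated (as a set under the relevant structure) by the standard involution $\si_\calD$ together with the identity, after localizing to reduce to constant rank and using \ref{LG-def}\eqref{LG-defa} to assemble the answer on a finite product. So it suffices to produce, for each maximal ideal $\gm$ (or rather: globally), an element of $\orth(q)$ mapping to $\si_\calD$ under $\Dis$. This is precisely what Lemma~\ref{refso} provides: if $x \in M$ with $q(x) \in R\ti$, then the reflection $\rho_x \in \orth(q)$ satisfies $\Dis(\rho_x) = \si_\calD$. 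The existence of such an $x$ is guaranteed by part \eqref{LGqdi-a}\ref{LGqdi-i} as soon as $M$ is faithfully projective, which is the hypothesis of \eqref{LGqdi-b}. Hence $\Dis$ hits $\si_\calD$, and since the identity is trivially in the image and these two elements exhaust $\uAut(\calD,\calD_1)(R)$ (locally, hence globally after the product decomposition), $\Dis$ is surjective.

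I would organize the write-up as: (1) prove \eqref{LGqdi-a}\ref{LGqdi-i}$\iff$\ref{LGqdi-iii}; (2) observe the structure of $\uAut(\calD,\calD_1)$ and reduce \eqref{LGqdi-b} to producing a preimage of $\si_\calD$; (3) quote Lemma~\ref{refso} and part \eqref{LGqdi-a} to finish. The main obstacle — and the only genuinely substantive point — is the direction \ref{LGqdi-iii} $\implies$ \ref{LGqdi-i}, specifically verifying the fibrewise input $U(R/\gm)\neq\emptyset$ uniformly over \emph{all} residue fields, including finite ones of characteristic $2$ (where ``nonsingular'' is the subtle notion of \ref{qfnsp}\ref{qfnsp-vi}); once the quasi-compactness of $U$ and this fibrewise nonemptiness are in hand, Proposition~\ref{prop_baire}\ref{prop_baire-a} does the rest, and part \eqref{LGqdi-b} is then a formal consequence. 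A secondary bookkeeping point is making the reduction to constant rank compatible with the direct-product behaviour of $\orth(q)$, $\calD$, and $\uAut(\calD,\calD_1)$, but this is routine given \ref{qf}\eqref{qf-redc} and \ref{qfdi}\ref{qfdi-a}.
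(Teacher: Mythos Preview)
Your argument for part~\eqref{LGqdi-a} matches the paper's; note only that the fibrewise check $U(R/\gm)\ne\emptyset$ is simpler than you suggest: since $M_k\ne 0$ and $\rad(q_k)=0$, the form $q_k$ is not identically zero, so it takes a nonzero (hence invertible) value.

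There is a genuine gap in your treatment of part~\eqref{LGqdi-b} in the odd-rank case. You claim that $\{\Id,\si_\calD\}$ exhausts $\Aut(\calD,\calD_1)(R)\cong\bmu_2(R)$ after a suitable product decomposition, but under the isomorphism \eqref{qua-ex3} the standard involution $\si_\calD$ corresponds to $-1\in\bmu_2(R)$, and $\bmu_2(R)$ is not generated by $\pm 1$ in general. Take $R=\FF_2[\veps]/(\veps^2)$: this is local (hence LG), connected, and $\bmu_2(R)=\{1,1+\veps\}$, while $-1=1$ so $\si_\calD=\Id$. Over this $R$ every reflection $\rho_x$ has $\Dis(\rho_x)=\si_\calD=\Id$, yet $1+\veps\in\Aut(\calD,\calD_1)(R)$ is nontrivial and is not hit by your argument. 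No product decomposition of $R$ helps, since $R$ is already connected local. The paper avoids this by using a different preimage in the odd-rank case: for $x\in\bmu_2(R)$ one has $x\Id_M\in\orth(q)$, and $\Cli(x\Id_M)$ acts as $\Id$ on $\Cli_0(q)$ and as $x\cdot\Id$ on $\Cli_1(q)$, so $\Dis(x\Id_M)$ is exactly the automorphism corresponding to $x$. Reflections are used only in the even-rank case.

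In the even-rank case your idea is essentially correct, but the ``product decomposition'' you need is not the rank decomposition: since $\Aut(\calD)(R)\cong(\ZZ/2\ZZ)_R(R)\cong\idemp(R)$, each $g\in\Aut(\calD)$ determines its own idempotent $\veps_1\in R$ with $g|_{\veps_0\calD}=\Id$ and $g|_{\veps_1\calD}=\si_\calD$; one then passes to $\veps_1 R$ and applies Lemma~\ref{refso} there. This $g$-dependent decomposition is what the paper does.
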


\begin{proof} \eqref{LGqdi-a}  The equivalence \eqref{LGqdi-i} $\iff$ \eqref{LGqdi-iii}  is proven in \cite[Lem.~11.26]{PRbook} for a regular $q$. The proof in the nonsingular case is essentially the same. We reproduce it here for the convenience of the reader.

Assuming \eqref{LGqdi-i}, it follows that
% $\implies$ \eqref{LGqdi-ii} is trivial, so assume \eqref{LGqdi-ii}, equivalently
$q(M) \not\subset \m$ for every maximal ideal $\m \ideal R$. In particular, $M_{R_\gm}\ne 0$, which, by \eqref{fapmod}, implies \eqref{LGqdi-iii}.

Conversely, suppose that $M$ is faithfully projective. We can view $q$ as a quadratic polynomial on the affine scheme $\uW(M)$ and define $U=\uW(M)_q$, the principal open subscheme determined by $q$. Let $\gm$ be a maximal ideal of $R$, put $k=R/\gm$ and observe that $M_k \ne 0$ by \ref{fapmod}, while $\rad(q_k)=0$ by nonsingularity of $q$, in particular $q_k \ne 0$. Hence $U(k) = \{ m\in M_k: q_k(m) \ne 0\} \ne \emptyset$, and so $U(R) \ne \emptyset$ by \ref{prop_baire}\ref{prop_baire-a}, i.e., \eqref{LGqdi-i} holds. \sm

\eqref{LGqdi-b} Applying the rank decomposition of quadratic modules  \ref{qf}\eqref{qf-redc} and \ref{qfdi}\eqref{qfdi-a}, we see that we can assume that $M$ has constant rank $r$.

Assume that $r$ is odd. By \ref{qfdi}\eqref{discralg-d} and \eqref{qua-ex3}, an automorphism $g\in \Aut(\calD, \calD_1)$ has the form $g|_{\calD_0} = \Id$ and $g|_{\calD_1} = x \Id_{\calD_1}$ for some $\det(g) = x\in \mu_2(R)$. Observe $x\Id_M\in \orth(q)$. By construction of $\Cli(q)$, the orthogonal map $x\Id_M$ acts
on $\Cli_0(q)$ as identity, and on $\Cli_1(q)$ as $x\Id$. Thus $\Dis(x\Id_M) = g$.%

Assume that $r$ is even. By \ref{qfdi}\eqref{discralg-even} and the proof of \cite[III, (4.1.2)]{K}, to every $g\in \Aut(\calD, \calD_1) = \Aut(\calD)$ one can associate a unique complete system $(\veps_0, \veps_1= 1-\veps_0)$ of orthogonal idempotents $\veps_i$, such that $g$ stabilizes $\veps_i \calD$ and satisfies $g|_{\veps_0 \calD} = \Id$, $g|_{\veps_1\calD}$ is the standard involution of the quadratic \'etale $\veps_1R$--algebra $\veps_1\calD$. By \ref{LG-def}\eqref{LG-defa}, $\veps_1R$ is an LG--ring. Hence, by \ref{qf}\eqref{qf-redc}, we can assume that $R=\veps_1R$, and have to show that there exists $f\in \orth(q)$ such that $\Dis(f) = \si_{\calD}$. Since $q(M) \cap R\ti \ne \emptyset$ by  \eqref{LGqdi-a}, this follows from \eqref{refso1}.  \end{proof}

\subsection{Orthogonal group schemes}\label{ogs} %% Knebusch, 4.14
Let $(M,q)$ be a quadratic form over $R$. The $R$--group functor $\underline{\orth(q)}$, assigning to $T\in \Ralg$ the group $  \underline{\orth(q)}(T) = \orth(q_T)$ is represented by
% a closed subscheme of $\uGL(M)$, hence
an affine finitely presented $R$--group scheme $\uO(q)$, \cite[Def.~4.1.0.2]{CF} or \cite[page 364]{Co1}. Properties that we will use: \sm

\begin{inparaenum}[(a)]\item \label{ogs-dec} ({\em Direct products}) Let $R= R_0 \times \cdots \times R_n$ be a direct product. Orthogonal groups respect the decomposition of \ref{qf}\eqref{qf-redc} as follows:
%% Kneb 4.14
\begin{align*}
 &\orth(q) = \orth(q_0) \times \cdots \times \orth(q_n), \\
&  \uO(M,q) \cong p_{0*}\big(\uO(M_0, q_0)\big) \times \cdots \times
             p_{n*}\big( \uO(M_n, q_n)\big) \\
 & \qquad (\text{isomorphism of $R$--group schemes})
\end{align*}
where $p_i \co \Spec(R_i) \to \Spec(R)$ is the morphism associated with the canonical projection $R\to R_i$. \sm

\item \label{ogs-ns} $\uO(q)$ is in general not reductive. For example (\cite[Thm.~C.1.5]{Co1}),  if $(M, q)$ has constant positive rank $n$, then $\uO(q)$
    is smooth if and only if either $n$ is even or $n$ is odd and $2\in R\ti$.\sm

\item \label{ogs-dis} The homomorphism $\Dis$ of \eqref{sog-dishom} respects base change and thus defines a homomorphism of $R$--group schemes
\begin{equation}\label{qfdi1}
 \Dis \co \uO(q) \to \uAut (\calD, \calD_1), \quad \calD = \Dis(q),
\end{equation}
assigning to $T\in \Ralg$ the map $\Dis(T) \co \orth(q_T) \to \Aut\big(\calD_T, \calD_{1,T})$.%
\end{inparaenum}

\begin{lem}[Cohomology]\label{ogs-co} Fix $n\in \NN_+$. The map
\[ [(M,q)] \mapsto [\uIsom\big( (R^n, q_{0,n}), \, (M,q)\big)] \]
is a bijection between the set of isometry classes of quadratic spaces over $R$ of rank $n$ and $\rmH^1\big(R, \uO(q_{0,n})\big)$.
Equivalently, for every quadratic space $(M,q)$ of rank $n$, the set of twisted forms of $(M,q)$ is in bijection with $\rmH^1\big(R, \uO(M,q)\big)$.
\end{lem}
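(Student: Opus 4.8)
The statement is the classical dictionary between twisted forms and non-abelian $\rmH^1$, applied to the quadratic-space situation; the work is entirely in verifying the hypotheses of that dictionary. First I would recall that quadratic spaces of rank $n$ over $R$ are, by Proposition~\ref{qfnsp}, precisely the twisted forms of $(R^n, q_{0,n})$ for the fppf (indeed étale) topology: condition \ref{qfnsp-iv} says any nonsingular $(M,q)$ of constant rank $n$ becomes isometric to $q_{0,n}$ over a flat cover, and conversely any such twist is again nonsingular of rank $n$ by base-change stability \ref{qf}\eqref{qf-bc} together with the descent of nonsingularity. So the groupoid of quadratic spaces of rank $n$, fibered over $\Ralg$, is a gerbe (it is nonempty locally and its automorphism sheaves are representable), and its band is $T \mapsto \underline{\orth(q_{0,n})}(T) = \uO(q_{0,n})(T)$ by definition of the orthogonal group scheme \ref{ogs}.

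\smallskip

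Second, the general theory of gerbes (Giraud \cite{Gir}) gives a canonical bijection between the set of isomorphism classes of objects of such a gerbe over the base and the cohomology set $\rmH^1\big(R, \uO(q_{0,n})\big)$, once one knows the automorphism group scheme $\uO(q_{0,n})$ is the band: the class of a quadratic space $(M,q)$ corresponds to the class of the torsor $\uIsom\big((R^n,q_{0,n}),(M,q)\big)$ under $\uO(q_{0,n})$ acting by post-composition. Here one needs that $\uIsom\big((R^n,q_{0,n}),(M,q)\big)$ is indeed representable and is an fppf-torsor under $\uO(q_{0,n})$; this follows because the two forms are fppf-locally isometric (Proposition~\ref{qfnsp}\ref{qfnsp-iv} again) and the isometry sheaf is a closed subscheme of the affine scheme $\uW(\Hom_R(R^n, M))$ cut out by the quadratic equations $q \circ f = q_{0,n}$. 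The inverse map sends a cocycle/torsor $P$ to the quadratic space obtained by twisting $(R^n, q_{0,n})$ by $P$, which is well-defined since $\uO(q_{0,n})$ acts on $(R^n,q_{0,n})$ by isometries and descent along the fppf cover trivializing $P$ produces a quadratic module over $R$ that is fppf-locally $q_{0,n}$, hence nonsingular of rank $n$.

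\smallskip

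The "equivalently" clause is then immediate: given any quadratic space $(M,q)$ of rank $n$, the isometry $\uIsom\big((R^n,q_{0,n}),(M,q)\big)$ is an $\uO(q_{0,n})$-torsor, and by the first part $\uO(q_{0,n})$ and $\uO(M,q)$ are inner twists of each other (conjugation by any fppf-local isometry identifies them), so $\rmH^1(R,\uO(q_{0,n}))$ and $\rmH^1(R,\uO(M,q))$ are canonically in bijection via the twisting map that sends the base point of the first to the class of $(M,q)$; composing with the first bijection gives the parametrization of twisted forms of $(M,q)$ by $\rmH^1(R,\uO(M,q))$. Technically one may cite \cite[2.2.4.5]{CF} (as is done for the hermitian analogue in \ref{ung}\eqref{ung-e}) or, more hands-on, adapt the tensor-system/descent machinery of \cite[54.7, 54.15]{PRbook} to quadratic spaces. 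The main (really, the only) obstacle is the bookkeeping needed to check that the fibered category of rank-$n$ quadratic spaces is genuinely a gerbe with the stated band — i.e., local nonemptiness and the identification of automorphism sheaves — but all the required local structure is already packaged in Proposition~\ref{qfnsp}, so this is routine rather than deep.
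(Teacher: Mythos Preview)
Your proposal is correct and follows essentially the same approach as the paper: the paper's proof simply says to argue as in \ref{ung}\eqref{ung-e}, replacing the reference to \ref{dia}\eqref{dia-b} by Proposition~\ref{qfnsp}, and offers the same two options you mention (the gerbe argument via \cite[2.2.4.5]{CF} or the tensor-system/descent approach of \cite[54.7, 54.15]{PRbook}). Your write-up is a faithful unpacking of exactly that sketch.
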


\begin{proof} This can be proven in the same way as \ref{ung}\eqref{ung-e}, replacing the reference \ref{dia}\eqref{dia-b} used there by \ref{qfnsp}. \end{proof}
\sm

It may be appropriate to remind the reader that we are using fppf-cohomology which does in general not coincide with \'etale cohomology, see \ref{ogs}\eqref{ogs-ns}.
In this respect, the description of the Galois cohomology set $\rmH^1_{\rm Gal}(R, \uO(q))$ in characteristic $2$ (\cite[p.~408]{KMRT}) is instructive.

\subsection{Special orthogonal group scheme $\uSO(q)$} \label{sogsc}
Let $(M,q)$ be a faithful quadratic $R$--space, and let $\calD = \Dis(q)$ be its discriminant algebra. We define the $R$--group scheme as the kernel of the homomorphism $\Dis$ of \eqref{qfdi1}:
\begin{equation} \label{sogsc0} \uSO(q) = \Ker( \Dis). \end{equation}
Thus, for $T\in \Ralg$ we have
$\uSO(q)(T) = \{ g\in \orth(q_T) : \Dis(g) = \Id_{\Dis(q_T)}\}$.
Since the inclusion $\inc\co \uAut(\calD, \calD_1)\to \uAut(\calD)$ is a monomorphism, this definition coincides with the one of \cite[IV, (5.1)]{K}, where $\uSO(q)$ is defined as the kernel of $\inc \circ \Dis$. The discussion below shows that it also agrees with the definition used in \cite[App.~C]{Co1}. However, it coincides with the definition of $\uSO(q)$ in \cite[\S4.3]{CF} only in the even rank case.
Following is a list of properties of $\uSO(q)$ that we will use. \sm

\begin{inparaenum}[(a)]
  \item \label{sogsc-dir} ({\em Direct products}) Let $R= R_0 \times \cdots \times R_n$ be a direct product of rings, and let $(M,q) = (M_0, q_0) \times \cdots \times (M_n, q_n)$
be the corresponding decomposition into a direct product of quadratic $R_i$--modules, \ref{qf}\eqref{qf-redc}.
Analogously to \ref{ogs}\eqref{ogs-dec}, we have
\begin{equation}\label{sogsc11}
 \uSO(M,q) \cong p_{1*}\big(\uSO(M_1, q_1)\big) \times \cdots \times
             p_{n*}\big( \uSO(M_n, q_n)\big)
\end{equation}
into a direct product of $R$--group schemes. It will follow from this and the discussion below that, in general, $\uSO(q)$ is a reductive $R$--group scheme which is semisimple if $\rank M \ge 3$. \sm

\item ({\em $\uSO$ and determinants}) \label{sogsc-det}
By \cite[IV, (5.1.1)]{K} we have a group homomorphism
\begin{equation} \label{dh1}
   \det \co \uO(q) \longto \bmu_{2, R}, \quad g \mapsto \det(g).
\end{equation}
We always have $\uSO(q) \subset \Ker(\det)$; equality holds if $M$ has constant odd rank or if $2\in R\ti$.  This is proven in \cite[IV, (5.1.1)]{K}, but note the misprint in (3) of loc.\ cit., where ``$\subset$'' should be ``$=$", as one can see from the proof. \sm

\item ({\em Odd rank})\label{sogsc-odd}  Let $(M,q)$ be a quadratic space of odd rank. Then the morphism $ z_M \co \bmu_{2,R} \longto \uO(q)$, $x \mapsto x\Id_M$, is a section of $\det$. Hence, by \eqref{sogsc-det},
\begin{equation}
  \label{orthsc-2} \uO(q) \cong \bmu_2 \times_R \uSO(q).
\end{equation}
If $M$ has constant rank $1$, then $\uSO(q) = \{\star \}$, and if $M$ has constant odd rank $2n+1\ge 3$, then $\uSO(q)$ is an adjoint semisimple $R$--group scheme of type ${\rm B}_n$ (${\rm B}_1 = \rmA_1$ for $n=1$) \cite[Prop.~C.3.10]{Co1}. \sm

\item({\em Even rank}) \label{sogsc-even}  Let $(M,q)$ be a quadratic space of even rank $2n\ge 2$. Then $q$ is regular by \ref{qfnsp}, and the following hold.
\end{inparaenum}
\begin{enumerate}[label={\rm (\roman*)}]
 %\item  \label{def-sosII} $\uSO(q)$ is the identity  component (\ref{sec:neutral}) of the smooth      $R$--group scheme $\uO(q)$; it is an open and closed subgroup scheme of $\uO(q)$ (\cite[Remark after Thm.~C.2.11]{Co1}). \sm

\item Since the map $\psi$ of \eqref{qua-aut-1} is an isomorphism, the {\em Dickson map}
\[ \Di : = \psi\me \circ \Dis \co \uO(q) \to \ZZ/2\ZZ_R \]
is a group homomorphism satisfying $\uSO(q) = \Ker(\Di)$. Moreover, by \cite[C.2.8]{Co1} or \cite[IV, (5.2.2)]{K}, the sequence
\begin{equation}  \label{sogsc-even1}
 1 \longto \uSO(q) \longto \uO(q) \xrightarrow{\Di} \ZZ/2\ZZ_R \longto 1
\end{equation}
is exact. \sm

 \item \label{sogsc-even-ii} If $M$ has constant rank $2$,  then $\uSO(q)$ is a rank one torus.  Indeed, by descent, we are reduced to the hyperbolic case $q=xy$  where $\uSO(q)=\GG_m$ \cite[IV, 5.1.3 and V, (2.6.3)]{K}.%, see \ref{oexa}.
\sm

\item \label{sogsc-even-iii}  If $M$ has constant rank $2n\ge 4$, then $\uSO(q)$ is a semisimple $R$--group scheme of type $\rmD_n$ \cite[C.3.10]{Co1}.
\end{enumerate}
\sm

\begin{inparaenum}[(a)] \setcounter{enumi}{4}
\item\label{sogqs-hyp} Suppose $M = U \oplus V$ such that $q(U) = 0 = q(V)$ and $(M,q) \simlgr \HH(U)$ under an isometry identifying $V$ with $U^*$. Then, associating with $g\in \GL(U)$ the map $M \to M$, $(u,v) \mapsto (g u, g\me v)$, gives rise to a closed embedding of $R$--group schemes
\begin{equation} \label{sogqs-hyp1}
  \uGL(U) \longto \uSO(q).
\end{equation}
\end{inparaenum}

\begin{prop}[Parabolic and Levi subgroups of $\uSO(q)$] \label{sopl}
Let $(M, q)$ be a quadratic space, let $U$ and $V$ be faithfully projective submodules and let $M'\subset M$ be a submodule such that
\[ M = (U \oplus V) \perp M', \quad q(U)=0 = q(V), \quad
    (U\oplus V, q|_{U\oplus V})\cong \HH(U).\]
\begin{inparaenum}[\rm (a)]
  \item The subgroup scheme $P$ of $\uSO(q)$, which stabilizes $U$, is a parabolic subgroup of\/ $\uSO(q)$. \sm

 \item Moreover, $\uGL(U) \times \uSO(q|_{M'})$ is a Levi subgroup of $P$, where the first factor embeds into $P$ using the embedding  $\uGL(U) \to \uSO(q|_{U\oplus V}) \to \uSO(q)$ of \eqref{sogqs-hyp1}.
\end{inparaenum}
\end{prop}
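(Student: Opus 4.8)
The plan is to follow the argument of Proposition~\ref{dec} (the unitary case) almost verbatim, using the dynamic method recalled in \ref{dps}: I will exhibit a single cocharacter $\la\co\GG_m\to\uSO(q)$ whose associated parabolic subgroup is $P$ and whose centralizer is $\uGL(U)\times_R\uSO(q|_{M'})$, and then quote \ref{dps} to conclude. As preliminaries I would record that $M'$, being a direct summand of $M$, is finite projective; that $q|_{M'}$ is nonsingular by the first implication in \eqref{qf-perp1} (applied to $q=q|_{U\oplus V}\perp q|_{M'}$), so that the group scheme $\uSO(q|_{M'})$ of \ref{sogsc} is defined, invoking the rank decomposition \ref{sogsc}\eqref{sogsc-dir} if $M'$ happens not to be faithful; and that $G:=\uSO(q)$ is reductive by \ref{sogsc}, so that "parabolic subgroup" is meaningful. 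Throughout I work with the decomposition $M=U\oplus V\oplus M'$.

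The cocharacter is
\[ \la\co\GG_m\longto G,\qquad \la(t)=t\,\Id_U+t\me\,\Id_V+\Id_{M'}, \]
with the obvious meaning on $T$--points, $T\in\Ralg$. The first thing to check is that $\la$ lands in $G$ and not merely in $\uO(q)$: its restriction to the hyperbolic summand $U\oplus V\cong\HH(U)$ is exactly the image of $t\,\Id_U\in\uGL(U)$ under the embedding \eqref{sogqs-hyp1}, and it is the identity on $M'$; hence $\la$ factors through the composite $\uGL(U)\to\uSO(q|_{U\oplus V})\to\uSO(q)$ of \eqref{sogqs-hyp1}. (Equivalently, $\Dis\circ\la$ is trivial, since $\GG_m$ has geometrically connected fibres while the target of $\Dis$ is, locally on $\Spec(R)$, the \'etale group $(\ZZ/2\ZZ)_R$ or the group $\bmu_{2,R}$ by \eqref{evdi1},\eqref{discralg-d1}, neither of which receives a nonconstant homomorphism from $\GG_m$.) Note that $\la$ is noncentral.

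Next comes the block bookkeeping, identical to \ref{dec}\eqref{dec-b} and \eqref{dec-c}. Writing $g\in G(T)$ as a matrix $(g_{ij})_{1\le i,j\le3}$ relative to $(U_T,V_T,M'_T)$ and putting $(a_1,a_2,a_3)=(1,-1,0)$, one has $\la(t)\,g\,\la(t)\me=(t^{a_i-a_j}g_{ij})$, so the orbit map of $g$ extends to $\GG_a$, i.e.\ $g\in\rmP_G(\la)(T)$, precisely when the blocks carrying a strictly negative power of $t$ vanish, namely $g_{21}=g_{23}=g_{31}=0$; these conditions say that $g$ stabilizes $U_T$ and $U_T^\perp=U_T\oplus M'_T$, and because $g$ is an isometry they are equivalent to $g$ stabilizing $U_T$ alone. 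Hence $\rmP_G(\la)=P$, which is parabolic by \ref{dps}, proving (a). Similarly $\rmC_G(\la)(T)$ consists of the block--diagonal $g=g_{11}\oplus g_{22}\oplus g_{33}$ in $G(T)$; the isometry relations force $g_{11}\oplus g_{22}$ to be the image of $g_{11}\in\GL(U_T)$ under \eqref{sogqs-hyp1} (so $g_{22}$ is determined) and $g_{33}\in\orth(q|_{M'})(T)$, while the extra constraint $g\in\uSO(q)$, by multiplicativity of the Dickson invariant along the orthogonal decomposition and the fact that the $\uGL(U)$--factor already lies in $\uSO$ by \eqref{sogqs-hyp1}, amounts to $g_{33}\in\uSO(q|_{M'})(T)$. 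Thus $\rmC_G(\la)\cong\uGL(U)\times_R\uSO(q|_{M'})=L$, which by \ref{dps} is a Levi subgroup of $\rmP_G(\la)=P$; this is (b).

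I expect the only genuine obstacle to lie in the two places where $\uSO$ must be distinguished from $\uO$: that $\la$ takes values in $\uSO(q)$, and that the $M'$--component of a centralizing element lands in $\uSO(q|_{M'})$ rather than just $\uO(q|_{M'})$ (equivalently, that extension by the identity on $M'$ carries $\uSO(q|_{U\oplus V})$ into $\uSO(q)$). Both are instances of the additivity of the discriminant/Dickson invariant under orthogonal sums, which is standard but needs to be stated carefully; they are handled via the embedding \eqref{sogqs-hyp1}. Everything else is the routine block computation already carried out for the unitary groups in the proof of \ref{dec}.
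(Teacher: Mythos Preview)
Your proposal is correct and follows exactly the approach the paper indicates: the paper's own proof is a one-line pointer to the argument of Proposition~\ref{dec}\eqref{dec-b},\eqref{dec-c} ``using the same cocharacter'', which is precisely the cocharacter $\la(t)=t\,\Id_U+t\me\Id_V+\Id_{M'}$ and block computation you carry out. Your added care about why $\la$ lands in $\uSO(q)$ and why the $M'$--component of a centralizing element lies in $\uSO(q|_{M'})$ (via \eqref{sogqs-hyp1} and additivity of the discriminant/Dickson invariant) fills in details the paper leaves implicit.
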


\begin{proof}
  This can be shown in the same way as parts \eqref{dec-b} and \eqref{dec-c} of \ref{dec}, using the same cocharacter.
\end{proof}

\end{document}